\documentclass{article}

\usepackage[preprint]{arxiv_style}
\usepackage{lmodern}
\setcitestyle{square,numbers}
\usepackage{tikz}
\usetikzlibrary{positioning}
\usetikzlibrary{shapes.geometric}

\usepackage[T1]{fontenc}
\usepackage[utf8]{inputenc}
\usepackage[english]{babel}
\usepackage[tbtags]{amsmath}
\usepackage{amsfonts,amssymb,mathrsfs,amscd,longtable,amsthm}
\usepackage{mathrsfs}
\usepackage[all]{xy}
\usepackage[most]{tcolorbox}
\usepackage{booktabs}

\usepackage{mathtools}
\mathtoolsset{showonlyrefs}

\usepackage{algorithm}
\usepackage{algorithmic}

\usepackage{hypbmsec}
\usepackage{mathtext}
\usepackage{dsfont}

\usepackage{hyperref}

\usepackage{backref}

\addto\captionsrussian{

}
\addto\captionsrussian{

}

\usepackage{bbm}
\usepackage{makecell}
\usepackage{pifont}

\def\bad{\spaceskip=0.33emplus0.6emminus0.15em\immediate\write5{\string\bad}}

\numberwithin{equation}{section}

\makeatletter

\def\PlotAlph#1{\expandafter\@PlotAlph\csname c@#1\endcsname}

\def\@PlotAlph#1{
 \ifcase#1\or $\mathrm{A}1$\else \@ctrerr \fi
}

\newtheorem*{keywords*}{Keywords}
\newtheorem{theorem}{Theorem}
\newtheorem{lemma}{Lemma}
\newtheorem{corollary}{Corollary}
\newtheorem{proposition}{Proposition}
\newtheorem{assumption}{Assumption}
\newtheorem{definition}{Definition}
\newtheorem{remark}{Remark}
\newtheorem{example}{Example}

\newcommand{\E}{\mathbb E}

\newcommand{\norm}[1]{\|#1\|}
\newcommand{\R}{\mathbb{R}}

\newcommand{\cA}{\mathcal{A}}
\newcommand{\cB}{\mathcal{B}}
\newcommand{\cM}{\mathcal{M}}

\newcommand{\cD}{\mathcal{D}}
\newcommand{\cE}{\mathcal{E}}
\newcommand{\cF}{\mathcal{F}}

\newcommand{\cH}{\mathcal{H}}

\newcommand{\cP}{\mathcal{P}}

\newcommand{\cS}{\mathcal{S}}
\newcommand{\cT}{\mathcal{T}}
\newcommand{\cX}{\mathcal{X}}

\newcommand{\bA}{\mathbf{A}}
\newcommand{\bb}{\mathbf{b}}
\newcommand{\be}{\mathbf{e}}

\newcommand{\one}[1]{\mathbb{I}\{#1\}}

\newcommand{\eqsp}{\;}

\renewcommand{\epsilon}{\varepsilon}

\DeclareMathOperator*{\argmin}{arg\ min}
\DeclareMathOperator*{\argmax}{arg\ max}

\allowdisplaybreaks

\def\mp{\mu^\pi}
\def\rp{r^\pi}
\def\Pp{P^\pi}
\def\Ppp{P_\pi}
\def\Bp{\cB^\pi}
\def\Bpp{\cB_\pi}

\title{Mathematical methods of reinforcement learning}

\author{
Denis Belomestny\\
Duisburg-Essen University, HSE University\\
\texttt{denis.belomestny@uni-due.de}\\
\And
Alexander Gasnikov\\
Innopolis University, MIPT, HSE University\\
\texttt{gasnikov@yandex.ru}\\
\And
Egor Gladin\\
HSE University\\
\texttt{elgladin@hse.ru}\\
\And
Alexey Naumov\\
HSE University\\
\texttt{anaumov@hse.ru}\\
\And
Artemy Rubtsov\\
HSE University\\
\texttt{asrubtsov@hse.ru}\\
\And
Yuri Sapronov\\
MIPT, ISP RAS\\
\texttt{sapronov.iuf@phystech.edu}\\
\And
Daniil Tiapkin\\
HSE University\\
\texttt{dtyapkin@hse.ru}\\
\And
Nikita Yudin\\
HSE University\\
\texttt{n.yudin@hse.ru}\\
}

\begin{document}

\maketitle

\begin{abstract}

Reinforcement learning (RL) is increasingly grounded in tools from probability, optimization, and operator theory. This survey organizes the mathematical structures  that underpin the design and analysis of modern algorithms in RL. We begin from Markov decision processes (MDPs) and the Bellman operators, emphasizing contraction mappings, monotonicity, and fixed-point theory that yield  convergence guarantees and rates for value and policy iteration, and temporal-difference schemes. We then develop the optimization  perspective: stochastic approximation and martingale methods, convex duality and the role of regularization linking mirror/proximal methods. Function approximation is treated through linear and non-linear settings, covering stabilization, error decomposition, and sample-complexity via concentration inequalities for dependent data and mixing processes. We further cover off-policy evaluation/learning, constrained RL and constrained MDPs (CMDPs). Throughout we unify algorithmic templates under common operator and variational lenses, highlighting both finite-sample bounds and asymptotic results. Our presentation is intended to provide a unified mathematical entry point for researchers in probability, optimization, and statistics interested in reinforcement learning.

\end{abstract}

\begin{keywords*}
Markov decision process, discounted Markov decision process, average reward Markov decision process, constrained Markov decision process, sample complexity, dynamic programming, full-knowledge setting, generative setting, model-free reinforcement learning, forward model setting, multi-armed bandits, policy gradient, preference optimization.
\end{keywords*}

\section{Introduction}
\label{s1}
Reinforcement learning (RL) has emerged as a powerful framework for sequential decision making under uncertainty, bringing together ideas from optimal stochastic control, Markov decision processes (MDPs), and statistical decision theory. In classical MDP theory, the model of the environment is assumed to be fully known and the objective is to optimize a control policy accordingly. In contrast, in reinforcement learning the dynamics of the environment are initially unknown, and the agent must learn about them—either directly or implicitly—through interaction, thereby introducing elements of statistical estimation into the problem. From a statistical perspective, this learning requirement fundamentally changes the performance criteria. Rather than the likelihood-based risk functions common in classical statistical inference, reinforcement learning typically focuses on regret, which quantifies the performance loss incurred by using a learning policy instead of an optimal one from the outset. This regret-based viewpoint, closely connected to online learning, motivates specialized techniques such as optimistic exploration, in which additional exploration is deliberately introduced to gather information that ultimately improves long-term performance.
\par
Within this framework, two broad methodological paradigms have emerged: model-based and model-free approaches. In model-based reinforcement learning, the agent explicitly estimates transition dynamics and rewards and uses these estimates to plan or optimize policies. Model-free methods, by contrast, avoid explicit model estimation and instead learn value functions or policies directly through repeated interaction with the environment. A canonical example is Q-learning, where action-value estimates are updated recursively using observed rewards and successor values without constructing an explicit model of state transitions.
\par
Optimization methods also lie at the heart of reinforcement learning, shaping how agents refine policies and value functions to maximize cumulative rewards. Many algorithms can be interpreted as gradient-based, mirror-descent, or proximal update schemes, thereby connecting reinforcement learning with modern convex and nonconvex optimization theory. When additional regularization terms—such as entropy or Kullback–Leibler (KL) penalties—are incorporated into the objective, the resulting optimization problems often acquire improved stability and favorable geometric properties that facilitate convergence analysis. For example, entropy-regularized policy gradient methods encourage exploration while smoothing the optimization landscape and reducing the risk of convergence to suboptimal deterministic policies. These optimization perspectives provide a principled framework for balancing exploration and exploitation while supporting both theoretical guarantees and practical algorithmic performance.
\par
The mathematical structure of reinforcement learning is further shaped by several foundational analytical tools. Central among these is the Bellman principle, which expresses optimal value functions through recursive fixed-point relations and forms the basis of dynamic programming methods. The performance difference lemma provides a quantitative characterization of policy improvement and plays an important role in the analysis of policy gradient methods. The log-derivative trick (also known as the score-function gradient estimator) enables efficient computation of policy gradients in stochastic environments. In addition, concentration inequalities for dependent data and mixing processes provide the probabilistic control required for finite-sample guarantees in policy evaluation and exploration strategies. Together, these tools form a unified theoretical backbone for analyzing modern reinforcement learning algorithms.
\par
Reinforcement learning problems can be studied under several different time-horizon formulations, each leading to distinct analytical challenges. In finite-horizon episodic settings, interaction with the environment is organized into trajectories of fixed length and performance is typically evaluated over a finite number of steps or episodes. In contrast, infinite-horizon formulations consider ongoing interaction and evaluate performance using discounted or average reward criteria. In the latter setting, analyses frequently rely on ergodic properties of Markov processes. Under suitable assumptions such as irreducibility and aperiodicity, ergodicity ensures convergence to a stationary distribution, which in turn supports theoretical guarantees for long-run performance measures such as average reward or regret. Techniques from Markov chain theory—including drift and minorization conditions and mixing-time estimates—therefore play a central role in establishing convergence and regret bounds in both model-based and model-free reinforcement learning.
\par
Independently of the time-horizon formulation, reinforcement learning problems may be studied in either tabular or function-approximation regimes. When the state–action space is finite and moderate in size, policies and value functions can be represented exactly. In large or continuous spaces, however, function approximation becomes essential. In this setting, tools from approximation theory provide insight into the expressive power and stability of parametric representations such as linear architectures, kernel methods, and deep neural networks. Results such as universal approximation properties and quantitative approximation error bounds help characterize the trade-off between representational flexibility and statistical efficiency, thereby guiding the design and analysis of scalable reinforcement learning algorithms.
\par
This review surveys the mathematical foundations of reinforcement learning with an emphasis on operator-theoretic, probabilistic, and optimization-based perspectives. We highlight how regret-based objectives shape exploration strategies and unify model-based and model-free approaches within a common analytical framework. By tracing the interplay between dynamic programming, stochastic approximation, concentration methods, and function approximation theory, the survey provides a coherent mathematical entry point to the analysis of modern reinforcement learning algorithms.

\begin{table}[ht]
\centering
\caption{Table of notation used throughout the paper}
\label{tab:notation}
\begin{tabular}{ll}
\toprule
\textbf{Notation} & \textbf{Meaning} \\
\midrule
\([N]\)            & \(\{0,1,\ldots,N-1\}\) \\
\(\one{x}\) & indicator of $x$\\
\(\mathcal{M}\) & Markov decision process\\
\( \cS \)           &state space of size \( S \) \\
\( \cA \)           & action space of size \( A \) \\
\( H \) & horizon length in an episodic MDP\\
\( K \)           & number of episodes \\
\( r(s, a) \)   & reward \\
\(P\) & transition kernel  \\
\( Q^\pi(s, a) \) & Q-function of a given policy \(\pi\) at step \( h \) \\
\( V^\pi(s) \)  & V-function of a given policy \(\pi\) \\
\( Q_h^\star(s, a) \) & optimal Q-function   \\
\( V_h^\star(s) \)    & optimal V-function \\
\( V^{\star}(\mathcal{M}) \) & optimal average reward of the AMDP \(\mathcal{M}\)\\
\( \mathcal{R}(T) \) & regret \\
\(\mathcal{B}, \mathcal{B}^\pi, \mathcal{B}_\pi\)  & Bellman operators \\
\( d(\mathcal{M}) \) & diameter of the MDP \(\mathcal{M}\)\\
\(\mathbb{P},\mathbb{E}\) & probability and expectation\\
\(B(x, r)\) & ball of radius $r$ around $x$ \\
\(\operatorname{diag}(\mu)\) & diagonal matrix corresponding to the vector \(\mu\)\\
\(\mathbf{e}_s\) & canonical basis vector in $\mathbb{R}^S$\\
\(\vec{1}\) & all-ones vector in $\mathbb{R}^S$\\
\bottomrule
\end{tabular}
\end{table}

\section{Main setup}
\label{s2}

Let $\R$ denote the set of real numbers, and for any integer $N$, denote $[N] = \{0,\dots,N-1\}$.  We  denote by $\Delta(\cX)$ the set of probability measures on the measurable space $(\cX,\cF),$ where $\cF\subset 2^{\cX}$ denotes a $\sigma$-algebra on $\cX$. The central object of the review is the Markov decision process defined by the tuple $\cM=(\cS,\cA,P, r)$, where $\cS$ and $\cA$ denote the sets of states and actions, $P: \cS\times\cA \rightarrow \Delta(\cS)$ is the Markov transition kernel, and \(r\) is the reward function. The main part of the review (Chapters~\ref{s3}--\ref{sec:forward}) considers the case where the state and action spaces are finite (tabular setting), with cardinalities $S$ and $A$, respectively. Chapter~\ref{s7} presents results for the case of an infinite state space~$\mathcal{S}$. In the tabular setting, we will often interpret $P$ as a matrix of dimension~$SA \times S$ and $R$ as a vector in~$\mathbb{R}^{SA}$.
 A Markov decision process describes an environment with which an agent interacts sequentially at discrete points in time. At time $t$, the agent observes a state $s_t\in \cS,$ takes an action $a_t$, receives a reward $r=r(s,a),$ and  transitions to the next state $s_{t+1} \sim P(\cdot|s_t,a_t)$. The goal of the agent is to maximize the cumulative reward over time. To formalize this objective, we denote by $\pi : \cS\xrightarrow[]{}\Delta(\cA)$ the agent's policy that specifies how the agent selects actions. Note that under this definition, the choice of action depends only on the current state. Such policies are called \emph{stationary}. We denote by~$\Pi$ the set of all stationary policies. In a more general setting, a policy may depend on the entire history of the agent's observations. 
However, enlarging the policy class in this way does not improve performance, as there always exists an optimal stationary policy; see, e.g., \cite{putermanMDP} (Theorem~6.2.10). Several approaches can be used to compare policies; the main ones are outlined below.
\paragraph{Finite-horizon setting}
In the finite-horizon setting, the interaction between the agent and the environment lasts for $H > 0$ steps. The measure of policy performance is the expected cumulative reward, given by
\begin{align}
    V^{\pi}(s) &= \mathbb{E}_\pi \Bigl[\sum_{h=0}^{H-1}r(s_h,a_h) \, |\, s_0 = s \Bigr], \\
    Q^{\pi}(s, a) &= \mathbb{E}_\pi \Bigl[\sum_{h=0}^{H-1}r(s_h,a_h) \, |\, s_0 = s, a_0=a \Bigr]\eqsp.
\end{align}
where the expectation is taken with respect to the policy~$\pi$ and the stochasticity of the transition kernel~$P$. The function $V^\pi : \cS\rightarrow\mathbb{R}$ is referred to as the state-value function, whereas $Q^\pi : \cS\times\cA\rightarrow\mathbb{R}$ is known as the state–action value function. Chapter~\ref{sec:forward} provides a detailed analysis of this formulation.
\paragraph{Discounted setting}
In many cases, it is important not to restrict attention to a finite planning horizon, but rather to evaluate the performance of a policy over an infinite trajectory. It is often natural to assume that rewards obtained in the distant future have less significance than immediate ones. This leads to the formulation of a discounted Markov Decision process (DMDP), characterized by a discount factor~$\gamma \in (0,1)$. In this setting, the agent aims to maximize the expected discounted cumulative reward
\begin{equation}\label{eq:value_function}
    V^\pi (s) := \E_\pi \Bigl[ \sum_{t=0}^\infty \gamma^t r(s_t, a_t)\, |\, s_0 = s \Bigr],
\end{equation}
\begin{equation}\label{eq:action_value}
    Q^\pi (s, a) := \E_\pi \Bigl[ \sum_{t=0}^\infty \gamma^t r(s_t, a_t)\, |\, s_0 = s, a_0=a \Bigr].
\end{equation}
The main results within this framework are discussed in Chapters~\ref{s3} - \ref{sec5}.

\paragraph{Average reward setting}
Alongside the discounted setting, there is the infinite-horizon average-reward setting. This setting is majorly comprised of MDP where the expected return is defined via state value function and state-action value function, averaged along the trajectory rather than computed as a geometrically discounted sum:
\begin{equation}\label{eq:amdp_return}
   \begin{aligned}
       V^{\pi} (s) :=& \lim\limits_{H\rightarrow\infty}\mathbb{E}_{\pi}\Bigl[\frac{1}{H}\sum\limits_{t = 0}^{H - 1}r(s_t, a_t)\mid s_0 = s\Bigr];\\
       Q^{\pi} (s, a) :=& \lim\limits_{H\rightarrow\infty}\mathbb{E}_{\pi}\Bigl[\frac{1}{H}\sum\limits_{t = 0}^{H - 1}r(s_t, a_t)\mid s_{0} = s, a_{0} = a\Bigr].
   \end{aligned} 
\end{equation}
To guarantee convergence and the finiteness of the expectation in~\eqref{eq:amdp_return}, we assume that the Markov kernel $P^{\pi}$ (see \eqref{eq:p^pi_def}) admits a unique invariant distribution~$\mu_\pi$ for all $\pi$ and is uniformly in $\pi$ geometrically ergodic. That is, there exists $t_{\operatorname{mix}}>0$ such that for all $t \in \mathbb{N}$,
\begin{equation}
\bigl\| (P^{\pi})^t - \mathbf{1} \cdot \mu^\top_{\pi} \bigr\|_{\infty} 
   \leq \left(\frac{1}{4}\right)^{\lfloor t / t_{\operatorname{mix}} \rfloor}\eqsp.
\end{equation}
The analysis methods for AMDP are presented in subsections \ref{subsec:lp},  \ref{subseq:model_based} and  \ref{subseq:model_free}.

Thus far, reinforcement learning has been presented as sequential decision making in which an agent with policy $\pi$ interacts with an environment modeled as an MDP. In the setting of a partially observable Markov decision process (POMDP), the agent does not have direct access to the underlying states $s_t\in\mathcal{S}$ but instead receives observations $o_t\in\mathcal{O}$ conditioned on those states. Decision making must therefore rely on $o_t$ rather than $s_t$, producing trajectories with the following joint distribution:

\begin{equation}
\label{eq:pomdp_def}
P(s_{0:H}, o_{0:H}, a_{0:H-1})
=
P(s_0)\,P(o_0 | s_0)
\prod_{t=0}^{H-1}\!
\pi(a_t \mid o_{0:t})\,
P(s_{t+1} \mid s_t, a_t)\,
P(o_{t+1} \mid s_{t+1})
\end{equation}
where $H\rightarrow\infty$ in the infinite-horizon case. 
Sometimes the deterministic map $\hat{O}: \mathcal{S}\to\mathcal{O}$ is introduced to simplify the notation by binding the randomness to the Markov kernel: $o_t  = \hat{O}(s_t)$~\cite{zhang2025landscape}. There is also an alternative view on POMDP with explicit conditioning of observations on both states and actions: $o_{t + 1}\sim P(\cdot|s_{t + 1}, a_t)$~\cite{kochenderfer2022algorithms}. Some works consider policies in POMDP directly conditioned both on previous observations and actions: $a_t\sim\pi(\cdot|o_{0:t}, a_{0:t - 1})$, $t=\overline{1, H}$~\cite{arcieri2025deep}. For fully observable MDP, the trajectory likelihood formula simplifies because $o_t = s_t, t = \overline{0, H}$:
\begin{equation*}
    \begin{aligned}
        \tau &:= (s_0, a_0, s_1, a_1, \dots, s_H);\\
        P(\tau) &= P(s_0)\cdot\prod_{t = 0}^{H - 1}\pi(a_t|s_t)\cdot P(s_{t + 1}|s_t, a_t).
    \end{aligned}
\end{equation*}
Here and after we consider exclusively fully observable MDPs unless explicitly stated otherwise. The objective is to choose $\pi$ to maximize expected return:
$$J(\pi) = \mathbb{E}_{P(\tau)}\left[V^{\pi}(s_0)\right].$$
The optimization problem as stated is unconstrained. However, one may impose explicit constraints on value functions, leading to a constrained MDP (CMDP):
\begin{equation}
    \begin{aligned}
        &\max\limits_{\pi}J(\pi);\\
        &\text{s.t.}\quad J_i(\pi)\geq d_i, \quad i\in[m]
    \end{aligned}
\end{equation}
The value functions $J_i$ can have an arbitrary form. The typical example of these constraints for a DMDP -- value functions with different rewards and discounting:
$$J_i(\pi) := \mathbb{E}_{\pi}\left[\sum\limits_{t = 0}^{\infty}\lambda_i^t\cdot r_i(s_t, a_t)\right],\quad\lambda_i\in(0, 1].$$

\section{Dynamic programming}\label{s3}
In this section, we  explore dynamic programming (DP) which is a group of methods aimed at solving an MDP in the case when the dynamics of the environment is known, i.e., transition probabilities $P(s'|s,a)$ are given for every $s,s'\in\cS,\, a\in\cA$. DP algorithms are theoretically important and often serve as a foundation for more complex methods that don't assume availability of a perfect model of the environment. Throughout this section, we assume that \(\cS\) and \(\cA\) are finite, and rewards are specified by a deterministic function \(r : \cS\times\cA\to [0,1]\).

\paragraph{Stationary policies and induced Markov chains}  

A stationary policy $\pi$ naturally induces a Markov chain on the state--action space $\cS \times \cA$ with transition kernel
\begin{equation}\label{eq:p^pi_def}
    \Pp\bigl((s',a') \mid (s,a)\bigr)
    = P(s' \mid s,a)\,\pi(a' \mid s').
\end{equation}
Assume that this Markov chain is \emph{ergodic} (i.e., irreducible and aperiodic). Then it admits a unique stationary distribution $\mu^\pi$ over $\cS \times \cA$, called the \emph{state--action visitation distribution}, which satisfies
\begin{equation*}
    \mu^\pi(s',a') = \sum_{(s,a)\in \cS \times \cA} \mu^\pi(s,a)\, P(s' \mid s,a)\,\pi(a' \mid s'),
\end{equation*}
By integrating out the action component according to the policy $\pi$, one obtains the induced transition kernel on the state space $\mathcal{S}$,
\begin{equation}\label{eq:p_pi_def}
    \Ppp(s' \mid s) = \sum_{a \in \mathcal{A}} \pi(a \mid s)\, P(s' \mid s,a),
\end{equation}
Assuming that the resulting Markov chain on $\mathcal{S}$ is ergodic, we refer to its stationary distribution $\nu_\pi$ as the \emph{state visitation distribution}.
Superscripts are used to indicate quantities induced by the policy \(\pi\),
such as the transition kernels \(P^\pi\) and \(P_\pi\), as well as the
stationary distribution \(\mu^\pi\) of the state--action Markov chain.
In contrast, subscripts are used for distributions over states, such as
\(\nu_\pi\), to emphasize their role as marginal (state visitation) distributions
derived from \(\mu^\pi\).
The two stationary distributions are consistent in the sense that $\mp$ extends $\nu_\pi$ to the
product space:
\[
    \mp(s,a) = \nu_{\pi}(s)\,\pi(a \mid s),
    \qquad (s,a)\in \mathcal{S}\times\mathcal{A}.
\]
Thus, $\nu_\pi$ is the marginal of $\mp$ over $\mathcal{S}$, while $\mp$ couples the state visitation distribution with the policy $\pi$ to form a stationary distribution on $\mathcal{S}\times\mathcal{A}$.

\subsection{Bellman equations}

\subsubsection{Consistency equations}

In what follows, we consider stationary policies \(\pi:\cS\rightarrow \Delta(\cA)\).
It follows directly from the definitions \eqref{eq:value_function} and \eqref{eq:action_value}, together with the law of total probability, that for all \(s\in\cS\) and \(a\in\cA\), the value functions $V^\pi$ and $Q^\pi$ satisfy the following relations, known as the Bellman consistency equations:
\begin{align}
    \label{eq:bellman_consV}
    V^\pi(s) &= \sum_{a\in\cA}\pi(a|s)Q^\pi(s, a),\\
    \label{eq:bellman_consQ}
    Q^\pi(s, a) &= r(s, a) + \gamma \sum_{s'\in\cS} P(s'|s,a) V^\pi(s').
\end{align}
Equation~\eqref{eq:bellman_consV} immediately implies a useful identity, commonly employed in reinforcement learning analysis :  \(P V^\pi = P^\pi Q^\pi\). Indeed, for any \((s,a)\in \cS\times\cA\),
\begin{align}\label{eq:kernel_connection}
(P V^\pi)(s,a)
&= \sum_{s'\in\cS} P(s' | s,a) V^\pi(s') = \sum_{s'} P(s'|s,a)
\sum_{a'}\pi(a'|s')Q^\pi(s',a') \\
&= \sum_{(s',a')\in\cS\times\cA} P^\pi((s',a') | (s,a)) Q^\pi(s',a')
= (P^\pi Q^\pi)(s,a).
\end{align}
This identity captures the connection between the state transition kernel \(P\) and the induced kernel \(P^\pi\) on the state–action space.
Combining \eqref{eq:kernel_connection} and \eqref{eq:bellman_consQ}
we obtain the Bellman equations in matrix form:
\begin{equation}
\label{eq:bellman_in_matrix}
Q^{\pi} = r + \gamma P^\pi Q^\pi\eqsp, \quad V^\pi = \rp + \gamma P_\pi V^\pi \eqsp,
\end{equation}
where \(\rp(s) = \sum_{a\in\cA}\pi(a|s)r(s,a)\).
To establish that \eqref{eq:bellman_in_matrix} admits a unique solution, it suffices to verify that the matrix $I-\gamma P^\pi$ is invertible. Indeed, for any nonzero vector $x \in \mathbb{R}^{SA}$ we have by the triangle inequality
\begin{equation*}
    \left\|\left(I-\gamma P^\pi\right) x\right\|_{\infty} =\left\|x-\gamma P^\pi x\right\|_{\infty} \geq\|x\|_{\infty}-\gamma\left\|P^\pi x\right\|_{\infty} \geq\|x\|_{\infty}-\gamma\|x\|_{\infty} > 0\eqsp,
\end{equation*}
where the last inequality is due to the fact that each  component of $P^\pi x$ is a convex combination of the components of $x$. This immediately implies that \(I-\gamma P^\pi\) is invertible. The same argument applies to \(I-\gamma P_\pi\), ensuring that the equation for \(V^\pi\) also admits a unique solution.

\paragraph{Bellman Operators}  
For notational convenience, define the \emph{Bellman operators} \(\mathcal{B}_\pi: \mathbb{R}^{\mathcal{S}} \to \mathbb{R}^{\mathcal{S}}\) and \(\mathcal{B}^\pi: \mathbb{R}^{SA} \to \mathbb{R}^{SA}\) as  
\begin{equation}
\label{eq:bellman_operators}
\mathcal{B}_\pi V := \rp+ \gamma  P_\pi V\eqsp,\quad  \mathcal{B}^\pi Q  =r + \gamma P^\pi Q\eqsp. 
\end{equation}
The Bellman consistency equations \eqref{eq:bellman_in_matrix} can now be equivalently rewritten as  
\begin{equation}
V^\pi = \mathcal{B}_\pi V^\pi, \qquad Q^\pi = \mathcal{B}^\pi Q^\pi \eqsp.
\end{equation}
Hence, \(V^\pi\) and \(Q^\pi\) are fixed points of their respective Bellman operators~\cite{putermanMDP}.
Moreover, \(\mathcal{B}^\pi\) and \(\mathcal{B}_\pi\) are \(\gamma\)-contractions in the \(\ell_\infty\)-norm. Let us show it for \(\mathcal{B}_\pi\):
\begin{align*}
    \mathcal B_\pi V_1 - \mathcal B_\pi V_2
   \stackrel{\eqref{eq:bellman_operators}}{=}& \gamma P_\pi (V_1 - V_2), \\
    |(\mathcal B_\pi V_1)(s) - (\mathcal B_\pi V_2)(s)|
    =\, & \gamma \left| \sum_{s' \in \mathcal S}
        P_\pi(s' \mid s)\,(V_1(s') - V_2(s'))
      \right| \\
      \le\, & \gamma \sum_{s' \in \mathcal S}
        P_\pi(s' \mid s)\,
        |V_1(s') - V_2(s')| \\
    \le\, & \gamma \|V_1 - V_2\|_\infty,
\end{align*}
where the last inequality is due to the fact that $P_\pi(\cdot \mid s)$ is a probability distribution. Due to Banach's fixed-point theorem, the contraction property guarantees the existence and uniqueness of the fixed points, as well as the convergence of iterative procedures \(V_{k+1} = \mathcal{B}_\pi V_k\) and \(Q_{k+1} = \mathcal{B}^\pi Q_k\) to \(V^\pi\) and \(Q^\pi\), respectively \cite[Theorem 6.2.3]{putermanMDP}.

\subsubsection{Existence of deterministic stationary optimal policy}
Define the optimal value
function $V^\star:\cS\to \R$
and the optimal action-value function $Q^\star:\cS\times\cA \to \R$
as 
\begin{equation}
    V^{\star}(s) :=\sup_{\pi} V^\pi(s)\eqsp, \quad Q^{\star}(s, a)  :=\sup_{\pi} Q^\pi(s, a).
\end{equation}
\begin{definition}[Optimal Policy]
A policy $\pi^{\star}$ is called \emph{optimal} if its value function satisfies
\[
V^{\pi^{\star}}(s) = \sup_{\pi} V^{\pi}(s), \quad \forall s \in \mathcal{S}\eqsp.
\]
\end{definition}
\begin{definition}[Greedy Policy]
    Consider a function $Q:\cS\times\cA \to \R$ (not necessarily satisfying the definition \eqref{eq:action_value} of an action-value function).
    A stationary policy \(\pi:\cS\to\Delta(\cA)\) is called \emph{greedy} with respect to \(Q\) if
\[
\pi(a\mid s)>0 \quad\Rightarrow\quad a\in\arg\max_{a'\in\cA}Q(s,a'),
\qquad s\in\cS.
\]
    A deterministic policy $\pi$ is called \emph{greedy} with respect to $Q$
    if
    \begin{equation*}
        \pi(s) \in \arg\max_{a\in\cA} Q(s, a),\quad s\in\cS\eqsp.
    \end{equation*}
\end{definition}
It turns out that if an optimal policy exists, there exists a \emph{deterministic} optimal policy \cite[Theorem 6.2.9]{putermanMDP}.
In particular, it exists when \(\cS\) is discrete and \(\cA\) is finite \cite[Theorem 6.2.10]{putermanMDP}.
Moreover, such a policy can be obtained as a greedy policy with respect to the optimal action-value function \(Q^\star\) \cite[Corollary 6.2.8]{putermanMDP}. To characterize optimal value functions and policies, we introduce the following operator.

\begin{definition}[Bellman Optimality Operator]
    The \emph{Bellman optimality operator} $\mathcal{B}$ maps a value function $V$ to $\mathcal{B}V$ defined for all \(s\in\cS\) as
    \begin{equation}\label{eq:backup}
    (\mathcal{B}V)(s) = \max_{a\in\cA} \Bigl\{ r(s, a) + \gamma \sum_{s'\in\cS} P(s'|s,a) V(s') \Bigr\}\eqsp.
\end{equation}
   and for action-values,
\begin{equation}\label{eq:bellman_operator}
(\mathcal{B} Q)(s,a) = r(s,a) + \gamma \sum_{s'\in\mathcal{S}} P(s'|s,a) \max_{a'\in\mathcal{A}} Q(s',a')\eqsp.
\end{equation}
\end{definition}

\begin{remark}
    For notational simplicity, we denote by $\mathcal{B}$ both the value-function and action-value-function Bellman operators; the intended meaning will be clear from the context.
\end{remark}
\begin{remark}\label{remark:matrix_form}
    The Bellman optimality operator \(\mathcal{B}\) for action-value functions can be expressed in matrix form as
    \begin{equation}
        \mathcal{B}Q = r + \gamma P V\eqsp,
    \end{equation}
    where \(V \in \mathbb{R}^{S}\) is the greedy value vector defined by
    \(
        V(s) = \max_{a \in \mathcal{A}} Q(s,a)
    \).
\end{remark}
Similarly to operators \(\Bp\) and \(\Bpp\) defined in \eqref{eq:bellman_operators}, the optimality operators are also $\gamma$-contractions \cite[Proposition 6.2.4]{putermanMDP}, i.e.,
\begin{equation}\label{eq:B_contraction}
    \|\cB V - \cB V'\|_\infty \leq \gamma \| V - V'\|_\infty,\qquad \|\cB Q - \cB Q'\|_\infty \leq \gamma \| Q - Q'\|_\infty.
\end{equation}
By the Banach's fixed-point theorem, this implies existence and uniqueness of the solutions to the equations
\begin{equation}\label{eq:bellman_opt}
    V = \mathcal{B}V,
\end{equation}
referred to as the \emph{Bellman optimality equations}.

\begin{proposition}\label{prop:bellman_opt}
    A deterministic stationary policy $\pi$ is optimal if and only if the respective value function $V^\pi$ satisfies the Bellman optimality equation \eqref{eq:bellman_opt}, that is,
    \begin{equation*}
         V^\pi = \mathcal{B}V^\pi.
    \end{equation*}
\end{proposition}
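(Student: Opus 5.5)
The plan is to show that the unique fixed point $V^\star_{\cB}$ of $\cB$ coincides with the optimal value function $V^\star=\sup_\pi V^\pi$. Once this is established, both directions of the proposition are immediate. If $\pi$ is optimal, then $V^\pi=V^\star=V^\star_{\cB}$, so $V^\pi=\cB V^\pi$. Conversely, if $V^\pi=\cB V^\pi$, then uniqueness of the fixed point (Banach, via \eqref{eq:B_contraction}) gives $V^\pi=V^\star_{\cB}=V^\star$, so $\pi$ is optimal. Throughout I will use the monotonicity of $\cB$ and $\cB_\pi$: if $V\le V'$ componentwise, then $\cB V\le \cB V'$ and $\cB_\pi V\le\cB_\pi V'$. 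This holds because $P(\cdot|s,a)$ and $P_\pi(\cdot|s)$ are probability vectors and $\gamma>0$.

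\emph{Step 1: $V^\star\le V^\star_{\cB}$.} For any stationary policy $\pi$ (deterministic or randomized) and any $V$, we have $\cB_\pi V\le \cB V$ pointwise, since $(\cB_\pi V)(s)$ is a $\pi(\cdot|s)$-average of the bracket in \eqref{eq:backup}, and an average never exceeds the maximum. Starting from $V_0=V^\star_{\cB}$ and applying $\cB_\pi$ repeatedly, monotonicity gives $\cB_\pi^k V^\star_{\cB}\le \cB^k V^\star_{\cB}=V^\star_{\cB}$ by induction. Letting $k\to\infty$ and using the convergence $\cB_\pi^kV\to V^\pi$ established earlier, we get $V^\pi\le V^\star_{\cB}$. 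Since the supremum in the definition of $V^\star$ may range over history-dependent policies, I would invoke the cited fact \cite[Theorem 6.2.10]{putermanMDP} that stationary policies suffice, so that $\sup_\pi$ can be taken over $\Pi$. Alternatively, one can run the same argument on the finite-horizon truncation: by induction on $k$, the expected discounted $k$-step reward of any history-dependent policy with terminal value $V^\star_{\cB}$ is at most $\cB^kV^\star_{\cB}$. Either way, $V^\star\le V^\star_{\cB}$.

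\emph{Step 2: $V^\star_{\cB}\le V^\star$.} Let $\pi^\star$ be a deterministic policy that picks, in each state, an action attaining the maximum in \eqref{eq:backup} for $V=V^\star_{\cB}$. This is possible because $\cA$ is finite. Then $\cB_{\pi^\star}V^\star_{\cB}=\cB V^\star_{\cB}=V^\star_{\cB}$, so $V^\star_{\cB}$ is a fixed point of $\cB_{\pi^\star}$. By uniqueness of that fixed point, $V^{\pi^\star}=V^\star_{\cB}$, and hence $V^\star_{\cB}\le V^\star$. Combining with Step 1 gives $V^\star=V^\star_{\cB}$ and completes the proof as described above.

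The main obstacle is Step 1, specifically the passage from stationary to arbitrary (history-dependent) policies in the definition of $V^\star$. I will either cite Puterman's theorem for this, or write out the short induction on horizon $k$ with a tail bound $\gamma^k\|V^\star_{\cB}\|_\infty/(1-\gamma)\to0$, which handles general policies directly. The rest consists of routine monotonicity and contraction arguments.
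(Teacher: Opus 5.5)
Your argument is correct, but it is organized differently from the paper's proof.

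\emph{The paper's route.} The paper does not identify the fixed point of $\cB$ with $V^\star$ from scratch. It imports the existence of an optimal deterministic stationary policy $\pi^\star$ from \cite[Theorems 6.2.9 and 6.2.10]{putermanMDP}. It then notes $V^\star\le\cB V^\star$ from the consistency equations \eqref{eq:bellman_consV}--\eqref{eq:bellman_consQ}. Strict inequality at some $s$ is ruled out by a one-step deviation: the policy that plays a greedy action $a^\star$ once and then follows $\pi^\star$ would attain $(\cB V^\star)(s)>V^\star(s)$. The converse direction is then just uniqueness of the fixed point, exactly as in your final step.

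\emph{Your route.} You prove $V^\star_{\cB}=V^\star$ directly, in two parts. The comparison part uses $\cB_\pi\le\cB$ plus monotonicity to get $V^\pi\le V^\star_{\cB}$. The attainment part shows that the greedy policy with respect to $V^\star_{\cB}$ has value exactly $V^\star_{\cB}$.

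\emph{What each buys.} The paper's argument is shorter, but only because the substantive part, existence of an optimal deterministic stationary policy, is outsourced to the citation. Its deviation policy is also non-stationary, so it implicitly needs the supremum defining $V^\star$ to range over non-stationary policies as well. Your argument is self-contained. It proves the existence result (the greedy policy for $V^\star_{\cB}$ is optimal) and gives \eqref{eq:opt_policy} as an immediate by-product. It also isolates the one place where the policy class matters, the upper bound in Step~1. Your finite-horizon induction handles that step for history-dependent policies without any citation.
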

\begin{proof}
    {Let us first prove that any optimal deterministic stationary policy satisfies the Bellman optimality equation.}
    Existence of an optimal deterministic stationary policy follows from \cite[Theorems 6.2.9 and 6.2.10]{putermanMDP}. We now record the Bellman characterization of such policies. Bellman consistency equations \eqref{eq:bellman_consV}, \eqref{eq:bellman_consQ} imply that $V^\star(s) \leq (\mathcal{B}V^\star)(s),\, s\in\cS$. Suppose $V^\star(s) < (\mathcal{B}V^\star)(s)$ for some $s$, then define a
    policy $\pi'$ which first performs action
    \begin{equation*}
        a^\star \in \arg\max_{a\in\cA} \Bigl( r(s, a) + \gamma \sum_{s'\in\cS} P(s'|s,a) V^\star(s') \Bigr)\eqsp,
    \end{equation*}
    and then follows $\pi^\star$. It holds
    \begin{equation*}
        V^{\pi'}(s) \stackrel{\eqref{eq:bellman_consV}}{=} Q^{\pi^\star}(s, a^\star) \stackrel{\eqref{eq:bellman_consQ}}{=} r(s, a^\star) + \gamma \sum_{s'\in\cS} P(s'|s, a^\star) V^{\star}(s')= (\mathcal{B}V^\star)(s) > V^\star(s),
    \end{equation*}
    which contradicts optimality of $\pi^\star$. Thus, $V^\star(s) = (\mathcal{B}V^\star)(s),\, s\in\cS$.
    {Consequently, if a deterministic stationary policy $\pi$ is optimal, then $V^\pi=V^\star$, and hence}
    \[
        {V^\pi = V^\star = \mathcal{B}V^\star = \mathcal{B}V^\pi.}
    \]

    {Conversely, suppose that a deterministic stationary policy $\pi$ satisfies \(V^\pi = \mathcal{B}V^\pi\).}
    {By the Bellman consistency equation for policy $\pi$, for every $s\in\cS$,}
    \[
        {
        V^\pi(s)
        =
        r(s,\pi(s))
        +
        \gamma \sum_{s'\in\cS}P(s'\mid s,\pi(s))V^\pi(s').
        }
    \]
    {On the other hand, the assumed Bellman optimality equation gives}
    \[
        {
        V^\pi(s)
        =
        \max_{a\in\cA}
        \left\{
        r(s,a)
        +
        \gamma \sum_{s'\in\cS}P(s'\mid s,a)V^\pi(s')
        \right\}.
        }
    \]
    {Therefore, for every $s\in\cS$,}
    \[
        {
        \pi(s)\in
        \arg\max_{a\in\cA}
        \left\{
        r(s,a)
        +
        \gamma \sum_{s'\in\cS}P(s'\mid s,a)V^\pi(s')
        \right\}.
        }
    \]
    {Thus, $\pi$ is greedy with respect to $V^\pi$. In particular, its Bellman consistency equation coincides with the Bellman optimality equation. Since the Bellman optimality equation has a unique solution and $V^\star$ is one of its solutions, we obtain $V^\pi=V^\star$. Hence, $\pi$ is optimal.}
\end{proof}
\begin{remark}
    As a by-product, we showed that
    \begin{equation}\label{eq:opt_policy}
        \pi^{\star}(s) \in \arg\max_{a\in\cA} \Bigl( r(s, a) + \gamma \sum_{s'\in\cS} P(s'|s,a) V^{\star}(s') \Bigr),
        \quad s\in\cS.
    \end{equation}
\end{remark}

One can similarly derive $Q$-function counterparts of \eqref{eq:bellman_opt} and \eqref{eq:opt_policy}:
\begin{proposition}
 A stationary policy $\pi$ is optimal if and only if the respective action-value function $Q^\pi$ satisfies the Bellman optimality equations
\begin{equation}\label{eq:bellman_optQ}
    Q^{\pi}(s,a) = r(s,a) + \gamma \sum_{s'\in\cS} P(s'|s,a) \max_{a'\in\cA} Q^\pi(s',a'), 
    \quad (s,a)\in\cS\times\cA.
\end{equation}
Moreover, an optimal policy $\pi^{\star}$ can be chosen as a greedy policy with respect to $Q^{\star}$:
\begin{equation}
\pi^{\star}(s) \in \arg\max_{a \in \cA} Q^{\star}(s,a), \quad s \in \cS,
\end{equation}
in which case the optimal value function is recovered as
\[
V^{\star}(s) = \max_{a \in \cA} Q^{\star}(s,a), \quad s \in \cS.
\]
\end{proposition}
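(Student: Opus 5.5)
The plan is to reduce the $Q$-function statement to Proposition~\ref{prop:bellman_opt} through the Bellman consistency equations \eqref{eq:bellman_consV}, \eqref{eq:bellman_consQ}. The argument rests on the contraction property \eqref{eq:B_contraction} of the $Q$-version of $\cB$. By Banach's theorem, \eqref{eq:bellman_optQ} has a unique solution; call it $\bar Q$.

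\emph{Step 1: identify $\bar Q$ with $Q^\star$.} Let $\pi^\star$ be a deterministic optimal policy, whose existence is recalled from \cite[Theorems 6.2.9, 6.2.10]{putermanMDP}. By Proposition~\ref{prop:bellman_opt}, $V^{\pi^\star}=V^\star=\cB V^\star$.

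Using \eqref{eq:bellman_consQ} with $V^{\pi^\star}=V^\star$, we get $Q^{\pi^\star}=r+\gamma PV^\star$. Hence $\max_a Q^{\pi^\star}(s,a)=(\cB V^\star)(s)=V^\star(s)$. Substituting back gives $Q^{\pi^\star}=r+\gamma P\max_{a'}Q^{\pi^\star}(\cdot,a')$, so $Q^{\pi^\star}=\bar Q$.

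For any $\pi$ we have $Q^\pi=r+\gamma PV^\pi\le r+\gamma PV^\star=Q^{\pi^\star}$ componentwise, because $P$ has nonnegative entries. Therefore $Q^\star=\sup_\pi Q^\pi=Q^{\pi^\star}=\bar Q$, with the supremum attained. This also yields $\max_a Q^\star(s,a)=V^\star(s)$.

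\emph{Step 2: the equivalence.} Suppose first that $\pi$ is optimal, i.e.\ $V^\pi=V^\star$. Then by \eqref{eq:bellman_consQ}, $Q^\pi=r+\gamma PV^\star=Q^\star$, and $Q^\star$ satisfies \eqref{eq:bellman_optQ}.

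Conversely, suppose $Q^\pi$ satisfies \eqref{eq:bellman_optQ}. Uniqueness of the fixed point gives $Q^\pi=Q^\star$. By \eqref{eq:bellman_consV}, $V^\pi(s)=\sum_a\pi(a|s)Q^\star(s,a)\le\max_a Q^\star(s,a)=V^\star(s)$.

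For the reverse inequality, compare \eqref{eq:bellman_consQ} for $\pi$ with the expression for $Q^\star$ from Step 1. This gives $P(V^\pi-V^\star)=0$, which is not directly enough to conclude $V^\pi=V^\star$. This is the main obstacle.

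To get around it, I would argue through $V^\pi$ directly. Write $w=V^\star-V^\pi\ge0$ and $w(s)=\sum_a\pi(a|s)\bigl(\max_{a'}Q^\star(s,a')-Q^\star(s,a)\bigr)$. From $Q^\pi=Q^\star$ and \eqref{eq:bellman_consQ}, $PV^\pi=PV^\star$. Then
\[
V^\pi=\rp+\gamma P_\pi V^\pi=\sum_a\pi(a|\cdot)\bigl(r+\gamma PV^\pi\bigr)=\sum_a\pi(a|\cdot)\bigl(r+\gamma PV^\star\bigr)=\sum_a\pi(a|\cdot)Q^\star.
\]
This only gives back the same identity, so the honest route is the following. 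The hypothesis \eqref{eq:bellman_optQ} for $Q^\pi$ reads $r+\gamma PV^\pi=r+\gamma P\max_{a'}Q^\pi(\cdot,a')$, i.e.\ $P\bigl(V^\pi-\max_{a'}Q^\pi(\cdot,a')\bigr)=0$. Along states reachable under $P$, this forces $\pi$ to be greedy with respect to $Q^\pi$. In that case $V^\pi=\cB V^\pi$, and Proposition~\ref{prop:bellman_opt} applies.

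If unreachable states are a concern, I would note that $V^\pi(s)$ at such a state $s$ is still determined by $\pi(\cdot|s)$. Optimality there then requires greediness at $s$. So I would state the converse under the natural reading that \eqref{eq:bellman_optQ} is paired with \eqref{eq:bellman_consV} holding with the max, i.e.\ $V^\pi(s)=\max_a Q^\pi(s,a)$.

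\emph{Step 3: greedy policy and value recovery.} Take $\pi^\star(s)\in\arg\max_a Q^\star(s,a)$. Then $V^{\pi^\star}$ solves the fixed-point equation $V=r^{\pi^\star}+\gamma P_{\pi^\star}V$. Using $\max_aQ^\star=V^\star$ from Step 1, $V^\star$ also satisfies it, because $V^\star(s)=Q^\star(s,\pi^\star(s))=r(s,\pi^\star(s))+\gamma(PV^\star)(s,\pi^\star(s))$. Uniqueness of that fixed point gives $V^{\pi^\star}=V^\star$, so $\pi^\star$ is optimal, and $V^\star=\max_aQ^\star$ was already obtained in Step 1.
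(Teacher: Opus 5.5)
Your Step 1, the ``only if'' direction and Step 3 are correct. They are what the paper's one-line justification (``one can similarly derive'' from Proposition~\ref{prop:bellman_opt}) amounts to: uniqueness of the fixed point of $\cB$ gives $Q^\star=r+\gamma PV^\star$ and $\max_a Q^\star(s,a)=V^\star(s)$. The trouble is the ``if'' direction. The obstacle you hit is not a technicality that a ``natural reading'' can settle: as stated, the converse is false, even for deterministic $\pi$. Take $\cS=\{s_0,s_1\}$ and $\cA=\{a_1,a_2\}$, with $P(s_1\mid s,a)=1$ for all $(s,a)$. Let $r(s_0,a_1)=1$ and $r=0$ otherwise, and let $\pi(s_0)=a_2$. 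Then $V^\pi\equiv0$, $Q^\pi(s_0,a_1)=1$, $Q^\pi(s_0,a_2)=0$ and $Q^\pi(s_1,\cdot)=0$. Because $\max_{a'}Q^\pi(s_1,a')=0$,
\[
r(s,a)+\gamma\sum_{s'\in\cS}P(s'\mid s,a)\max_{a'\in\cA}Q^\pi(s',a')=r(s,a)=Q^\pi(s,a)\qquad\text{for all }(s,a).
\]
So $Q^\pi=Q^\star$ satisfies the $Q$-Bellman optimality equation, while $V^\pi(s_0)=0<1=V^\star(s_0)$. This is exactly what your own computation shows. $Q^\pi=Q^\star$ only yields $\gamma P(V^\star-V^\pi)=0$, which forces $V^\pi=V^\star$ only on states that some transition can reach. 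By contrast, $\pi(\cdot\mid s)$ at a state $s$ outside $\bigcup_{(s',a)}\operatorname{supp}P(\cdot\mid s',a)$ does not affect $Q^\pi$ at all. Mimicking the converse part of the proof of Proposition~\ref{prop:bellman_opt} stalls at the same point: uniqueness gives $Q^\pi=Q^\star$, not $V^\pi=V^\star$. So the defect is in the statement itself, and you should record it with a counterexample rather than hedge.

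Your proposed rescue does not prove the stated proposition either. If you add $V^\pi(s)=\max_aQ^\pi(s,a)$, then $V^\pi(s)=\max_a\{r(s,a)+\gamma(PV^\pi)(s,a)\}=(\cB V^\pi)(s)$, hence $V^\pi=V^\star$ by uniqueness, and the $Q$-equation is never used. In effect you have replaced the hypothesis by greediness of $\pi$ with respect to $Q^\pi$, which is itself equivalent to optimality. A correct version that keeps the $Q$-equation as the hypothesis assumes that every $s'\in\cS$ has $P(s'\mid s,a)>0$ for some $(s,a)$. Then $\gamma P(V^\star-V^\pi)=0$ together with $V^\star-V^\pi\ge0$ already gives $V^\pi=V^\star$ everywhere. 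Your first observation therefore finishes the proof, and the detour through greediness and Proposition~\ref{prop:bellman_opt} is unnecessary. (That proposition is stated only for deterministic policies anyway; for a stochastic $\pi$, only uniqueness of the fixed point of $\cB$ on $\R^{\cS}$ is needed.) Without such an assumption, the accurate statement is that $Q^\pi$ satisfies the $Q$-Bellman optimality equation if and only if $V^\pi=V^\star$ on $\bigcup_{(s,a)}\operatorname{supp}P(\cdot\mid s,a)$.
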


\subsection{Value iteration and policy iteration}\label{subsec:vi_pi}

\paragraph{Value iteration}
The proof of Proposition \ref{prop:bellman_opt} naturally suggests a procedure based on iteratively applying the contraction mapping $\mathcal{B}$ to a current estimate of a value function until it converges to $V^{\star}$. After that, one uses the equation \eqref{eq:opt_policy} to compute a near-optimal policy. The resulting algorithm goes by the name of Value Iteration (VI) \citep{bellman1957markovian} and runs as follows:
\begin{algorithm}[H]
\caption{Value Iteration (VI)}
\label{alg:vi}
\begin{algorithmic}[1]
    \REQUIRE DMDP \(\cM\)
    \STATE Initialize \(V_0 \in \mathbb R^{\mathcal S}\)
    \FOR{\(j = 0,1,...,k-1\)}
           \FORALL{\(s \in \mathcal{S}\)}
            \STATE \(V_{j+1}(s) \gets \max_{a\in\cA} \Bigl( r(s,a) + \gamma \sum_{s'\in\cS} P(s'|s,a) V_j(s') \Bigr)\)
        \ENDFOR
        \ENDFOR
	\RETURN $\pi$ --- greedy policy w.r.t. \(V_{k}.\)
\end{algorithmic}
\end{algorithm}

Alternatively, one can use the Bellman equations for $Q$-function \eqref{eq:bellman_optQ} to similarly construct the $Q$-value iteration (QVI) algorithm, which performs updates of the form
\begin{equation}\label{eq:qvi}
    Q_{j+1}(s,a) \gets r(s,a) + \gamma \sum_{s'\in\cS} P(s'|s,a) \max_{a'\in\cA} Q_{j}(s',a'), \quad (s,a)\in\cS\times\cA.
\end{equation}

Before stating the convergence result, note that if rewards are bounded in $[0,1]$, then for every policy $\pi$ the corresponding value functions satisfy
\begin{equation}\label{eq:bounded_QV}
    V^\pi(s) \in \left[0,\frac{1}{1-\gamma}\right],\qquad
    Q^\pi(s,a) \in \left[0,\frac{1}{1-\gamma}\right].
\end{equation}
The same bound holds for \(V^\star\) and \(Q^\star\).
This follows from the definitions \eqref{eq:value_function}, \eqref{eq:action_value} and the geometric progression formula.
\begin{proposition}
    If $V_0(s),Q_0(s,a) \in \left[0,\frac{1}{1-\gamma}\right]$, then it holds for VI and QVI
    \begin{equation}\label{eq:vi_rate}
    \|V_k - V^{\star}\|_{\infty} \leq \frac{\gamma^k}{1-\gamma},\quad \|Q_k - {Q}^{\star}\|_{\infty} \leq \frac{\gamma^k}{1-\gamma}\eqsp.
    \end{equation}
\end{proposition}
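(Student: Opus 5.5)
The plan is to combine the $\gamma$-contraction property \eqref{eq:B_contraction} of the Bellman optimality operator with the fixed-point identities $V^\star=\cB V^\star$ and $Q^\star=\cB Q^\star$. Then we bound the initial error using \eqref{eq:bounded_QV}. Since VI is exactly $V_{j+1}=\cB V_j$ and QVI \eqref{eq:qvi} is exactly $Q_{j+1}=\cB Q_j$, both cases are handled by the same argument.

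\textbf{Step 1 (fixed points).} We use that $V^\star$ satisfies \eqref{eq:bellman_opt}; this was established in the proof of Proposition~\ref{prop:bellman_opt}. For the $Q$-case, we use the $Q$-counterpart: $Q^\star = Q^{\pi^\star}$ satisfies \eqref{eq:bellman_optQ}, i.e.\ $Q^\star=\cB Q^\star$.

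\textbf{Step 2 (one-step contraction).} For each $j$ we have
\[
\|V_{j+1}-V^\star\|_\infty=\|\cB V_j-\cB V^\star\|_\infty\le\gamma\|V_j-V^\star\|_\infty,
\]
and the same holds for $Q$. Induction on $j$ then gives $\|V_k-V^\star\|_\infty\le\gamma^k\|V_0-V^\star\|_\infty$, and likewise for $Q_k$.

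\textbf{Step 3 (initial error).} By hypothesis $V_0(s)\in[0,\frac1{1-\gamma}]$, and by \eqref{eq:bounded_QV} also $V^\star(s)\in[0,\frac1{1-\gamma}]$. Two numbers in an interval of length $\frac1{1-\gamma}$ differ by at most $\frac1{1-\gamma}$, so $\|V_0-V^\star\|_\infty\le\frac1{1-\gamma}$. The same bound holds for $Q_0-Q^\star$. Combining this with Step 2 yields \eqref{eq:vi_rate}.

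The only step needing care is Step 3. The point is to obtain the constant $\frac1{1-\gamma}$ rather than the cruder $\frac2{1-\gamma}$ that a triangle inequality would give, and this relies on both quantities lying in the same nonnegative interval. The bound for $V^\star$ itself needs justification: $V^\star=\sup_\pi V^\pi$, and each $V^\pi$ lies in that interval, so the supremum does too. Everything else is a direct application of results already stated earlier in the paper.
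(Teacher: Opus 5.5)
Your proposal is correct and follows the paper's own proof: a one-step contraction around the fixed point via \eqref{eq:B_contraction}, unrolled $k$ times, with the initial error bounded by $\frac{1}{1-\gamma}$ because both the initialization and the optimum lie in $[0,\frac{1}{1-\gamma}]$ by \eqref{eq:bounded_QV}. Your extra remarks make explicit what the paper leaves implicit: why the constant is $\frac{1}{1-\gamma}$ rather than $\frac{2}{1-\gamma}$, why $V^\star$ inherits the bound as a supremum, and why $Q^\star=Q^{\pi^\star}$ is the fixed point.
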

\begin{proof}
    \[
    \|Q_k - {Q}^{\star}\|_{\infty} = \|{\mathcal{B}}Q_{k-1} - {\mathcal{B}}{Q}^{\star}\|_{\infty} \stackrel{\eqref{eq:B_contraction}}{\leq} \gamma \|Q_{k-1} - {Q}^{\star}\|_{\infty}\eqsp.
    \]
    By the recursion,
    \[
    \|Q_k - {Q}^{\star}\|_{\infty} \leq \gamma^k \|Q_0 - {Q}^{\star}\|_{\infty} \stackrel{\eqref{eq:bounded_QV}}{\leq} \frac{\gamma^k}{1-\gamma}\eqsp,
    \]
    and similarly for $V_k$.
\end{proof}
\begin{remark}
As shown in \citep{singh1994upper}, the estimate \eqref{eq:vi_rate} yields performance bounds for the corresponding greedy policies. Specifically, if \(Q:\cS\times\cA\to\R\) satisfies
\[
\|Q-Q^\star\|_\infty \le \varepsilon
\]
and \(\pi\) is greedy with respect to \(Q\), then
\[
\|V^\star - V^\pi\|_\infty \le \frac{2\varepsilon}{1-\gamma}.
\]
Similarly, if \(V:\cS\to\R\) satisfies
\[
\|V-V^\star\|_\infty \le \varepsilon
\]
and \(\pi'\) is greedy with respect to \(V\), then (note the additional factor $\gamma$)
\[
\|V^\star - V^{\pi'}\|_\infty \le \frac{2\gamma\varepsilon}{1-\gamma}.
\]
\end{remark}

\paragraph{Policy iteration}
We have seen that Value Iteration directly improves the value function estimate until it becomes close to optimal. After that, it outputs a greedy policy w.r.t. that near-optimal value function. Alternatively, given some policy, one could first approximate the respective value or $Q$-function using the consistency equations \eqref{eq:bellman_consV}, \eqref{eq:bellman_consQ} and after that define an improved policy as a greedy policy w.r.t. the estimated value function. These two steps are called \textit{policy evaluation} and \textit{policy improvement}. They are repeated until the policy stabilizes. The following implementation of the described algorithm is called Policy Iteration (PI) \citep{howard:dp}:
\begin{algorithm}
	\caption{Policy Iteration (PI)}
	\begin{algorithmic}[1]
		\REQUIRE DMDP \(\cM \).
            \STATE Initialize arbitrary \(Q_0\)
		\FOR{\(j = 0,1,...,k-1\)}
            \STATE Set \(\pi_j\) as greedy policy with respect to \(Q_j\) \COMMENT{Policy Improvement}
            \STATE Update \(Q_{j+1} = (I-\gamma P^{\pi_j})^{-1}r\) \COMMENT{Policy Evaluation}\label{line:policy_eval}
        \ENDFOR
	\RETURN $\pi_{k}$
	\end{algorithmic}
    \label{alg:PI}
\end{algorithm}

\begin{remark}\label{remark_policy_iteration}
In Algorithm \ref{alg:PI}, policy evaluation is written in terms of the action-value function \(Q^\pi\), which requires solving a linear system of dimension \(SA\times SA\). A cheaper alternative is to evaluate the state-value function \(V^\pi\), which satisfies
\[
V^\pi = r_\pi + \gamma P_\pi V^\pi ,
\]
where \(P_\pi\) is an \(S\times S\) matrix. After solving this smaller system, one can recover \(Q^\pi\) from
\[
Q^\pi(s,a)=r(s,a)+\gamma\sum_{s'\in\cS}P(s'|s,a)V^\pi(s').
\]
This typically reduces the computational cost of policy evaluation.
\end{remark}
Policy Iteration also enjoys a linear convergence guarantee; for instance, under the exact policy-evaluation scheme above one has bounds of the form
\[
\|V^{\pi_k}-V^\star\|_\infty \le C\gamma^k,
\]
and in particular \citep{scherrer2013improved} establishes
\[
\|V^{\pi_k}-V^\star\|_\infty \le \frac{\gamma^k}{1-\gamma}.
\]

\subsection{Linear programming}\label{subsec:lp}

There exist alternative approaches to finding an optimal policy for a given Markov decision process. Consider the case where the MDP $\cM = (\mathcal{S}, \mathcal{A}, P, R, \gamma)$ is fully known; here we assumed that $P(s'|s,a),\;r(s,a)\eqsp,\gamma$ are given as rational numbers. 
Using linear programming (LP) yields a polynomial-time algorithm when the MDP is specified in this way, with the computational complexity depending on the description length of $\cM$, assuming all parameters are rational. Recall that in the AMDP setting, the agent aims to find a policy~$\pi^\star$ that maximizes the expected average reward:
\begin{gather*}
    \rho^{\star}= \max_{\pi} \lim_{H\rightarrow \infty} \frac{1}{H}\mathbb{E} \Bigl[ \sum_{t=0}^{H-1}r(s_t,a_t) \Bigr ].
\end{gather*}
In the finite-horizon case, the limit is dropped and the maximum finite $H$ is used. For a policy $\pi$ one obtains a stationary distribution:
\begin{gather*}
    \nu_{\pi}(s')=\sum\limits_{(s,a)\in\mathcal{S}\times\mathcal{A}}P(s'|s, a)\pi(a|s)\nu_{\pi}(s),\quad s'\in\mathcal{S},
\end{gather*}
which corresponds to its probability vector $\nu_{\pi} = \left(\nu_{\pi}(s)\right)_{s\in\mathcal{S}}$. If the Markov chain induced by $\pi$ is ergodic (i.e., irreducible and aperiodic),  then
\begin{gather*}
    V^\pi=\lim_{H\to\infty}
\frac1H
\mathbb E_\pi
\Bigl[\sum_{t=0}^{H-1} r(s_t,a_t)\Bigr]=\sum\limits_{(s,a)\in\mathcal{S}\times\mathcal{A}}r(s,a)\pi(a|s)\nu_\pi(s).
\end{gather*}

Define the state–action distribution $\mu(s,a)=\nu_{\pi}(s)\pi(a|s)$. The AMDP optimization problem can then be reformulated as a linear program, expressed in terms of evaluating the policy through a distribution vector~$\mu$ belonging to the probability simplex over the state–action space:
\begin{equation}\label{eq:amdp_lp_formulation}
    \max_{\mu\in\Delta^{\mathcal{S}\times\mathcal{A}} } \Bigl[ V(\mu)= \langle R,\mu\rangle \eqsp : \eqsp \sum_{a\in\mathcal{A}}\mu(s',a)=\sum_{(s,a)}P(s'|s,a)\mu(s,a)\Bigr]\eqsp.
\end{equation}
Let $\mu^\star$ denote the optimal solution to the linear program~\eqref{eq:amdp_lp_formulation}, then the optimal policy~$\pi^\star$ can be recovered according to the rule
\begin{equation}\label{eq:dual_to_policy}
    \pi^\star(a|s)=\frac{\mu^\star(s,a)}{\sum\limits_{a'\in\mathcal{A}}\mu^\star(s,a')}\eqsp.
\end{equation}
Linear program~\eqref{eq:amdp_lp_formulation} can be rewritten directly in matrix form:
\begin{equation}\label{eq:amdp_lp_matrix}
\max\limits_{\mu\in\Delta^{\mathcal{S}\times\mathcal{A}} }\left \langle R,\mu \right \rangle\quad  
    \text{s.t. } (\widehat I - P)^\top \mu = 0,
\end{equation}
where the matrix $\widehat{I}\in \mathbb{R}^{SA\times S}$ has a nonstandard structure: in each row indexed by $(s,a)\in\mathcal{S}\times\cA$, only the entry corresponding to the state~$s \in \mathcal{S}$ equals~one, while all other entries are zero. Equalities and inequalities are understood componentwise for scalars, vectors, and matrices. The dual LP corresponds to evaluating the optimal policy through the $V$-function:
\begin{equation}\label{eq:amdp_dual_form} \min\limits_{\overline{V}\in\mathbb{R},V\in\mathbb{R}^{S}}\overline{V} \quad
    \text{s.t. } \eqsp \overline{V}\cdot \vec{1} + (\widehat{I}-P) V\geq R\eqsp.
\end{equation}
This yields average-reward optimality.  
The scalar~$\rho^\star$ represents the optimal value of the AMDP~$\mathcal{M}$ if and only if there exists a vector~$h \in \mathbb{R}^S$ such that
\begin{gather*}
    h(s)=\max\limits_{a \in\mathcal{A}} \Bigl( r(s, a) - \rho^\star +  \sum\limits_{s'\in\mathcal{S}} P(s'|s, a) h(s') \Bigr).
\end{gather*}
Equations can be derived from inequality constraints of the form
$$\widehat{I}\cdot V\geq R - \overline{V}\cdot\vec{1}+ P\, V\eqsp.$$
Here we can set $\overline{V} = \rho^{\star}$, since the LP solution lies on the boundary of the feasible set defined by affine constraints.
For discounted MDPs, the LP formulation is
\begin{equation}\label{eq:dual_dmpdp}
    \min\limits_{V\in\mathbb{R}^{S}}\left \langle q,V \right \rangle \quad \text{s.t. } \eqsp 
     (\widehat{I}-\gamma P)V\geq R\eqsp,
\end{equation}
where $q$ is the initial state distribution vector. The corresponding dual problem is
\begin{equation}
\max\limits_{\mu\in\Delta^{\mathcal{S}\times\mathcal{A}} }\left \langle R,\mu \right \rangle\quad \text{s.t. }\; (\widehat{I}-\gamma P)\mu=q\eqsp.
\end{equation}
Although the LP contains $\mathcal O(|\mathcal S||\mathcal A|)$ affine constraints, its special structure allows efficient solution methods that exploit sparsity and sampling-based approximations. In the DMDP case, one can equivalently reduce constraints in the primal problem as follows:
\begin{gather*}
    R-(\widehat{I}-\gamma P)V\leq0\ \Longleftrightarrow\ \max\limits_{(s,a)\in\mathcal{S}\times\mathcal{A}}\Big[r(s,a)- \big\langle \widehat{I}_{(s,a)}-\gamma P_{(s,a)},V \big\rangle\Big]\leq0.
\end{gather*}
Here $\widehat{I}_{(s,a)}$ and $P_{(s,a)}$ denote the rows corresponding to $(s,a)\in\mathcal{S}\times\mathcal{A}$ of $\widehat{I}$ and $P$, respectively. Since the full set of constraints uniquely determines $V$, it suffices to iterate using stochastic optimization methods, minimizing the left-hand side of the inequality, e.g. via stochastic gradient descent.  An unbiased stochastic estimate of the gradient of the constraint function
\[
r(s,a)- \big\langle \widehat{I}_{(s,a)}-\gamma P_{(s,a)},V\big\rangle
\]
can be expressed as {$\gamma \,\mathbf{e}_{s'} - \widehat{I}_{(s,a)}$}, where $\mathbf{e}_{s'}$ is a one-hot random vector drawn with distribution $P(\cdot | s,a)$. This approach applies analogously to the LP formulation for AMDP, but to its dual problem, see subsection \ref{subseq:model_free}, in the paragraph discussing stochastic mirror descent, for further details.
There is also an LP formulation for the constrained discounted MDP (CMDP):
\begin{gather*}
    \begin{matrix}
    \max\limits_{\mu\in\Delta^{\mathcal{S}\times\mathcal{A}} }\left \langle R,\mu \right \rangle;\\ 
    s.t.\ (\widehat{I}-\gamma P)\mu=q,\ D\mu\geq c.
    \end{matrix}
\end{gather*}
Compared to previous LPs, an additional affine inequality constraint $D\mu\geq c$ is introduced. Intuitively, each row of $D$ corresponds to a constraint cost function, and the inequality $D\mu\ge c$ enforces lower bounds on their expected values under the policy. This means that the optimized policy $\pi$ must correlate with each such $\hat{\pi}$ at least to the level specified by the corresponding entry of $c$. The constraint $D\mu\geq c$ can also be used for off-policy policy optimization: we not only maximize the reward under $\pi$, but also enforce that $\pi$ remains sufficiently similar to an expert policy $\hat{\pi}$ in terms of correlation specified by $c$. 

\subsection{Accelerated methods for solving MDPs}
In many practical scenarios, it is almost as important to take future rewards into account as it is to consider those in the current period, so the discount factor is close to 1. For instance, when seeking the optimal policy via the policy iteration method, one needs to compute the fixed point of the operator associated with a given policy~$\pi$ during the policy evaluation step.  Let us remind the expression for this operator:
\begin{equation}
    \mathcal{B}_{\pi}(V) := r_{\pi} + \gamma P_{\pi}V\eqsp,
\end{equation}
where \(P_\pi\) and \(r_\pi\) defined in \eqref{eq:p_pi_def} and \eqref{eq:bellman_in_matrix}.
Due to the operator being a contraction mapping, the inequality $\|V^{\pi} - V_t\|_{\infty} \leq \gamma^t \|V^{\pi} - V_0\|_{\infty}$ holds. Thus, there is convergence with a linear rate $\gamma \in (0, 1)$, and as the discount factor gets closer to 1, the Value Computation algorithm, where on every step $\mathcal{B}_{\pi}$ is applied to the current approximation of the solution, gets too slow.  Therefore, many works have addressed this issue and have tried to develop the idea of accelerated schemes analogous to the accelerated methods applied in convex optimization \citep[\S 2.2]{nesterov2018lectures}. The key idea lies in the fact that for any $x, y \in \mathbb{R}^n$, the following estimations hold:
\begin{equation*}
    \|(I - \mathcal{B}_{\pi})x - (I - \mathcal{B}_{\pi})y\|_{\infty} \leq (1 + \gamma) \|x - y\|_{\infty}  
\end{equation*}
\begin{equation*}
    (1 - \gamma) \|x - y\|_{\infty} \leq \|(I - \mathcal{B}_{\pi})x - (I - \mathcal{B}_{\pi})y\|_{\infty},
\end{equation*}
where $I$ is the identity mapping. 
These estimates suggest an analogy with the gradient of a smooth and strongly convex objective: the residual operator \(I-\mathcal{B}_{\pi}\) is well conditioned, with constants depending on \(1-\gamma\) and \(1+\gamma\). Recalling that the value function of policy \(\pi\) is the solution of the Bellman equation \(V^{\pi} - \mathcal{B}_{\pi}(V^{\pi}) = 0\), we obtain an analogy between finding \(V^{\pi}\) and finding the minimizer of a smooth and strongly convex objective. Note, however, that these estimates alone do not imply that \(I-\mathcal{B}_{\pi}\) is the gradient of such a function, since such an interpretation is valid only under additional structural assumptions.
With this idea, many algorithms inspired by convex optimization techniques and that can be applied to solving MDPs were proposed. The most basic schemes are presented in \cite{grand2021convex}. In particular, these include Accelerated Value Computation (A-VC), inspired by Accelerated Gradient Descent \citep{nesterov1983method,nesterov2013introductory}, and Momentum Value Computation (M-VC), inspired by Momentum Gradient Descent \citep{polyak1964some}.
In the convergence proofs of these algorithms, it is assumed that the Markov chain defined by the policy $\pi$ is irreducible and reversible. Unfortunately, the existing works do not prove the same for A-VI and M-VI, which differ from the previous schemes mentioned by replacing $\mathcal{B}_{\pi}$ with $\mathcal{B}$. One explanation for this lies in the increased complexity of the Bellman operator, which is neither differentiable nor affine. However, it is piece-wise affine, and in \cite{goyal2023first}, it was examined under what conditions the operator $I - \mathcal{B}$ can be considered as the gradient of some function.
Finally, we briefly note another approach to accelerating Value Computation based on some ideas from control theory. The work \citep{farahmand2021pid} presents Value Computation with three controllers: PD, PI, and PID. This approach is more general as it does not require the MDP to be reversible, and a good choice of gains can provide significant acceleration. However, tuning these parameters is a separate task.

\begin{remark}
     If one assumes that the MDP is reversible, the gains for PD VI can be found analytically, and this scheme will be exactly the same as M-VC.
\end{remark}

\subsection{Hidden convexity of MDP}
Although the optimization over policies in a Markov decision process is generally nonconvex, the problem admits an equivalent convex reformulation.
To observe it, recall from \eqref{eq:dual_dmpdp} that the dual LP formulation for discounted MDP takes the form
$$
	\max_{\mu \in \mathcal{D}}\, \sum_{s, a} \mu(s, a) r(s,a),
$$
where
$$
    \mathcal{D}:=\Bigl\{\mu \in \R^{S \times A} : \mu \geq 0,\, \sum_{a\in\cA} \mu(s, a)=q(s)+\gamma \sum_{s^{\prime}, a} P\left(s | s^{\prime}, a\right) \mu\left(s^{\prime}, a\right)\, \forall s \in \mathcal{S}\Bigr\},
$$
and $q(s)>0$ are probabilities of initial states, and the variable $\mu$ is referred to as
the \emph{discounted state–action occupancy measure}. Normalizing $\mu$ by $(1-\gamma)$ yields a probability measure over the state-action space.
It turns out (\cite{putermanMDP}, Corollary 6.9.2), there is a one-to-one correspondence between stationary policies and occupancy measures in $\mathcal D$ on the set of states reachable from the initial distribution $q$. In particular, a policy $\pi$ can be associated with an occupancy measure $\mu_q^\pi\in \mathcal{D}$ defined by
\[
\mu_q^\pi(s,a):=
\sum_{t=0}^\infty \gamma^t \mathbb{P}\left(s_t=s, a_t=a
\right),
\]
where the probability is w.r.t. the random trajectory generated by $s_0\sim q(\cdot)$, $a_t\sim \pi(\cdot|s_t)$ and $s_{t+1} \sim P(\cdot \mid s_t, a_t)$. Thus, $\mu_q^\pi(s,a)$ is the expected discounted number of visits to state-action pair $(s, a)$ when generating the initial state from $q$ and following policy $\pi$.
Conversely, occupancy measure $\mu \in \mathcal{D}$ induces policy
$$
	\pi_\mu(a | s) = \frac{\mu(s, a)}{\sum_{a'\in\cA} \mu(s, a')}
$$
whenever $\sum_{a'}\mu(s,a')>0.$
In particular, an optimal dual solution $\mu^\star \in \mathcal{D}$ recovers the optimal policy by $\pi_{\mu^\star}=\pi^\star$ \citep[Theorem 6.9.4]{putermanMDP}.

When maximizing the expected discounted cumulative reward, it is common to promote safe exploration by adding an entropic regularizer \citep{neu2017unified}, i.e.,
$$
    \mathcal{H}(\pi)=-\mathbb{E}
	\bigg[\sum_{t=0}^{\infty} \gamma^{t} \log \pi\left(a_{t} \mid s_{t}\right) \bigg],
$$
where the expectation is w.r.t. the random trajectory generated by $s_0\sim q(\cdot)$, $a_t\sim \pi(\cdot|s_t)$ and $s_{t+1} \sim P(\cdot \mid s_t, a_t)$.
Remarkably, entropy can be viewed as a strictly concave function of occupancy measures:
\begin{lemma}[(Lemma 3.2. in \cite{ho2016generative})]
	\label{concavity}
	Let
	$$
	\bar{H}(\mu)=-\sum_{s, a} \mu(s, a) \log \left(\frac{\mu(s, a)}{\sum_{a^{\prime}} \mu\left(s, a^{\prime}\right)}\right)\eqsp.
	$$
	Then, $\bar{H}$ is strictly concave, and for all $\pi \in \Pi$ and $\mu \in \mathcal{D}$, we have $\mathcal{H}(\pi)=\bar{H}\left(\mu_q^\pi\right)$ and $\bar{H}(\mu)=\mathcal{H}\left(\pi_\mu\right)$.
\end{lemma}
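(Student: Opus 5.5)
The proof has three parts: the two identities, which come from rewriting the entropy along trajectories as a sum against occupancy measures, and strict concavity, which comes from recognizing $\bar H$ as a negative relative-entropy/perspective function.

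\emph{First identity.} Take a stationary $\pi$. Since all terms $-\log\pi(a_t|s_t)$ are nonnegative, Tonelli's theorem lets us exchange expectation and sum:
\[
\mathcal H(\pi)=-\sum_{t\ge0}\gamma^t\sum_{s,a}\mathbb P(s_t=s,a_t=a)\log\pi(a|s)=-\sum_{s,a}\mu_q^\pi(s,a)\log\pi(a|s).
\]
For every state with $\sum_{a'}\mu_q^\pi(s,a')>0$ we have $\mu_q^\pi(s,a)=\bigl(\sum_{a'}\mu_q^\pi(s,a')\bigr)\pi(a|s)$, because $\mathbb P(s_t=s,a_t=a)=\mathbb P(s_t=s)\pi(a|s)$. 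Hence $\pi(a|s)=\mu_q^\pi(s,a)/\sum_{a'}\mu_q^\pi(s,a')$ on reachable states. On unreachable states both sides contribute $0$, using the convention $0\log 0=0$. This gives $\mathcal H(\pi)=\bar H(\mu_q^\pi)$.

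\emph{Second identity.} For $\mu\in\mathcal D$, I would invoke the one-to-one correspondence quoted just above from \cite{putermanMDP} (Corollary 6.9.2), namely $\mu_q^{\pi_\mu}=\mu$. Applying the first identity to $\pi_\mu$ then gives $\mathcal H(\pi_\mu)=\bar H(\mu_q^{\pi_\mu})=\bar H(\mu)$.

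\emph{Strict concavity.} Write $\bar H(\mu)=-\sum_s \sum_a \mu(s,a)\log\frac{\mu(s,a)}{\nu(s)}$ with $\nu(s)=\sum_{a'}\mu(s,a')$. For fixed $s$, the map $\mu(s,\cdot)\mapsto\sum_a\mu(s,a)\log\bigl(\mu(s,a)/\nu(s)\bigr)$ is the perspective-type function $D(\mu(s,\cdot)\,\|\,\nu(s)\mathbf 1)$, which is jointly convex as a relative entropy. By the log-sum inequality it is convex, so $\bar H$ is concave.

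Strictness is the main obstacle. The function is positively $1$-homogeneous in each block $\mu(s,\cdot)$, so it is linear along rays $\mu(s,\cdot)\mapsto c\,\mu(s,\cdot)$. Strict concavity therefore cannot hold on all of $\mathbb R_{\ge0}^{S\times A}$ and must use the constraint set $\mathcal D$.

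I would argue as follows. Equality in the log-sum inequality for a convex combination $\lambda\mu+(1-\lambda)\mu'$ forces, for each $s$, the conditionals to agree: $\mu(\cdot|s)=\mu'(\cdot|s)$ wherever both marginals are positive. That means $\pi_\mu=\pi_{\mu'}$ on states charged by both. If both measures lie in $\mathcal D$ and induce the same policy on all reachable states, then uniqueness of the occupancy measure given a policy (the solution of the linear flow equation, via invertibility of $I-\gamma P_\pi$ as shown in the Bellman section) gives $\mu=\mu'$. Hence equality in the concavity inequality forces $\mu=\mu'$, which is strict concavity on $\mathcal D$.

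One point needs care: a state reachable under one measure but with zero mass under the other. I would handle it by noting that the supports are determined by reachability under the respective policies, and that agreement of the conditionals on the common support propagates reachability, so the supports coincide.
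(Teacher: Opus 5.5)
Your proof is correct and follows the argument of the source the paper cites for this lemma (the paper gives no proof of its own beyond the reference to Ho and Ermon). There, the identities come from $\mathcal H(\pi)=-\sum_{s,a}\mu_q^\pi(s,a)\log\pi(a\mid s)$ together with $\mu_q^{\pi_\mu}=\mu$, and strict concavity on $\mathcal D$ comes from the equality case $\pi_\mu=\pi_{\mu'}$ of the log-sum inequality plus uniqueness of a policy's occupancy measure; you are right that strictness fails on the whole orthant by homogeneity, so it must be read on $\mathcal D$. The support caveat at the end is vacuous in this paper's setting: it assumes $q(s)>0$ for all $s$, so $\sum_a\mu(s,a)\ge q(s)>0$ for every $\mu\in\mathcal D$ and $\pi_\mu$ is defined on all of $\mathcal S$, though your reachability-propagation argument does correctly handle a general $q$.
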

Thus, entropy-regularized MDP can be formulated as a convex optimization problem over occupancy measures with an affine feasible set and a strictly concave objective.
Another scenario is the imitation learning where the goal is to imitate the expert's behavior by minimizing the KL-divergence between $\mu_\pi$ and the occupancy measure learned from the expert's sampled trajectories, which is also a convex objective \cite{zhang2020variational}. More generally, if an optimization problem admits a convex reformulation via non-linear (but invertible) map $c(\cdot)$, one talks about hidden convexity. Unfortunately, in many cases the map $c(\cdot)$ and/or its inverse are unavailable or hard to compute. 
However, under appropriate structural assumptions, stochastic gradient methods applied to the original policy variables may still enjoy global convergence guarantees; see \cite{fatkhullin2023stochastic}.

\section{Generative model setting}\label{sec:gen_model}
We now transition from the full-knowledge setting to the standard reinforcement learning scenario, where planning must be performed under uncertainty. Specifically, we assume that the reward function is known, while the transition probabilities are unknown, as handling unknown rewards is comparatively straightforward.
In this section, we assume access to a generative model:

\textbf{The generative model setting}. A generative model is an oracle that, given a state–action pair $(s,a)$, returns an independent sample $s'\sim P(\cdot\mid s,a)$ together with the reward $r(s,a)$ (or a sample of the reward if rewards are stochastic).
Throughout this section, unless stated otherwise, we assume that \(\cS\) and \(\cA\) are finite, and rewards are specified by a deterministic function \(r : \cS\times\cA\xrightarrow[]{} [0,1]\).
The goal in generative model setting is to find an \(\varepsilon\)-optimal policy \(\pi\), i.e., a policy satisfying
\[
\|Q^\pi - Q^{\pi^{\star}}\|_\infty \le \varepsilon.
\]  
Note that since the rewards \(r(s,a)\) lie in the interval \([0,1]\), the value function \(Q^\pi(s,a)\) takes values in \([0, 1/(1-\gamma)]\), so it is meaningful to consider accuracies \(\varepsilon \in \bigl(0, 1/(1-\gamma)\bigr]\). In this work, we focus on constructing PAC algorithms, which for a given tolerance level \(\delta\) output a policy that is \(\varepsilon\)-optimal with probability at least \(1-\delta\). We primarily focus on how the sample complexity depends on the size of the state-action space \(SA\) and the effective horizon \((1-\gamma)^{-1}\), as well as on the desired accuracy \(\varepsilon\) and the confidence level \(\delta\). The effective horizon reflects the typical time scale over which future rewards influence the value function. Two primary approaches arise in this setting: \textit{model-based} and \textit{model-free}:
\begin{itemize}
    \item In the model-based approach, a sufficient number of transitions is first generated, based on which empirical transition probabilities of the Markov process are estimated. Subsequently, optimization methods are applied to the resulting empirical Markov model to compute an approximate optimal policy.
    
    \item In the model-free approach, the algorithm directly constructs an approximation of the optimal policy or value function during its execution, without explicitly estimating the transition probabilities. This approximation is iteratively refined as new samples are obtained from the generative model. In other words, an algorithm is called model-free if it updates value functions or policies directly from sampled transitions without explicitly estimating the transition kernel.
\end{itemize}

\subsection{Model-Based Approach}\label{subseq:model_based}
The simplest model-based algorithms, such as Empirical QVI or Empirical PI, first collect enough samples using a generative model and construct an empirical MDP 
\({\mathcal{\widehat{M}}} = (\mathcal{S}, \mathcal{A}, \widehat{P}, r, \gamma)\) based on the estimated transition probabilities, see Algorithm~\ref{alg:estimatemodel}. Afterwards, any policy iteration \cite{howard:dp} or value iteration (QVI) \cite{bellman1957markovian} method can be applied to this empirical model $\mathcal{M}$. 
\begin{algorithm}
\caption{Model Estimator}
\label{alg:estimatemodel}
\begin{algorithmic}[1]
\REQUIRE The generative model, number of samples $n$
\FOR{each $(s,a) \in \cS\times\cA$}
    \STATE $\forall s' \in \cS : n(s'|s,a) = 0$ \COMMENT{Initialization}
    \FOR{$i = 1$ to $n$}
        \STATE Sample $s' \sim P(\cdot | s,a)$ \COMMENT{Generate a state-transition sample}
        \STATE $n(s'|s,a) \gets n(s'|s,a) + 1$ \COMMENT{Count the transition samples}
    \ENDFOR
    \STATE $\forall s \in \cS : \widehat{P}(s'|s,a) = \frac{n(s'|s,a)}{n}$ 
\ENDFOR
\RETURN $\widehat{P}$ \COMMENT{Return the empirical model}
\end{algorithmic}
\end{algorithm}
It was shown in \cite{agarwal2020model} that for $\varepsilon \in \bigl(0, \tfrac{1}{\sqrt{1-\gamma}}\bigr]$, in order to obtain an $\varepsilon$-optimal policy $\pi$, it is sufficient to construct an empirical model of the process based on
\begin{equation}\label{eq:dmdp_samples}
    K=\widetilde{\mathcal{O}}\left(\frac{S A}{(1-\gamma)^3\varepsilon^2}\right)
\end{equation}
queries to a generative model.\footnote{Here we hide poly-logarithmic factors in asymptotic bound by using a notation $\widetilde{\mathcal{O}}(f(x))$ that is an upper bound on $f(x)$ up to constant and poly-logarithmic factors for sufficiently large $x$.}
Remarkably, the estimate \eqref{eq:dmdp_samples} matches (up to logarithmic factors) the lower bound on the number of generated transitions required to guarantee finding an $\varepsilon$-optimal solution with probability at least $1-\delta$ in the worst case setting. This lower bound was obtained in \cite{gheshlaghi2013minimax}.

Note that the estimate \eqref{eq:dmdp_samples} implies that at least on the order of $\tfrac{S A}{(1-\gamma)^2}$ transitions must be generated, since it holds for accuracies $\varepsilon \in \bigl(0, \tfrac{1}{\sqrt{1-\gamma}}\bigr]$. In many cases, such a sample size may be infeasible, while lower accuracy requirements may still be acceptable.  It turns out that the bound \eqref{eq:dmdp_samples} can be extended to the entire range $\varepsilon \in \bigl(0,\tfrac{1}{1-\gamma}\bigr]$ \cite{li2020breaking}. To achieve this, one can add a small perturbation to the rewards $r(s,a)$ and then apply, as before, an optimization method to the empirical model (now with perturbed rewards). Under this approach, the required sample size begins at $\sim \tfrac{S A}{1-\gamma}$ transitions.

Much less literature has focused on the finite-horizon Markov decision process with horizon $H$ and a generative model. In this case, the analysis and complexity bounds are analogous to the discounted setting.
In particular, the lower bound on the number of queries to the oracle matches the upper bound and is given by
\begin{equation*}
    K=\widetilde{\mathcal{O}}\left(\frac{S A H^3}{\varepsilon^2}\right).
\end{equation*}
The lower bound was first established in \cite{sidford2018near} together with the tight upper bound valid only for the range \(\varepsilon\in (0,1)\). The authors of \cite{agarwal2019reinforcement} propose an upper bound for $\varepsilon\in(0,\sqrt{H}]$ in a manner similar to \cite{gheshlaghi2013minimax} for the discounted case. Evidently, this bound remains valid over the entire interval $\varepsilon\in(0,H]$ when the reward perturbation technique of \citet{li2020breaking} is applied.

\paragraph{Optimal Control in AMDP} 
We conclude the discussion of model-based approaches by considering the problem of optimal control in the \emph{average-reward} MDP (AMDP). 
In the discounted setting, the problem is in some sense simplified by introducing an effective planning horizon of order $1/(1-\gamma)$, which yields optimal policies $\pi_\gamma^\star$. A natural question is whether there exists a universal optimal policy that does not depend on $\gamma$. It turns out that such a policy is precisely the optimal policy $\pi^\star$ in the corresponding AMDP. Moreover, as expected, when the planning horizon is extended (i.e., $\gamma \to 1$), the discounted optimal policies converge to the AMDP-optimal policy: $\pi^\star_\gamma \to \pi^\star$. 

This observation motivates reduction-based methods: to solve an AMDP, one may choose $\gamma$ sufficiently close to $1$ and instead solve the associated discounted MDP. In particular, \cite{sidford2018near} established an upper bound of order 
\(\widetilde{\mathcal{O}}\!\left(\frac{t_{\operatorname{mix}}SA}{{\varepsilon^{3}}}\right),
\)
and a lower bound of order 
\(\Omega\!\left(\frac{t_{\operatorname{mix}}SA}{{\varepsilon^{2}}}\right),
\)
thus leaving a gap in the dependence on the accuracy parameter $\varepsilon$. This gap was subsequently closed by \citet{wang2024optimal}. Both works rely on the reduction to discounted MDPs by setting 
\(
\gamma = 1 - c(\varepsilon/{t_{\mathrm{mix}}})\) for a suitable constant \(c>0\)
and \(
\bar{\varepsilon} = {\varepsilon}/({1-\gamma}),
\)
and solving the corresponding DMDP to accuracy $\bar{\varepsilon}$.

\paragraph{A unified view on sample complexity}
The sample complexity bounds for different MDP models (discounted, finite-horizon,
and average-reward) admit a common interpretation.
After an appropriate normalization of the objective function, the problem can be
viewed as estimating an expectation by its empirical counterpart. In particular,
the relevant quantities can be interpreted as expectations over state--action pairs
(and, in the average-reward setting, over time via an improper uniform measure).
From this perspective, the sample complexity takes the generic form
\[
\widetilde{\mathcal{O}}\!\left(
\frac{N_{\mathrm{eff}} \cdot H_{\mathrm{eff}}}{\varepsilon_{\mathrm{eff}}^2}
\right),
\]
where \(N_{\mathrm{eff}}\) is an effective factor proportional to the number of $(s, a)$ pairs required to estimate MDP, \(H_{\mathrm{eff}}\) is the effective horizon and
\(\varepsilon_{\mathrm{eff}}\) is the normalized accuracy, accounting for the
aggregation of rewards over time.
The correspondence between different models is summarized in Table~\ref{tab:unified_complexity}.\footnote{Here we also hide poly-logarithmic factors in asymptotic bound by using a notation $\widetilde{\Omega}(f(x))$ that is a lower bound on $f(x)$ up to constant and poly-logarithmic factors for sufficiently large $x$.}

\begin{table}[ht]\caption{Unified view of sample complexity across MDP settings}
\centering
\label{tab:unified_complexity}
\begin{tabular}{lllll}
\toprule
\textbf{Setting} & \makecell{$H_{\mathrm{eff}}$} & \makecell{$N_{\mathrm{eff}}$\\\textbf{factor}} & \makecell{$\varepsilon_{\mathrm{eff}}$\\\textbf{tolerance}} & \makecell{\textbf{Sample}\\\textbf{complexity}} \\
\midrule

DMDP 
& \( \frac{1}{1-\gamma} \) 
& $S\cdot A$
& $(1 - \gamma)\cdot\varepsilon$
& \( \widetilde{\Omega}\big(\frac{SA}{(1-\gamma)^3 \varepsilon^2}\big) \)~\cite{li2020sample} \\

HMDP 
& \( H \) 
& $S\cdot A$
& $\tfrac{\varepsilon}{H}$
& \( \widetilde{\Omega}\big(\frac{SA H^3}{\varepsilon^2}\big) \)~\cite{tiapkin2022dirichlet} \\

AMDP 
& \( t_{\mathrm{mix}} \)
& $S\cdot A$
& $\varepsilon$
& \( \widetilde{\Omega}\big(\frac{SA\, t_{\mathrm{mix}}}{\varepsilon^2}\big) \)~\cite{wang2024optimal} \\

\bottomrule
\end{tabular}
\end{table}

\noindent
In all cases, the complexity can be interpreted as the cost of estimating a collection
of \(SA \cdot H_{\mathrm{eff}}\) parameters up to accuracy \(\varepsilon_{\mathrm{eff}}\),
corresponding to the deviation between empirical and true expectations. To better understand the lower bounds on complexities presented in Table~\ref{tab:unified_complexity}, we introduce hard instances --- families of MDPs where different parameter settings lead to nearly identical trajectories yet require distinct optimal policies. We first describe a simple hard instance that underlies many of these lower‑bound proofs. The hard instance used in discounted and average‑reward MDP lower‑bounds is a small, bandit‑like system. It has two controllable states and two absorbing states:

\begin{itemize}
    \item \textbf{state 0}: taking any action keeps the agent in state 0 with high probability but occasionally moves to state 1. State 0 itself yields no reward.
    \item \textbf{state 1}: this state hides a multi‑armed bandit. Each action $a$ leads to a terminal state Good with probability $p_a$ and to Bad with probability $1-p_a$. Good returns {positive reward in $(0, 1]$ specific for each considered setting}, Bad returns reward 0, and both transition back to state 1. The optimal action $a^*$ is the arm with the largest success probability $p_{a^*}$. The difference between the best and second‑best arms is {$\epsilon$} so that distinguishing them requires {$\widetilde{\Omega}(\epsilon^{-2})$} samples. Once the system reaches state 1, it stays there until the next visit to state 0.
\end{itemize}
The graph for such a MDP is presented in figure~\ref{fig:dmdp_hard_instance}.
\begin{figure}
\centering
\begin{tikzpicture}[>=stealth,auto,node distance=3.5cm,semithick]
  \tikzset{state/.style={ellipse,draw,minimum width=2.2cm,minimum height=1.2cm}}
  \node[state] (s0) {\makecell{State 0\\(no reward)}};
  \node[state, right of=s0] (s1) {\makecell{State 1\\(bandit)}};
  \node[state, right=2.5cm of s1, yshift=2.5cm] (good) {\makecell{Good\\(reward $(1 - \gamma)$)}};
  \node[state, right=3cm of s1, yshift=-2.5cm] (bad) {\makecell{Bad\\(reward 0)}};

  \path (s0) edge[loop above] node[align=center]{loop w.p.\;$1-\alpha$} (s0);

  \path (s0) edge[->] node[align=center]{to state 1\\w.p.\;$\alpha$} (s1);

  \path (s1) edge[->] node[sloped,near start, below]{\(a^*\!:\,p_{a^*}\)} (good);
  \path (s1) edge[->] node[sloped,near start]{\(a\!:\,p_a\)} (good);
  \path (s1) edge[->] node[sloped,near start, below]{\(a^*\!:\,1-p_{a^*}\)} (bad);
  \path (s1) edge[->] node[sloped,near start]{\(a\!:\,1-p_a\)} (bad);

  \path (good) edge[->,dashed,bend left=20] node[near end]{return} (s1);
  \path (bad) edge[->,dashed,bend left=20] node[near end]{return} (s1);
\end{tikzpicture}
\caption{Hard instance for DMDP.}
\label{fig:dmdp_hard_instance}
\end{figure}

The intuition is summarized in the discounted‑MDP lower‑bound proof of~\cite{lattimore2012pac}. They note that once a policy reaches state 1 it should choose the action most likely to lead to Good; the process then returns to state 1 and repeats. Estimating which arm is better thus reduces to identifying the optimal arm of a bandit with arms $a$ and success probabilities $p_a$; sample‑complexity results for bandits show that any policy must play the inferior arm at least $\tilde\Omega\left(\tfrac{A}{\epsilon^2}\right)$ times. The discounted MDP lower bound arises by embedding this bandit in an MDP with an additional state 0 so that the effect of a wrong choice at state 1 propagates back through discounting. Below we analyse this hard instance under three performance criteria and derive the lower‑bound scaling.

Consider an infinite‑horizon MDP with discount factor $\gamma \in (0,1)$ and rewards scaled to $[0,1]$. In the hard instance, state 0 transitions to state 1 with probability {$\alpha = 1 - \gamma$ or proportional to it} and remains in state 0 with probability $1-\alpha$. Let the bandit in state 1 have two actions: $a^*$ with success probability {$p_{a^*} = p_a+\epsilon < 1$ and $a$ with $p_a\in (0, 1)$}. Both actions lead to state 1 through Good or Bad, so the expected return from state 1 under action $a$ is $r(a)=(1-\gamma)\cdot p_a$ because good yields {$(1 - \gamma)$} and discounting {propagates} future rewards. The value difference between $a^*$ and $a$ at state 1 is therefore

{$$V^{*}(1) - Q(1, a) = (1 - \gamma)(p_{a^*} - p_a) = \epsilon(1-\gamma).$$}
State 0 remains in the zero‑reward loop for an average of $\tfrac{1}{1-\gamma}$ steps. During this period the agent plans its next visit to state 1. If its policy chooses the sub‑optimal action at state 1 in those plans, the error propagates back to state 0 and the expected value at state 0 decreases by roughly $\gamma^{t}(V^*(1)-Q(1,a))$ each time step. Summing over the geometric time spent in state 0 gives a total value gap on the order of

{$$\Delta V(0)\approx \sum\limits_{t = 0}^{\infty}\frac{1}{1-\gamma}\,\gamma^{t}\,(1-\gamma)\cdot \epsilon = \frac{\epsilon}{1-\gamma}.$$}
Distinguishing a difference of size $\Delta V(0)$ requires $\Omega\left(\tfrac{1}{\Delta V(0)^2}\right)$ samples. Because the bandit problem at state 1 already requires $\Omega(\epsilon^{-2})$ samples to tell whether $p_{a^*}$ is larger than $p_a$, the cumulative effect of discounting multiplies the sample requirement by $\tfrac{1}{(1-\gamma)^2}$. A careful application of Fano’s inequality~\cite{yu1997assouad} shows that any algorithm needs at least

$$\widetilde{\Omega}\Bigl(\tfrac{SA}{\epsilon^{2}\,(1-\gamma)^{3}}\Bigr)$$
queries to the generative model to learn an $\epsilon$‑optimal policy~\cite{jin2021towards}. This matches the minimax lower bound for discounted MDPs up to logarithmic factors. Upper bounds based on empirical Q‑value iteration achieve $\tilde{O}\left(\tfrac{SA}{\epsilon^2(1-\gamma)^3}\right)$ samples. The key intuition here is discounting attenuates future rewards, but the error incurred when picking the wrong arm at state 1 accumulates while the agent stays in state 0 for roughly $\tfrac{1}{1-\gamma}$ steps. This yields an additional factor $\tfrac{1}{1-\gamma}$ in the sample complexity beyond the bandit lower bound.

Average‑reward MDPs optimize the long‑term average reward rather than discounted return. Without discounting, the Markov chain must mix to a stationary distribution. To construct a hard instance, one modifies the previous MDP so that state 0 randomly "restarts"\,the chain: state 0 transitions to state 1 {deterministically $(\alpha = 1)$, but a small probability $\beta=\Theta\left(\tfrac{1}{t_{\mathrm{mix}}}\right)$ in bandit} state causes a reset to a special {reset} state. This ensures that under any policy the induced Markov chain has mixing time at most $t_{\mathrm{mix}}$~\cite{jin2021towards}. The graph of hard AMDP instance is in figure~\ref{fig:amdp_hard_instance}. {For $p_a = 1 - \beta$, $p_{a^*} = p_a + \epsilon\beta < 1$ one can show the} expected average reward difference between choosing $a^*$ and $a$ in state 1 {according to Lemma 5 in~\cite{jin2021towards}}:
\begin{equation*}
    \begin{aligned}
        &{\frac{\epsilon(1 - \beta)}{(1 + \beta)(2 - \beta)(2 - \beta - \epsilon(1 - \beta))}.}\\
    \end{aligned}
\end{equation*}
{Basicly, the solution of such AMDP lies on distinguishing between two Bernoulli distributions with means $(1 - \beta)p_a$ and $(1 - \beta)p_{a^*}$. Due to Lemma 7 in~\cite{jin2021towards} it requires the following amount of samples:}
\begin{equation*}
    \begin{aligned}
        &{\widetilde{\Omega}\left(\frac{1}{(1 - \beta)^2(p_{a^*} - p_a)^2}\right) = \widetilde{\Omega}\left(\frac{1}{\epsilon^2\beta^2}\right).}\\
    \end{aligned}
\end{equation*}
{However, in a generative model one can simulate multiple steps of the Markov chain, waiting long enough between samples to let the chain mix so that successive rewards are nearly independent. Because the chain mixes in $t_{\mathrm{mix}} = \Theta\left(\beta^{-1}\right)$ steps, one can thin the simulation, taking one reward every $O\left(t_{\mathrm{mix}}\right)$ transitions, and obtain effectively independent observations whose average is $\mu_{\pi}(1)(1 - \beta)p_{a}$ and $\mu_{\pi}(1)(1 - \beta)p_{a^*}$ respectively, where $\mu_{\pi}(1)$ is the stationary probability of being in state 1. To estimate these averages within $\epsilon$, , the number of independent observations needed is $\widetilde{\Omega}\left(\epsilon^{-2}\right)$. Each independent observation costs $O(t_{\mathrm{mix}})$ transitions, so the total sample complexity to distinguish between $a$ and $a^*$ is $\widetilde{\Omega}\left(\frac{t_{\mathrm{mix}}}{\epsilon^2}\right)$. Repeating this analysis for each state-action pair we get the desired estimate:
$$\widetilde{\Omega}\bigl(\tfrac{SA\,t_{\mathrm{mix}}}{\epsilon^{2}}\bigr).$$
The work~\cite{jin2021towards} provides matching upper and lower bounds for AMDPs: algorithms based on primal‑dual methods achieve this sample complexity, and their lower bound shows that linear dependence on the mixing time is unavoidable.}

\begin{figure}
\centering
\begin{tikzpicture}[>=stealth,auto,node distance=3.5cm,semithick]
  \tikzset{state/.style={ellipse,draw,minimum width=2cm,minimum height=1.2cm}}
  \node[state] (s0) {\makecell{State 0\\(no reward)}};
  \node[state, right=2cm of s0] (s1) {\makecell{State 1\\(bandit)}};
  \node[state, above=2cm of s1] (reset) {\makecell{Reset\\(restart)}};
  \node[state, right=3cm of s1, yshift=2.5cm] (good) {\makecell{Good\\(reward 1)}};
  \node[state, right=3cm of s1, yshift=-2.5cm] (bad) {\makecell{Bad\\(reward 0)}};

  \path (s0) edge[->,loop above] node[align=center]{\makecell{loop w.p.\\$(1-\alpha)$}} (s0);
  \path (s0) edge[->] node[align=center,below]{\makecell{to state 1\\w.p. $\alpha$}} (s1);

  \path (s1) edge[->,dotted,color=orange,right,bend left=20] node[align=center]{\makecell{reset\\w.p. $\beta$}} (reset);

  \path (reset) edge[->,color=orange,bend right=20] node[right]{restart} (s0);

  \path (s1) edge[->] node[sloped]{\makecell{\(a^*\!:\,p_{a^*}(1 - \beta)\),\\\(a\!:\,p_a(1 - \beta)\)}} (good);
  \path (s1) edge[->] node[sloped, below]{\makecell{\(a^*\!:\,(1-p_{a^*})(1 - \beta)\),\\\(a\!:\,(1-p_a)(1 - \beta)\)}} (bad);

  \path (good) edge[->,dashed,bend left=20] node{return} (s1);
  \path (bad) edge[->,dashed,bend right=20] node[above,xshift=0.5cm]{return} (s1);
\end{tikzpicture}
\caption{Hard instance for AMDP.}
\label{fig:amdp_hard_instance}
\end{figure}

In finite‑horizon MDPs each episode lasts exactly $H$ steps. The agent collects rewards only within an episode; after $H$ steps the environment resets. To obtain a hard instance, replicate the bandit MDP across the $H$ stages: each stage $h\in\{1,\dots,H\}$ has its own state $h$ where the agent chooses an action, transitions to an absorbing Good or Bad state, and then deterministically moves to stage $h+1$. {The Good state gives immediate reward equal $\frac{1}{H}$.} The optimal arm yields a slightly larger probability of reaching the Good state, producing an extra {$\epsilon$} expected reward at that stage{, for example: $p_a\in (0, 1)$, $p_{a^*} = p_a + \epsilon < 1$}. The total reward over an episode under the optimal policy is $H$ times this stage‑wise reward. A sub‑optimal policy that chooses a wrong arm in stage $h$ sacrifices {$\frac{\epsilon}{H}$} immediate reward but does not affect future stages because the horizon is finite. The graph of this hard instance is presented in figure~\ref{fig:hmdp_hard_instance}. Because the reward gap at a single stage is {$\frac{\epsilon}{H}$}, a bandit lower bound implies that one needs {$\widetilde{\Omega}\left(\frac{H^2}{\epsilon^2}\right)$} samples per stage to identify the best arm. However, in an episodic setting the value difference accumulates over the $H$ stages: the total return difference between the optimal and sub‑optimal policies is {$\epsilon$} per episode. To identify an arm whose expected return is within $\epsilon$ of optimal, one must detect a gap of size {$\tfrac{\epsilon}{H}$} in the per‑stage reward. Fano’s inequality therefore yields a lower bound on the number of episodes of order {$\widetilde{\Omega}\left(\tfrac{H^2}{\epsilon^2}\right)$}. Since each episode contains $H$ steps, the overall sample complexity (number of state–action pairs sampled) is

$$\widetilde{\Omega}\bigl(\tfrac{SA\,H^{3}}{\epsilon^{2}}\bigr).$$
The work~\cite{sidford2018near} shows the tightness of this bound. For episodic hard instance the intuition means errors do not propagate across episodes but the horizon $H$ scales the value difference. Finding an $\epsilon$‑optimal policy thus requires $H^3$ dependence: {$H$ from the accumulated reward gap of size $\tfrac{\epsilon}{H}$ at each stage and $H^2$ from the number of episodes needed to estimate the overall gap.}

\subsection{Model-free approach}\label{subseq:model_free}
In this subsection, we discuss the class of model-free algorithms. We first consider Bellman-based methods, most notably $Q$-learning and its variants that attain the minimax lower bound in {\cite{gheshlaghi2013minimax}}. We then turn to a linear programming perspective, in which the problem admits a saddle-point formulation, and demonstrate that stochastic mirror descent methods provide a natural algorithmic framework in this setting.

\begin{figure}
\centering
\begin{tikzpicture}[>=stealth,semithick]
  \tikzset{state/.style={ellipse,draw,minimum width=2cm,minimum height=1.1cm}}

  \node[state] (s1) at (0,0)    {\makecell{Stage 1\\(bandit)}};
  \node[state] (s2) at (4.5,0)  {\makecell{Stage 2\\(bandit)}};
  \node[state] (sH) at (9,0)   {\makecell{Stage $H$\\(bandit)}};

  \node[state] (g1) at (2,  3.8) {\makecell{Good\\(r=$H^{-1}$})};
  \node[state] (b1) at (2, -3.8) {\makecell{Bad\\(r=0)}};

  \node[state] (g2) at (6.5,  3.8) {\makecell{Good\\(r=$H^{-1}$})};
  \node[state] (b2) at (6.5, -3.8) {\makecell{Bad\\(r=0)}};

  \node[state] (gH) at (10,  3.8) {\makecell{Good\\(r=$H^{-1}$})};
  \node[state] (bH) at (10, -3.8) {\makecell{Bad\\(r=0)}};

  \draw[->,bend left=15]  (s1) to node[sloped,below]{\(a^*:p_{a^*}\)} (g1);
  \draw[->,bend left=35]  (s1) to node[sloped,above]{\(a:p_a\)}     (g1);
  \draw[->,bend right=15] (s1) to node[sloped,above]{\(a^*:1-p_{a^*}\)} (b1);
  \draw[->,bend right=35] (s1) to node[sloped,below]{\(a:1-p_a\)}     (b1);
  \draw[dashed,->] (g1) to node[sloped,below]{move} (s2);
  \draw[dashed,->] (b1) to node[sloped,above]{move} (s2);

  \draw[->,bend left=15]  (s2) to node[sloped,below]{\(a^*:p_{a^*}\)} (g2);
  \draw[->,bend left=35]  (s2) to node[sloped,above]{\(a:p_a\)}     (g2);
  \draw[->,bend right=15] (s2) to node[sloped,above]{\(a^*:1-p_{a^*}\)} (b2);
  \draw[->,bend right=35] (s2) to node[sloped,below]{\(a:1-p_a\)}     (b2);
  \draw[dashed,->] (g2) to node[sloped,below]{move} (sH);
  \draw[dashed,->] (b2) to node[sloped,above]{move} (sH);

  \draw[->,bend left=15]  (sH) to node[sloped,below]{\(a^*:p_{a^*}\)} (gH);
  \draw[->,bend left=35]  (sH) to node[sloped,above]{\(a:p_a\)}     (gH);
  \draw[->,bend right=15] (sH) to node[sloped,above]{\(a^*:1-p_{a^*}\)} (bH);
  \draw[->,bend right=35] (sH) to node[sloped,below]{\(a:1-p_a\)}     (bH);

  \node at (7,0) {...};
\end{tikzpicture}
\caption{Hard instance for HMDP.}
\label{fig:hmdp_hard_instance}
\end{figure}

\paragraph{Q-learning}
Assume that a generative model of the MDP is available. The $Q$-learning algorithm maintains a sequence of vectors \(Q_t \in \mathbb{R}^{SA}\), initialized arbitrarily with
\(\|Q_0\|_\infty \le (1-\gamma)^{-1},\)
and for \(t > 0\) updated according to
\begin{equation}\label{eq:q_learning_update}
    Q_{t} = (1 - \alpha_t) Q_{t-1} + \alpha_t  \widehat{\mathcal{B}}_t Q_{t-1}\;,
\end{equation}
where \(\alpha_t \in [0,1)\) denotes the step size, and \(\widehat{\mathcal{B}}_t : \mathbb{R}^{SA} \to \mathbb{R}^{SA}\) is the empirical Bellman operator defined by
\[
    \widehat{\mathcal{B}}_t Q(s,a) = r(s,a) + \gamma \max_{a'\in\cA} Q(s_t,a'),
\]
with \(s_t \sim P(\,\cdot\mid s,a)\). Clearly, \(\widehat{\mathcal{B}}_t\) is an unbiased estimator of the optimal Bellman operator.
Equivalently, in matrix form one may write \(\widehat{\mathcal{B}}_t Q_{t-1} = r+\gamma\widehat{P}_t V_{t-1}\), where \(V_{t-1}(s) = \max_{a\in\cA} Q_{t-1}(s,a)\), and \(\widehat{P}_t(s'| s,a) = \one{s' = s_t(s,a)}\) provides an unbiased estimate of the transition kernel. 

We now outline the key ideas underlying the convergence analysis. Letting $\Delta_t = Q_t - Q^\star_t$ denote the approximation error, one obtains the recursion
\begin{equation}\label{eq:q_learning_martingale_error_recursion}
    \begin{aligned}
        &\Delta_t = A_t\Delta_{t - 1} + \alpha_t\varepsilon_t,
    \end{aligned}
\end{equation}
where $A_t = I - \alpha_t(I - \gamma P^{\pi^\star})$ and $\pi^\star$ denotes an optimal policy. Iterating this relation with $\Gamma_{m:n} = \prod_{i = m}^nA_i$ yields
\begin{equation}\label{eq:q_learning_dynamics}
    \begin{aligned}
        &\Delta_{t} = \Gamma_{1:t}\Delta_0 + \sum\limits_{j = 1}^t\alpha_j\Gamma_{j+1:t}\varepsilon_t.
    \end{aligned}
\end{equation}
The first term in~\eqref{eq:q_learning_dynamics} represents the transient component of the error, which governs the rate at which the initial error $Q_0 - Q^\star$ is forgotten. The second term corresponds to the fluctuation component, arising from the stochastic variability of the iterates $Q_t$ around the fixed point $Q^\star$. The proof of convergence rests on two ingredients: the contraction properties of the matrices $I - \alpha_t(I - \gamma P^{\pi^\star})$, and concentration inequalities for vector-valued martingales (notably Freedman’s inequality~\cite{tropp2011freedman}). Here we have an actual martingale:
\begin{equation*}
    \begin{aligned}
        &\varepsilon_t = \mathcal{\widehat{B}}_tQ_{t - 1} - \mathcal{B}Q_{t - 1} = \gamma\left(\widehat{P}_t - P\right)V_{t - 1},
    \end{aligned}
\end{equation*}
where $\widehat{P}_t$ is an empirical Markov kernel at iteration $t$. Below we show it. Let \(\Omega\) be the sample space of all possible sequences of transition samples generated by the generative model, let \(\cF\) be the corresponding product \(\sigma\)-algebra, and let \(\mathbb{P}\) be the probability measure induced by the transition kernel \(P\). The natural filtration of $Q$-learning algorithm:
\[
    \cF_{t - 1}
    :=
    \sigma\!\left(
        Q_0,\widehat P_1,\widehat P_2,\ldots,\widehat P_{t - 1}
    \right),
    \qquad
    t\geq1,
\]
with the convention
\[
    \cF_0:=\sigma(Q_0).
\]
A \emph{filtered probability space} is a tuple
\[
    (\Omega,\cF,\{\cF_t\}_{t\ge0},\mathbb{P}),
\]
where \((\Omega,\cF,\mathbb{P})\) is a probability space and
\[
    \cF_0\subseteq \cF_1\subseteq \cF_2\subseteq \cdots \subseteq \cF
\]
is an increasing sequence of sub-\(\sigma\)-algebras. The \(\sigma\)-algebra \(\cF_t\) represents all information available up to time \(t\).
A stochastic process \(\{X_t\}_{t\ge0}\), with \(X_t\in\R^d\), is called \emph{adapted} to the filtration \(\{\cF_t\}_{t\ge0}\) if \(X_t\) is \(\cF_t\)-measurable for every \(t\ge0\). It is called a \emph{martingale} with respect to \((\{\cF_t\}_{t\ge0},\mathbb{P})\) if
\[
    X_t \text{ is } \cF_t\text{-measurable},
    \qquad
    \E[\norm{X_t}]<\infty,
    \qquad
    \E[X_{t+1}\mid\cF_t]=X_t
\]
for every \(t\ge0\). Equivalently, the conditional expected future value of the process, given the current information, is its current value.

A stochastic process \(\{\xi_{t+1}\}_{t\ge0}\), with \(\xi_{t+1}\in\R^d\), is called a \emph{martingale-difference sequence} with respect to \(\{\cF_t\}_{t\ge0}\) if
\[
    \xi_{t+1} \text{ is } \cF_{t+1}\text{-measurable},
    \qquad
    \E[\norm{\xi_{t+1}}]<\infty,
    \qquad
    \E[\xi_{t+1}\mid\cF_t]=0
\]
for every \(t\ge0\). In this case,
\[
    M_t:=\sum_{k=1}^{t}\xi_k,
    \qquad
    M_0:=0,
\]
is a martingale with respect to \(\{\cF_t\}_{t\ge0}\).

In particular, $Q_t$ is $\cF_t$-measurable, $\widehat P_{t}$ is conditionally independent of $\cF_{t - 1}$ given $P$, and
\[
    \E[\widehat P_{t}\mid\cF_{t - 1}]=P.
\]
Consequently,
\[
    \E[\widehat{\mathcal{B}}_{t}Q_{t - 1}\mid\cF_{t - 1}]
    =
    r+\gamma P V_{t - 1}
    =
    \mathcal{B}Q_{t - 1}.
\]
Therefore the stochastic Bellman error
\[
    \varepsilon_{t}
    :=
    \widehat{\mathcal{B}}_{t}Q_{t - 1}-\mathcal{B}Q_{t - 1}
    =
    \gamma(\widehat P_{t}-P)V_{t - 1}
\]
is a bounded martingale-difference sequence with respect to \(\{\cF_t\}_{t\ge0}\):
\[
    \E[\varepsilon_{t}\mid\cF_{t - 1}]=0,\quad \|\varepsilon_t\|_{\infty}\leq\gamma\left(\|\widehat{P_t}V_{t - 1}\|_{\infty} + \|PV_{t - 1}\|_{\infty}\right)\leq 2\gamma\|V_{t - 1}\|_{\infty}\leq\frac{2\gamma}{1 - \gamma}.
\]
Equivalently, the cumulative noise process
\[
    M_t:=\sum_{k=1}^{t}\alpha_k\varepsilon_k,
    \qquad
    M_0:=0,
\]
is a martingale whenever the step sizes \(\alpha_k\) are deterministic, or more generally \(\cF_{k-1}\)-measurable, and
\[
    \E[\norm{\alpha_k\varepsilon_k}]<\infty
    \qquad
    \text{for all }k\ge1.
\]
To establish $Q$-learning algorithm convergence almost surely the one thing left is to apply a martingale contraction lemma:

\begin{lemma}
\label{lem:q_learning_martingale_contraction}
Let \(u_t\in\R^d\) be an adapted bounded process satisfying
\begin{equation}
\label{eq:q_learning_abstract_martingale_recursion}
    u_t
    =
    (1-\alpha_t)u_{t-1}
    +
    \alpha_t(g_{t-1}+\zeta_t),
    \qquad t\in\mathbb{N}.
\end{equation}
Assume that
\[
    \alpha_t\in[0,1],
    \qquad
    \sum_{t=1}^{\infty}\alpha_t=\infty,
    \qquad
    \sum_{t=1}^{\infty}\alpha_t^2<\infty,
\]
that \(g_{t-1}\) is \(\cF_{t-1}\)-measurable and satisfies
\[
    \norm{g_{t-1}}_\infty
    \le
    \beta\norm{u_{t-1}}_\infty+c_{t-1}
    \qquad
    \text{for some }\beta\in[0,1),
\]
where \(c_t\to0\) as \(t\rightarrow\infty\), and that \(\zeta_t\) is a martingale-difference noise with bounded conditional second moment:
\[
    \E[\zeta_t\mid\cF_{t-1}]=0,
    \qquad
    \E[\norm{\zeta_t}_\infty^2\mid\cF_{t-1}]\le C<\infty.
\]
Then
\[
    u_t\to0
    \qquad
    \text{almost surely.}
\]
\end{lemma}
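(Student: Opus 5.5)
We will follow the classical Jaakkola--Jordan--Singh / Tsitsiklis argument and split the iterate into a noise part and a contraction part. First we define the pure-noise process
\begin{equation*}
    w_t = (1-\alpha_t) w_{t-1} + \alpha_t \zeta_t, \qquad w_0 = 0 .
\end{equation*}
We show that \(w_t\to 0\) almost surely, coordinate by coordinate. Unrolling gives \(w_t = \sum_{k\le t}\alpha_k \Gamma_{k+1:t}\zeta_k\) with scalar weights \(\Gamma_{k+1:t}=\prod_{i=k+1}^t(1-\alpha_i)\). The series \(\sum_k \alpha_k\zeta_k\) is a martingale bounded in \(L^2\), since \(\sum_k\alpha_k^2\E\|\zeta_k\|_\infty^2\le C\sum_k\alpha_k^2<\infty\). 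Hence it converges almost surely. A Kronecker/Toeplitz-type summation lemma, applied with the weights \(\Gamma_{k+1:t}\), then gives \(w_t\to 0\). This step uses \(\sum_t\alpha_t=\infty\), which forces \(\Gamma_{1:t}\to 0\). Alternatively, the Robbins--Siegmund theorem applied to \(\E[w_t(i)^2\mid\cF_{t-1}]\le(1-\alpha_t)^2w_{t-1}(i)^2+\alpha_t^2C\) yields the same conclusion.

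Second, we handle the deterministic-like remainder \(v_t=u_t-w_t\). It satisfies
\begin{equation*}
    v_t=(1-\alpha_t)v_{t-1}+\alpha_t g_{t-1},
\end{equation*}
so it involves no noise. By assumption \(u\) is bounded, say \(\|u_t\|_\infty\le D_0\) on a given sample path. Fix \(\epsilon>0\) with \(\beta(1+\epsilon)<1\) and set \(D_{k+1}=\beta(1+2\epsilon)D_k\), which decreases geometrically. The induction claim is this: if \(\|u_t\|_\infty\le D_k\) for all \(t\ge t_k\), then eventually \(\|u_t\|_\infty\le D_{k+1}\). To see it, note that for \(t\ge t_k\) we have \(\|g_{t-1}\|_\infty\le\beta D_k+c_{t-1}\). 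Since \(c_t\to0\), eventually \(\|g_{t-1}\|_\infty\le \beta(1+\epsilon)D_k\). The recursion for \(v_t\) is then a convex combination, so \(\limsup_t\|v_t\|_\infty\le\beta(1+\epsilon)D_k\); the initial value is forgotten because \(\Gamma_{s:t}\to 0\). Combined with \(w_t\to0\), this gives \(\|u_t\|_\infty\le\beta(1+2\epsilon)D_k\) eventually.

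To make the splitting work with the shifted starting times \(t_k\), we restart the noise process at each \(t_k\). That is, we use \(w^{(k)}_t\) with \(w^{(k)}_{t_k}=0\) and the same recursion. Each restarted process still converges to 0, because it is the original one minus a term \(\Gamma_{t_k+1:t}w_{t_k}\) that vanishes. Iterating over \(k\) gives \(\limsup\|u_t\|_\infty\le D_k\) for every \(k\), hence \(u_t\to0\) almost surely. All of this happens on the probability-one event where every \(w^{(k)}\to0\); there are countably many such processes.

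The main obstacle is the first step, the almost sure convergence of the weighted noise average. The step sizes are not assumed monotone, and the conditional variance bound is only in \(\ell_\infty\). We will use the Robbins--Siegmund supermartingale convergence theorem coordinatewise, since each coordinate satisfies the scalar recursion with the same \(\alpha_t\). This theorem gives convergence of \(w_t(i)^2\) and summability of \(\alpha_t w_{t-1}(i)^2\), and together with \(\sum\alpha_t=\infty\) these force the limit to be zero. The bookkeeping of the restarted processes in the second step is routine by comparison.
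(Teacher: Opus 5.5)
Your proof is correct, but it is organized differently from the paper's.

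The paper never isolates the noise. It shows that $N_n^{(i)}=\sum_{t\le n}\alpha_t\zeta_t(i)$ converges a.s. It then uses Abel summation, together with the fact that $\Phi_{t+1:m}$ is increasing in $t$, to get $\sup_{m\ge n}\bigl|\sum_{t=n+1}^m\Phi_{t+1:m}\alpha_t\zeta_t(i)\bigr|\to0$. It closes in one step with $L:=\limsup_t\norm{u_t}_\infty$, obtaining $L\le\beta(L+\epsilon)+2\epsilon$.

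You instead split $u=v+w$ and prove $w_t\to0$ as a standalone fact via Robbins--Siegmund on $w_t(i)^2$. That step implicitly uses $(1-\alpha_t)^2\le 1-\alpha_t$ to get summability of $\alpha_t w_{t-1}(i)^2$, and the limit is then forced to zero by $\sum_t\alpha_t=\infty$. You then run the Tsitsiklis-style shrinking-box induction $D_{k+1}=\beta(1+2\epsilon)D_k$.

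Your noise step is more modular and avoids the monotonicity and summation-by-parts bookkeeping. The paper's limsup step is shorter and needs no induction. Your own decomposition already makes the restarted processes $w^{(k)}$ unnecessary. Since $v$ satisfies the noise-free recursion from any starting time, the single global $w$ gives directly $\limsup_t\norm{v_t}_\infty\le\beta(L+\epsilon)+\epsilon$. Combined with $w_t\to0$, this yields the paper's inequality for $L$ and merges the two arguments.

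Two small repairs are needed:
\begin{enumerate}
\item The step ``eventually $c_{t-1}\le\beta\epsilon D_k$'' requires $\beta>0$. Replace $\beta$ by any $\beta'\in(0,1)$ with $\beta'\ge\beta$.
\item For $D_k$ to decrease you need $\beta(1+2\epsilon)<1$, not $\beta(1+\epsilon)<1$.
\end{enumerate}
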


\begin{proof}
Fix a coordinate \(i\in\{1,\ldots,d\}\). The process
\[
    N_n^{(i)}:=\sum_{t=1}^{n}\alpha_t\zeta_t(i)
\]
is a square-integrable martingale whose predictable quadratic variation is bounded, since
\[
    \sum_{t=1}^{\infty}
    \E[\alpha_t^2\zeta_t(i)^2\mid\cF_{t-1}]
    \le
    C\sum_{t=1}^{\infty}\alpha_t^2
    <
    \infty.
\]
Hence the martingale convergence theorem implies that \(N_n^{(i)}\) converges almost surely. Therefore its tails vanish when \(n\rightarrow\infty\):
\begin{equation}
\label{eq:q_learning_unweighted_martingale_tail}
    \sup_{m\ge n}
    \left|
    \sum_{t=n}^{m}\alpha_t\zeta_t(i)
    \right|
    \longrightarrow0
    \qquad
    \text{almost surely.}
\end{equation}
For integers \(a\le b\), define
\[
    \Phi_{a:b}:=\prod_{k=a}^{b}(1-\alpha_k),
    \qquad
    \Phi_{a:b}:=1\quad\text{if }a>b.
\]
Unrolling~\eqref{eq:q_learning_abstract_martingale_recursion} from time \(n\) to time \(m\) yields
\begin{equation}
\label{eq:q_learning_unrolled_abstract_recursion}
    u_m(i)
    =
    \Phi_{n+1:m}u_n(i)
    +
    \sum_{t=n+1}^{m}\Phi_{t+1:m}\alpha_t g_{t-1}(i)
    +
    \sum_{t=n+1}^{m}\Phi_{t+1:m}\alpha_t\zeta_t(i).
\end{equation}
By summation by parts, because the weights \(\Phi_{t+1:m}\) lie in \([0,1]\) and are monotone in \(t\), the weighted martingale tails are controlled by the unweighted tails. In particular, when \(n\rightarrow\infty\)
\begin{equation}
\label{eq:q_learning_weighted_martingale_tail}
    \sup_{m\ge n}
    \left|
    \sum_{t=n+1}^{m}\Phi_{t+1:m}\alpha_t\zeta_t(i)
    \right|
    \longrightarrow0
    \qquad
    \text{almost surely.}
\end{equation}
Let
\[
    L:=\limsup_{t\to\infty}\norm{u_t}_\infty.
\]
Since \(u_t\) is bounded, \(L<\infty\). Fix \(\epsilon>0\). Choose \(n\) large enough so that, for all \(t\ge n\),
\[
    \norm{u_t}_\infty\le L+\epsilon,
    \qquad
    c_t\le\epsilon,
\]
and so that the weighted martingale tail in~\eqref{eq:q_learning_weighted_martingale_tail} is at most \(\epsilon\) for every coordinate. Using~\eqref{eq:q_learning_unrolled_abstract_recursion}, we obtain, for every \(m\ge n\),
\begin{align*}
    |u_m(i)|
    &\le
    \Phi_{n+1:m}|u_n(i)|
    +
    \sum_{t=n+1}^{m}\Phi_{t+1:m}\alpha_t
    \left(\beta\norm{u_{t-1}}_\infty+c_{t-1}\right)
    +
    \epsilon
    \\
    &\le
    \Phi_{n+1:m}|u_n(i)|
    +
    \left(1-\Phi_{n+1:m}\right)
    \left(\beta(L+\epsilon)+\epsilon\right)
    +
    \epsilon,
\end{align*}
where we used
\[
    \sum_{t=n+1}^{m}\Phi_{t+1:m}\alpha_t
    =
    1-\Phi_{n+1:m}.
\]
Since \(\sum\limits_{t = 1}^{\infty}\alpha_t=\infty\), we have \(\Phi_{n+1:m}\to0\) as \(m\to\infty\). Taking \(\limsup\limits_{m\to\infty}\) and maximizing over the finitely many coordinates gives
\[
    L\le \beta(L+\epsilon)+2\epsilon.
\]
Equivalently,
\[
    (1-\beta)L\le(\beta+2)\epsilon.
\]
Letting \(\epsilon\rightarrow0\) yields \(L=0\), because \(\beta<1\). Thus \(u_t\to0\) when \(t\rightarrow\infty\) almost surely. This concludes the lemma proof.
\end{proof}

Now, by taking the recursion~\eqref{eq:q_learning_martingale_error_recursion} is of the form~\eqref{eq:q_learning_abstract_martingale_recursion} with
\[
    u_t=\Delta_t,
    \qquad
    g_t=h_t,
    \qquad
    \zeta_{t+1}=\xi_{t+1},
    \qquad
    \beta=\gamma,
    \qquad
    c_t\equiv0
\]
all assumptions of Lemma~\ref{lem:q_learning_martingale_contraction} hold. Consequently, for \(t\rightarrow\infty\)
\[
    \Delta_t\to0
    \qquad
    \text{almost surely.}
\]
Equivalently, when \(t\rightarrow\infty\)
\[
    \norm{Q_t-Q^\star}_\infty\to0
    \qquad
    \text{almost surely,}
\]
which proves $Q$-learning convergence. The convergence of $Q$-learning algorithm also establishes an almost surely asymptotic optimality of the corresponding greedy policy:
\[
    \pi_t(s)\in\argmax_{a\in\cA}Q_t(s,a),\qquad\pi_t\rightarrow\pi^\star\text{ as }t\rightarrow\infty.
\]
or
\[
    \norm{V^{\pi_t}-V^\star}_\infty\to0
    \qquad
    \text{almost surely.}
\]
To prove it one has to bound the norm above. Since \(\pi_t\) is greedy with respect to \(Q_t\):
\[
    \mathcal{B}^{\pi_t}Q_t=\mathcal{B}Q_t.
\]
Using \(Q^\star=\mathcal{B}Q^\star\) and \(Q^{\pi_t}=\mathcal{B}^{\pi_t}Q^{\pi_t}\), we have
\begin{align*}
    \norm{Q^\star-Q^{\pi_t}}_\infty
    &\le
    \norm{\mathcal{B}Q^\star-\mathcal{B}Q_t}_\infty
    +
    \norm{\mathcal{B}^{\pi_t}Q_t-\mathcal{B}^{\pi_t}Q^{\pi_t}}_\infty
    \\
    &\le
    \gamma\norm{Q^\star-Q_t}_\infty
    +
    \gamma\norm{Q_t-Q^{\pi_t}}_\infty
    \\
    &\le
    2\gamma\norm{Q_t-Q^\star}_\infty
    +
    \gamma\norm{Q^\star-Q^{\pi_t}}_\infty.
\end{align*}
Rearranging gives
\[
    \norm{Q^\star-Q^{\pi_t}}_\infty
    \le
    \frac{2\gamma}{1-\gamma}\norm{Q_t-Q^\star}_\infty.
\]
The right-hand side converges to zero almost surely as we recently proved. Since
\begin{align*}
    \norm{V^{\pi_t} - V^\star}_{\infty} &= \max\limits_{s\in\mathcal{S}}\left\{V^{\star}(s) - V^{\pi_t}(s)\right\}
    \\
    &= \max\limits_{s\in\mathcal{S}}\left\{\max\limits_{a\in\mathcal{A}}\left\{Q^\star(s, a)\right\} - \max\limits_{a'\in\mathcal{A}}\left\{Q^{\pi_t}(s, a')\right\}\right\}
    \\
    &\leq\max\limits_{s\in\mathcal{S}}\max\limits_{a\in\mathcal{A}}\left\{Q^\star(s, a) - Q^{\pi_t}(s, a)\right\} = \norm{Q^{\pi_t} - Q^\star}_{\infty}
\end{align*}
the claim follows.

{The work}~\cite{li2024q} established that $Q$-learning achieves a sample complexity of 
\begin{equation}\label{eq:q_learning_sample_complexity_generative_model}
    \begin{aligned}
        &\widetilde{\mathcal{O}}\left(\frac{SA}{(1-\gamma)^4 \varepsilon^2}\right)\eqsp,
    \end{aligned}
\end{equation}
when using constant or linearly rescaled step sizes.
In \cite{wainwright2019stochastic}, $Q$-learning is analyzed within the more general framework of stochastic approximation with contractive operators, yielding slightly weaker bounds.
Finally, we note that classical Q-learning exhibits suboptimal dependence on the effective horizon compared to model-based planning methods, as discussed in \cite{li2024q}.

\paragraph{Phased Q-learning}
In the work \cite{kearns1998finite}, a modification of Q-learning algorithm was proposed. The modification is called phased Q-learning and its main distinction from the original algorithm is that it works in phases, as the name suggests, and averages values of the next states observed during a phase. A phase $\ell$ consists of several calls to the generative model for every state-action pair $(s,a)$ and an update. Let $m$ be the number of such calls, then the update reads as
$$
{Q}_{\ell+1}(s, a)=r(s,a)+ \frac{\gamma}{m} \sum_{k=1}^{m} \max_{a'\in\cA}Q_{\ell}(s_k^\ell,a'),
$$
where $s_1^{\ell}, \ldots, s_{m}^{\ell}$ are the $m$ next states observed from $(s, a)$ on the calls to the generative model during the $\ell$-th phase. Setting the epoch length to \(m = \widetilde{\mathcal{O}}(\varepsilon^{-2}(1-\gamma)^{-4})\) the algorithm computes $\varepsilon$-optimal policy, $\varepsilon \in \bigl(0, \frac{1}{1-\gamma}\bigr]$, after $\widetilde{\mathcal{O}}\left(\frac{S A}{(1-\gamma)^7\varepsilon^2}\right)$ calls to the generative model,  which is one of the first finite-sample convergence results of this type.

\paragraph{Polyak-Ruppert Averaged $Q$-learning} 
To attain the theoretical lower bound on sample complexity, several modifications of standard $Q$-learning have been proposed. A prominent example is the Polyak-Ruppert averaged $Q$-learning algorithm. The averaged iterate is defined as
\[
    \bar{Q}_t = \frac{1}{t}\sum_{i=1}^{t} Q_i,
\]
where \(\{Q_i\}_{i\ge 1}\) are updated according to the standard $Q$-learning recursion \eqref{eq:q_learning_update}.  
\cite{li2023statistical} establish that the averaged $Q$-learning iterate is minimax optimal in the sense of achieving the lower bound on the expected \(\infty\)-norm error. Furthermore, a central limit theorem is established for the averaged iterates:
\begin{equation}
    \sqrt{T}\bigl(\bar{Q}_T - Q^\star\bigr) \xrightarrow{d} \mathcal{N}(0, \Sigma),
\end{equation}
where the asymptotic covariance matrix \(\Sigma \in \mathbb{R}^{SA \times SA}\) is given by
\begin{equation}
    \Sigma = (I - \gamma P^{\pi^\star})^{-1} \operatorname{Var}(Z_1) (I - \gamma P^{\pi^\star})^{-\top}, 
    \quad Z_1 = (P - \widehat{P}_1) V^\star,
\end{equation}
where $\widehat{P}_1$ is the empirical Markov transition kernel evaluated at the first iteration. The result highlights that averaging stabilizes the stochastic fluctuations of $Q$-learning. In particular, the central limit theorem quantifies the asymptotic distribution of the estimation error, providing precise confidence intervals for each component of $Q^\star$ in the large-sample regime.

\paragraph{Variance-Reduced $Q$-learning} 
Another approach to reducing sample complexity is the variance-reduction technique proposed in \cite{wainwright2019variance}. We outline the fundamental idea behind variance reduction. Starting from the standard $Q$-learning update
\begin{equation}
    Q_t = (1-\alpha_t) Q_{t-1} + \alpha_t \widehat{\mathcal{B}}_t Q_{t-1},
\end{equation}
suppose that, in principle, we could compute both an empirical \(\widehat{\mathcal{B}}_t(Q^\star)\) and the exact Bellman update \(\mathcal{B}(Q^\star)\). In this case, one could implement the recentered update
\begin{equation}\label{eq:ideal_update}
    Q_t = (1-\alpha_t) Q_{t-1} + \alpha_t \Big(\widehat{\mathcal{B}}_t Q_{t-1} + \mathcal{B} Q^\star - \widehat{\mathcal{B}}_t Q^\star \Big).
\end{equation}
Defining the error \(\Delta_t = Q_t - Q^\star\), we obtain
\begin{equation}
    \Delta_t = (1-\alpha_t) \Delta_{t-1} + \alpha_t \widehat{\mathcal{B}}_t( \Delta_{t-1}) .
\end{equation}
By the contractivity of the Bellman operator and the unbiasedness of $\widehat{\mathcal B}_t$, one obtains \(\|\Delta_t\|_{\infty} \leq (1 - (1-\gamma)\alpha_t)\|\Delta_{t-1}\|_{\infty}\). Consequently, if this idealized algorithm could be executed with a constant step size, the error would decay geometrically. This illustrates the key intuition behind variance-reduced $Q$-learning: by centering the updates around the optimal $Q^\star$, the variance of the stochastic updates is significantly diminished, leading to faster convergence. The preceding observations can be formalized as follows. Variance-reduced $Q$-learning is executed over $M$ epochs, each of fixed length $T$. In the $m$ -th epoch, the operator \(\widetilde{\mathcal{B}}\) is computed using \(N_m\) samples per state-action pair and serves as an estimate of the Bellman operator $\mathcal{B}$ in formula \eqref{eq:ideal_update}.
The proxy for $Q^\star$, denoted $\bar{Q}$, is set to be the output from the previous epoch, providing a reference for the current epoch's updates.

\begin{algorithm}[H]
\label{alg:variance_q_learning}
\caption{RunEpoch (\(T, \bar{Q}, N\))}
\label{alg:run_epoch}
\begin{algorithmic}[1]
\REQUIRE Epoch length $T$, proxy $\bar{Q}$, recentering sample size $N$
\STATE Compute $\widetilde{\mathcal{B}}_{N}\gets \frac{1}{N} \sum_{i=1}^{N} \hat{\mathcal{B}}_{i}$ 
\STATE Initialize $Q_1 \gets \bar{Q}$
\FOR{$t = 1$ \TO $T$} 
    \STATE Update 
    \begin{equation}
    Q_t = (1-\alpha_t) Q_{t-1} + \alpha_t \Big(\widehat{\mathcal{B}}_t Q_{t-1} + \widetilde{\mathcal{B}}_n \bar{Q} - \widehat{\mathcal{B}}_t \bar{Q} \Big).
\end{equation}
\ENDFOR
\RETURN $Q_{T}$
\end{algorithmic}
\end{algorithm}
Algorithm \ref{alg:variance_q_learning} with appropriate choice of \(N_m\) and \(T\) achieves the minimax-optimal sample complexity~\eqref{eq:dmdp_samples}.
\
\paragraph{Markovian Sample Trajectory}
Unfortunately, the generative model framework is primarily of theoretical interest. In practice, the assumption that one can sample transitions from any state–action pair at will is unrealistic. Typically, an agent interacts with the environment sequentially, collecting data along a Markovian trajectory \( \tau = \{(s_t,a_t,r_t)\}_{t=0}^{\infty}\), which is generated under a fixed stationary behavior policy  \(\pi_b\): 
\[
a_t \sim {\pi_b}(s_t), \quad r_t = r(s_t, a_t), \quad s_{t+1} \sim P(\cdot |s_t, a_t).
\]
The behaviour policy can be arbitrary yet with the positive probability to sample an arbitrary action $a_t\in\mathcal{A}$ at state $s_{t}\in\mathcal{S}$ for finite iteration $t\in\mathbb{Z}_{+}$ to make the whole process convergent. Upon observing the transition tuple \((s_t,a_t,r_t,s_{t+1})\), the update takes the form
\[
Q_{t+1}(s_{t}, a_{t}) =Q_{t}(s_{t}, a_{t}) + \alpha_t \left(r_t + \gamma \max_{a\in\cA} Q_{t}(s_{t+1}, a) - Q_t(s_t,a_t)\right)\eqsp,
\]
while leaving other entries unchanged.
In this online setting, the agent observes correlated transitions along a single trajectory and updates its estimates incrementally. Consequently, algorithms must handle the inherent dependencies between consecutive samples, as well as potential distributional mismatch when learning a target policy \(\pi\) that differs from the behavior policy \(\pi_b\), giving rise to the classical off-policy learning problem.

To ensure that the agent collects sufficient information about every transition \(P(s'|s,a)\), we assume that The Markov chain induced by the behavior policy \(\pi_b\) is uniformly ergodic.
Two crucial quantities that dictate the performance of asynchronous Q-learning are the minimum state-action occupancy probability
\[\mu_{\text{min}} = \min_{(s,a)\in\cS\times\cA} \mu_{{\pi_b}}(s,a)\eqsp,\]
the mixing time \(t_{\operatorname{mix}}\), which characterizes how quickly the distribution of visited states along the trajectory approaches the stationary distribution and is defined as the smallest integer $t$ such that
\[
\sup_{s\in\cS}
\|
P_{\pi_b}^t(\cdot|s)
-
\nu_{\pi_b}
\|_{\mathrm{TV}}
\le \tfrac14.
\] \cite{li2024q} demonstrates that with high probability, the total sample size needed for asynchronous
Q-learning is 
\begin{equation}\label{q_upper_bound}
    \widetilde{\mathcal{O}}\left(\frac{\mu_{\operatorname{min}}^{-1}}{(1-\gamma)^4 \varepsilon^2} + \frac{\mu_{\operatorname{min}}^{-1}t_{\operatorname{mix}}}{(1-\gamma)}\right)\eqsp,
\end{equation}
provided that the learning rates are taken to be some proper constant $\alpha_{t} = \alpha$.  The bound in \eqref{q_upper_bound} is comparable to the corresponding result in the generative model setting, with the factor \(\mu_{\operatorname{min}}^{-1}\) playing a role analogous to the size of the state-action space 
\(SA\).
\remark{
Learning algorithms can be divided into two groups: on-policy and off-policy methods. On-policy methods aim at evaluating or improving the policy that they use to interact with the environment. In off-policy methods, there is a \textit{behavior policy} $\pi_b$ that is used to generate samples and \textit{target policy} $\pi$ that is being evaluated. Under this terminology, the Q-learning algorithm is an off-policy method.}
\paragraph{Stochastic Mirror Descent}\label{par:smd} 
\normalfont\selectfont 
The first results in this direction were obtained in \cite{wang2020randomized}. Here, however, we focus on the setting studied in \cite{jin2020efficiently}, with particular attention to the average-reward MDP formulation, while noting that analogous results also hold in the discounted case.  
We begin by recalling (see \eqref{eq:amdp_lp_matrix}, \eqref{eq:amdp_dual_form}) that the problem of computing an optimal policy can be cast as the linear program
\[
    \min_{\overline{V},\, V}\overline{V} 
    \quad \text{subject to } (\widehat{I} - P)V + \vec{1}\cdot \overline{V} \;\geq\; R\eqsp,
\]
whose dual is
\[
    \max_{\mu{\in\Delta^{S\times A}}}\eqsp\langle\mu, R\rangle 
    \quad \text{subject to } {(\widehat{I} - P)^\top\mu} \;=\; 0\eqsp.
\]
By standard linear programming duality, the problem can be reformulated using Lagrangian multipliers as a bilinear saddle-point problem. For AMDPs, the minimax formulation is given by
\begin{equation}\label{eq:saddle_point_formulation}
    \min_{V,\overline{V}}\max_{\mu{\in \Delta^{S\times A}}} f(\overline{V}, V, \mu), 
\end{equation}
{where
\begin{align}
    f(\overline V,V,\mu) &:= \overline V + \mu^\top\big( -  \overline V \cdot\vec{1} + (P - \widehat I)V +R\big).\\
    &= 
\mu^\top\big((P - \widehat I)V +R\big).
\end{align}
}
The central idea of \cite{jin2020efficiently} is to view \eqref{eq:saddle_point_formulation} within the general framework of $\ell_\infty$-$\ell_1$ bilinear games~\cite{karmarkar2026solving}. Such games arise when one player minimizes a bilinear function over a box domain (\(\ell_\infty\)), while the other maximizes over a simplex domain (\(\ell_1\)): 
\begin{equation}\label{eq:bilinear}
    \min_{x \in \mathbb{B}_b^n} \max_{y \in \Delta^m}  f(x,y) 
    = y^\top Mx + b^\top x - c^\top y,
\end{equation}
where \(\mathbb{B}_b^n = b \cdot [-1,1]^n\) denotes the $\ell_\infty$ box constraint, and \(\Delta^m\) denotes the probability simplex.  
To solve \eqref{eq:bilinear}, stochastic mirror descent is applied with divergence functions
\[
    V_x(x') = \tfrac{1}{2}\|x - x'\|^2, 
    \qquad 
    V_y(y') = \operatorname{KL}(y \,\|\, y'),
\]
corresponding respectively to the Euclidean and Kullback--Leibler divergences. At iteration \(t\), given stochastic gradients \( \E[g_t^x|x_t, y_t] = f_x(x_t,y_t)\) and \(\E[g_t^y|x_t, y_t] = {-f_y(x_t,y_t)}\), the updates take the form
\[
    x_{t+1} = \arg\min_{x \in \mathbb{B}_b^n} \bigl\{\langle \alpha g_t^x, x\rangle + V_{x_t}(x)\bigr\}, 
    \qquad 
    y_{t+1} = \arg\min_{y \in \Delta^m} \bigl\{\langle \beta g_t^y, y\rangle + V_{y_t}(y)\bigr\}.
\]
It remains to specify how stochastic gradients of \eqref{eq:saddle_point_formulation} can be constructed under access to a generative model. 
The key observation is that the saddle-point objective admits unbiased gradient estimators obtained via local sampling. For the primal variable \(V\), the gradient is given by
\(
    \nabla_V f(\overline{V}, V, \mu) = {\mu^\top (P - \widehat{I})}\eqsp.
\)
To obtain a stochastic estimate, we first sample a state-action pair \((s,a) \sim \mu\) and then draw \(s' \sim P(\cdot | s,a)\). An unbiased estimator is
\[
    g^V = \mathbf{e}_{s'} - \mathbf{e}_s\eqsp.
\]
For the dual variable \(\mu\), the gradient is given by
\(
    \nabla_\mu f(\overline{V}, V, \mu) = {(P - \widehat{I} )V + R\eqsp.}
\)
{Since the dual player maximizes \(f\), while the mirror-descent update above is written in descent form, we use an unbiased estimator of \(-\nabla_\mu f\).}
In this case, we sample uniformly from the state-action space, \((s,a) \sim \mathrm{Unif}(\mathcal{S}\times\mathcal{A})\), then draw $s' \sim P(\cdot | s,a)$, and set
\[
    {g^\mu \;=\; SA \bigl(V(s) - V(s') - r(s,a)\bigr)\,\mathbf{e}_{s,a}\eqsp,}
\]
where the prefactor \(SA\) corrects for uniform sampling and $\mathbf{e}_{s,a}$ denotes the canonical basis vector for \((s,a)\).  
This construction illustrates a remarkable feature of the mirror descent framework: despite the apparent complexity of the bilinear formulation, each stochastic gradient step can be implemented using only local one-step samples from the generative model, without the need to reconstruct global transition probabilities.

It is worth remarking on two complementary lines of work. The study \cite{van2021minimum} examines closely related reductions of planning in MDPs to $\ell_1$-regression and linear programming, and develops nearly–linear time algorithms for dense instances; this line of research supplies an algorithmic alternative that is particularly attractive when the underlying LPs are dense and admit efficient dynamic preconditioning. By contrast, \cite{neu2023efficient} investigates a framework that combines linear programming with linear function approximation and employs stochastic primal–dual optimization to produce a compact policy representation via entirely offline computation.

\section{Policy Evaluation}\label{sec5}
We temporarily shift our focus from the policy control problem to the policy evaluation problem, 
that is, the estimation of the value function \(
V^{\pi}(s)\).
Policy evaluation plays a central role in reinforcement learning, as it provides
a quantitative assessment of the long-term performance of a given policy and
serves as a key component of policy improvement methods. In particular, many
algorithms (such as policy iteration and actor-critic methods) rely on repeated
evaluation of intermediate policies. It is also essential in off-policy settings,
where one seeks to evaluate a target policy using data generated by a different
behavior policy, a situation that frequently arises in recommender systems.

\subsection{Temporal difference (TD) learning}
TD methods form the basis of many modern reinforcement learning algorithms. 
They are statistically efficient, update estimates online, and extend naturally to 
non-tabular approximations. We begin with the classical tabular setting and then move to the more general case of MDPs with linear function approximation.

We begin by recalling that, in the full-knowledge setting, the state–value function \(V^\pi\) is  the unique solution of the linear system
\begin{equation}
\label{eq:consistency_equation}
(I-\gamma P_{\pi})V^\pi = \rp\eqsp,
\end{equation}
where \(P_\pi\) and \(\rp\) defined in \eqref{eq:p_pi_def} and \eqref{eq:bellman_in_matrix}.
Our objective is to recover \(V^{\pi}\) when the kernel \(P_{\pi}\) is unknown and only sample transitions are available. We consider this problem under general Linear Stochastic Approximation (LSA) framework.
\paragraph{Linear Stochastic Approximation} LSA aims to solve a linear system \(A\theta = b\) with a unique
solution \(\theta^\star\). We do not have access to the coefficients \(A\) and \(b\) but instead we have access to a sequence of observations \(\{\bA(X_n), \bb(X_n)\}_{n\in\mathbb{N}}\), where \((X_n)_{n\in\mathbb{N}}\) are noise variables taking values in a Polish space \(\mathcal{X}\) and \(\bA  : \mathcal{X}\rightarrow \mathbb{R}^{d\times d}\), \(\bb : \mathcal{X}\rightarrow \mathbb{R}^{d}\)
are measurable functions. For a fixed step size \(\alpha>0\) and initialization \(\theta_0\in \mathbb{R}^d\), we consider the
sequences of LSA iterates \(\{\theta_n\}_{n\in\mathbb{N}}\) given by
\begin{equation}
    \theta_{k+1} = \theta_{k} - \alpha (\bA(X_{k+1})\theta_{k} - \bb(X_{k+1}))\eqsp.
\end{equation}
LSA is a well-studied area of research \cite{ durmus2021tight, samsonov2026statistical, durmus2025finite}. To ensure convergence to the solution, it is standard to assume that the matrix $-A$ is Hurwitz. Two common types of noise variables are typically considered: independent (i.i.d.) and Markovian.
\begin{assumption}
The random variables \((X_k)_{k \in \mathbb{N}}\) are i.i.d. satisfying  \(\mathbb{E}[\bA(X_1)] = A,\) and  \(\mathbb{E}[\bb(X_1)] = {b}\).
\end{assumption}
\begin{assumption}
    The sequence $(X_k)_{k \in \mathbb{N}}$ is assumed to form a Markov chain with a Markov kernel $P$. Moreover, $P$ is uniformly geometrically ergodic with a unique invariant distribution $\mu$ satisfying \(\mathbb{E}_{X\sim\mu}[\bA(X)] = A,\) and  \(\mathbb{E}_{X\sim\mu}[\bb(X)] = {b}\).
\end{assumption}
The analysis of the Markovian case is more challenging; however, it is often worthwhile, as Markov noise sequences arise more frequently in practical applications. The complete TD algorithm is as follows:
\begin{algorithm}[H]
\caption{TD(0)}
\label{alg:td}
\begin{algorithmic}[1]
    \REQUIRE Policy \(\pi\), step size \(\alpha\), discount factor \(\gamma\)
    \STATE Initialize \(V_0 : \mathcal{S} \to \mathbb{R}\) arbitrarily
    \FOR{\(k = 1,\ldots,n\)} 
        \STATE Observe transition \( (s_k,a_k,r_k,s_{k+1}) \), where 
        \(a_k \sim \pi(\cdot \mid s_k)\), \(s_{k+1} \sim P(\cdot \mid s_k,a_k)\)
        \STATE Update
        \[
        V_k(s_k)
        \gets
        V_{k-1}(s_k)
        +
        \alpha\bigl(
        r_k + \gamma V_{k-1}(s_{k+1}) - V_{k-1}(s_k)
        \bigr)
        \]
    \ENDFOR
    \RETURN \(V_n\)
\end{algorithmic}
\end{algorithm}
\remark{
Algorithm~\ref{alg:td} is an on-policy method, as the data is generated by the
same policy \(\pi\) that is being evaluated (i.e., the behavior and target policies coincide).
Off-policy variants (see \eqref{eq:importance}) allow the use of data collected under a different behavior policy
\(\pi_b \), but require additional correction mechanisms, such as importance sampling,
to account for the distribution mismatch.
}
To clarify the connection between TD(0) and LSA, we rewrite update rule in matrix form:
\begin{equation}
    V_{k} = V_{k-1} - \alpha\bigg(\underbrace{\be_{s_k}(\be_{s_k}^\top - \gamma\be_{s_{k+1}}^\top)}_{\bA(X_k)}V_{k-1} - \underbrace{\be_{s_k}\be_{s_k,a_k}^\top r}_{\bb({X_k})}\bigg)
\end{equation}
It is straightforward to verify that
\[\E_{\mu_{\pi}}[\bA(X_k)] = \operatorname{diag}(\mu_{\pi})(I - \gamma P_\pi), \quad \E_{\mu_\pi}[\bb(X_k)] =\operatorname{diag}(\mu_\pi) r_\pi\eqsp,\]
where \(\mu_\pi\) denotes stationary distribution under policy \(\pi\). As long as \(\mu_\pi > 0 \), this immediately implies  \eqref{eq:consistency_equation}. We note that $-A$ is Hurwitz due to the stationarity condition $\mu_\pi^\top P_\pi = \mu_\pi^\top$, which allows the standard linear stochastic approximation theory to be applied directly.

We have considered the on-policy setting. However, similar updates hold in the off-policy regime. Let \(\pi\) be the target policy and \(\pi_b\) the behavior policy. Then, the off-policy TD(0) update is 
\begin{align}
\label{eq:importance}
V_k(s_k) = {(1-\alpha\rho_k)}V_{k-1}(s_k) + \alpha \rho_k(r_k + \gamma V_{k-1}(s_{k+1})), \quad \rho_k = \frac{\pi(a_k|s_k)}{\pi_b(a_k|s_k)}.
\end{align}
Here, $\rho_k$ is the \emph{importance sampling ratio}, which corrects for the \emph{distributional shift} between the behavior policy $\pi_b$ and the target policy $\pi$. It reweights the observed transitions so that the expected update corresponds to the dynamics under $\pi$, despite sampling from $\pi_b$. Since the behavior policy distribution $\mu_{\pi_b}$ does not correspond to the target policy transition kernel $P_\pi$, the matrix $A = \E[\bA(X_k)] = \operatorname{diag}(\mu_{\pi_b})(I-\gamma P_\pi)$ cannot be guaranteed to be Hurwitz. Consequently, the standard LSA approach cannot be applied directly.

\paragraph{Generalizations of TD}
\normalfont\selectfont 
Beyond the classical TD(0), there exists a broader family of algorithms denoted by TD(\(k\)), where the update is based on the \(k\)-step return
\[
    V_{t+1}(s_t) = V_t(s_t) + \alpha \big( G_t^{(k)} - V_t(s_t) \big),
\]
\[
    G_t^{(k)} = \sum_{\ell=0}^{k-1} \gamma^\ell r_{t+\ell} + \gamma^k V_t(s_{t+k}) .
\]

The extreme case $k \to \infty$ gives the \emph{Monte Carlo estimator}, which uses the full return until the end of the trajectory.
Thus, TD($k$) interpolates between TD(0) ($k=1$) and Monte Carlo ($k=\infty$), allowing one to balance bias and variance. 

A more general approach is TD($\lambda$)\cite{sutton1988tdlambda}, which combines all $k$-step returns using exponentially decaying weights:
\[
    G_t^{(\lambda)} = (1-\lambda) \sum_{k=1}^\infty \lambda^{k-1} G_t^{(k)},
\]
where $\lambda \in [0,1]$. This unifies TD(0) ($\lambda=0$) and Monte Carlo ($\lambda=1$).

\subsection{Linear function approximation}
Temporal-difference methods based on gradient descent and linear function approximation \cite{samsonov2024improved} form a core part of the modern field of reinforcement learning. We consider the \emph{linear function approximation} for $V^{\pi}(s)$, defined for $s \in \cS$, $\theta \in \mathbb{R}^{d}$, and a feature mapping $\varphi :  \cS \to \mathbb{R}^{d}$ as $V_{\theta}^{}(s) =  \varphi^\top(s) \theta$. Our goal is to find a parameter $\theta^\star$, which defines the best linear approximation to $V^{\pi}$ (the precise notion of “best” will be specified below). In this context, it is natural to consider $\mathbb{R}^{S}$ as the vector space of functions $V:\cS \to \mathbb{R}$.  
The set 
\( = \{ V_\theta : \theta \in \mathbb{R}^d \}
\) is then a $d$-dimensional subspace of $\mathbb{R}^{S}$, spanned by the feature functions $\{\varphi_i\}_{i=1}^d$. We denote by \(\Phi\in \R^{S\times d}\) the feature matrix with entries \(\Phi(s,i) = \varphi_i(s)\). Using this notation, the value function can be written compactly as  \(V_\theta = \Phi\theta\). The TD(0) algorithm with linear function approximation  in the on-policy setting updates the parameter $\theta_k$ as
\begin{equation}\label{eq:td_with_approx}
    \theta_{k} = \theta_{k-1} - \alpha \varphi(s_k)  \delta_k,\quad  \delta_k = \varphi(s_k)^\top \theta_{k-1} -  r_k -\gamma\varphi(s_{k+1})^\top \theta_{k-1} \eqsp.
\end{equation}
Note that the update rule in Algorithm~\ref{alg:td} can be viewed as a special case of \eqref{eq:td_with_approx} when the feature map is chosen to be the one-hot encoding, i.e. \(\varphi(s) = \be_s\). In the general case, consider the linear system
\(A \theta^\star = b\) where
\begin{align}
    A &= \mathbb{E}_{\mu_\pi} \big[ \varphi(s)\,(\varphi(s) - \gamma \varphi(s'))^\top \big] = \Phi^\top \operatorname{diag}(\mu_{\pi} )(I - \gamma P_\pi) \Phi\eqsp\eqsp,\\
    b &= \mathbb{E}_{\mu_\pi} \big[ \varphi(s)\, r(s,\pi(s))\big] = \Phi^\top \operatorname{diag}(\mu_{\pi}) r_\pi\eqsp.
\end{align}
As in the on-policy TD without function approximation, the matrix $A$ can be shown to be Hurwitz, ensuring that the sequence $\{\theta_k\}$ converges to $\theta^\star$, the point at which $\mathbb{E}[\delta \phi] = 0$, i.e., with zero temporal difference error.

It is well-known that the TD(0) algorithm with linear function approximation may diverge in the off-policy setting. 
A classical counterexample demonstrating this phenomenon was presented by \citet{baird1995residual}, where TD(0) updates can lead to unbounded growth of the estimated value function, even if the feature space can perfectly represent the true value function. Divergence may be hidden in the matrix
\(A = \Phi^\top \operatorname{diag}(\mu_{\pi_b}) (I - \gamma P_\pi) \Phi\), which is not necessarily Hurwitz when the distribution of the behavior policy differs significantly from the target distribution $\mu_\pi$. To address this, advanced policy evaluation methods such as Generalized Temporal Difference learning (GTD2) and Temporal-Difference learning with Gradient Correction (TDC) were developed in \cite{sutton2008convergent,sutton:gtd2:2009}. These algorithms can be interpreted as true gradient methods, designed to optimize the Mean Squared Bellman Projection Error (MSBPE) given by
\[\E_{\mu_\pi}[\|V_\theta - \Pi_{\varphi}\mathcal{B}_\pi V_\theta\|^2]\eqsp,\]
where \(\Pi_\varphi\) denotes the projection onto the feature subspace.
The MSBPE is a natural loss function because the Bellman operator generally maps \(V_\theta\) outside of the feature space. The convergence of these methods can be analyzed within the Two-Time-Scale Stochastic Approximation framework; see \cite{butyrin2026gaussian} for details. Importantly, GTD and TDC remain stable even in off-policy settings, with their iterates converging to the fixed point of the projected Bellman operator \(\Pi_\varphi \mathcal{B}_\pi\).

\begin{center}
    \section{Forward model setting}\label{sec:forward}
\end{center}
\subsection{Introduction to Stochastic Multi-Armed Bandits}

The multi-armed bandit (MAB) problem is a fundamental model in sequential decision making, capturing the trade-off between exploration and exploitation. The setting can be informally described as a gambler facing several slot machines (“arms”), each providing stochastic rewards from an unknown distribution, and aiming to maximize the total reward over time.

The concept of the MAB problem was first introduced by \citet{robbins1952some}, who formulated it in the context of the sequential design of experiments. Since then, the problem has served as a canonical benchmark framework for developing and analyzing learning algorithms with provable performance guarantees.

A comprehensive overview of the field is provided in the modern survey by \citet{slivkins2019introduction}, which covers stochastic, adversarial, contextual, and structured variants of the problem. Another influential tutorial is the work of Bubeck and Cesa-Bianchi \cite{bubeck2012regret}, which presents a unified regret analysis framework for both stochastic and non-stochastic settings.
We now provide a formal definition of the problem:

\subsubsection{Definitions}

\begin{definition}[Stochastic Multi-Armed Bandit Problem]
    Given $K$ possible actions $\cA$ (a.k.a. \textit{arms}), each arm $a$ has its underlying distribution of rewards $\cD_a$. Throughout the following discussion, we assume that the support of each distribution \(\cD_a\) is contained in the interval \([0,1]\eqsp.\) The goal of the algorithm (a.k.a. \textit{agent}) is to find an arm $a$ that maximizes expectation of an observed reward $\mu(a) = \E[\cD_a]$ during $T$ rounds of interaction.
    
    \textbf{Interaction protocol.}
    In each round $t \in [T]\eqsp$: 
    \begin{itemize}
        \item Agent picks arm $a_t \in \cA\eqsp$;
        \item Agent receives reward $r_t \sim \cD_{a_t}$ for a chosen arm $a_t\eqsp$.
    \end{itemize}
    
    All rewards generated by a single arm assumed to be independent and identically distributed. For simplicity we assume bounded reward $r_t \in [0,1]$.
\end{definition}

This problem is quite general and have several clear applications.
\begin{example}[Recommendation systems]
    We aim to choose which item to recommend to a new user on a brand new online cinema. The arms $K$ represent all the films available in the library, and $T$ denotes the time during which the user visits the website. For each displayed film advertisement, we receive a reward of $0$ (no reaction) or $1$ (click), where these reactions are assumed to be i.i.d. Thus, our goal is to show film preview that has the highest possible probability of being clicked.\footnote{When more data is available, classical algorithms for known users (such as collaborative filtering) can be applied. However, these approaches do not work well for newly registered “cold” users. To address this, the problem needs to be formulated in a contextual setting.}
\end{example}

\begin{example}[Medical Trials]
    Assume we are assisting a doctor who specializes in a particular illness. She has a set of possible treatments $K$, and for each patient she must choose one treatment to maximize the effectiveness, which is quantified as a number in the interval $[0,1]$. If we assume that each patient comes from a fixed distribution, then the i.i.d. assumption is satisfied.
\end{example}
Let us first summarize the notation. The set of arms (or actions) is denoted by $\mathcal{A}$, and individual arms are typically represented by $a$, with or without indices. The underlying reward distribution associated with arm $a$ is denoted by $\mathcal{D}_a$. The expected reward (mean) of arm $a$ is defined as $\mu(a) \triangleq \E[\cD_a]$, and the optimal (best) expected reward is given by $\mu^\star \triangleq \max_{a \in \cA} \mu(a)$. The quantity $\Delta(a) \triangleq \mu^\star - \mu(a)$ measures how much worse arm $a$ is compared to the optimal reward $\mu^\star$; this value is referred to as the \textit{gap} of arm $a$. An optimal arm $a^\star$ is an arm with $\mu(a^\star) = \mu^\star$ or equivalently, $\Delta(a^\star) = 0$. Note that the optimal arm need not be unique. Next, we define a performance measure as a \textit{cumulative regret} (or just regret) at round $T$\footnote{In the literature this quantity is often called \textit{pseudo-regret}.}
\[
    \mathcal{R}(T) \triangleq \sum_{t=1}^T \mu^\star - \mu(a_t) = T \mu^\star - \sum_{t=1}^T \mu(a_t)\eqsp.
\]
Notice that $\mathcal{R}(T)$ is a random variable that depends on actions $a_t$ that may depend on the randomness in obtained rewards and in the algorithm itself. During this chapter, we will focus on the \textit{expected regret} $\E[\mathcal{R}(T)]$. 

\subsubsection{Exploration-Exploitation Tradeoff}

A natural question arises: why do we choose this particular notion of performance, instead of simply considering the gap of the final selected action $\mu^\star - \mu(a_T)$? The practical reason is connected to the fact that it is assumed to use the answers of bandit algorithms from the beginning without waiting a lot of time while it converges well.

Another reason is theoretical: considering regret allows us to face the so-called \textit{exploration-exploitation dilemma}: at each round $t$, the algorithm faces a trade-off between exploring less frequently chosen arms to gather more information (exploration) and exploiting the arm with the highest estimated mean reward based on current knowledge (exploitation).

In this scenario, it is necessary to ensure that the expected regret grows \textit{sublinearly}, i.e., $\E[\mathcal{R}(T)/T] \to 0$. In particular, this implies that $\mu^\star - \mu(a_T) \to 0$, so by considering regret we can only make our task harder. Next we consider the following naive scheme for regret minimization.

\paragraph{Explore-First Algorithm}
\begin{itemize}
    \item Exploration phase: Try each arm $N$ times;
    \item Select arm $\hat a$ with the highest average rewards;
    \item Exploitation phase: play arm $\hat a$ in all remaining $T - NK$ rounds.
\end{itemize}

\begin{theorem}\label{th:explore_first_regret}
    For $N=\mathcal{O}(T\sqrt{\log(T)}/K)^{2/3}$ Explore-First Algorithm achieves 
    \[
        \E[\mathcal{R}(T)]  \leq \mathcal{O}\left( T^{2/3} (K \log (T))^{1/3} \right).
    \]
\end{theorem}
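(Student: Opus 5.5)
The argument is the standard "clean event" analysis. We split the regret into the exploration phase and the exploitation phase, and control the second one by a concentration bound on the empirical means.

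\emph{Clean event.} For each arm $a$, let $\bar\mu(a)$ be the average of its $N$ exploration rewards. The rewards lie in $[0,1]$ and are i.i.d., so Hoeffding's inequality gives
\[
\mathbb{P}\bigl(|\bar\mu(a)-\mu(a)|>r\bigr)\le 2e^{-2Nr^2}.
\]
We choose the confidence radius $r=\sqrt{2\log T/N}$, which makes each such probability at most $2T^{-4}$. A union bound over the $K\le T$ arms (we may assume $K\le T$, since otherwise the claim is trivial because $\mathcal{R}(T)\le T$) shows that the clean event
\[
\mathcal{E}=\bigl\{|\bar\mu(a)-\mu(a)|\le r \ \text{for all } a\bigr\}
\]
fails with probability at most $2/T^{2}$. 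On the complement of $\mathcal{E}$ we use the trivial bound $\mathcal{R}(T)\le T$. This contributes at most $2/T$ to the expected regret, which is negligible.

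\emph{Regret on $\mathcal{E}$.} The exploration phase costs at most $NK$, since each round has gap at most $1$. For the exploitation phase, the chosen arm $\hat a$ maximizes $\bar\mu$. On $\mathcal{E}$ this gives
\[
\mu(\hat a)+r\ge\bar\mu(\hat a)\ge\bar\mu(a^\star)\ge\mu^\star-r,
\]
so $\Delta(\hat a)\le 2r$. The exploitation phase therefore contributes at most $2rT$, and
\[
\E[\mathcal{R}(T)]\le NK+2T\sqrt{2\log T/N}+O(1/T).
\]

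\emph{Choice of $N$.} We balance $NK$ against $T\sqrt{\log T/N}$. This gives $N^{3/2}K\asymp T\sqrt{\log T}$, that is, $N\asymp\bigl(T\sqrt{\log T}/K\bigr)^{2/3}$, which is exactly the prescribed choice. Substituting back, both terms are of order $K^{1/3}T^{2/3}(\log T)^{1/3}$, which yields $\mathcal{O}\bigl(T^{2/3}(K\log T)^{1/3}\bigr)$.

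\emph{Main obstacle.} The main delicacy is the bookkeeping around the clean event and the admissibility of $N$. We need $NK\le T$, which holds in the regime $K\lesssim T/\log T$; otherwise the bound already exceeds $T$ and is trivial. We also need to round $N$ to an integer, which changes only constants. The key conceptual step is the $2r$ gap bound on $\mathcal{E}$; everything else is routine.
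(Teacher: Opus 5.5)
Your proof is correct and matches the paper's argument step for step: the same clean event with radius $\sqrt{2\log T/N}$, the same $2r$ gap bound for the empirically best arm, and the same balancing of $NK$ against $T\sqrt{\log T/N}$. Your remarks on the regimes $K>T$ and $K\log T>T$, where $NK\le T$ fails but the bound is trivial anyway, tidy up points the paper leaves implicit.
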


To provide a proof of regret bound, we first recall a fundamental concentration inequality:
\begin{lemma}[Hoeffding bound]
    Let $X_1,\ldots,X_n$ be a sequence of i.i.d. random variables bounded in $[a,b]$. Then with probability at least $1-2\delta$ the following holds
    \[
        \Bigl\vert  \frac{1}{n} \sum_{i=1}^n X_i - \E[X_1] \Bigr\vert \leq (b-a)\sqrt{\frac{\log(1/\delta)}{2n}}\eqsp.
    \]
\end{lemma}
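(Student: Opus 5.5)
The plan is the standard Chernoff (exponential moment) method combined with Hoeffding's lemma on the moment generating function of a bounded, centred random variable. Set $Y_i = X_i - \E[X_1]$, so that the $Y_i$ are i.i.d., mean zero, and take values in an interval of length $b-a$. Write $\bar Y_n = \frac1n\sum_{i=1}^n Y_i$. I will bound the upper tail $\mathbb{P}(\bar Y_n \ge \varepsilon)$ and the lower tail $\mathbb{P}(\bar Y_n \le -\varepsilon)$ separately, each by $\delta$. A union bound then yields the two-sided statement with probability at least $1-2\delta$, which is exactly the form of the lemma.

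For the upper tail, fix $\lambda>0$ and apply Markov's inequality to $e^{\lambda n \bar Y_n}$:
\[
\mathbb{P}(\bar Y_n \ge \varepsilon) \le e^{-\lambda n\varepsilon}\,\E\bigl[e^{\lambda \sum_i Y_i}\bigr] = e^{-\lambda n\varepsilon}\prod_{i=1}^n \E\bigl[e^{\lambda Y_i}\bigr],
\]
where independence is used for the factorization. The key ingredient is Hoeffding's lemma: if $Y$ is mean zero and $Y\in[\alpha,\beta]$, then $\E[e^{\lambda Y}]\le \exp\bigl(\lambda^2(\beta-\alpha)^2/8\bigr)$.

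To prove the lemma I would proceed in two steps.
\begin{itemize}
\item By convexity of $y\mapsto e^{\lambda y}$, write $e^{\lambda Y}\le \frac{\beta-Y}{\beta-\alpha}e^{\lambda\alpha}+\frac{Y-\alpha}{\beta-\alpha}e^{\lambda\beta}$. Taking expectations and using $\E Y=0$ gives $\E[e^{\lambda Y}]\le e^{L(u)}$, where $u=\lambda(\beta-\alpha)$, $p=-\alpha/(\beta-\alpha)\in[0,1]$, and $L(u)=-pu+\log(1-p+pe^u)$.
\item Check that $L(0)=0$ and $L'(0)=0$, and that $L''(u)=q(1-q)\le 1/4$ for some $q\in[0,1]$, since $L''$ is the variance of a Bernoulli variable. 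A second-order Taylor expansion then gives $L(u)\le u^2/8$.
\end{itemize}
This Taylor-expansion step is the only genuinely non-routine part and is where I expect the main effort.

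Plugging the lemma in, with $\beta-\alpha=b-a$, gives
\[
\mathbb{P}(\bar Y_n\ge\varepsilon)\le \exp\bigl(-\lambda n\varepsilon+n\lambda^2(b-a)^2/8\bigr).
\]
Optimizing at $\lambda=4\varepsilon/(b-a)^2$ yields the bound $\exp\bigl(-2n\varepsilon^2/(b-a)^2\bigr)$. The lower tail follows by applying the same argument to $-Y_i$, which are also mean zero and supported on an interval of length $b-a$. Finally, setting $\exp\bigl(-2n\varepsilon^2/(b-a)^2\bigr)=\delta$ and solving gives $\varepsilon=(b-a)\sqrt{\log(1/\delta)/(2n)}$. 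The union bound over the two tails completes the argument.
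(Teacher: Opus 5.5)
Your proof is correct: it is the classical Chernoff-bound argument with Hoeffding's lemma, and every step checks out. This includes the convexity bound, $L(0)=L'(0)=0$, $L''\le 1/4$, the choice $\lambda=4\varepsilon/(b-a)^2$ and the inversion $\varepsilon=(b-a)\sqrt{\log(1/\delta)/(2n)}$. The paper gives no proof of its own and only recalls the inequality as standard, so there is nothing to compare beyond one remark: set aside the trivial case $a=b$ before dividing by $b-a$ in your definitions of $p$ and $\lambda$.
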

Now we are ready establish a regret bound for Explore-First Algorithm.
\begin{proof}
    Let us denote by $\widehat{\mu}(a)$ the average reward of action $a$ observed during the exploration phase. We now define the so-called \textit{clean event} $\cE$ as follows:
    \[
        \cE = \{ \forall\eqsp a \in \cA: \vert \widehat{\mu}(a) - \mu(a) \vert \leq \beta \}\eqsp,
    \]
    where $\beta  = \sqrt{2\log(T)/N}$. Notice that by Hoeffding inequality for any fixed $a$
    \[
        \mathbb{P}[\vert \widehat{\mu}(a) - \mu(a) \vert > \beta] \leq \frac{2}{T^4}\eqsp.
    \]
    Observe that ${\cE^c} = \bigcup_{a \in \cA} \{\vert \widehat{\mu}(a) - \mu(a) \vert > \beta\}\eqsp$, therefore we may apply union bound
    \begin{equation}\label{eq:clean_event_explore_first}
        \mathbb{P}\big({\cE^c}\big) \leq \sum_{a \in \cA} \mathbb{P}[\vert \widehat{\mu}(a) - \mu(a) \vert > \beta] \leq \frac{2K}{T^4}\eqsp.
    \end{equation}
    Thus, since $K \leq T$ clean event holds with probability at least $1-1/T^2$. Next let us define $\hat a = \argmax_{a \in \cA} \widehat{\mu}(a)$\eqsp. Assume that $\hat a \not = a^\star\eqsp$. In this case we have that under the clean event
    \[
        \mu(\hat a) + \beta \geq \widehat{\mu}(\hat a) \geq  \widehat{\mu}(a^\star) \geq \mu^\star - \beta\eqsp.
    \]
    Therefore
    \begin{equation}\label{eq:gap_bound_explore_first}
        \Delta(\hat a) = \mu^\star - \mu(\hat a) \leq 2\beta\eqsp.
    \end{equation}
    Let us derive a regret bound
    \begin{align*}
\E[\mathcal{R}(T)] = \E\big[ \textstyle\sum_{t=1}^{T} \Delta(a_t) \big] = \E\!\underbrace{\big[ \textstyle\sum_{t=1}^{NK} \Delta(a_t) \big]}_{\text{exploration phase}}
   + \E\!\underbrace{\big[ \textstyle\sum_{t=NK+1}^{T} \Delta(\hat{a}) \big]}_{\text{exploitation phase}}
   \eqsp.
\end{align*}

    In the first phase we have only a trivial regret bound $NK$. For the second phase we divide our expectation into two parts: with and without clean event:
\begin{align*}
\mathbb{E}\big[ \textstyle\sum_{t = NK+1}^{T} \Delta(\hat{a}) \big] 
&= \mathbb{E}\big[ \textstyle\sum_{t = NK+1}^{T} \Delta(\hat{a}) \mid \mathcal{E} \big] 
   \, \mathbb{P}[\mathcal{E}] 
 +
   \mathbb{E}\big[ \textstyle\sum_{t = NK+1}^{T} \Delta(\hat{a}) \mid \mathcal{E}^c \big] 
   \, \mathbb{P}[\mathcal{E}^c] \\
&\le 2\beta T + \tfrac{1}{T}.
\end{align*}
where we use \eqref{eq:gap_bound_explore_first} together with \eqref{eq:clean_event_explore_first}.
    Therefore, we have
    \[
        \E[\mathcal{R}(T)] \leq NK + 2T \sqrt{\frac{2\log(T)}{N}} + \frac{1}{T}\eqsp.
    \]
    Let us optimize the upper bound over $N$. The optimal value is \[N^\star = (T\sqrt{\log(2T)}/K)^{2/3}\] and in this case we derived claimed regret bound (assuming that $K \leq T$):
    \[
        \E[\mathcal{R}(T)] \leq 3 T^{2/3} (K \log(2T))^{1/3} + T^{-1} = \mathcal{O}(T^{2/3} (K \log(T))^{1/3}) = \widetilde{\mathcal{O}}(T^{2/3} K^{1/3})\eqsp.
    \]
    This concludes the proof.
\end{proof}

The presented algorithm has sublinear regret that means that this algorithm actually \textit{learn} the optimal arm asymptotically but not so efficient as it possible. Next we introduce the main principle that allows us to derive much better regret.

\paragraph{Optimism in the Face of Uncertainty (OFU)}
The idea of OFU is a key tool for solving the MAB  problem. It first appeared in the  \cite{lai1985asymptotically} and was later formalized in the finite-time UCB algorithm by \citet{auer2002finite}. The main principle is to choose the arm with the highest upper confidence bound on its mean reward. This concept extends naturally to reinforcement learning, where it has been used to design near-optimal algorithms for episodic MDPs.

Let us define $\overline{\mu}_t(a)$ as an \emph{upper confidence bound} for arm $a$, which means that, with high probability we have $\overline{\mu}_t(a) \geq \mu(a)$. By Hoeffding's inequality, this upper confidence bound can be chosen as
\[  
    \overline{\mu}_t(a) = \widehat{\mu}_t(a) + \beta_t(a) := \frac{1}{n_t(a)} \sum_{t: a_t = a} r_t + \sqrt{\frac{2\log(T)}{n_t(a)}}\eqsp,
\]
where $n_t(a)$ is a number of times when the arm $a$ was picked up to a timestamp $t$. But importantly, this bound follows not \textit{directly} from Hoeffding inequality because number of pulls $n_t(a)$ is \textit{random} and this randomness should be handled properly. The UCB algorithm is defined as follows: at each round \(t\), select the arm
\begin{equation}
    a_t = \argmax_{a \in \cA} \overline{\mu}_t(a)\eqsp,
\end{equation}
observe the reward \(r_t\eqsp\), update the empirical mean \(\widehat{\mu}_t(a_t)\) and the count \(n_t(a_t),\) and then proceed to the next round. Why does this make sense? There are two main reasons to choose arm $a$ at round $t$:
\begin{itemize}
    \item Arm $a$ has a high mean reward $\widehat{\mu}_t(a)$, which implies that it is likely to have a high mean reward $\mu(a)$.
    \item Arm $a$ has a large confidence interval $\beta_t(a)$, which implies that this arms is not explored properly.
\end{itemize}

In the following we call $\beta_t(a)$ as an \textit{exploration bonus}.

\begin{theorem}
    Algorithm UCB achieves $\E[\mathcal{R}(T)] = \widetilde{\mathcal{O}}(\sqrt{KT})$.
\end{theorem}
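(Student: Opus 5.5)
We follow the clean-event template from the proof of Theorem~\ref{th:explore_first_regret}, now with random sample sizes. The one new difficulty is that $n_t(a)$ is random, so Hoeffding cannot be applied to $\widehat{\mu}_t(a)$ directly.

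\textbf{Clean event.} We handle the random sample size with the standard ``reward tape'' construction. For each arm $a$, imagine an i.i.d.\ sequence $r_{1}(a),r_2(a),\ldots\sim\cD_a$ fixed in advance. The $j$-th pull of arm $a$ reveals $r_j(a)$. Let $\bar v_j(a)$ be the average of the first $j$ entries of this tape. For fixed $a$ and $j$, Hoeffding with $\delta=T^{-4}$ gives $|\bar v_j(a)-\mu(a)|\le\sqrt{2\log(T)/j}$ with probability at least $1-2T^{-4}$. A union bound over $a\in\cA$ and $j\le T$ shows that
\[
\cE=\Bigl\{\forall a,\ \forall j\le T:\ |\bar v_j(a)-\mu(a)|\le \sqrt{2\log(T)/j}\Bigr\}
\]
satisfies $\mathbb{P}(\cE^c)\le 2KT^{-3}\le 2T^{-2}$ when $K\le T$. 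Since $\widehat{\mu}_t(a)=\bar v_{n_t(a)}(a)$, the bound $|\widehat{\mu}_t(a)-\mu(a)|\le\beta_t(a)$ holds on $\cE$ simultaneously for all $t$ and $a$, whatever the value of $n_t(a)$. I expect this decoupling of the random count from the concentration bound to be the main conceptual obstacle. To avoid a zero count, each arm is pulled once at initialization, which costs at most $K$ regret.

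\textbf{Per-round gap.} On $\cE$, whenever $a_t=a$ we have $\overline{\mu}_t(a)\ge\overline{\mu}_t(a^\star)\ge\mu^\star$, and also $\overline{\mu}_t(a)\le\mu(a)+2\beta_t(a)$. Hence
\[
\Delta(a)\le 2\sqrt{2\log(T)/n_t(a)}.
\]
Applying this at the last time arm $a$ is chosen gives $n_T(a)\le 8\log(T)/\Delta(a)^2+1$. Consequently the regret contributed by arm $a$ satisfies
\[
\Delta(a)\,n_T(a)\le \min\Bigl\{ O\bigl(\sqrt{n_T(a)\log T}\bigr),\ \Delta(a)\,n_T(a)\Bigr\}\le O\bigl(\sqrt{n_T(a)\log T}\bigr)+1 .
\]

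\textbf{Summing.} On $\cE$ we get $\mathcal{R}(T)\le\sum_a O(\sqrt{n_T(a)\log T})+K$. Since $\sum_a n_T(a)=T$, Cauchy--Schwarz (equivalently, concavity of the square root) bounds $\sum_a\sqrt{n_T(a)}$ by $\sqrt{KT}$, so $\mathcal{R}(T)=O(\sqrt{KT\log T})$. On $\cE^c$ the regret is at most $T$, and this event has probability at most $2T^{-2}$, so it adds $O(1/T)$ to the expectation. Altogether $\E[\mathcal{R}(T)]=O(\sqrt{KT\log T})=\widetilde{\mathcal O}(\sqrt{KT})$.
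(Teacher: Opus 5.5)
Your proof is correct. It shares the paper's skeleton: the reward-tape clean event, optimism giving $\Delta(a_t)\le 2\beta_t(a_t)$ on every round, and concavity of $\sqrt{\cdot}$ at the end. Where you differ is in how the per-round bound is aggregated.

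\textbf{The paper's route.} It sums the confidence widths over all rounds and regroups by arm:
\[
\sum_t 1/\sqrt{n_t(a_t)}=\sum_a\sum_{k=1}^{n_T(a)}1/\sqrt{k}\le 2\sum_a\sqrt{n_T(a)}.
\]

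\textbf{Your route.} You use that every pull of arm $a$ costs the same $\Delta(a)$, so only the width at the last pull matters. That width bounds $n_T(a)\le 8\log(T)/\Delta(a)^2+1$, and hence $\Delta(a)\,n_T(a)\le\sqrt{8\log(T)\,(n_T(a)-1)}+\Delta(a)$. (This follows directly, so your intermediate $\min\{\cdot\}$ display is unnecessary.)

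\textbf{What each buys.} Your version gives, as a by-product, the gap-dependent bound $\E[\mathcal{R}(T)]=O\bigl(\sum_{a:\Delta(a)>0}\log(T)/\Delta(a)+K\bigr)$. The paper's version never uses that the per-pull loss is constant. That is why it transfers to UCRL and UCBVI, where per-episode regret is bounded by a sum of bonuses along a trajectory and there is no fixed gap per state--action pair. The same $\sum 1/\sqrt{N}$ regrouping step reappears there.

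Your explicit initial pull of each arm also handles the zero-count issue that the paper leaves implicit. Your constants are consistent with the bonus $\sqrt{2\log(T)/n_t(a)}$, whereas the paper's clean event mixes two different radii.
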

\begin{proof}
    Our proof is divided into two parts. First, we establish the optimism property, showing that our estimate $\overline{\mu}_t(a)$ indeed forms an upper confidence bound for real mean $\mu(a)$. In the second part we will show that estimation error is small enough. This structure of the proof is similar for all \textit{optimistic} algorithms that uses the OFU principle.
    
    For the first part let us define \textit{optimistic event}
    \[
        \cE_{\text{opt}} = \{ \forall t \in [T], \forall a \in \cA : \vert \widehat{\mu}_t(a) - \mu(a)\vert \leq \beta_t(a) \}.
    \]
   As mentioned earlier, it is rather challenging to obtain guarantees for this event directly, due to the random and data-dependent \(n_t\). To overcome this issue, let us imagine \textit{a reward tape}: an $1 \times T$ table filled with i.i.d. sampled reward from $\cD_a$. Then for $j$-th choice of arm $a$ we will think not as about a new sample from $\cD_a$ but as about a selecting $j$-th element on this tape. Let us call $v_j(a)$ as a mean reward over first $j$ elements of this tape. Additionally it is clear that
    \[
        \cE := \left\{ \forall j \in [T], \forall a \in \cA : \vert v_j(a) - \mu(a)\vert \leq \sqrt{\frac{\log(2T)}{j}} \right\} \subseteq \cE_{\text{opt}}  .
    \]
    because in the right-hand side we use only some of the elements of each tape and this event holds with higher probability. Thus, we may work only with the event $\cE$ for which Hoeffding inequality may be applied. For each pair $j,a$ we have
    \[
        \mathbb{P}\left[\vert v_j(a) - \mu(a)\vert > \sqrt{\frac{2\log(T)}{j}} \right]  \leq \frac{2}{T^4}\eqsp.
    \]
    By union bound argument (similar as in Explore-First algorithm) and assuming that $K \leq T$ we get
    \[
        \mathbb{P}[\cE_{\text{opt}}] \geq \mathbb{P}[\cE] \geq 1 - \frac{2K}{T^3} \geq 1 - \frac{2}{T^2}\eqsp.
    \]
    
    Next we are going to proof \textit{error estimation} part. First, let us decompose regret depending on the event $\cE_{\text{opt}}$
    \[
        \E[\mathcal{R}(T)] = \E[\mathcal{R}(T) | \cE_{\text{opt}}] \mathbb{P}[\cE_{\text{opt}}] + \E\left[\mathcal{R}(T) | {\cE^c_{\text{opt}}}\right] \mathbb{P}\left[{\cE^c_{\text{opt}}}\right] \leq \E[\mathcal{R}(T) | \cE_{\text{opt}}] + 2 T^{-1}\eqsp.
    \]
    Thus, it is sufficient to analyze regret only under $\cE_{\text{opt}}$.  Let us begin by deriving a bound on $\Delta(a_t)$. In this case, we have
    \[
        \mu^\star \overset{(a)}{\leq} \overline{\mu}_t(a^\star) \overset{(b)}{\leq}\overline{\mu}_t(a_t) \overset{(c)}{\leq}\mu(a_t) + 2 \beta_t(a_t).
    \]
    where (a) holds since \(\overline{\mu}_t(\cdot)\) is an upper confidence bound, (b) follows from the greedy choice of arm \(a_t = \arg\max_{a \in \mathcal{A}} \overline{\mu}_t(a)\), and (c) holds by \(\mu(a_t) \geq \widehat{\mu}_t(a_t) - \beta_t(a_t)\) and \(\overline{\mu}_t(a_t) = \widehat{\mu}_t(a_t) + \beta_t(a_t)\). In other words, the real expectation may be equal to the lower confidence bound and we need to handle this case. Thus, we have
    \[
        \Delta(a_t) = \mu^\star - \mu(a_t) \leq 2 \sqrt{\frac{2\log(T)}{n_t(a)}}\eqsp.
    \]
    Therefore, restricting to the event \(\mathcal{E}_{\mathrm{opt}}\) and regrouping the sum over selected actions, we obtain
    \[
        \mathcal{R}(T) \leq \widetilde{\mathcal{O}}\left(\sum_{t=1}^T \frac{1}{\sqrt{n_t(a_t)}} \right) = \widetilde{\mathcal{O}}\left(\sum_{a \in \cA} \sum_{k=1}^{n_T(a)} \frac{1}{\sqrt{k}} \right) = \widetilde{\mathcal{O}}\left(\sum_{a \in \cA} \sqrt{n_T(a)} \right).
    \]
     Notice that the function $f(x) = \sqrt{x}$ is concave for $x>0$. Therefore, applying Jensen's inequality yields
    \[
        \sum_{a \in \cA} \sqrt{n_T(a)} = K \sum_{a \in \cA} \frac{\sqrt{n_T(a)}}{K}  \leq K \sqrt{\sum_{a \in \cA} \frac{n_T(a)}{K}} = \sqrt{TK}\eqsp,
    \]
    where the final equality follows from the identity $\sum_{a \in \cA} n_T(a) = T$.
    Combining everything together we conclude the required regret bound
    \[
        \E[\mathcal{R}(T)]  \leq \E[\mathcal{R}(T) | \cE_{\text{opt}}] + 2 T^{-1} = \widetilde{\mathcal{O}}(\sqrt{TK}).
    \]
\end{proof}

\paragraph{Thompson Sampling}
Thompson Sampling is one of the earliest heuristics developed for multi-armed bandit problems, originally proposed by \citet{thompson1933likelihood}. For a comprehensive review of modern results, we refer to \cite{russo2018tutorial}, while a complete Bayesian decision-theoretic foundation can be found in \cite{lattimore2020bandit}. This randomized algorithm, rooted in Bayesian principles, has gained considerable attention, as empirical studies have demonstrated its strong performance compared to OFU-based methods, see \cite{chapelle2011empirical}.

For simplicity of discussion we consider Thompson Sampling using Beta priors in the Bernoulli bandit problem, i.e., when the rewards are either \(0\) or \(1\), and the probability of reward
\(1\) for arm \(i\) (the probability of success) is \(\mu_i\). Let us briefly recall that the Beta distribution is a family of continuous probability distributions supported on the interval \((0, 1)\). The pdf of \(\mathrm{Beta}(\alpha, \beta)\), with parameters \(\alpha > 0\) and \(\beta > 0\), is given by  
\[
    f(x; \alpha, \beta) = \frac{\Gamma(\alpha + \beta)}{\Gamma(\alpha)\,\Gamma(\beta)} \, x^{\alpha - 1} (1 - x)^{\beta - 1}, \quad x \in (0, 1),
\]  
where \(\Gamma(\cdot)\) denotes the gamma function. The mean of \(\mathrm{Beta}(\alpha, \beta)\) is \(\alpha / (\alpha + \beta)\). As can be seen from the pdf, larger values of \(\alpha\) and \(\beta\) result in a tighter concentration of the distribution around its mean.
Using
Beta priors is useful for Bernoulli rewards because if
the prior is a \(\text{Beta}(\alpha, \beta)\) distribution, then after observing a Bernoulli trial, the posterior distribution is
simply \(\text{Beta}(\alpha+1,\beta)\) or \(\text{Beta}(\alpha,\beta  + 1)\), depending on
whether the trial resulted in a success or failure, respectively.
TS initially assumes arm \(i\) to have prior \(\text{Beta}(1,1)\) on
\(\mu_i\), which is natural because \(\text{Beta}(1,1)\) is the uniform
distribution on \((0,1)\). At time \(t\), having observed \(S_i(t)\)
successes (reward = 1) and \(F_i(t)\) failures (reward = 0)
in \(k_i(t) = S_i(t) + F_i(t)\) plays of arm \(i\), the algorithm
updates the distribution on \(\mu_i\) as \(\text{Beta}(S_i(t)+1, F_i(t)+
1)\). The algorithm then generates independent samples
from these posterior distributions of the \(\mu_i\)'s, and
plays the arm with the largest sample value.

\begin{algorithm}\label{alg:bernoulli_thompson}
    \centering
    \caption{Thompson Sampling for Bernoulli bandits}
    \begin{algorithmic}[1]
        \STATE{For each arm $i = 1, \dots, N$ set $S_i = 0, F_i = 0$} \\
        
        \FOR{$t = 1, 2, \dots$}
            \STATE{For each arm $i = 1, \dots, N$ Sample $\theta_i(t)$ from the Beta($S_i + 1, F_i + 1$) distribution.}\\
            \STATE Play arm $i(t) := \arg\max_i \theta_i(t)$ and observe reward $r_t$.\\
            \STATE{If $r = 1$ then $S_{i(t)} = S_{i(t)} + 1$ else $F_{i(t)} = F_{i(t)} + 1$}
        \ENDFOR
    \end{algorithmic}
\end{algorithm}
The first minimax regret bounds for Thompson Sampling were established in \cite{agrawal2017further}.

\begin{theorem}
    For the \(K\)-armed stochastic bandit problem, TS using Beta priors, has expected regret
    \[\mathbb{E}[\mathcal{R}(T)]\leq O(\sqrt{KT\ln T}).\]
\end{theorem}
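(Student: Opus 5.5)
The plan is to follow the Agrawal--Goyal analysis. First, decompose the regret over suboptimal arms. Then, for each suboptimal arm $i$, bound the expected number of its plays using two thresholds $x_i<y_i$ placed between $\mu_i$ and $\mu^\star$. Finally, convert these gap-dependent bounds into the minimax rate $\sqrt{KT\ln T}$ by splitting arms according to the size of their gap.

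Fix a suboptimal arm $i$ with gap $\Delta_i$, and set $x_i=\mu_i+\Delta_i/3$ and $y_i=\mu^\star-\Delta_i/3$. Let $\hat\mu_i(t)=S_i(t)/(k_i(t)+1)$ denote the posterior-mean-type estimate. Define the events $E_i^\mu(t)=\{\hat\mu_i(t)\le x_i\}$ and $E_i^\theta(t)=\{\theta_i(t)\le y_i\}$. The number of plays of arm $i$ then splits into three terms:
\[
\sum_t \mathbb{P}(i(t)=i)\le \sum_t \mathbb{P}\bigl(i(t)=i,\overline{E_i^\mu(t)}\bigr)+\sum_t \mathbb{P}\bigl(i(t)=i,E_i^\mu(t),\overline{E_i^\theta(t)}\bigr)+\sum_t \mathbb{P}\bigl(i(t)=i,E_i^\mu(t),E_i^\theta(t)\bigr).
\]
The first term is controlled by Hoeffding's inequality through the same reward-tape device used in the UCB proof. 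The $j$-th play of arm $i$ reads the $j$-th entry of a fixed i.i.d. tape, so the random count causes no difficulty, and the term is $O(1/\Delta_i^2)$. For the second term, we use the fact that the Beta$(S+1,F+1)$ quantile is governed by a Binomial tail (the Beta--Binomial identity $F^{\mathrm{Beta}}_{\alpha,\beta}(y)=1-F^{B}_{\alpha+\beta-1,y}(\alpha-1)$). Once $k_i(t)\ge L_i=\Theta(\ln T/\Delta_i^2)$, the probability that $\theta_i(t)>y_i$ while $\hat\mu_i(t)\le x_i$ is at most $1/T$. This gives a bound of $L_i+1$.

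The main obstacle is the third term: arm $i$ is played even though its sample is low, which means the optimal arm's sample $\theta_1(t)$ was also below $y_i$. The key lemma to establish is
\[
\mathbb{P}\bigl(i(t)=i,E_i^\mu,E_i^\theta\mid\mathcal{F}_{t-1}\bigr)\le \frac{1-p_{1,t}}{p_{1,t}}\,\mathbb{P}\bigl(i(t)=1,E_i^\mu,E_i^\theta\mid\mathcal{F}_{t-1}\bigr),
\]
where $p_{1,t}=\mathbb{P}(\theta_1(t)>y_i\mid\mathcal{F}_{t-1})$. Summing over $t$, and noting that $p_{1,t}$ only changes when arm $1$ is played, this reduces to bounding $\sum_{j}\mathbb{E}\bigl[1/p_{1,\tau_j+1}-1\bigr]$, where $\tau_j$ is the time of the $j$-th play of arm $1$. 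This expectation I would compute directly by averaging over the Binomial$(j,\mu^\star)$ number of successes and using the Beta--Binomial identity. It is $O(1)$ for small $j$, of order $e^{-c j\Delta_i^2}+1/T$ once $j\gtrsim 1/\Delta_i^2$, and for the minimax version it must be summed only up to roughly $j\lesssim \ln T/\Delta_i^2$. Carrying out this computation carefully is the technical core, and I expect it to be the hardest step.

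Summing the three terms yields $\mathbb{E}[k_i(T)]\le O(\ln T/\Delta_i^2)$. In the minimax step the third term needs the tighter estimate from the previous paragraph, giving $\mathbb{E}[k_i(T)]\,\Delta_i=O(\ln T/\Delta_i)$. To conclude, split the regret $\sum_i\Delta_i\,\mathbb{E}[k_i(T)]$ at the threshold $\Delta=\sqrt{K\ln T/T}$. Arms with $\Delta_i\le\Delta$ contribute at most $T\Delta=\sqrt{KT\ln T}$. The remaining arms contribute $\sum_i O(\ln T/\Delta_i)\le O(K\ln T/\Delta)=O(\sqrt{KT\ln T})$.
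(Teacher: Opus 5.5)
The paper gives no proof of this theorem; it only cites \cite{agrawal2017further}. Your outline is the Agrawal--Goyal argument: the events $\{\hat\mu_i(t)\le x_i\}$ and $\{\theta_i(t)\le y_i\}$ with $x_i,y_i$ at distance $\Delta_i/3$ from $\mu_i,\mu^\star$, the three-term split, the $\frac{1-p}{p}$ comparison lemma, the reduction to $\sum_j\mathbb{E}[1/p_{1,\tau_j+1}-1]$, and the split of arms at $\sqrt{K\ln T/T}$. Your treatment of the first two terms and of the final split is correct. The problem is in the step you yourself flag as the technical core: the profile you predict for $\mathbb{E}[1/p_{1,\tau_j+1}]$ is wrong, and the correct profile is exactly what the minimax step depends on.

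Write $\Delta_i'=\mu^\star-y_i$, and let $D_i$ be the Bernoulli Kullback--Leibler divergence between $y_i$ and $\mu^\star$.

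\emph{Small $j$.} The excess $\mathbb{E}[1/p_{1,\tau_j+1}]-1$ is not $O(1)$ for small $j$. At $j=0$ the posterior is uniform, so $p_{1,\tau_0+1}=1-y_i$, which is of order $\Delta_i$ when $\mu^\star$ is near $1$. Agrawal--Goyal's bound for $j<8/\Delta_i'$ is $1+3/\Delta_i'$, and order $1/\Delta_i'$ is actually attained at $j=0$.

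\emph{Intermediate $j$.} For $8/\Delta_i'\le j\lesssim 1/\Delta_i'^2$ their bound is $1+\Theta\bigl(e^{-\Delta_i'^2 j/2}+\frac{e^{-D_i j}}{(j+1)\Delta_i'^2}+\frac{1}{e^{\Delta_i'^2 j/4}-1}\bigr)$. A direct Binomial-tail computation, for $\mu^\star$ bounded away from $0$ and $1$, shows that an excess of order $1/(\Delta_i'^2 j)$ is genuine there, not an artifact of the bound.

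\emph{Why it matters.} Suppose you only had a uniform $O(1/\Delta_i)$ bound up to $j\approx\ln T/\Delta_i^2$. Then the third term would be $O(\ln T/\Delta_i^3)$ and the per-arm regret $O(\ln T/\Delta_i^2)$. Your final split would give $T\Delta+K\ln T/\Delta^2$, which is no better than the Explore-First rate $T^{2/3}(K\ln T)^{1/3}$.

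\emph{The fix.} What saves the argument is the harmonic decay. Use $\frac{1}{e^x-1}\le\frac{1}{x}$ and simply drop the factor $e^{-D_i j}$. Then the sum over \emph{all} $j\le T-1$ is at most $\frac{24}{\Delta_i'^2}+O\bigl(\frac{1+\ln T}{\Delta_i'^2}\bigr)$. Three consequences follow. No truncation at $j\lesssim\ln T/\Delta_i^2$ is needed, since the reduction sums over all plays of arm $1$ anyway. The $\ln T$ comes from a harmonic sum. And you should avoid the tempting estimate $\sum_j e^{-D_i j}/(j+1)\lesssim 1/D_i$: it produces $1/(\Delta_i'^2 D_i)$, which is only $O(\Delta_i^{-4})$. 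That is harmless for a gap-dependent bound but fatal in the minimax split. With this corrected estimate, the rest of your plan goes through.
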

To handle the general case, we slightly modify Algorithm \ref{alg:bernoulli_thompson}. After observing the reward \(r_t\), we perform a Bernoulli trial with success probability \(r_t\) and obtain an outcome \(\tilde{r}_t \in \{0, 1\}\). If \(\tilde{r}_t = 1\), we increment \(S_i \leftarrow S_i + 1\); otherwise, we increment \(F_i \leftarrow F_i + 1\). For further details, see \cite{agrawal2012analysis}. Later, we will show how to generalize Thompson Sampling to the general case of MDPs.

Besides Thompson Sampling, a closely related idea is Bayes-UCB, which constructs upper confidence bounds using quantiles of the posterior distributions. In \cite{kaufmann2012bayesian}, the authors analyze the algorithm using $(1 - 1/t)$-quantiles and show that it achieves asymptotically optimal regret. Empirical results indicate that Bayes-UCB often converges faster than classical UCB methods, benefiting from tighter posterior-based confidence intervals.
\subsection{General Problem Settings}

We consider the reinforcement learning problem of an agent interacting with an environment in order to maximize
its cumulative rewards through time. We model the environment
as a Markov decision process whose transition dynamics are unknown from the agent. For simplicity we assume that the immediate reward function $r$ is available; however, note that unknown reward functions can be handled without additional difficulties. As the agent interacts with the environment it observes the states, actions and rewards generated by the system dynamics. This leads to a
fundamental trade off: should the agent explore poorly-understood states and actions to gain information and improve
future performance, or exploit its knowledge to optimize short-run rewards. The most common approach to this learning problem is to separate the process of estimation and optimization. In
this paradigm, point estimates of the unknown quantities are used in place of the unknown parameters and a plan is
made with respect to these estimates. Naive optimization with respect to these point estimates can lead to premature
exploitation and so may never learn the optimal policy. To overcome these difficulties, we will consider two classes of algorithms that simultaneously combine exploration and exploitation and have strong convergence guarantees.

In the following, we focus on \textit{tabular} finite-horizon MDPs $\cM = (\cS, \cA, P, R)$. Let us note that both the episodic and non-episodic settings of the problem are considered. In the non-episodic settings, agent starts at state $s_0$ and selects an action $a_0$. Next, agent observes a new state $s_1 \sim P(\cdot | s_0, a_0)$ generated by transition probability kernel and reward $r_0 \sim R(\cdot | s_0,a_0)$, then agent selects next action $a_1$ and so on. The interaction with the environment terminates at fixed time horizon $T$. The agent's goal is to minimize regret, defined as:
\[\mathcal{R}(T) = TV^{\star} - \sum_{t=0}^{T-1}r(s_t,a_t),\]
where \( V^{\star} \) is the optimal value of AMDP \( M \), see section \ref{s2}.

In episodic settings, the agent's interaction with the environment is divided into episodes, each lasting \( H \) steps. At the end of a round, the interaction with the environment stops, and the agent returns to the initial state $s_0 = s_0^k$.\footnote{The setting with a random initial state sampled from initial distribution $\mu$ could be modelled by new artificial state with prescribed transition from this state.} . In such settings, the agent minimizes the total regret over \( T = K H \) steps, defined as:
\[\mathcal{R}(T) = KV^{\star}(s_0) - \sum_{k=0}^{K-1}\sum_{h=0}^{H-1}r(s_h^k, a_h^k),\]
where \( (s_h^k, a_h^k) \) is the state and action of the agent in the \( k \)-th episode at step \( h \). Very important feature that here we do not have \textit{discounting factor}. In the following, we will focus on the episodic case, but note that most algorithms for the episodic model, with minor adjustments, can be transferred to the non-episodic case, and the same regret estimates hold for them, with the only difference being that the horizon length \( H \) is replaced by the diameter of the MDP \( d(\cM) \):
\begin{definition}
    Consider the stochastic process defined by a stationary policy \(\pi: S\xrightarrow[]{} A\) operating on an MDP \(\mathcal{M}\) with initial state \(s.\) Let \(T(s'|\mathcal{M},\pi, s)\) be the random variable for the first time step in which state \(s'\) is reached in this process. Then the diameter of \(\mathcal{M}\) is defined as 
    \[d(\cM) := \max_{s \not = s'}\min_{\pi\in\Pi}\mathbb{E}[T(s'|\cM,\pi, s)]\eqsp. \]
\end{definition}
\begin{example}
    Let us consider the special case where $H=1$ and $\vert \cS \vert = 1$. Then the corresponding MDP $\cM$ is just an instance of \textit{stochastic bandit} problem.
\end{example}
In applications, it is sometimes useful to consider the case where the transition probabilities of the Markov kernel may change during the episode. So in the following we concentrate on a finite episodic MDPs
\(\cM = (\cS, \cA, H, \{P_h\}_{h\in[H]}, \{r_h\}_{h\in[H]}) \)
, where \(P_h(s'
|s, a)\) is the probability
transition from state \(s\) to state \(s'\) by taking action a at step \(h\), and \(r_h(s, a)\) is the immediate reward received after taking the action \(a\) in state \(s\) at step \(h.\) As a rule, the analysis of algorithms in the case of a stage-dependent MDP does not differ from the stage-independent MDP case, except for an additional factor \( \sqrt{H} \) in the regret bounds. Note that a stage-dependent MDP \(\cM= (\cS, \cA, P_h, r_h)\) can be equivalently represented as a stage-independent MDP. To this end, define \(\widetilde{\cM} = (\widetilde{\cS},\cA, \widetilde{P}, \widetilde{R} )\), where \(\widetilde{\cS} = \bigsqcup_{h=0}^{H-1} \cS_h\) denotes the disjoint union of \(H\) copies of \(\cS\), which we refer to as levels. The transition kernel \(\widetilde{P}\) is nonzero only between consecutive levels; more precisely, 
\[\widetilde{P}(s'|s, a) = P_h(s'|s,a), \quad \tilde{r}(s,a) = r(s,a)\quad \text{where}\eqsp s\in\cS_h\eqsp,\eqsp s'\in\cS_{h+1}\eqsp.\]
Note that \(|\widetilde{S}| = HS\) which explains the appearance of an additional factor of \(\sqrt{H}\) in subsequent bounds. The main technical difference in the finite-horizon setting is that the policy depends explicitly on the current time step \(h\in[H]\). This dependence is necessary because, at early stages \(h\), the agent aims to move towards regions with potentially high rewards, while at later stages it prefers to exploit immediate rewards. Consequently, in the finite-horizon setting, we consider a set of policies \((\pi_0, \ldots, \pi_{H-1})\). Slightly abusing notation, we denote this entire sequence simply as \(\pi\). Analogously to the discounted setting, we define the value and action-value functions as follows:
\[
    V^\pi_h(s) = \E\left[ \sum_{t=h}^{H-1} r_t \mid s_{h} = s  \right], \qquad Q^\pi_h(s,a) = \E\left[ \sum_{t=h}^{H-1} r_t \mid s_{h} = s, a_h = a  \right], 
\]
where $ s_{t+1} \sim P_t(\cdot|s_t,a_t),\eqsp a_t \sim \pi_t(\cdot | s_t), \eqsp r_t  =r_t(s_t, a_t)$. Importantly, in this setting value function depends on the current step $h$. We may define in a similar manner optimal $V$- and $Q$-functions as a supremum over all possible sets of policies,
\[
    V^\star_h(s) = \sup_{\pi \in \Pi} V^\pi_h(s)\eqsp, \qquad Q^\star_h(s,a) = \sup_{\pi \in \Pi} Q^\pi_h(s,a)\eqsp,
\]
and, moreover, Bellman and optimal Bellman equations holds for this model
\begin{align*}\label{bel_eq}
    &Q^\pi_h(s,a) = r_h(s,a) + P_hV^\pi_{h+1}(s,a) , &  &Q^\star_h(s,a) =  r_h(s,a) + P_hV^\star_{h+1}(s,a)\eqsp, \\
    &V^\pi_h(s) = \sum_{a\in\cA}\pi_h(a|s)Q^\pi_h(s,a)\eqsp, & &V^\star_h(s) = \max_{a \in \cA} Q^\star_h(s, a)\eqsp,  \\
    &V^\pi_{H}(s) = 0, & &V^\star_{H}(s) = 0\eqsp.
\end{align*}
When proving regret bounds, it is often crucial to compare the value function of a fixed policy $\pi$ when evaluated in two different MDPs (for example, the true environment and its empirical estimate).  
[The following lemma, commonly referred to as the \emph{Simulation Lemma}, provides a precise decomposition of this difference and serves as a key analytical tool.
\begin{lemma}[Simulation Lemma]\label{lem:simulation}
Let $M$ and $\widehat M$ be two episodic MDPs with common state and action spaces $\mathcal S$ and $\mathcal A$. Then for any policy \(\pi\) and any initial state $s\in\mathcal S$ the following identity holds:
\begin{align*}
V_0^\pi(s)-\widehat{V}_0^\pi(s) &= 
\sum_{h=0}^{H-1}\mathbb{E}_{M,\pi}\big[\big\langle P_h(\cdot\mid s_h,a_h)-\widehat P_h(\cdot\mid s_h,a_h),\,
\widehat V_{h+1}^\pi\big\rangle
\,\big|\,s_0=s\big] \\
&+\sum_{h=0}^{H-1}\mathbb{E}_{M,\pi}
\big[ r_h(s_h,a_h)-\widehat r_h(s_h,a_h)\big| s_0=s\big]\eqsp,
\end{align*}
where the expectation on the right-hand side is taken w.r.t. the trajectory law induced by $M$ and policy $\pi$.
\end{lemma}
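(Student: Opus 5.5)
The plan is to prove the identity by a one-step recursion on the per-stage value difference and then unroll it along the trajectory of the true MDP $M$. For $h\in[H]$ define $\delta_h(s):=V_h^\pi(s)-\widehat V_h^\pi(s)$, with the terminal condition $\delta_H\equiv0$ coming from $V_H^\pi=\widehat V_H^\pi=0$.

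The key step is to use the finite-horizon Bellman consistency equations in both MDPs for the same policy $\pi$. These give
\[
V_h^\pi(s)=\sum_{a}\pi_h(a\mid s)\bigl(r_h(s,a)+P_hV^\pi_{h+1}(s,a)\bigr)
\]
and
\[
\widehat V_h^\pi(s)=\sum_{a}\pi_h(a\mid s)\bigl(\widehat r_h(s,a)+\widehat P_h\widehat V^\pi_{h+1}(s,a)\bigr).
\]
Subtract them, and add and subtract $P_h\widehat V^\pi_{h+1}(s,a)$. This yields
\[
\delta_h(s)=\sum_a\pi_h(a\mid s)\Bigl[r_h(s,a)-\widehat r_h(s,a)+\bigl\langle P_h(\cdot\mid s,a)-\widehat P_h(\cdot\mid s,a),\widehat V^\pi_{h+1}\bigr\rangle+P_h\delta_{h+1}(s,a)\Bigr].
\]
The choice of where to insert the intermediate term matters. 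We pair the true kernel $P_h$ with the difference $\delta_{h+1}$, and the kernel error with $\widehat V^\pi_{h+1}$. This is exactly what makes the remainder propagate under the law of $M$ and gives $\widehat V$ inside the inner product, as in the statement.

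Next we interpret the bracket probabilistically. Write $g_h(s,a)$ for the sum of the reward error and the kernel-error inner product. Under $\mathbb E_{M,\pi}$ with $a_h\sim\pi_h(\cdot\mid s_h)$ and $s_{h+1}\sim P_h(\cdot\mid s_h,a_h)$, the recursion reads
\[
\delta_h(s_h)=\mathbb E_{M,\pi}\bigl[g_h(s_h,a_h)+\delta_{h+1}(s_{h+1})\mid s_h\bigr].
\]
We apply this for $h=0,1,\dots,H-1$ and use the tower property, which is valid by the Markov property of the trajectory under $(M,\pi)$. With $\delta_H=0$, this gives
\[
\delta_0(s)=\sum_{h=0}^{H-1}\mathbb E_{M,\pi}[g_h(s_h,a_h)\mid s_0=s],
\]
which is the claimed identity. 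Formally this is a finite backward induction on the statement that $\delta_h(s)$ equals the tail sum from $h$ conditioned on $s_h=s$.

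There is no genuine obstacle, since all sums are finite. The only point needing care is the bookkeeping of the conditioning in the telescoping step. We need the expectation of $\delta_{h+1}(s_{h+1})$ given $s_h$ to coincide with $P_h\delta_{h+1}$ averaged over $\pi_h$. This holds because $\widehat V^\pi_{h+1}$ is a fixed deterministic function, and only $M$'s kernel generates the next state. If $\pi$ is history-dependent, the same argument works on the history-augmented process, but we will treat Markov (possibly stage-dependent) policies as in the setup.
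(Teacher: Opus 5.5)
Your proposal is correct and matches the paper's proof: you use the same add-and-subtract of $P_h\widehat V^\pi_{h+1}$ to get the one-step identity $V_h^\pi-\widehat V_h^\pi=(r_h^\pi-\widehat r_h^\pi)+(P_h^\pi-\widehat P_h^\pi)\widehat V_{h+1}^\pi+P_h^\pi(V_{h+1}^\pi-\widehat V_{h+1}^\pi)$, and unroll it along trajectories of $M$ using $V_H^\pi=\widehat V_H^\pi=0$. Your backward induction with the tower property simply writes out the telescoping step that the paper states in words.
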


\begin{proof}
The identity follows by straightforward telescoping of the Bellman equations for the two MDPs.For each $h=0,\dots,H-1$,

\[
V_h^\pi - \widehat V_h^\pi
= (r_h^\pi-\widehat r_h^\pi)
+ (P_h^\pi-\widehat P_h^\pi)\,\widehat V_{h+1}^\pi
+ P_h^\pi\,(V_{h+1}^\pi-\widehat V_{h+1}^\pi).
\]
where $r_h^\pi(s) := \mathbb{E}_{a\sim\pi_h(\cdot|s)}[r_h(s,a)]$ and $P_h^\pi$ is the transition kernel under $\pi$ at stage $h$.  
Evaluating this identity along a trajectory generated by $M$ under $\pi$, conditioning on $s_0=s$, and unrolling for $h=0,1,\dots,H-1$ produces a telescoping sum: the last term at step $h$ becomes the first term at step $h+1$, and at $h=H$ we have $V_H^\pi \equiv \widehat V_H^\pi \equiv 0$. Summing all additive terms yields the claimed equality.
\end{proof}

The optimal Bellman equations provide an approcah to computing the optimal \(Q\)-value function via backward dynamic programming. Moreover, the optimal policy \(\pi^\star\) can be represented as the greedy policy with respect to the optimal state-action value function \(Q^\star_h:\)
\[\pi_h^\star(s) = \arg \max_{a}Q^\star_h(s,a)\eqsp.\]
\subsubsection{Lower Bounds for Episodic Reinforcement Learning}
Understanding the minimax limits of performance in episodic settings is crucial for assessing the optimality of algorithms. Recent results establish sharp lower bounds on the expected regret. In particular, {the work~\cite{domingues2021episodic} shows} that for an episodic MDP with horizon $H$, $S$ states, and $A$ actions, any algorithm must incur expected regret at least
\begin{equation}\label{eq:lower_regret_bound}
    \Omega\!\left(\sqrt{H^2SA T}\right),
\end{equation}
where $T$ is the total number of interaction steps. The main distinction of this episodic estimate from the finite-horizon estimate in table~\ref{tab:unified_complexity} is that HMDP assumes a generative model assumption while problems in work~\cite{domingues2021episodic} are solved in an online manner with estimates formed over the trajectories therefore a regret bound here is more natural.
The bound~\eqref{eq:lower_regret_bound} matches the upper bounds achieved by several recent algorithms, which we discuss later in this section.

\subsubsection{UCRL}
The Upper Confidence Reinforcement Learning (UCRL) algorithm is a foundational optimistic algorithm designed for regret minimization. It was first introduced by \citet{jaksch2010near}, who provided near-optimal regret guarantees in the non-episodic average reward setting. Before presenting the algorithm, we slightly abuse notation and define  $V^\pi_h(s\mid P')$ corresponding to policy \(\pi\) evaluated in an MDP with transition kernel \(P',\) i.e. $\mathcal{M}' = (\cS, \cA, P', R, H)$. We are now ready to formally define the UCRL algorithm, see Algorithm \ref{alg:ucrl_alg}.
\begin{algorithm}
    \caption{UCRL}
    \begin{algorithmic}[1]\label{alg:ucrl_alg}
        \REQUIRE Confidence parameter \(\delta\).
        \FOR{episode \(k = 0,1,\ldots,K-1\)}
            \STATE Construct the confidence set \(C_{\delta,k}\) (see \eqref{eq:conf_set}) of plausible transition kernels.
            \STATE Compute 
            \[
                \overline{P}^k = \argmax_{P' \in C_{\delta, k}} V^\star_0({s_0} \mid P'),
            \]
            where \(s_0\) is the initial state.  Set \(\pi^k\) as the optimal policy in the MDP 
            \({\overline{\mathcal{M}}_{k}} =(\cS, \cA, \overline{P}^k, R, H) \)
            \STATE Execute policy \(\pi^k\) for the entire episode and update the empirical estimates of the transition probabilities.
        \ENDFOR
    \end{algorithmic}
\end{algorithm}

Notice that we do not claim that this algorithm is computationally friendly but it requires only solve finite-dimensional convex optimization problem that is doable.
Next we are going to define a set of plausible models. First, we define empirical estimate of the model using available data\footnote{Moreover, it will be maximum likelihood estimate}

\[
    \widehat{P}^k_h(s'|s,a) = \begin{cases}
        \frac{{N^k_h(s,a, s')}}{N_h^k(s,a)}, & \text{if } N_h^k(s,a) \geq 1 \\
        \frac{1}{S}, & \text{otherwise.}
    \end{cases}
\]
 where \( N_h^k(s, a) \) and \( N_h^k(s,a,s') \) are empirical counts, constructed based on the agent's observations during the first \( k \) episodes:
\[N_h^k(s,a)=\sum_{i=0}^{k-1}\one{(s_h^i,a_h^i)= (s,a)}\eqsp,\]
\[N_h^k(s,a,s')=\sum_{i=0}^{k-1}\one{(s_h^i,a_h^i,s_{h+1}^i)= (s,a,s')}\eqsp.\]
where $a^{k}_h = \pi^{k}_h(s^k_h)$ is an action that was selected at episode $k$ and step $h$. In other words, it is a number of visits of state-action pair $(s,a)$ and transitions from state-action pair $(s,a)$ to $s'$ respectively. To construct the confidence sets, we rely on concentration inequalities for sums of independent random variables. The following lemma will be useful:
\begin{lemma}[Concentration for $\ell_1$-norm~\cite{neu2020unifying}]\label{lem:ell1_norm_concentration}
    Let $X_1,\ldots,X_n$ be  centered independent random vectors in $\R^d$ such that $\norm{X_k}_1 \leq b$ a.s. for any $k \in [n]$. Then for any $\delta > 0$ with probability at least $1-\delta$
    \[
        \Bigl\Vert \sum_{k=1}^n X_k \Bigr\Vert_1 \leq b \sqrt{2dn\ln(1/\delta)}\eqsp.
    \]
\end{lemma}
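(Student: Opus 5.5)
The plan is to linearize the $\ell_1$-norm by duality and reduce to a scalar Hoeffding bound plus a union bound. For any $y\in\R^d$ we have $\|y\|_1=\max_{\sigma\in\{-1,1\}^d}\langle \sigma, y\rangle$. Set $Y=\sum_{k=1}^n X_k$. Then
\[
\mathbb{P}\bigl(\|Y\|_1>t\bigr)\le \sum_{\sigma\in\{-1,1\}^d}\mathbb{P}\bigl(\langle\sigma,Y\rangle>t\bigr),
\]
so it suffices to control each fixed direction $\sigma$ and pay a factor $2^d$.

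Fix $\sigma$ and let $Z_k=\langle\sigma,X_k\rangle$. The $Z_k$ are independent because the $X_k$ are. They are centered because the $X_k$ are. They are bounded because $|Z_k|\le\|\sigma\|_\infty\|X_k\|_1\le b$, so each $Z_k$ lies in an interval of length $2b$. The one-sided Hoeffding inequality (the Hoeffding bound stated earlier, in its one-sided exponential form) then gives
\[
\mathbb{P}\Bigl(\sum_{k=1}^n Z_k>t\Bigr)\le\exp\Bigl(-\frac{2t^2}{n(2b)^2}\Bigr)=\exp\Bigl(-\frac{t^2}{2nb^2}\Bigr).
\]
The two-sided version stated in the paper also works, at the cost of a harmless factor $2$.

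Combining the two displays gives $\mathbb{P}(\|Y\|_1>t)\le 2^d\exp(-t^2/(2nb^2))$. Setting the right-hand side equal to $\delta$ yields, with probability at least $1-\delta$,
\[
\|Y\|_1\le b\sqrt{2n\bigl(d\ln 2+\ln(1/\delta)\bigr)}.
\]

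The final step is to absorb the union-bound term, that is, to show $d\ln 2+\ln(1/\delta)\le d\ln(1/\delta)$. This is equivalent to $(d-1)\ln(1/\delta)\ge d\ln 2$. For $d\ge 2$ it holds as soon as $\delta\le 1/4$, since then $\ln(1/\delta)\ge 2\ln 2\ge \tfrac{d}{d-1}\ln 2$. This bookkeeping is the step I expect to be the main obstacle. The stated form $b\sqrt{2dn\ln(1/\delta)}$ is clean only in the small-$\delta$ regime, and for $d=1$ one simply applies one-sided Hoeffding to $Y$ and $-Y$ directly.

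So in the write-up I will either state the mild restriction $\delta\le 1/4$ (the regime relevant for the confidence sets in UCRL, where $\delta$ is divided by $SAHK$), or keep the slightly sharper form $b\sqrt{2n(d\ln2+\ln(1/\delta))}$ and note that it implies the claim.

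As a fallback, McDiarmid's inequality also works. It applies to $\|Y\|_1$ with bounded differences $2b$. Combined with $\E\|Y\|_1\le\sqrt{d\,\E\|Y\|_2^2}\le b\sqrt{dn}$, it gives $\|Y\|_1\le b\sqrt{dn}+b\sqrt{2n\ln(1/\delta)}$, which is again dominated by $b\sqrt{2dn\ln(1/\delta)}$ for $\delta$ below an absolute constant.
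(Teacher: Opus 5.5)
The paper states this lemma without proof (it is imported from Neu and Pike-Burke), so there is no in-paper argument to compare against. Your main route is the standard argument and is correct. It writes $\|Y\|_1=\max_{\sigma\in\{-1,1\}^d}\langle\sigma,Y\rangle$, applies one-sided Hoeffding to each $\langle\sigma,Y\rangle$ (range $2b$), and takes a union bound over the $2^d$ sign vectors. You need Hoeffding for independent but not necessarily identically distributed summands, rather than the i.i.d.\ version quoted in the paper; this is standard. The argument gives $\|Y\|_1\le b\sqrt{2n(d\ln 2+\ln(1/\delta))}$ for every $d\ge1$ and $\delta\in(0,1)$. Your absorption step correctly turns this into the stated bound when $d\ge2$ and $\delta\le 1/4$. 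You are also right that a restriction on $\delta$ is unavoidable and not an artifact of the proof. For $n=1$ and $X_1=\xi b\,\mathbf{e}_1$ with $\xi$ a Rademacher sign, $\|X_1\|_1=b$ almost surely, while $b\sqrt{2d\ln(1/\delta)}<b$ whenever $\delta>e^{-1/(2d)}$. So the lemma as literally stated (for any $\delta>0$) is false.

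The step that does fail is your treatment of $d=1$. Applying one-sided Hoeffding to $Y$ and to $-Y$ is just the $d=1$ case of your union bound. It gives $\mathbb{P}(|Y|>t)\le 2e^{-t^2/(2nb^2)}$, i.e.\ $|Y|\le b\sqrt{2n\ln(2/\delta)}$, and the factor $2$ cannot be absorbed into $d\ln(1/\delta)=\ln(1/\delta)$. No other argument can do it either, because for $d=1$ the claimed bound is false even for $\delta\le 1/4$. Take $n=3$ and $X_k=\xi_k b$ with independent Rademacher $\xi_k$; then $|Y|=3b$ with probability $1/4$. For $\delta=0.24$ one has $b\sqrt{6\ln(1/0.24)}\approx 2.93\,b<3b$, so the bound holds with probability only $3/4<1-\delta$. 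Your McDiarmid fallback has the same blind spot, since $\sqrt d+\sqrt{2\ln(1/\delta)}>\sqrt{2d\ln(1/\delta)}$ when $d=1$; for $d\ge2$ it works once $\delta\le e^{-(3+2\sqrt2)}\approx 0.003$. So in the write-up, either state the lemma for $d\ge2$ and $\delta\le1/4$, or keep the unconditional form $b\sqrt{2n(d\ln2+\ln(1/\delta))}$. The latter serves equally well for the UCRL confidence sets, where $\delta$ is replaced by $\delta/(SAHK)$.
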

Using Lemma~\ref{lem:ell1_norm_concentration}, we can derive the following result:
\begin{lemma}[$\ell_1$-confidence set~\cite{neu2020unifying}]\label{lm:lec04:ell1_confidence_set}
    Let $\beta_{\delta}(n) = 4\sqrt{\frac{S \log(SAHK/\delta)}{\max(n, 1)}}$ and define the confidence set
    \begin{equation}\label{eq:conf_set}
        C_{\delta, k} = \left\{ P' \mid \forall (s,a,h) \in \cS \times \cA\times [H]  : \left\Vert P'_h(\cdot|s,a) - \widehat{P}^k_h(\cdot|s,a) \right\Vert_1 \leq \beta_\delta(N^k_h(s,a))  \right\}.
    \end{equation}
    Then with probability at least $1-\delta$ for any $k \in [K],$ $P$ belongs to the $C_{\delta,k}$.
\end{lemma}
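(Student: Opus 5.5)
Fix a triple $(s,a,h)$ and a number of visits $n\ge 1$. The key difficulty is that $N^k_h(s,a)$ is random and data-dependent, so Lemma~\ref{lem:ell1_norm_concentration} cannot be applied directly to $\widehat P^k_h(\cdot\mid s,a)$. I will use the same ``reward tape'' device as in the UCB proof. For each $(s,a,h)$, imagine an infinite tape of i.i.d. next states $Y_1,Y_2,\ldots\sim P_h(\cdot\mid s,a)$. The $j$-th visit to $(s,a)$ at step $h$ reads $Y_j$. By the Markov property, this coupling produces the same joint law of trajectories. On this tape,
\[
\widehat P^k_h(\cdot\mid s,a)=\frac1n\sum_{j=1}^n \be_{Y_j}\quad\text{whenever } N^k_h(s,a)=n,
\]
so it suffices to control the tape averages for every deterministic $n$.

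For fixed $(s,a,h,n)$, set $X_j=\be_{Y_j}-P_h(\cdot\mid s,a)\in\R^S$. These vectors are centered and independent, with $\norm{X_j}_1\le 2$. Lemma~\ref{lem:ell1_norm_concentration} with $d=S$, $b=2$ and confidence $\delta'$ gives, with probability at least $1-\delta'$,
\[
\Bigl\Vert \frac1n\sum_{j=1}^n X_j\Bigr\Vert_1\le 2\sqrt{\frac{2S\ln(1/\delta')}{n}}.
\]
I choose $\delta'=\delta/(SAHK)$. Then $2\sqrt{2S\ln(SAHK/\delta)/n}\le 4\sqrt{S\log(SAHK/\delta)/n}=\beta_\delta(n)$.

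Next comes the union bound. Since $N^k_h(s,a)\le k\le K$, only $n\in\{1,\ldots,K\}$ matter. Ranging over $SAH$ triples and $K$ values of $n$ gives total failure probability at most $SAHK\cdot\delta'=\delta$. On the complement event, every tape average is within $\beta_\delta(n)$ of $P_h(\cdot\mid s,a)$ for every $n\le K$. Hence, for every $k$ and every $(s,a,h)$ with $N^k_h(s,a)\ge1$, the true kernel satisfies the constraint in \eqref{eq:conf_set}. The event is defined on the tapes and does not depend on $k$, so the statement holds simultaneously for all $k\in[K]$.

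It remains to handle $N^k_h(s,a)=0$. Then $\widehat P^k_h=\frac1S\vec 1$, and $\norm{P_h(\cdot\mid s,a)-\widehat P^k_h(\cdot\mid s,a)}_1\le 2$. Meanwhile $\beta_\delta(0)=4\sqrt{S\log(SAHK/\delta)}\ge 2$ for nontrivial parameters, since $\log(SAHK/\delta)\ge 1/4$ holds for instance when $\delta\le 1/2$. So the constraint holds trivially.

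The main obstacle is justifying the tape coupling rigorously, that is, that the adaptively selected samples have the same law as prefixes of an i.i.d. sequence. The union bound over all $n\le K$ is what makes the random $N^k_h(s,a)$ harmless. This costs only the factor $K$ inside the logarithm, which is exactly why $K$ appears in $\beta_\delta$.
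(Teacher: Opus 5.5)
Your proof is correct and is essentially the derivation the paper intends. The paper gives no proof beyond saying the result follows from Lemma~\ref{lem:ell1_norm_concentration}, and you supply exactly that: apply the $\ell_1$ concentration with $d=S$, $b=2$, $\delta'=\delta/(SAHK)$ to each fixed $(s,a,h,n)$ on a sample tape (the same device as in the paper's UCB proof), then take a union bound over the $SAHK$ events, which is precisely where the $\log(SAHK/\delta)$ in $\beta_\delta$ comes from. Your treatment of $N^k_h(s,a)=0$ is also fine; in fact $\beta_\delta(0)\ge 2$ holds for every $\delta\in(0,1]$ once $S\ge 2$, and the case $S=1$ is trivial.
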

Finally, we are ready to state regret bounds for this algorithm. 
\begin{theorem}[Regret bound for UCRL]
\label{th:ucrl}
    The regret of $\mathsf{UCRL}$ defined with confidence sets $C_{\delta,t}$ satisfies with probability at least $1-\delta$
    \[
        \mathcal{R}(T) = \widetilde{\mathcal{O}}(\sqrt{H^3 S^2 AT}).
    \]
\end{theorem}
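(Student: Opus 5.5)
The plan follows the standard optimism-plus-simulation-lemma argument. Let $\cE$ be the event of Lemma~\ref{lm:lec04:ell1_confidence_set}, namely that $P\in C_{\delta,k}$ for every episode $k$; it has probability at least $1-\delta$, and we work on $\cE$ throughout.

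\emph{Optimism.} Since $\overline P^k$ maximizes $V^\star_0(s_0\mid P')$ over $C_{\delta,k}\ni P$, we get
\[
V^\star_0(s_0)\le V^\star_0(s_0\mid \overline P^k)=V^{\pi^k}_0(s_0\mid\overline P^k).
\]
Hence the per-episode regret satisfies
\[
V^\star_0(s_0)-V^{\pi^k}_0(s_0)\le V^{\pi^k}_0(s_0\mid\overline P^k)-V^{\pi^k}_0(s_0\mid P).
\]

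\emph{Simulation lemma.} Apply Lemma~\ref{lem:simulation} with $M$ the true MDP, $\widehat M=\overline{\cM}_k$, and common rewards. The right-hand side above becomes
\[
\sum_{h=0}^{H-1}\E_{P,\pi^k}\bigl[\langle \overline P^k_h-P_h,\overline V^{\pi^k}_{h+1}\rangle(s_h,a_h)\bigr].
\]
Hölder's inequality bounds each term by $\|\overline P^k_h-P_h\|_1\,\|\overline V_{h+1}\|_\infty$. Here $\|\overline V_{h+1}\|_\infty\le H$, and $\|\overline P^k_h-P_h\|_1\le 2\beta_\delta(N^k_h(s,a))$ because both kernels lie in $C_{\delta,k}$ (triangle inequality through $\widehat P^k_h$). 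This gives
\[
\text{regret}_k\lesssim H\sum_h \E_{P,\pi^k}\Bigl[\sqrt{S\log(SAHK/\delta)/\max(N^k_h(s_h,a_h),1)}\Bigr].
\]

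\emph{From expectations to realized trajectories.} Let $X_k$ be the sum of bonuses along the actual trajectory of episode $k$. The difference between the expectation above and $X_k$ is a martingale difference with respect to the episode filtration, since $\pi^k$ and $N^k$ are $\cF_{k-1}$-measurable. Each increment is bounded by $O(H\cdot H\sqrt{S\log})$, so Azuma--Hoeffding gives a deviation of $\widetilde{\mathcal O}(H^2\sqrt{SK})=\widetilde{\mathcal O}(\sqrt{H^3ST})$, which is lower order. We spend $\delta/2$ on this step and $\delta/2$ on $\cE$.

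\emph{Pigeonhole sum.} It remains to bound
\[
H\sqrt{S\log}\sum_{k}\sum_h \bigl(\max(N^k_h(s^k_h,a^k_h),1)\bigr)^{-1/2}.
\]
For fixed $(s,a,h)$, the counts along visits run through $0,1,2,\dots$, so the inner sum is at most $\sum_{(s,a,h)}\bigl(1+2\sqrt{N^K_h(s,a)}\bigr)$. By Cauchy--Schwarz (concavity of $\sqrt{\cdot}$, as in the UCB proof) with $\sum_{s,a}N^K_h(s,a)=K$ for each $h$, this is at most $HSA+2H\sqrt{SAK}$. Multiplying through gives
\[
H\sqrt S\cdot H\sqrt{SAK}=\sqrt{H^4S^2AK}=\sqrt{H^3S^2AT},
\]
using $T=KH$, plus lower-order terms.

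The main obstacle is the bookkeeping in the expectation-to-realization step and the pigeonhole sum. Counts are updated only at episode ends, so within an episode the same $(s,a,h)$ is visited at most once per stage $h$, and this must be handled correctly. Because $N_h$ is stage-indexed, each $(s,a,h)$ is visited at most once per episode, so $N^k_h$ increments by at most one per episode and the pigeonhole sum is exact. The martingale concentration must also be applied uniformly; I would use Azuma with a union over nothing more than the single sum.
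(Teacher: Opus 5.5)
Your proposal is correct and follows the paper's proof: optimism on the confidence event, then the simulation lemma combined with H\"older's inequality and the doubled $\ell_1$ width $2\beta_\delta(N^k_h)$, then the pigeonhole bound over stage-indexed counts. The Azuma--Hoeffding step that turns $\E_{M,\pi^k}[\cdot]$ into sums along realized trajectories, and the $\max(N^k_h,1)$ bookkeeping for zero counts, are steps the paper skips without comment; you handle both correctly, and they cost only lower-order terms.
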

\begin{proof}
     Lemma~\ref{lm:lec04:ell1_confidence_set} allow us to define the high-probability event \( \mathcal{E} \), satisfying \( P(\mathcal{E}) \geq 1 -\delta\), where confidence sets $C_{\delta,k}$ are all correct. In the subsequent analysis, we temporarily condition on \(\mathcal{E}\). Next we define estimate of value function induced by our choice of policy
    \begin{equation}\label{eq:ucrl_optimism}
        \overline{V}^k_0(s_0) = \max_{P' \in C_{\delta, k}} V^\star_0(s \mid P')\eqsp.
    \end{equation}
    Additionally we define $\overline{P}^k$ as a model on which maximum attains and \(\pi^k\) the corresponding optimal policy under \(\overline{P}^k\)\footnote{Since the set of all models is a product of simplices, it is compact and therefore maximum attains.}. {Using optimism, on the event \(\mathcal E\), we obtain
\begin{align*}
    \mathcal R(T)
    &=
    \sum_{k=0}^{K-1}
    \Bigl(
        V_0^\star(s_0)-V_0^{\pi^k}(s_0)
    \Bigr)  \leq
    \sum_{k=0}^{K-1}
    \Bigl(
        \overline V_0^k(s_0)-V_0^{\pi^k}(s_0)
    \Bigr).
\end{align*}
Since \(\pi^k\) is optimal in the optimistic MDP with transition kernel
\(\overline P^k\), we have
\[
    \overline V_0^k(s_0)
    =
    V_0^{\pi^k}(s_0 \mid \overline P^k).
\]
Applying Lemma~\ref{lem:simulation} to the true MDP with kernel \(P\)
and the optimistic MDP with kernel \(\overline P^k\), and using that the
reward functions coincide, gives
\begin{align*}
    \overline V_0^k(s_0)-V_0^{\pi^k}(s_0)
    &=
    \sum_{h=0}^{H-1}
    \mathbb E_{M,\pi^k}
    \left[
        \left\langle
            \overline P_h^k(\cdot\mid s_h^k,a_h^k)
            -
            P_h(\cdot\mid s_h^k,a_h^k),
            \overline V_{h+1}^k
        \right\rangle
    \right].
\end{align*}
On the event \(\mathcal E\), both \(P\) and \(\overline P^k\) belong to
the confidence set \(C_{\delta,k}\). Therefore, for any \((s,a,h)\),
\[
    \left\|
        \overline P_h^k(\cdot\mid s,a)
        -
        P_h(\cdot\mid s,a)
    \right\|_1
    \leq
    2\beta_\delta\bigl(N_h^k(s,a)\bigr).
\]
Moreover, since the rewards are bounded in \([0,1]\), we have
\(\|\overline V_{h+1}^k\|_\infty \leq H\). Hence,
\begin{align*}
    &\overline V_0^k(s_0)-V_0^{\pi^k}(s_0)
    \leq
    2H
    \sum_{h=0}^{H-1}
    \mathbb E_{M,\pi^k}
    \big[
        \beta_\delta\big(N_h^k(s_h^k,a_h^k)\big) 
    \big] \\
    &\qquad\leq
    8H\sqrt{S\log(SAHK/\delta)}
    \sum_{h=0}^{H-1}
    \mathbb E_{M,\pi^k}
       \big[{\max}^{-1/2}\{1,N_h^k(s_h^k,a_h^k)\}\big]
    \eqsp.
\end{align*}
Consequently,
\begin{align*}
    &\mathcal R(T)
    \leq
    8H\sqrt{S\log(SAHK/\delta)}
    \sum_{k=0}^{K-1}\sum_{h=0}^{H-1}
    \mathbb E_{M,\pi^k}
        \big[{\max}^{-1/2}\{1,N_h^k(s_h^k,a_h^k)\}\big]\eqsp.
\end{align*}} 
This sum of inversed squares could be bounded by regroupped by state-action pairs
    \begin{align*}
        \sum_{k=0}^{K-1} \sum_{h=0}^{H-1} \frac{1}{\sqrt{N^k_h(s_h^k,a^k_h)}} &= \sum_{h,s,a}\sum_{i=1}^{N_h^K(s,a)}\frac{1}{\sqrt{i}} \leq 2\sum_{h,s,a}\sqrt{N_h^K(s,a)} \leq H\sqrt{SAK}\eqsp.
    \end{align*}
    Finally, we obtain that with probability at least $1-\delta$ the following regret bound holds:
    \[
        \mathcal{R}(T) = \widetilde{\mathcal{O}}(\sqrt{H^4 S^2 A K} ) = \widetilde{\mathcal{O}}(\sqrt{H^3 S^2 A T}).
    \]
\end{proof}
\begin{remark}
    Theorem~\ref{th:ucrl} provides the so-called \emph{Probably Approximately Correct} (PAC) guarantees for the UCRL algorithm. In the subsequent discussion we restrict our attention to \emph{expected regret} bounds for simplicity;  however, we note that analogous PAC guarantees hold for all algorithms considered below.
\end{remark}
For a comprehensive survey of UCRL-like algorithms, we refer the reader to \cite{neu2020unifying}.
\subsubsection{UCBVI}

\paragraph{Value Iteration}
The UCBVI (Upper Confidence Bound Value Iteration) algorithm updates the policy for the next episode based on information collected from previous episodes. To achieve this, an MDP is constructed with the Markov kernel \( \widehat{P}_h^k \) and rewards defined as  
\[
\widehat{r}_h(s,a) = r_h^k(s,a) + b_h^k(s,a)\eqsp.
\]  
Then the optimal policy \( \pi_h^k \) is found using Bellman equations and dynamic programming. The additional terms added to the true rewards are called bonuses and defined as:  
\begin{equation}\label{eq:hoeffding_bonus}
b_h^k(s,a) = 2H\sqrt{\frac{\ln{(SAHK/\delta)}}{N_h^k(s,a)}}\eqsp.
\end{equation} 
Bonuses allow the algorithm to construct a policy that balances exploration and exploitation, where poorly explored states are assigned higher bonuses. We now present the formal description of the dynamic value iteration algorithm.
\[\widehat{V}_H^k(s) = 0, \quad \widehat{Q}_h^k =\min \{ r_h + b_h^k + \widehat{P}_h^k  \widehat{V}_{h+1}^{k}, \; H\}\eqsp,\]
\begin{equation}\label{dp_eq_2}
    \widehat{V}_h^k(s) = \max_{a\in\cA}\widehat{Q}_h^k(s,a), \;  \pi_h^k(s) = \argmax_{a\in\cA} \widehat{Q}_h^k(s,a)\eqsp.
\end{equation}
We are now ready to formally define the UCBVI algorithm:
\begin{algorithm}
	\caption{UCBVI}
	\begin{algorithmic}[1]\label{alg:ucbvi}
		\REQUIRE reward function \(r\), confidence parameters \(\delta\).
		\FOR{ round \(k = 0,1,...K-1\)}
            \STATE Compute \(\widehat{P}_h^k\) as the empirical estimates.
            \STATE Compute reward bonus \(b_h^k\) for all \(h\).
            Run Value Iteration on \(\{\widehat{P}_h^k, r + b_h^k\}\)
            Set \(\pi_k\) as the returned policy of VI.
            \STATE Execute policy \(\pi_h^k\) in the episode \(h\). Update values \(N_h^k\)
        \ENDFOR
	\end{algorithmic}	
\end{algorithm}

UCBVI was first introduced in \cite{azar2017minimax} with two distinct types of exploration bonuses. The bonus specified in equation~\eqref{eq:hoeffding_bonus} is referred to as a Hoeffding-type bonus, since the corresponding confidence intervals are derived via the Chernoff–Hoeffding inequality. The following result holds: 
\begin{theorem}
    In the stage-dependent setting, UCBVI-H attains the regret bound
    \begin{equation*}
         \E[\mathcal{R}(K)] = \widetilde{\mathcal{O}}(\sqrt{H^3SAT})\eqsp.
    \end{equation*}
\end{theorem}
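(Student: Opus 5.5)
The plan follows the optimism-plus-simulation template used for UCRL (Theorem~\ref{th:ucrl}), but with the $\ell_1$ confidence set replaced by a Hoeffding bound applied only in the direction of the fixed optimal value $V^\star_{h+1}$. This is what removes one $\sqrt{S}$ factor, as well as one $\sqrt{H}$ relative to the UCRL bound.

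\textbf{Step 1: the good event.} Let $\mathcal{E}$ be the event that for all $(k,h,s,a)$ with $N_h^k(s,a)\ge1$,
\[
\bigl|(\widehat P_h^k-P_h)V^\star_{h+1}(s,a)\bigr|\le H\sqrt{\ln(SAHK/\delta)/N_h^k(s,a)} .
\]
Because $V^\star_{h+1}$ is deterministic and bounded by $H$, the reward-tape argument from the UCB proof applies. We then take Hoeffding's inequality for each fixed sample count and a union bound, which gives $\mathbb{P}(\mathcal{E})\ge1-\delta$. Choosing $\delta=1/K$ converts the high-probability bound into an expectation bound at cost $O(H)$.

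\textbf{Step 2: optimism.} On $\mathcal{E}$ we show $\widehat Q_h^k\ge Q_h^\star$ by backward induction in $h$. Assuming $\widehat V_{h+1}^k\ge V^\star_{h+1}$, we have
\[
r_h+b_h^k+\widehat P_h^k\widehat V_{h+1}^k\ge r_h+b_h^k+\widehat P_h^kV^\star_{h+1}\ge r_h+P_hV^\star_{h+1}=Q^\star_h .
\]
The truncation at $H$ preserves the inequality because $Q_h^\star\le H$.

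\textbf{Step 3: recursion.} Optimism gives $V_0^\star-V_0^{\pi^k}\le \widehat V_0^k-V_0^{\pi^k}$. Write $\delta_h^k=\widehat V_h^k(s_h^k)-V_h^{\pi^k}(s_h^k)$. Expanding,
\begin{align*}
\delta_h^k\le{}& b_h^k+(\widehat P_h^k-P_h)V^\star_{h+1}+(\widehat P_h^k-P_h)(\widehat V^k_{h+1}-V^\star_{h+1})\\
&+P_h(\widehat V_{h+1}^k-V_{h+1}^{\pi^k}),
\end{align*}
with all terms evaluated at $(s_h^k,a_h^k)$. The last term equals $\delta_{h+1}^k$ plus a martingale difference $\xi_h^k$. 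Summed over $k,h$, the $\xi_h^k$ contribute $\widetilde{\mathcal O}(H\sqrt{T})$ by Azuma's inequality. The second term is at most $b_h^k$ on $\mathcal{E}$.

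\textbf{Step 4: the main obstacle.} The hard part is the cross term $(\widehat P_h^k-P_h)(\widehat V^k_{h+1}-V^\star_{h+1})$, since $\widehat V^k_{h+1}$ depends on the data and Hoeffding cannot be applied to it directly. I would bound it coordinatewise using Bernstein's inequality, $|\widehat P-P|(s')\le\sqrt{2P(s')L/N}+2L/N$, where $L=\ln(SAHK/\delta)$. Applying $\sqrt{xy}\le x/H+Hy$ then gives
\[
\text{cross term}\le \frac1H P_h(\widehat V^k_{h+1}-V^\star_{h+1})+\widetilde{\mathcal O}\bigl(H^2S/N\bigr)\le \frac1H P_h(\widehat V^k_{h+1}-V^{\pi^k}_{h+1})+\widetilde{\mathcal O}\bigl(H^2S/N\bigr).
\]
The recursion therefore becomes $\delta_h^k\le(1+1/H)(\delta_{h+1}^k+\xi_h^k)+2b_h^k+\widetilde{\mathcal O}(H^2S/N)$. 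Since $(1+1/H)^H\le e$, unrolling over $h$ loses only a constant factor.

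\textbf{Step 5: summation.} As in Theorem~\ref{th:ucrl}, pigeonholing by state-action pairs gives
\[
\sum_{k,h}b_h^k\lesssim H\sqrt{L}\sum_{h,s,a}\sqrt{N_h^K(s,a)}\le H\sqrt{L}\,H\sqrt{SAK}=\widetilde{\mathcal O}\bigl(\sqrt{H^3SAT}\bigr).
\]
The lower-order term contributes $\widetilde{\mathcal O}(H^3S^2A)$. Rounds with $N=0$, where $b$ is effectively infinite and truncation caps the contribution at $H$, cost at most $H^2SA$. The stated bound then holds for $T\gtrsim H^3S^3A$, and otherwise holds up to this additive lower-order term.
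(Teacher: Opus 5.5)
The paper never proves this theorem. It cites \cite{azar2017minimax} and only proves the weaker Proposition~\ref{prop:ucbvi_h}, with rate $\widetilde{\mathcal O}(\sqrt{H^3S^2AT})$. What you give is essentially the original Azar--Osband--Munos argument, and it is correct. The two routes differ in how they treat the term $(\widehat P_h^k-P_h)\widehat V_{h+1}^k$.

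\emph{The paper's route.} It combines optimism with the simulation lemma (Lemma~\ref{lem:simulation}) and bounds this whole term at once via Lemma~\ref{lem:ell1_norm_concentration}, by $\|\widehat P_h^k-P_h\|_1\|\widehat V_{h+1}^k\|_\infty\lesssim H\sqrt{SL/N}$. This is short, handles the trajectory randomness through the tower property rather than Azuma, and has no additive lower-order term. It pays the extra $\sqrt S$.

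\emph{Your route.} You use the $V^\star$-direction Hoeffding bound (the content of Lemma~\ref{error_V}) in the regret decomposition itself, not only for optimism. The data dependence is then isolated in $(\widehat P_h^k-P_h)(\widehat V_{h+1}^k-V^\star_{h+1})$. Coordinatewise Bernstein, together with $0\le \widehat V_{h+1}^k-V^\star_{h+1}\le H$, absorbs this into a $(1+1/H)$-inflated recursion along the realized trajectory. The cost is an Azuma step and an additive $\widetilde{\mathcal O}(H^3S^2A)$ term, so the clean rate holds for $T\gtrsim H^3S^3A$. This matches the additive lower-order term carried by the bound in \cite{azar2017minimax}.

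Two minor points. First, the coordinatewise Bernstein event, with a further union bound over $s'$, must be added to $\mathcal{E}$ alongside the Hoeffding event. Second, your opening sentence overstates the gain: relative to the UCRL bound $\sqrt{H^3S^2AT}$ of Theorem~\ref{th:ucrl}, you save only $\sqrt S$, not an additional $\sqrt H$.
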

The second type of bonus is more intricate and is based on the Bernstein concentration inequality. This more refined bonus design eliminates the extra \(\sqrt{H}\)
factor from the regret bound, thereby matching the lower and upper bounds up to logarithmic factors:
\begin{theorem}\label{th:ucbvi_bern}
    In the stage-dependent setting, UCBVI-B attains the regret bound
    \begin{equation*}
         \E[\mathcal{R}(K)] = \widetilde{\mathcal{O}}(\sqrt{H^2SAT})\eqsp.
    \end{equation*}
\end{theorem}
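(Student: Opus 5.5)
The plan is to follow the optimism-plus-simulation-lemma template used for UCRL in Theorem~\ref{th:ucrl}. The Hoeffding bonus \eqref{eq:hoeffding_bonus} will be replaced by a Bernstein-type bonus
\[
b_h^k(s,a)\approx \sqrt{\frac{L\,\mathrm{Var}_{\widehat P_h^k(\cdot|s,a)}(\widehat V_{h+1}^k)}{N_h^k(s,a)}}+\frac{HL}{N_h^k(s,a)}+(\text{correction}),
\]
where $L=\log(SAHK/\delta)$. The correction term accounts for the fact that the variance is computed with $\widehat V^k_{h+1}$ rather than $V^\star_{h+1}$.

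\textbf{Step 1: clean event.} Define an event $\mathcal E$ of probability at least $1-\delta$ on which four things hold for all $(k,h,s,a)$:
\begin{enumerate}
\item Bernstein's inequality for the fixed function $V^\star_{h+1}$: $|(\widehat P_h^k-P_h)V^\star_{h+1}|(s,a)\lesssim \sqrt{L\,\mathrm{Var}_{P_h}(V^\star_{h+1})/N}+HL/N$. Because $V^\star$ is deterministic, this avoids the extra $\sqrt S$ that the $\ell_1$-bound of Lemma~\ref{lm:lec04:ell1_confidence_set} would cost.
\item A per-coordinate Bernstein bound $|\widehat P_h^k(s'|s,a)-P_h(s'|s,a)|\lesssim\sqrt{P_h(s'|s,a)L/N}+L/N$.
\item Concentration of the empirical variance around the true variance.
\item A Freedman-type martingale bound for the trajectory noise terms $\sum_{k,h}\bigl(P_h(\cdot|s_h^k,a_h^k)\,\delta_{h+1}^k-\delta_{h+1}^k(s_{h+1}^k)\bigr)$.
\end{enumerate}
On $\mathcal E$, prove optimism $\widehat V_h^k\ge V^\star_h$ by backward induction on $h$. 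Write $\widehat Q-Q^\star=b+(\widehat P-P)V^\star+\widehat P(\widehat V-V^\star)$; the last term is $\ge0$ by induction. The bonus must dominate the Bernstein deviation. The true variance $\mathrm{Var}_P(V^\star)$ is replaced by $\mathrm{Var}_{\widehat P}(\widehat V)$ plus an error of order $\sqrt{\widehat P(\widehat V-V^\star)^2}$, and that error is what the correction term in the bonus absorbs. The truncation at $H$ is harmless. Consequently $\mathcal R(K)\le\sum_k(\widehat V_0^k-V_0^{\pi^k})(s_0)$.

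\textbf{Step 2: recursive decomposition.} Let $\delta_h^k=\widehat V_h^k(s_h^k)-V_h^{\pi^k}(s_h^k)$ and expand
\[
\delta_h^k\le b_h^k+(\widehat P_h^k-P_h)V^\star_{h+1}+(\widehat P_h^k-P_h)(\widehat V^k_{h+1}-V^\star_{h+1})+P_h(\widehat V^k_{h+1}-V^{\pi^k}_{h+1}),
\]
with everything evaluated at $(s_h^k,a_h^k)$. The last term equals $\delta_{h+1}^k$ plus a martingale difference. For the third term, use the per-coordinate bound with $ab\le \tfrac{a^2}{H}+Hb^2$. This gives $\tfrac1H P_h(\widehat V-V^\star)$, which is at most $\tfrac1H(\delta_{h+1}^k+\text{martingale})$, plus a lower-order term $SHL/N$. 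Unrolling then produces a factor $(1+1/H)^H\le e$. The total regret is thus bounded by a constant times
\[
\sum_{k,h}\Bigl(\sqrt{L\,\mathrm{Var}_{P_h}(V^{\pi^k}_{h+1})/N_h^k}+\sqrt{L\,\mathrm{Var}\text{-error}/N_h^k}+\frac{H^2SL}{N_h^k}\Bigr)+\text{martingale}.
\]
The martingale term is $\widetilde{\mathcal O}(H\sqrt T)$. The $1/N$ terms sum to $\widetilde{\mathcal O}(H^2S^2A)$, which is lower order.

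\textbf{Step 3 (main obstacle): the variance sum.} By Cauchy--Schwarz, the leading term is at most
\[
\sqrt{L\sum_{k,h}\mathrm{Var}_{P_h}(V^{\pi^k}_{h+1})(s_h^k,a_h^k)}\cdot\sqrt{\sum_{k,h}1/N_h^k}.
\]
The second factor is $\sqrt{HSA\,L}$ after regrouping by $(h,s,a)$ as in the UCRL proof. For the first factor, invoke the law of total variance: for each episode, $\mathbb E\sum_h \mathrm{Var}_{P_h}(V^{\pi}_{h+1})=\mathrm{Var}(\sum_h r_h)\le H^2$. So the sum is $\lesssim KH^2+\widetilde{\mathcal O}(H^2\sqrt T)$ after converting conditional expectations to realized sums with Freedman's inequality. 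The product is $\sqrt{H^3SAK}=\sqrt{H^2SAT}$.

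It remains to show that the variance-error terms are lower order. Bound $\mathrm{Var}(V^\star)-\mathrm{Var}(V^{\pi^k})$ and $\widehat P(\widehat V-V^\star)^2$ by $2H\,P(\widehat V-V^{\pi^k})$, that is, by $H\delta_{h+1}^k$ plus martingale terms. Summing gives a quantity of order $H\cdot\mathcal R$. After Cauchy--Schwarz this contributes $\sqrt{H^2SAL\cdot H\mathcal R}$, and solving the resulting self-bounding quadratic inequality in $\mathcal R$ leaves only a lower-order additive term. Getting the Bernstein correction right, so that optimism holds while the correction stays lower order, is where I expect the real work; the fallback is the explicit correction term of \cite{azar2017minimax}. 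Finally, take expectations, with $\mathcal E^c$ contributing at most $\delta KH$, and set $\delta=1/T$.
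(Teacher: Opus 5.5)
The paper gives no proof of this theorem (it is taken from \cite{azar2017minimax}). The only argument it supplies is the Hoeffding-bonus sketch behind Proposition~\ref{prop:ucbvi_h}, which yields the weaker $\sqrt{H^3S^2AT}$.

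Your outline follows the architecture of \cite{azar2017minimax}, and most of it is sound:
Bernstein at the fixed function $V^\star_{h+1}$;
the per-coordinate bound plus AM--GM feeding a $(1+1/H)^H\le e$ recursion;
the inequality $\mathrm{Var}_P(V^\star)-\mathrm{Var}_P(V^{\pi^k})\le 2H\,P(V^\star-V^{\pi^k})$, valid since $0\le V^{\pi^k}\le V^\star\le H$;
the law of total variance;
and the self-bounding step.

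The genuine gap is the one you flag, and it is the crux of the Bernstein analysis, not a detail. As you set it up, the correction must dominate $\sqrt{L\,\widehat P_h^k(\widehat V^k_{h+1}-V^\star_{h+1})^2/N_h^k}$. That quantity involves the unknown $V^\star$, so it cannot be part of a bonus the algorithm computes. Your backward induction only tells you that $\widehat V^k_{h+1}-V^\star_{h+1}$ lies in $[0,H]$. Using that yields a correction of order $H\sqrt{L/N_h^k}$, which is exactly the Hoeffding bonus and brings back the extra $\sqrt H$. Moreover, Step~3 bounds the idealized quantity $\widehat P(\widehat V-V^\star)^2$, not whatever computable surrogate is actually added. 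So the claim that the correction is lower order is not established for any implementable algorithm.

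The fallback to \cite{azar2017minimax} is not a drop-in either. Their correction is built from next-state visit counts $N'_{h+1}(y)$, roughly $\sqrt{\tfrac{L}{N}\sum_{y}\widehat P(y\mid s,a)\min\{H^3S^2AL^2/N'_{h+1}(y),H^2\}}$ in their stationary setting. Optimism then requires an auxiliary per-state bound $\widehat V^k_{h+1}(y)-V^\star_{h+1}(y)\lesssim\sqrt{H^3S^2AL^2/N'_{h+1}(y)}$. That bound is proved jointly with optimism by induction over episodes, using value estimates forced to be non-increasing in $k$ and a local regret bound at each state. The surrogate's sum is then handled separately by a pigeonhole argument.

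Two routes fit your Step~3 better:
\begin{enumerate}
\item Carry a pessimistic estimate $\underline V^k_{h+1}\le V^\star_{h+1}$ through the same backward induction, and use the computable correction $\tfrac1H\widehat P(\widehat V-\underline V)+cH^2L/N$. By AM--GM this dominates the required term, because $0\le\widehat V-V^\star\le\widehat V-\underline V\le H$. The price is a parallel $(1+c/H)$-recursion for $\widehat V-\underline V$ along the trajectory.
\item Use $b(V)=\max\{c_1\sqrt{L\,\mathrm{Var}_{\widehat P}(V)/N},\,c_2HL/N\}$ with suitable constants satisfying $c_1^2\le c_2$. This makes $V\mapsto\widehat PV+b(V)$ coordinatewise nondecreasing on $[0,H]^S$. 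Optimism then follows from empirical Bernstein at $V^\star$ with no correction at all.
\end{enumerate}

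Two bookkeeping slips are harmless but worth fixing.
\begin{enumerate}
\item In the stage-dependent setting $\sum_{k,h}1/N_h^k\le HSA(1+\log K)$. So the $H^2SL/N$ terms sum to $\widetilde{\mathcal O}(H^3S^2A)$, not $\widetilde{\mathcal O}(H^2S^2A)$.
\item The variance-error sum is of order $H^2U$, not $H\mathcal R$, where $U$ is the sum of your per-step bound terms. Summing $\delta^k_{h+1}$ over $h$ costs another factor $H$. Also, the $\delta$'s upper-bound the regret rather than being bounded by it, so the quadratic inequality has to be written for $U$. It still reads $U\le U_0+\sqrt{C_0U}$ with $C_0=\widetilde{\mathcal O}(H^3SA)$, hence $U\le 2U_0+C_0$.
\end{enumerate}
The net result is $\widetilde{\mathcal O}(\sqrt{H^2SAT}+H^3S^2A)$. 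So the displayed rate needs $T\gtrsim H^4S^3A$, a caveat the statement also suppresses.
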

We next describe techniques that lead to a regret bound for UCBVI-H that is slightly weaker than the one stated above.
\begin{proposition}\label{prop:ucbvi_h}
    UCBVI-H achieves the following regret bound:
    \begin{equation*}
         \E[\mathcal{R}(K)] = \widetilde{\mathcal{O}}( \sqrt{H^3S^2AT}).
    \end{equation*}
\end{proposition}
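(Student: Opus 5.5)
The plan is to reuse the UCRL argument (Theorem~\ref{th:ucrl}): replace the $\ell_1$ confidence set by the Hoeffding bonus \eqref{eq:hoeffding_bonus}, and pay for the $S$-dimensional concentration through Lemma~\ref{lem:ell1_norm_concentration}. This gives the extra $\sqrt{S}$ relative to the sharp $\sqrt{H^3SAT}$ bound, and it avoids the delicate correlation between $\widehat P_h^k$ and $\widehat V_{h+1}^k$.

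\textbf{Step 1: the good event and optimism.} By Lemma~\ref{lm:lec04:ell1_confidence_set} and a union bound, with probability at least $1-\delta$ we have, for all $(k,h,s,a)$,
\[
\|\widehat P_h^k(\cdot|s,a)-P_h(\cdot|s,a)\|_1\le \beta_\delta(N_h^k(s,a)).
\]
Call this event $\mathcal E$. On $\mathcal E$ we prove optimism, $\widehat Q_h^k\ge Q_h^\star$, by backward induction on $h$. Suppose $\widehat V_{h+1}^k\ge V_{h+1}^\star$. Then
\[
r_h+b_h^k+\widehat P_h^k\widehat V_{h+1}^k-Q_h^\star\ \ge\ b_h^k+(\widehat P_h^k-P_h)V_{h+1}^\star .
\]
Only the fixed vector $V^\star_{h+1}$ appears here, so a scalar Hoeffding inequality with a union bound gives $|(\widehat P_h^k-P_h)V_{h+1}^\star|\le H\sqrt{\ln(SAHK/\delta)/N}\le b_h^k$. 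The truncation at $H$ keeps optimism, since $Q^\star_h\le H$. We intersect this scalar event with $\mathcal E$, which costs only a constant factor in $\delta$. Then
\[
\mathcal R\le\sum_k\bigl(\widehat V_0^k(s_0)-V_0^{\pi^k}(s_0)\bigr).
\]

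\textbf{Step 2: decompose the gap.} Write $\delta_h^k=\widehat V_h^k(s_h^k)-V_h^{\pi^k}(s_h^k)$. Dropping the truncation gives an upper bound, so the update \eqref{dp_eq_2} yields
\[
\delta_h^k\le b_h^k(s_h^k,a_h^k)+\bigl\langle\widehat P_h^k-P_h,\widehat V_{h+1}^k\bigr\rangle(s_h^k,a_h^k)+P_h(\widehat V_{h+1}^k-V_{h+1}^{\pi^k})(s_h^k,a_h^k).
\]
The last term equals $\delta_{h+1}^k+\xi_{h+1}^k$, where $\xi_{h+1}^k$ is a bounded martingale difference. Unrolling over $h$ is the simulation-lemma telescoping of Lemma~\ref{lem:simulation}. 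The middle term is the main obstacle, because $\widehat V_{h+1}^k$ depends on the same data as $\widehat P_h^k$. Here we deliberately use the crude Hölder bound: since $\|\widehat V_{h+1}^k\|_\infty\le H$, on $\mathcal E$ the middle term is at most $H\beta_\delta(N_h^k)=\widetilde{\mathcal O}\bigl(H\sqrt{S/N_h^k}\bigr)$. This is exactly where the factor $\sqrt S$ is lost. Since $b_h^k$ is also of this order, each per-step term is $\widetilde{\mathcal O}\bigl(H\sqrt{S/\max\{1,N_h^k\}}\bigr)$.

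\textbf{Step 3: summation.} Summing over $k$ and $h$, the martingale terms $\sum_{k,h}\xi_{h+1}^k$ are bounded by Azuma--Hoeffding: there are $T$ increments of size at most $2H$, giving $\widetilde{\mathcal O}(H\sqrt{T})$ with high probability. The bonus terms are handled by the pigeonhole computation from the proof of Theorem~\ref{th:ucrl}, $\sum_{k,h}N_h^k(s_h^k,a_h^k)^{-1/2}\le 2\sum_{h,s,a}\sqrt{N_h^K(s,a)}\le 2H\sqrt{SAK}$. The pairs with $N=0$ contribute at most $H\cdot HSA$, a lower-order term. Altogether
\[
\mathcal R=\widetilde{\mathcal O}\bigl(H\sqrt S\cdot H\sqrt{SAK}\bigr)=\widetilde{\mathcal O}(\sqrt{H^3S^2AT}),
\]
using $T=KH$. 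Finally, the regret is at most $T$ on the complement of the good events; choosing $\delta=1/T$ gives the stated expected-regret bound.
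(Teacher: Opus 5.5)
Your proposal is correct and follows the paper's own argument. It uses optimism from a scalar Hoeffding bound on the fixed vector $V^\star_{h+1}$, a H\"older bound on the cross term $(\widehat P_h^k-P_h)\widehat V_{h+1}^k$ via the $\ell_1$ confidence set (exactly where the extra $\sqrt{S}$ enters), the UCRL pigeonhole summation, and $\delta=1/T$ (the paper's $1/(KH)$) for the failure event; the only cosmetic difference is that you unroll along the realized trajectory with Azuma--Hoeffding on the martingale terms, whereas the paper unrolls in expectation under $(M,\pi^k)$ via the simulation lemma and passes to realized visit counts by the law of total expectation.
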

We begin the proof with an important lemma which is proven using a careful application of the Hoeffding inequality:
\begin{lemma}[model error under \(V^{\star}\)]\label{error_V}
   For all \((s,a,h,k) \in {\cS\times \cA}\times[H]\times[K] \) with probability at least \( 1 - \delta \), the inequality holds:
\begin{equation}\label{error_ineq_V}
    \left| (\widehat{P}_h^k(\cdot|s, a) - P_h(\cdot |s,a)) V_{h+1}^{\star} \right | \leq 2H\sqrt{\frac{\ln{(SAHK/\delta)}}{N_h^k(s,a)}} = b_h^k(s,a)\eqsp.
\end{equation} 
\end{lemma}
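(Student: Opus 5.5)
The approach is Hoeffding's inequality applied to a fixed vector, combined with the ``reward tape'' device from the UCB analysis and a union bound. The central observation is that $V^\star_{h+1}$ is a \emph{deterministic} vector. It depends only on the true MDP and not on the data, which is exactly why this lemma is stated for $V^\star$ rather than for the data-dependent $\widehat V^k_{h+1}$. Consequently, for fixed $(s,a,h)$, the quantity $\widehat P_h^k(\cdot|s,a)V^\star_{h+1}$ is simply the empirical mean of the values $V^\star_{h+1}(s'_i)$ over the next states $s'_1,\dots,s'_{N}$ observed after the visits to $(s,a)$ at step $h$. Each summand lies in $[0,H]$ and has mean $P_h(\cdot|s,a)V^\star_{h+1}$.

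The difficulty is that $N_h^k(s,a)$ is random and depends on past observations, so Hoeffding's inequality cannot be applied to it directly. I would handle this exactly as in the UCB proof. For each $(s,a,h)$, imagine a tape of i.i.d.\ samples $s'_1,s'_2,\dots\sim P_h(\cdot|s,a)$, and let the $j$-th visit to $(s,a)$ at step $h$ reveal the $j$-th entry. By the Markov property this produces the same joint law of the interaction. Since $N_h^k(s,a)\le K$, it then suffices to control, for every fixed $n\in\{1,\dots,K\}$, the average of the first $n$ tape entries:
\[
\Bigl|\frac1n\sum_{i=1}^n V^\star_{h+1}(s'_i)-P_h(\cdot|s,a)V^\star_{h+1}\Bigr|\le H\sqrt{\frac{\log(2/\delta')}{2n}}
\]
with probability at least $1-\delta'$, which is the Hoeffding bound with $b-a=H$.

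Next I would take a union bound over $(s,a,h,n)\in\cS\times\cA\times[H]\times[K]$, i.e.\ $SAHK$ events, with $\delta'=\delta/(SAHK)$. This gives a deviation of $H\sqrt{\log(2SAHK/\delta)/(2n)}$, and this is at most $2H\sqrt{\ln(SAHK/\delta)/n}$ because $\log 2\le 7\ln(SAHK/\delta)$ for nontrivial parameters; the constant $2$ in $b_h^k$ has enough slack. On the resulting good event, substituting $n=N_h^k(s,a)$ gives \eqref{error_ineq_V} simultaneously for all $k$, since every realized count is one of the fixed $n$ already covered.

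The case $N_h^k(s,a)=0$ needs a separate remark. There the right-hand side is interpreted as $+\infty$ (or, for a clipped bonus, $\ge H$), and the left-hand side is trivially at most $H$ because both $\widehat P$ and $P$ are probability vectors and $0\le V^\star_{h+1}\le H$. I expect the only genuinely delicate step to be the tape/stopping argument that justifies applying Hoeffding at a random sample size. Once that argument is in place, the rest is bookkeeping.
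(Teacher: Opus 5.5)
Your proposal is correct and follows the route the paper indicates. The paper applies Hoeffding's inequality to the deterministic vector $V^\star_{h+1}$, which is exactly what removes the $\sqrt{S}$ factor of the $\ell_1$ confidence bound, and then takes a union bound over $SAHK$ events. The paper only asserts this in one line, and your tape argument for the random count $N_h^k(s,a)$, the constant check, and the treatment of $N_h^k(s,a)=0$ supply the details it omits.
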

Note that the bound in lemma \ref{error_V} could be obtained from lemma \ref{lem:ell1_norm_concentration}, but the determinism of the vector \( V^{\star} \) allows us to eliminate the factor \( \sqrt{S} \) from the right-hand side. The above lemmas allow us to define the high-probability event \( \mathcal{E} \), satisfying \( P(\mathcal{E}) \geq 1 - 2\delta \), where inequalities \eqref{eq:conf_set} and \eqref{error_ineq_V} hold.
In the subsequent analysis, we temporarily condition on \( \mathcal{E} \) and prove the key lemma.

\begin{lemma}[Optimism under \(\cal{E}\)]
    Conditioned on the event \( \mathcal{E} \), for all \( (h,k) \in [H]\times[K] \), the inequality holds:
    \[\widehat{V}_h^k(s_0) \geq V_h^{\star}(s_0)\eqsp.\]
\end{lemma}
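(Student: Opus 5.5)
We will prove the stronger, pointwise statement $\widehat{Q}_h^k(s,a) \geq Q_h^\star(s,a)$ for all $(s,a)\in\cS\times\cA$ and every $h$. It then gives $\widehat{V}_h^k(s) \geq V_h^\star(s)$ for every $s$, and in particular at $s_0$. The argument is a backward induction on $h$ from $H$ down to $0$, with $k$ fixed and everything conditioned on $\mathcal{E}$.

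\emph{Base case.} At $h=H$ we have $\widehat{V}_H^k \equiv 0 = V_H^\star$ by definition, so the claim holds trivially.

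\emph{Inductive step.} Assume $\widehat{V}_{h+1}^k \geq V_{h+1}^\star$ pointwise. Fix $(s,a)$.

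Suppose first that the truncation in \eqref{dp_eq_2} is active, so $\widehat{Q}_h^k(s,a)=H$. Rewards lie in $[0,1]$ and at most $H-h\le H$ steps remain, so $Q_h^\star(s,a)\le H$. Hence $\widehat{Q}_h^k(s,a)\ge Q_h^\star(s,a)$ immediately.

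Otherwise, use the optimal Bellman equation $Q_h^\star = r_h + P_h V_{h+1}^\star$ and write
\[
\widehat{Q}_h^k(s,a) - Q_h^\star(s,a) = b_h^k(s,a) + \widehat{P}_h^k(\cdot|s,a)\bigl(\widehat{V}_{h+1}^k - V_{h+1}^\star\bigr) + \bigl(\widehat{P}_h^k(\cdot|s,a) - P_h(\cdot|s,a)\bigr)V_{h+1}^\star .
\]
We bound the three terms separately:
\begin{itemize}
\item The middle term is nonnegative. By the induction hypothesis $\widehat{V}_{h+1}^k - V_{h+1}^\star \ge 0$, and $\widehat{P}_h^k(\cdot|s,a)$ is a probability vector; this holds also when $N_h^k(s,a)=0$, where $\widehat{P}_h^k$ is uniform.
\item The last term is at least $-b_h^k(s,a)$ on $\mathcal{E}$, by Lemma~\ref{error_V}. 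This is exactly where the bonus is used: it is chosen to dominate the model error evaluated against the \emph{fixed} vector $V_{h+1}^\star$.
\end{itemize}
Adding the three terms gives $\widehat{Q}_h^k(s,a)\ge Q_h^\star(s,a)$.

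\emph{Conclusion of the step.} Taking the maximum over $a$ on both sides gives
\[
\widehat{V}_h^k(s) = \max_{a\in\cA}\widehat{Q}_h^k(s,a) \ge \max_{a\in\cA} Q_h^\star(s,a) = V_h^\star(s),
\]
which closes the induction.

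Two points need care.

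\emph{Splitting the error at $V^\star$.} The decomposition must separate the error around $V_{h+1}^\star$, not around $\widehat{V}_{h+1}^k$. The latter is data-dependent, so controlling it would require the $\ell_1$ bound of Lemma~\ref{lem:ell1_norm_concentration} and an extra $\sqrt{S}$ factor. Placing the data-dependent difference against $\widehat{P}_h^k$, where only positivity is needed, avoids this.

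\emph{Unvisited pairs.} When $N_h^k(s,a)=0$, the bonus is formally infinite. The clipping at $H$ then makes $\widehat{Q}_h^k(s,a)=H$, and the first case applies. I expect no other obstacles, since all probabilistic content is already packaged in $\mathcal{E}$ through Lemma~\ref{error_V}.
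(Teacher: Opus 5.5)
Your proof is correct and follows the paper's argument: backward induction on $h$, the trivial truncation case $\widehat{Q}_h^k(s,a)=H$, and otherwise the induction hypothesis (replacing $\widehat{V}_{h+1}^k$ by $V_{h+1}^\star$ under the probability vector $\widehat{P}_h^k(\cdot|s,a)$) followed by Lemma~\ref{error_V}, which absorbs $(\widehat{P}_h^k-P_h)V_{h+1}^\star$ into the bonus. Your three-term decomposition, the strengthening to all states and actions, and the remark on unvisited pairs make explicit what the paper's two-line chain of inequalities leaves implicit.
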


\begin{proof}
    We prove the statement by backward induction on \( h \). For \( h = H \), we have \( \widehat{V}_H^k(s) = V_H^{\star}(s) = 0 \).
    Now, we establish the truth of the statement for \( h \) under the assumption that it has been proven for \( h + 1 \). From equation (\ref{dp_eq_2}), it is enough to verify that for all \( (s,a) \in \mathcal{S} \times \mathcal{A} \), the inequality \( \widehat{Q}_h^k(s,a) \geq Q_h^{\star}(s,a) \) holds. If \( \widehat{Q}_h^k(s,a) = H \), the inequality is trivially satisfied.
 Otherwise,
    \begin{align*}
        \widehat{Q}_h^k(s,a) - Q_h^{\star}(s,a) &= b_h^k(s,a) +  \widehat{P}_h^k(\cdot|s,a) \cdot \widehat{V}_{h+1}^{k} - P_h(\cdot|s,a)\cdot V_{h+1}^{\star} \\
        &\geq  b_h^k(s,a) +  (\widehat{P}_h^k(\cdot|s,a) - P_h(\cdot|s,a)) \cdot V_{h+1}^{\star} \geq 0,
    \end{align*}
    where the first equality follows from the definition of the value-iteration algorithm and the Bellman equations, the second inequality follows from the induction hypothesis, and the last inequality follows from lemma \ref{error_V}.
\end{proof}
We now provide an outline of the proof for the weakened regret bound.
\begin{proof}[of Proposition \ref{prop:ucbvi_h}]
We begin by decomposing the regret, conditioning on the event \(\mathcal{E}\):
\begin{align*}
    \mathbb{E}[\mathcal{R}(K)]
    &= \mathbb{E}\!\left[ \sum_{k=0}^{K-1} \big( V^{\star}(s_0) - V^{\pi^k}(s_0) \big) \right] \\
    &= \mathbb{E} \!\left[ \one{\mathcal{E}}\sum_{k=0}^{K-1} \big( V^{\star}(s_0) - V^{\pi^k}(s_0) \big) \right]
       + \mathbb{E} \!\left[ \one{\mathcal{E}^c} \sum_{k=0}^{K-1} \big( V^{\star}(s_0) - V^{\pi^k}(s_0) \big) \right] \\
    &\leq \mathbb{E} \!\left[ \one{\mathcal{E}} \sum_{k=0}^{K-1} \big( \widehat{V}^{k}_0(s_0) - V^{\pi^k}(s_0) \big) \right]
       + 2\delta K H \eqsp.
\end{align*}
Using optimism, lemma \ref{lem:ell1_norm_concentration} and the simulation lemma, we get:
\begin{align*}
     \widehat{V}_0^k(s_0) - V^{\pi^k}_0(s_0) &\leq\sum_{h=0}^{H-1}\mathbb{E}_{M,\pi_k} \left[ b_h^k(s_h,a_h) + \left(\widehat{P}_h^k(\cdot|s_h,a_h) - P_h(\cdot|s_h,a_h)\right)\widehat{V}_{h+1}^{k}\right] \\
     &\leq \sum_{h=0}^{H-1}\mathbb{E}_{M,\pi_k} \left[ b_h^k(s_h,a_h) + 4H\sqrt{SL/N_h^k(s_h,a_h)}\right] \\
     &\leq \sum_{h=0}^{H-1}\mathbb{E}_{M,\pi_k} \left[6H\sqrt{SL/N_h^k(s_h,a_h)}\right] \\
     &\leq 6H\sqrt{SL}\cdot \mathbb{E}_{M,\pi_k}\left[ \sum_{h=0}^{H-1}\frac{1}{\sqrt{N_h^k(s_h,a_h)}}\right] \eqsp.
\end{align*}
Now we sum all episodes together and take the failure event into consideration:
\begin{align*}
    \mathbb{E}[\mathcal{R}(T)] &= \mathbb{E}\left[\sum_{k=0}^{K-1}V^{\star}(s_0) - V^{\pi^k}(s_0)\right]\\
    &= \mathbb{E}\left[\one{\mathcal{E}}\sum_{k=0}^{K-1}V^{\star}(s_0) - V^{\pi^k}(s_0)\right] + \mathbb{E}\left[\one{\mathcal{E}^c}\sum_{k=0}^{K-1}V^{\star}(s_0) - V^{\pi^k}(s_0)\right] \\
    &\leq \mathbb{E}\left[\one{\mathcal{E}}\sum_{k=0}^{K-1}V^{\star}(s_0) - V^{\pi^k}(s_0)\right] + 2\delta KH \\
    &\leq 10H\sqrt{S\ln(SAHK/\delta)}\mathbb{E}\left[\sum_{k=0}^{K-1}\sum_{h=0}^{H-1}\frac{1}{\sqrt{N_h^k(s_h^k, a_h^k)}}\right] + 2\delta KH
\end{align*}
where the last inequality uses the law of total expectation.
We can bound the double summation term above like in UCRL analysis and obtained bound
\[\mathbb{E}\left[\sum_{k=0}^{K-1}V^{\star}(s_0) - V^{\pi^k}(s_0)\right] \leq 6H^2S\sqrt{AK\ln (SAHK/\delta)} + 2\delta KH\eqsp.\]
Setting \(\delta=1/KH\), we get:
\[\mathbb{E}[\mathcal{R}(T)]\leq 10H^2S\sqrt{AK\ln (SAH^2K^2)} + 2 = \widetilde{\mathcal{O}}(\sqrt{H^3S^2AT})\eqsp.\]
\end{proof}

\subsubsection{Model-free approaches}

There are two main approaches to RL: model-based and model-free. Model-based algorithms (UCBVI, UCRL) make use of a model for the environment, forming a control policy based on this learned model. Model-free approaches dispense with the model and directly update the value function—the expected reward starting from each state. 
Advantages of model-free algorithms in reinforcement learning:
\begin{itemize}
    \item No need to model the environment – these algorithms learn  directly from experience without constructing an explicit model of transitions and rewards.
    \item Flexibility – can be applied in complex and partially observable environments where building a model is difficult.
    \item Lower computational complexity – do not require storing and updating a transition table like model-based methods.
    \item Better performance in large state spaces – especially useful for high-dimensional problems.
    \item Suitable for real-time applications – can adapt to changes in the environment without recomputing the entire model.
\end{itemize}
Due to these advantages, model-free algorithms became the foundation of deep RL and laid the groundwork for state-of-the-art algorithms such as DQN \cite{mnih2015humanlevel}, A2C \cite{mnih2016a3c}, and TRPO \cite{schulman2015trpo}. On the other hand it is believed that model-free algorithms suffer from a higher sample complexity compared to model-based approaches. We draw an analogy with a generative model setting in discounted MDPs. The minimax lower bound on the sample complexity for a generative model is  \(\widetilde{{\Omega}}(SA/(1-\gamma)^3\varepsilon^2)\), whereas for Q-learning, a hard example has been constructed with a proven lower bound of \(\widetilde{\Omega}(SA/(1-\gamma)^4\varepsilon^2)\) although a variance reduction technique can close this gap~\cite{wainwright2019variance}. The simplest and most commonly used algorithm learns the Q-function by updating it at each step, similar to classical Q-learning with a generative model, with the only difference being that the form of the Bellman operator is adapted for a finite-horizon MDP. Exploration of the environment is achieved by allowing the possibility of taking a random action with probability $\varepsilon$.

\begin{algorithm}[H]
    \centering
    \caption{Q-learning with \(\varepsilon\)-greedy exploration}
    \label{alg:q_rl_e_greedy}
    \begin{algorithmic}[1]
        \STATE {\bfseries Input:} stepsize \(\alpha\), exploration parameter \(\varepsilon\);
        \STATE {\bfseries Initialize:} $Q_h(s,a)\leftarrow 0$ for all $(s,a,h) \in {\cS\times \cA}\times [H]$;
        \FOR{episode $k = 0,\ldots,K-1$}
          \FOR{step $h=0,\ldots, H-1$}
          \STATE With probability $\varepsilon$, select $a_h^k \sim \text{Uniform}(\mathcal{A})$, otherwise set $a_h^k = \arg\max\limits_{a} Q_h(s_h^k, a)$ \\
           \STATE observe state \(s_{h+1}^k\)\\
          
          \(Q_h(s_h,a_h)\leftarrow (1-\alpha)Q_h(s_h,a_h) + \alpha[r(s_h,a_h) + \max_{a}Q_{h+1}(s_{h+1}^k, a)]\)\\
          \ENDFOR
        \ENDFOR
    \end{algorithmic}
\end{algorithm}

Unfortunately, as shown in the example below, an unstructured exploration strategy requires exponentially many episodes to learn.
    \begin{example}[\cite{osband2016generalization}]
"River Swim" environment consists of a long chain of states $\mathcal{S} = \{1, \dots, S\}$. At each step, the agent can transition either left or right. Actions to the left are deterministic, while actions to the right succeed with probability $1 - S^{-1}$; otherwise, the agent moves to the left. All states give zero reward except for the far-right state $S$, which gives a reward of $1$. Each episode has a length of $H = S - 1$, and the agent always starts in the initial state $1$. The optimal policy is to move right at every step, yielding an expected reward of \(p^\star = \left( 1 - 1/S \right)^{S-1}\) per episode. Consider Q-learning  with  {zero initialization and} \(\varepsilon\)-greedy exploration, {where ties between actions with equal Q-values are broken uniformly at random}{.} Let $k^{\star}$ denote the first episode in which state $S$ is visited. It is easy to see that $Q_{h}^k(s,a) = 0$ for all $h$ and all $k < k^{\star}$. Furthermore, actions are sampled uniformly at random over episodes $k < k^{\star}$. Thus, in any episode $k < k^{\star}$, the terminal state will be reached with probability $ 2^{1-S}$. It follows that  
\[
\mathbb{E}[k^{\star}] = {\Theta(2^S)}\eqsp.
\]

\end{example}

To design an algorithm that achieves good upper bounds on regret, a model-free algorithm~\ref{alg:q_rl_ucb} was developed, following the idea of constructing bonuses as in UCBVI. The key difference is that instead of adding bonuses to the rewards, we now add them directly to the Q-function.
Using UCB (Upper Confidence Bound) exploration instead of $\varepsilon$-greedy exploration in the model-free setting helps manage uncertainties more effectively for different states and actions~\cite{jin2018q}.
\begin{algorithm}[H]
    \centering
    \caption{Q-learning with UCB-Hoeffding}
    \label{alg:q_rl_ucb}
    \begin{algorithmic}[1]
        \STATE {\bfseries Input:} MDP $M = (\cS, \cA, P, R, H)$ and the immediate reward function $r$;
        \STATE {\bfseries Initialize:} $Q_h(s,a)\leftarrow H$ and $N_h(s,a) \leftarrow 0$ for all $(s,a,h) \in {\cS\times \cA}\times [H]$;
        \FOR{episode $k = 0,\ldots,K-1$}
          \FOR{step $h=0,\ldots, H-1$}
          \STATE Take action \(a_h\in \argmax_{a'}Q_h(s_h,a')\) and observe \(s_{h+1}\)\\
          \(t=N_h(s_h,a_h) \leftarrow N_h(s_h,a_h) + 1\); \(b_t\leftarrow c\sqrt{H^3 \log{(SAHK/\delta)}/t}\)\\
          \(Q_h(s_h,a_h)\leftarrow (1-\alpha_t)Q_h(s_h,a_h) + \alpha_t[r(s_h,a_h) + V_{h+1}(s_{h+1}) + b_t]\)\\
          \(V_h(s_h)\leftarrow \min{\{H,\max_{a'}Q_h(s_h,a')\}}.\)
          \ENDFOR
        \ENDFOR
    \end{algorithmic}
\end{algorithm}
The analysis implies that it is important to apply a learning rate of \(\alpha_t = O(H/t)\) rather than \(1/t\) when updating a state-action pair for the \(t\)-th time. The former approach places more emphasis on recent updates, as opposed to treating all past updates equally. This careful adjustment in reweighting is key to the significant difference in sample efficiency, as it leads to a guarantee that is much more efficient compared to earlier methods, which required exponentially many samples in terms of \(H\).
\begin{theorem}[\cite{jin2018q}]
     There exists an absolute constant $c > 0$ such that, for any $\delta>0$, if we choose \(b_t = c \sqrt{H^3 \ln(SAT/\delta) / t},
\) 
then with probability $1 - \delta$, the total regret of Q-learning with UCB-Hoeffding is at most
\[
\widetilde{\mathcal{O}}(\sqrt{H^4 SAT}),
\]
\end{theorem}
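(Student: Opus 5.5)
The plan follows the standard analysis of \cite{jin2018q}. It has four steps: unroll the update into a weighted sum, prove optimism by backward induction, bound the per-episode gap recursively, and sum up.

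\textbf{Step 1: unrolled representation.} With $\alpha_t=(H+1)/(H+t)$, define $\alpha_t^0=\prod_{j=1}^t(1-\alpha_j)$ and $\alpha_t^i=\alpha_i\prod_{j=i+1}^t(1-\alpha_j)$. Unrolling the update gives, for $t=N_h^k(s,a)$ and $k_1<\dots<k_t$ the previous episodes in which $(s,a)$ was visited at step $h$,
\[
Q_h^k(s,a)=\alpha_t^0H+\sum_{i=1}^t\alpha_t^i\bigl[r_h(s,a)+V_{h+1}^{k_i}(s_{h+1}^{k_i})+b_i\bigr].
\]
I will record the elementary properties of these weights:
\begin{itemize}
\item $\sum_{i}\alpha_t^i=1$ for $t\ge1$;
\item $\max_i\alpha_t^i\le 2H/t$ and $\sum_i(\alpha_t^i)^2\le 2H/t$;
\item $\sum_{i}\alpha_t^i/\sqrt{i}\in[1/\sqrt t,2/\sqrt t]$;
\item $\sum_{t\ge i}\alpha_t^i\le 1+1/H$.
\end{itemize}
The last one is what prevents exponential blow-up in $H$.

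\textbf{Step 2: optimism.} Subtract the Bellman equation $Q_h^\star=r_h+P_hV^\star_{h+1}$, written with the same weights, from the unrolled form. This gives
\[
(Q_h^k-Q_h^\star)(s,a)=\alpha_t^0(H-Q^\star_h)+\sum_i\alpha_t^i\Bigl[(V_{h+1}^{k_i}-V^\star_{h+1})(s_{h+1}^{k_i})+\bigl(\widehat P_{h}^{k_i}-P_h\bigr)V^\star_{h+1}+b_i\Bigr],
\]
where $\widehat P_h^{k_i}$ denotes the one-hot empirical kernel. The noise term $\sum_i\alpha_t^i(\widehat P^{k_i}_h-P_h)V^\star_{h+1}$ is a martingale in $i$, because $k_i$ is a stopping time. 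Azuma--Hoeffding with the weight bound $\sum_i(\alpha_t^i)^2\le 2H/t$, plus a union bound over $(s,a,h,t)$, gives with probability $1-\delta$ that it is at most $c\sqrt{H^3\iota/t}\le\sum_i\alpha_t^ib_i$, where $\iota=\log(SAT/\delta)$. Backward induction on $h$ then yields $Q_h^k\ge Q_h^\star$ and $V_h^k\ge V_h^\star$. The same bound also gives the upper estimate
\[
0\le (Q_h^k-Q^\star_h)(s,a)\le \alpha_t^0H+\sum_i\alpha_t^i\,(V_{h+1}^{k_i}-V^\star_{h+1})(s_{h+1}^{k_i})+\beta_t,
\qquad \beta_t=O\bigl(\sqrt{H^3\iota/t}\bigr).
\]

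\textbf{Step 3: recursion.} Define $\delta_h^k=(V_h^k-V_h^{\pi_k})(s_h^k)$ and $\phi_h^k=(V_h^k-V_h^\star)(s_h^k)$. By optimism the regret is at most $\sum_k\delta_1^k$. Since $a_h^k$ is greedy with respect to $Q_h^k$,
\[
\delta_h^k\le (Q_h^k-Q_h^\star)(s_h^k,a_h^k)+(Q^\star_h-Q^{\pi_k}_h)(s_h^k,a_h^k).
\]
The second term equals $\delta_{h+1}^k-\phi_{h+1}^k+\xi_{h+1}^k$, where $\xi$ is a martingale difference. Substituting the bound from Step 2 and summing over $k$, the weighted sum $\sum_k\sum_i\alpha^i_{n_h^k}\phi_{h+1}^{k_i}$ is regrouped by the episode in which each $\phi$ was generated. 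The property $\sum_{t\ge i}\alpha_t^i\le1+1/H$ then bounds it by $(1+1/H)\sum_k\phi_{h+1}^k$. The $-\phi_{h+1}^k$ term cancels most of this, and since $\phi\le\delta$ we obtain
\[
\sum_k\delta_h^k\le SAH+\Bigl(1+\frac1H\Bigr)\sum_k\delta_{h+1}^k+\sum_k\bigl(\beta_{n_h^k}+\xi_{h+1}^k\bigr).
\]

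\textbf{Step 4: summation.} Iterate the recursion over the $H$ steps; the factor $(1+1/H)^H\le e$ stays bounded. The bonus sum $\sum_{h,k}\beta_{n_h^k}\lesssim\sqrt{H^3\iota}\sum_{h,s,a}\sqrt{N_h^K(s,a)}$ is bounded by Cauchy--Schwarz, exactly as in the UCRL proof above, giving $O(\sqrt{H^3\iota}\cdot H\sqrt{SAK})=O(\sqrt{H^4SAT\iota})$. The martingale terms contribute $O(H\sqrt{T\iota})$ by Azuma--Hoeffding, which yields the stated $\widetilde{\mathcal O}(\sqrt{H^4SAT})$ bound.

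The main obstacle is Step 3. The regrouping must show that each $\phi_{h+1}$ is counted at most $(1+1/H)$ times in total; this is exactly where the choice $\alpha_t=O(H/t)$ matters. With $\alpha_t=1/t$ the corresponding constant would be $O(1)$ larger per step and compound exponentially in $H$. I would first verify the weight identities carefully and then perform the exchange of summation order.
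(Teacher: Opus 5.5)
Your proposal is correct and reconstructs the argument of \cite{jin2018q}, which the paper cites rather than proves; the paper only remarks that the analysis rests on optimism ($Q_h^k\ge Q_h^\star$ with high probability) and on the step size $\alpha_t=O(H/t)$, and these are exactly the two ingredients of your Steps~2--3 (Azuma--Hoeffding along the stopping times $k_i$, and the regrouping bound $\sum_{t\ge i}\alpha_t^i=1+1/H$). Two cosmetic points: the $O(H^2SA)$ term left after unrolling the recursion should be absorbed explicitly (it is at most $\sqrt{H^4SAT}$ when $T\ge SA$, and otherwise the trivial bound $\mathcal{R}\le T$ suffices), and your side remark on $\alpha_t=1/t$ is imprecise, since then $\alpha_t^i=1/t$ and $\sum_{t\ge i}\alpha_t^i$ is a divergent harmonic tail rather than a constant that is merely $O(1)$ larger.
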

As in optimistic model-based algorithms, the regret analysis follows from the fact that \(Q_h^k\) is an upper bound for \(Q_h^\star\) with high probability. Note that the modification of the algorithm, called Q-Learning-Bernstein, with more accurate bonus modeling leads to an upper bound \(\widetilde{\mathcal{O}}(\sqrt{H^3 SAT})
\). {The work~\cite{zhang2020almost} introduces} a novel model-free algorithm, UCB-Advantage, which achieves a regret bound of \(\widetilde{\mathcal{O}}(\sqrt{H^2SAT})\), improving previous results and matching the information-theoretic lower bound up to logarithmic factors.

\subsection{Randomized Algorithms}
One of the primary challenges in achieving sample-efficiency in forward-model settings lies in the problem of \textit{pessimistic estimation} of critical states. Specifically, due to noise in the observed transitions and rewards, an algorithm may assign an unjustifiably low value to the action-value function in certain regions of the state space. As a result, the agent will tend to avoid such states, leading to insufficient visitation and thereby preventing correction of the underestimated values. 

To address this issue, early sample-efficient algorithms focused on constructing upper confidence bound on the value function to ensure \textit{optimism in the face of uncertainty}. These approaches, which we discussed in detail earlier, guarantee that with high probability, the estimated action-values are optimistic, thereby encouraging sufficient exploration.

We now shift focus to an alternative and equally important paradigm: randomized algorithms. The core idea is conceptually simple — rather than explicitly modeling an upper confidence bound on the value function, one perturbs the collected data with appropriately scaled random noise, and then plans using the randomized dataset. This form of data perturbation induces an implicit exploration-exploitation trade-off. Remarkably, this approach not only offers strong theoretical guarantees but also tends to outperform deterministic methods in many practical scenarios.
\subsubsection{PSRL and Bayes regret}
Algorithms based on the idea of optimism were deterministic in the following sense: previous observations obtained from the environment uniquely determined the policy for the new round. Now, we will turn to the consideration of another class of so-called randomized algorithms, which trace their roots to the previously discussed Thompson Sampling for MAB. Recall that $\mathcal{H}_k$  denote the history of observations made prior to episode k. A reinforcement learning algorithm is a deterministic sequence \(\{\mu_k\}\) of functions,
each mapping \(\mathcal{H}_k\)
to a probability distribution \(\mu_k(\mathcal{H}_k)\) over policies. At the start of the kth
episode, the algorithm samples a policy \(\pi_k\) from the distribution \(\mu_k(\mathcal{H}_k)\). The algorithm
then execute policy \(\pi^k\) during the kth episode.

We model the agent's initial uncertainty over the environment
through a prior distribution. Posterior sampling for reinforcement
learning (PSRL) selects policy through two simple steps. First, a single instance of the
environment is sampled from the posterior distribution at the start of an episode. Then,
PSRL solves for and executes the policy that is optimal under the sampled environment over
the episode. PSRL randomly selects policies according to the probability they are optimal. With this approach exploration is guided by the variance of sampled policies as opposed to optimism.

 \begin{algorithm}
	\caption{Posterior Sampling for Reinforcement Learning (PSRL)}
	\begin{algorithmic}[1]
		\REQUIRE Prior distribution \(f\)
		\FOR{ episodes \(k = 0,1,...,K-1\)}
            \STATE sample \(M_k\sim f(\cdot|{\cH_{k}})\)
            \STATE compute \(\pi^k = \pi^{M_k}\)
            \FOR{timesteps \(h=0,...,H-1\)}
                \STATE sample and apply \(a_h^k=\pi^k_h(s_h^k)\)
                \STATE observe \(r_h^k\) and \(s_{h+1}^k\)
            \ENDFOR
        \ENDFOR
	\end{algorithmic}	
\end{algorithm}

Let's list some advantages of posterior sampling relative to optimistic
algorithms:
\begin{itemize}
    \item since PSRL only requires solving for an optimal policy for a single sampled MDP, it is computationally efficient both relative to many optimistic methods, which
require simultaneous optimization across a family of plausible environments.
\item the presence of an explicit prior allows an agent to
incorporate known environment structure in a natural way. This is crucial for most practical applications, as learning without prior knowledge requires exhaustive experimentation
in each possible state.

\item PSRL is naturally suited to more complex settings
where design of an efficiently optimistic algorithm might not be possible{.}
\end{itemize}

The first analysis of a randomized algorithm was conducted in the work \cite{osband2013more}. In that work, a bound was obtained for the so-called Bayesian regret of the algorithm, which we will define below.
Consider an episodic stage-independent MDP model \( M = (S, A, R^M, P^M, H) \), in which the agent only has unknown transition probabilities \( P^M \), but the distribution \( f \) over the set of admissible MDPs \( \Omega \) is assumed to be known. Let the sequence of policies \( \{\pi^0, \ldots, \pi^{K-1}\} \) be obtained during the execution of the algorithm \( \mu \). We define the Bayesian regret of the algorithm \( \mu \) over \( K \) rounds as  
\[
\mathcal{R}(K, \mu) = \sum_{k=0}^{K-1} \Delta_k,
\]  
where \( \Delta_k \) denotes the regret in round \( k \), and is given by  
\[
\Delta_k = V^{\pi^{\star}}_{M^{\star}}(s_0) - {V^{\pi^k}_{M^\star}(s_0)}.
\]

Note that the regret is a random variable, which now also depends on the true underlying MDP \( M^{\star} \) and the samples \( \{M_k\} \), which are also random variables (compare with the previous formulations). The probability space consists of the following elements:
\[[M^{\star}, M_0, \tau_0, M_1,\tau_1,...,M_{K-1}, \tau_{K-1}]\in \Omega^{K+1}\times\cT^K,\]
where \( \Omega \) is the set of admissible MDPs, and \( \mathcal{T} \) is the set of trajectories of length \( H \). The probability measure is defined by the density function
\[p([M^{\star}, M_0, \tau_0,...,M_{K-1}, \tau_{K-1}]) = f(M^{\star})\prod_{i=0}^{K-1}f(M_i|\cH_i)\mathbb P(\tau_i|M^{\star}, M_i),\]
where \( \mathbb{P}(\tau_i | M^{\star}, M_i) \) is the probability of observing trajectory \( \tau_i \) while using the optimal policy for \( M_i \) within \( M^{\star} \).It is straightforward to show that the sequence of MDPs \(\{M_k\}\) converges almost surely to the true underlying MDP \(M^{\star}\) as the number of observations grows:
\[
\mathbb{E}[M_k \mid \mathcal{H}_k] \xrightarrow[]{a.s.} M^{\star}.
\]
However, if we consider the random variables \(M_k\) without conditioning on the history \(\mathcal{H}_k\), then \(M_k\) and \(M^{\star}\) are identically distributed:
\[
M_k \overset{d}{=} M^{\star}.
\]
This fact allows us to relate quantities that
depend on the true, but unknown, MDP \(M^{\star}\), to those of the sampled MDP \(M_k\), which is fully observed by the agent. The following lemma is an immediate consequence of this observation 

\begin{lemma}\label{lem:psrl_equiv}
    Let \(g\) be any function that is \(\sigma(\mathcal{H}_k)\)-measurable. Then the following identity holds:
    \[
    {\mathbb{E}[g(M^{\star}) ] = \mathbb{E}[g(M_k)].}
    \]
\end{lemma}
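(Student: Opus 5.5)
The plan is to condition on the history $\cH_k$ and then take expectations with the tower property. The key structural fact is the joint density of $[M^\star, M_0,\tau_0,\dots]$ written above. In it, $M^\star$ is drawn from the prior $f$, and the trajectories are generated from $M^\star$. Hence the conditional law of $M^\star$ given $\cH_k$ is the Bayesian posterior $f(\cdot\mid\cH_k)$. By construction of PSRL, $M_k$ is drawn from exactly this posterior $f(\cdot\mid\cH_k)$, independently of $M^\star$ given $\cH_k$.

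I read the statement as follows: $g$ is a measurable function of the MDP that may additionally depend on the history, i.e.\ $g(M)=g(M;\cH_k)$ with $g(\cdot\,;\cdot)$ jointly measurable. This is the form needed later, e.g.\ for $g(M)=V^{\pi^M}_M(s_0)$ or for quantities built from confidence sets determined by $\cH_k$.

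The steps are as follows.

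\emph{Step 1: the posterior of $M^\star$.} From the factorization of the joint density, I will compute the conditional density of $M^\star$ given $\cH_k=(M_0,\tau_0,\dots,M_{k-1},\tau_{k-1})$. This gives
\[
p(M^\star\mid \cH_k)\propto f(M^\star)\prod_{i<k}\mathbb P(\tau_i\mid M^\star,M_i).
\]
The factors $f(M_i\mid\cH_i)$ do not involve $M^\star$ and cancel in normalization. The result is precisely the posterior $f(M^\star\mid\cH_k)$ that the algorithm uses.

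\emph{Step 2: the law of $M_k$.} The density factor $f(M_k\mid\cH_k)$ shows that, conditionally on $\cH_k$, the sample $M_k$ has the same law $f(\cdot\mid\cH_k)$. Therefore, for every fixed history $h$,
\[
\E[g(M^\star;h)\mid\cH_k=h]=\int g(M;h)\,f(dM\mid h)=\E[g(M_k;h)\mid\cH_k=h].
\]

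\emph{Step 3: averaging.} Taking expectations of both sides and using the tower property $\E[\cdot]=\E[\E[\cdot\mid\cH_k]]$ yields $\E[g(M^\star)]=\E[g(M_k)]$. Integrability is not an issue in our uses, since $g$ is bounded by $H$ for value functions; in general I would assume $g\ge0$ or $g$ integrable.

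The main obstacle is Step 1: justifying rigorously that the conditional law of $M^\star$ given $\cH_k$ coincides with the algorithm's posterior. This requires the data to depend on $M^\star$ only through the likelihoods $\mathbb P(\tau_i\mid M^\star,M_i)$. It also requires the policy choices, i.e.\ the $M_i$, to depend on $M^\star$ only through past data; this holds because each $M_i$ is drawn from $f(\cdot\mid\cH_i)$. In a general (non-discrete) $\Omega$, this step should be phrased via regular conditional distributions. I would handle it by induction on $k$ via Bayes' rule, noting that each new factor $f(M_i\mid\cH_i)$ is free of $M^\star$.
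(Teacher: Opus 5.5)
Your proof is correct and is the standard posterior-sampling argument behind the paper's one-line justification. The paper gives no separate proof; it calls the lemma an immediate consequence of the observation that $M_k$ and $M^\star$ are identically distributed. You make that precise: $M_k$ is drawn from $f(\cdot\mid\mathcal{H}_k)$, which you show is also the conditional law of $M^\star$ given $\mathcal{H}_k$, so the two conditional expectations agree and the tower property gives the identity. Conditioning on $\mathcal{H}_k$ is in fact slightly more careful than appealing to the unconditional equality $M_k \overset{d}{=} M^\star$, which by itself only covers functions $g$ that do not depend on the history.
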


Recall that in the definition of the regret, \(\Delta_k = V^{\pi^{\star}}_{M^{\star}}(s_0) - V^{\pi^k}_{M_k}(s_0),\) the optimal policy \(\pi^{\star}\) is not accessible to the agent. This complicates the analysis of the value function \(V^{\pi^{\star}}_{M^{\star}}\) for states encountered during learning. To overcome this difficulty, we introduce an alternative decomposition:
\[
\widetilde{\Delta}_k = V^{\pi^k}_{M_k}(s_0) - V^{\pi^k}_{M^{\star}}(s_0).
\]

\begin{lemma}[Regret equivalence]\label{regret_equiv}
    \[
    \mathbb{E} \left[ \sum_{k=0}^{K-1} \Delta_k \right] = \mathbb{E} \left[ \sum_{k=0}^{K-1} \widetilde{\Delta}_k \right].
    \]
\end{lemma}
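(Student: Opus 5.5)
The plan is to show that, for each episode $k$, the two per-episode quantities have the same expectation, $\mathbb{E}[\Delta_k]=\mathbb{E}[\widetilde{\Delta}_k]$, and then sum over $k$ by linearity of expectation. Subtracting the definitions gives
\[
\Delta_k-\widetilde{\Delta}_k = V^{\pi^\star}_{M^\star}(s_0) - V^{\pi^k}_{M_k}(s_0),
\]
because the term $V^{\pi^k}_{M^\star}(s_0)$ appears in both and cancels. So it suffices to prove that $\mathbb{E}\bigl[V^{\pi^\star}_{M^\star}(s_0)\bigr]=\mathbb{E}\bigl[V^{\pi^k}_{M_k}(s_0)\bigr]$.

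To prove this, define the function $g(M):=V^{\pi^M}_{M}(s_0)=V^\star_{0,M}(s_0)$, the optimal value of the MDP $M$ from the fixed initial state $s_0$. Here $\pi^M$ is an optimal policy for $M$, computed by backward dynamic programming on the Bellman optimality equations. The quantity $g(M)$ does not depend on how ties are broken in choosing $\pi^M$, so $g$ is a deterministic measurable function of $M$ alone; $s_0$ is fixed. Since $\pi^k=\pi^{M_k}$, we have $V^{\pi^k}_{M_k}(s_0)=g(M_k)$ and $V^{\pi^\star}_{M^\star}(s_0)=g(M^\star)$.

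Next I would invoke Lemma~\ref{lem:psrl_equiv}. The justification is the posterior sampling property: conditionally on $\mathcal{H}_k$, the sample $M_k$ has the same law as $M^\star$, namely the posterior $f(\cdot\mid\mathcal{H}_k)$. Hence $\mathbb{E}[g(M^\star)\mid\mathcal{H}_k]=\mathbb{E}[g(M_k)\mid\mathcal{H}_k]$ almost surely, and the tower property gives $\mathbb{E}[g(M^\star)]=\mathbb{E}[g(M_k)]$. This identity is also what the unconditional statement $M_k\overset{d}{=}M^\star$ yields directly.

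The main obstacle is this last step, because Lemma~\ref{lem:psrl_equiv} is phrased for $\sigma(\mathcal{H}_k)$-measurable $g$. What is actually needed is a fixed measurable $g$ of the model, possibly depending also on $\mathcal{H}_k$. I would therefore argue directly from the joint density $f(M^\star)\prod_i f(M_i\mid\mathcal{H}_i)\mathbb{P}(\tau_i\mid M^\star,M_i)$. Marginalizing over everything after episode $k$, the conditional law of $M^\star$ given $\mathcal{H}_k$ is the Bayes posterior $f(\cdot\mid\mathcal{H}_k)$, which is precisely the law from which $M_k$ is drawn.

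Integrability is automatic, since all values lie in $[0,H]$. Summing the per-episode identities over $k=0,\dots,K-1$ then yields the lemma.
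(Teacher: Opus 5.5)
Your proof is correct and follows the paper's route. You cancel the common term to get $\Delta_k-\widetilde{\Delta}_k=g(M^\star)-g(M_k)$ with $g(M)=V^{\pi^M}_M(s_0)$, use the posterior-sampling identity (Lemma~\ref{lem:psrl_equiv}) to get $\mathbb{E}[g(M^\star)]=\mathbb{E}[g(M_k)]$, and sum over $k$. Your algebra, which ends in $V^{\pi^k}_{M_k}(s_0)$ where the paper's display has a typo with $M^\star$, is the correct version, and your justification via the conditional law of $M^\star$ given $\mathcal{H}_k$ is cleaner than the paper's appeal to ``$\sigma(\mathcal{H}_k)$-measurability'' of $g$.
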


\begin{proof}
    Observe that the difference \(\Delta_k - \widetilde{\Delta}_k\) can be expressed as
    \[
    \Delta_k - \widetilde{\Delta}_k = V^{\pi^{\star}}_{M^{\star}}(s_0) - {V^{\pi^k}_{M^\star}(s_0)} - \left( V^{\pi^k}_{M_k}(s_0) - V^{\pi^k}_{M^{\star}}(s_0) \right) = V^{\pi^{\star}}_{M^{\star}}(s_0) - V^{\pi^k}_{M^{\star}}(s_0).
    \]
    Now define the function \(g(M) = V^{\pi^M}_{M}(s_0),\) where \(\pi^M\) denotes the policy obtained by planning in MDP \(M\). This function is \(\sigma(\mathcal{H}_k)\)-measurable since it depends only on the sampled MDP and the algorithm's procedure, which are fully determined by the history \(\mathcal{H}_k\). By Lemma \ref{lem:psrl_equiv}, we have:
    \[
    \mathbb{E}[g(M^{\star}) - g(M_k)] = 0.
    \]
    Therefore, \(\mathbb{E}[\Delta_k - \widetilde{\Delta}_k] = 0,\) and summing over \(k = 0, \dots, K-1\) concludes the proof.
\end{proof}

The subsequent regret analysis is based on Lemma~\ref{regret_equiv}; for further details, we refer the reader to the work \cite{osband2013more}.

\begin{theorem}\label{PSRL}
    PSRL achieves the following Bayesian
regret bound:
    \begin{equation}
         \E[\mathcal{R}(T)] = \widetilde{\mathcal{O}}(\sqrt{H^2S^2AT}).
    \end{equation}
\end{theorem}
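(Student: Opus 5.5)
The plan is to follow the optimistic-algorithm template used for UCRL (Theorem~\ref{th:ucrl}). The role of optimism is taken over by Lemma~\ref{regret_equiv}. By that lemma the Bayesian regret equals $\mathbb{E}\bigl[\sum_{k}\widetilde{\Delta}_k\bigr]$, where $\widetilde{\Delta}_k = V^{\pi^k}_{M_k}(s_0) - V^{\pi^k}_{M^\star}(s_0)$. This quantity compares the same policy $\pi^k$ in two MDPs, so it is exactly what the Simulation Lemma~\ref{lem:simulation} handles. Applying it with $M=M^\star$ and $\widehat M = M_k$, and with the rewards known, gives
\[
\widetilde{\Delta}_k = -\sum_{h=0}^{H-1}\mathbb{E}_{M^\star,\pi^k}\bigl[\langle P^\star_h(\cdot\mid s_h,a_h)-P^{k}_h(\cdot\mid s_h,a_h),\,V^{\pi^k}_{M_k,h+1}\rangle\bigr].
\]
Hölder's inequality then bounds each summand by $H\,\|P^\star_h(\cdot\mid s,a)-P^k_h(\cdot\mid s,a)\|_1$, because the value functions lie in $[0,H]$.

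Next I will use the $\ell_1$ confidence sets $C_{\delta,k}$ of Lemma~\ref{lm:lec04:ell1_confidence_set}. These sets are built from the history $\mathcal{H}_k$, so the indicator $\one{M\in C_{\delta,k}}$ is a $\sigma(\mathcal{H}_k)$-measurable function of $M$. Lemma~\ref{lem:psrl_equiv}, which I read as the statement that $M^\star$ and $M_k$ have the same conditional law given $\mathcal{H}_k$, then gives $\mathbb{P}(M_k\notin C_{\delta,k})=\mathbb{P}(M^\star\notin C_{\delta,k})\le\delta$. On the event that both $M^\star$ and $M_k$ lie in $C_{\delta,k}$, the triangle inequality yields $\|P^\star_h-P^k_h\|_1\le 2\beta_\delta(N^k_h(s,a))$. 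On the complementary event I bound $\widetilde{\Delta}_k$ trivially by $H$, which contributes at most $2\delta KH$ in expectation. Choosing $\delta=1/(KH)$ makes this term $O(1)$.

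It remains to control
\[
\mathbb{E}\sum_{k,h} 2H\beta_\delta\bigl(N^k_h(s^k_h,a^k_h)\bigr) = \widetilde{\mathcal{O}}\Bigl(H\sqrt{S}\;\mathbb{E}\sum_{k,h}\max\{1,N^k_h\}^{-1/2}\Bigr).
\]
I will pass from the expectation over $M^\star$-trajectories under $\pi^k$ to the realized trajectory $(s^k_h,a^k_h)$. This is valid because the realized trajectory is generated exactly by $\pi^k$ in $M^\star$, so the two quantities agree in expectation. The pigeonhole argument from the UCRL proof then applies. For a stage-independent MDP I pool counts over $h$ and write $N^k(s,a)$; the sum over visits is at most $\sum_{s,a}2\sqrt{N^K(s,a)}\le 2\sqrt{SAT}$ by Jensen's inequality, plus an additive term of order $SAH$ for the unvisited pairs. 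Multiplying gives $\widetilde{\mathcal{O}}(H\sqrt{S}\sqrt{SAT})=\widetilde{\mathcal{O}}(\sqrt{H^2S^2AT})$, which is the claimed bound.

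I expect the main obstacle to be the counting step. Within an episode the counts $N^k$ are frozen, so a pair can be visited up to $H$ times inside one episode while its count has not been updated. This breaks the clean identity $\sum_i i^{-1/2}$. I would first try to bound the intra-episode overcount by comparing $N^k$ with the true running count $n$. When $N^k\ge H$, the true count is at most $2N^k$, which costs only a constant factor. When $N^k<H$, the per-visit cost is at most $H$ and there are at most $H$ such visits per $(s,a)$, adding a lower-order term of order $SAH^2$. If instead counts are kept per stage $h$, the same argument goes through but gives an extra $\sqrt{H}$, so I will commit to the pooled-count version. A secondary subtlety is measurability: I must make sure that $C_{\delta,k}$ depends only on $\mathcal{H}_k$, so that Lemma~\ref{lem:psrl_equiv} transfers the failure probability from $M^\star$ to $M_k$.
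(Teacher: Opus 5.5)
Your proposal is correct and follows the route the paper indicates. The paper gives no self-contained proof and defers to \cite{osband2013more}, whose argument matches yours: Lemma~\ref{regret_equiv} to pass to $\widetilde{\Delta}_k$, then the simulation lemma with $\ell_1$ confidence sets whose failure probability is transferred from $M^\star$ to $M_k$ via Lemma~\ref{lem:psrl_equiv}, then a pigeonhole count over pooled visit counts with the stale-count correction. Two bookkeeping points affect only constants: the confidence sets of Lemma~\ref{lm:lec04:ell1_confidence_set} must be restated with pooled counts $N^k(s,a)$ (same tape and union-bound proof), and a pair can receive up to $2H-1$ visits, not $H$, while $N^k<H$.
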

Theorem~\ref{PSRL} presents the first regret bound established for the class of randomized algorithms. However, the Bayesian regret analysis only provides guarantees in expectation over a prior distribution on the MDP class, thereby allowing for the existence of hard MDP instances on which the algorithm may exhibit suboptimal learning efficiency. To address this limitation, subsequent works proposed modified algorithms such as SOS-PSRL~\cite{agrawal2017opsrl_worst} and OPSRL~\cite{tiapkin2022opsrl}, Bayes-UCBVI \cite{tiapkin2022dirichlet} which combine optimism-based exploration with posterior sampling. These approaches achieve tight regret bounds even under worst-case MDP instances.

\subsubsection{RLSVI}
Randomized Least-Squares Value Iteration (RLSVI) is a model-free reinforcement learning method that promotes deep exploration in episodic MDPs via randomized value function estimates. In the tabular setting, \citet{osband2016generalization} showed that this approach achieves nearly optimal Bayesian regret, outperforming naive exploration strategies such as $\varepsilon$-greedy. The idea was extended in \cite{osband2019deep} to large-scale problems with function approximation, including neural networks.
Theoretical analysis in \cite{russo2019worst} established the first near-optimality bound in the worst-case setting. Subsequent refinements \cite{agrawal2021improved} introduced a clipping-based modification that improves worst-case regret bounds.

In this section, we provide a detailed overview of the RLSVI algorithm, highlighting its connection to optimistic approaches and its extension to the non-tabular setting. Conceptually, RLSVI is closely related to UCBVI. At the beginning of each episode \(k\), the agent constructs an empirical model of the environment \(\widehat{\mathcal{M}}_k = ({\cS, \cA}, \widehat{P}_k, \widehat{r}_k, H)\). Recall that UCBVI adds optimistic bonuses to rewards via \(\bar{r}_k = \widehat{r}_k + b_h^k\). In contrast, RLSVI incorporates randomness by sampling a noise vector \(w^k \in \mathbb{R}^{HSA}\), where each component \(w^k(h, s, a) \sim \mathcal{N}(0, \sigma_k(h,s,a))\). The variance is defined as \(\sigma_k(h,s,a) = (\beta_k / N_h^k(s,a))^{1/2}\), where \(\beta_k\) is a tuning parameter. The random perturbation \(w^k\) should be sufficiently large to dominate the error introduced by performing approximate Bellman updates. A common choice is given by \(\beta_k = \widetilde{\mathcal{O}}(SH^3)\). The agent then computes an optimal policy \(\pi^k\) for the perturbed MDP \(\bar{\mathcal{M}} = ({\cS, \cA}, \widehat{P}_k, \widehat{r}_k + w^k, H)\).

\begin{algorithm}
	\caption{Randomized Least Squares Value Iteration (RLSVI)}
	\begin{algorithmic}[1]
		\REQUIRE Tuning parameters \(\{\beta_k\}_{k\in\mathbb{N}}\)
		\FOR{episodes \(k = 0, 1, \dots, K-1\)}
		    \STATE Estimate transition dynamics \(P_h^k\) and define \(\sigma_h^k(s,a) = \sqrt{\beta_k / N_h^k(s,a)}\)
            \STATE Sample \(w_h^k(s,a) \sim \mathcal{N}(0, \sigma_h^k(s,a))\)
            \STATE Run Value Iteration on the perturbed MDP \(\{P_h^k, \widehat{r}_k + w^k\}\) and obtain policy \(\pi^k\)
            \FOR{timesteps \(h = 0, \dots, H-1\)}
                \STATE Apply action \(a_h^k = \pi_h^k(s_h^k)\)
                \STATE Observe \(r_h^k\) and \(s_{h+1}^k\)
            \ENDFOR
        \ENDFOR
	\end{algorithmic}	
\end{algorithm}

\textbf{An alternative view of RLSVI.} To clarify the origin of the algorithm's name, we present an equivalent formulation that generalizes naturally to non-tabular MDPs. Informally, RLSVI constructs a randomized dataset by perturbing the observed transitions and then fits value functions \(Q_h\) via least-squares regression to minimize temporal difference (TD) error. Let the agent's experience before episode \(k\) be
\[
D = \sqcup_h D_h, \quad D_h = \{(s_h^i, a_h^i, r_h^i, s_{h+1}^i)\}_{i=0}^{k-1}.
\]
Perturb each tuple in \(D_h\) by adding independent Gaussian noise \(w_h^i \sim \mathcal{N}(0, \beta_k)\), yielding the randomized dataset
\[
\widetilde{D} = \sqcup_h \widetilde{D}_h, \quad \widetilde{D}_h = \{(s_h^i, a_h^i, r_h^i + w_h^i, s_{h+1}^i)\}_{i=0}^{k-1}.
\]
Define the TD-error loss as
\[
\mathcal{L}(Q \mid Q_{\text{next}}, {D}) = \sum_{(s,a,r,s') \in {D}} \left(Q(s,a) - r - \max_{a'} Q_{\text{next}}(s', a')\right)^2.
\]
We iteratively solve the regression problem:
\[
\widehat{Q}_h = \argmin_{Q \in \mathbb{R}^{SA}} \mathcal{L}(Q \mid Q_{h+1}, \widetilde{D}_h).
\]
Each regression step corresponds to one iteration of Value Iteration over \(\widehat{\mathcal{M}}_k\) with injected noise into the Q-values. This view enables generalization to continuous or high-dimensional state spaces.
\begin{theorem}[\cite{russo2019worst}]
    RLSVI with tuning parameters \(\beta_k = \frac{1}{2}SH^3\log(2SAHk)\) satisfies the following regret bound:
    \[
        \mathbb{E}[\mathcal{R}(K)] = \widetilde{\mathcal{O}}\big(\sqrt{H^5S^3AT}\big).
    \]
\end{theorem}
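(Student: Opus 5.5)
The proof follows the optimism-plus-simulation-lemma template used for UCBVI, with two changes. Optimism now holds only with constant probability rather than with high probability. The analysis also has to control the Gaussian perturbations. Throughout we work on a high-probability event $\mathcal{E}$ on which the empirical model concentrates. Concretely, $|(\widehat P_h^k-P_h)V^\star_{h+1}|\le b_h^k$ as in Lemma~\ref{error_V}, and the $\ell_1$ bound of Lemma~\ref{lm:lec04:ell1_confidence_set} holds. On $\mathcal{E}$ we also bound the noise: $\max_{h,s,a}|w^k_h(s,a)|\lesssim\sigma_h^k(s,a)\sqrt{\log(HSAK)}$, since the noise is Gaussian. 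The complement of $\mathcal{E}$ costs at most $O(\delta KH)$ in expected regret, and we choose $\delta=1/(KH)$.

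\textbf{Step 1 (decomposition).} We write $V^\star_0-V^{\pi^k}_0=(V^\star_0-\overline V^k_0)+(\overline V^k_0-V^{\pi^k}_0)$, where $\overline V^k$ is the value of $\pi^k$ in the perturbed model $(\widehat P^k,\widehat r_k+w^k)$. The second term is an estimation error. We handle it with Lemma~\ref{lem:simulation}, applied to the true MDP and the perturbed MDP. This gives a sum over the true trajectory of $w^k_h(s_h,a_h)$ plus model-error terms $\langle \widehat P_h^k-P_h,\overline V^k_{h+1}\rangle$. Using the $\ell_1$ confidence set together with a bound $\|\overline V^k\|_\infty\lesssim H\max|w|$, each such term is at most $\widetilde{\mathcal O}(\sqrt{S H^3}\cdot H\sqrt{S}/\sqrt{N_h^k})$. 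We need clipping here, as in \cite{agrawal2021improved}, or else we accept the crude bound. Summing over $k,h$ with the pigeonhole argument from the UCRL proof, $\sum_{k,h}N_h^k(s_h^k,a_h^k)^{-1/2}\le H\sqrt{SAK}$, produces the factors $H^{5/2}S^{3/2}\sqrt{AK}\cdot\sqrt{H}=\sqrt{H^5S^3AT}$ up to logarithms. The powers are consistent with $\beta_k=\widetilde{\mathcal O}(SH^3)$.

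\textbf{Step 2 (anti-concentration / stochastic optimism).} This is the main obstacle. The first term $V^\star_0-\overline V^k_0$ need not be nonpositive. Instead we show that, conditionally on the history, $\mathbb{P}(\overline V^{\star,k}_0(s_0)\ge V^\star_0(s_0)\mid\mathcal H_k)\ge p_0$ for an absolute constant $p_0>0$, where $\overline V^{\star,k}$ is the optimal value in the perturbed model. The argument runs as follows. Along the trajectory of $\pi^\star$ in the empirical model, the value difference equals $\sum_h \mathbb{E}_{\widehat M,\pi^\star}[w_h+(\widehat P_h-P_h)V^\star_{h+1}]$, by the simulation lemma in the reverse direction. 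This is a Gaussian with variance at least $\sum_h \mathbb{E}[\beta_k/N_h^k]$. By Cauchy--Schwarz and Lemma~\ref{error_V}, that variance is at least the squared bias when $\beta_k\gtrsim H^3\log(\cdot)$. The Gaussian then exceeds its negative mean with constant probability. The variance computation for a weighted sum of independent Gaussians under the occupancy of $\pi^\star$ is exactly where $\beta_k$ must dominate the Hoeffding error, and it is the step I would check most carefully.

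\textbf{Step 3 (converting optimism into regret).} This follows the Russo-style argument. Let $\widetilde w$ be an independent copy of the noise. Then $V^\star_0-\overline V^k_0\le \mathbb{E}_{\widetilde w}[\widetilde V^k_0-\overline V^k_0\mid \widetilde V^k_0\ge V^\star_0]$, and this is at most $p_0^{-1}\mathbb{E}_{\widetilde w}[|\widetilde V^k_0-\mathbb{E}\widetilde V^k_0|]+|\overline V^k_0-\mathbb{E}\overline V^k_0|$. Thus the first term is bounded by the deviation of the perturbed optimal value from its conditional mean. By Lipschitzness in $w$ along the executed trajectory, this deviation is again controlled by a sum of $\sigma_h^k(s_h,a_h)$ plus model-error terms under the true occupancy of $\pi^k$, up to a martingale difference. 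Using Azuma's inequality for these martingale terms and Step 1's pigeonhole bound then gives $\mathbb{E}[\mathcal R(K)]=\widetilde{\mathcal O}(\sqrt{H^5S^3AT})$.
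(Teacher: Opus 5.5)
Your overall architecture matches the route the paper points to: a concentration event, a split into the pessimism term $V^\star_0-\overline V^k_0$ and an on-policy term, constant-probability optimism, and Russo's conversion of optimism into a deviation bound. The key ingredient there is the lemma $\mathbb{P}(V(\bar M^k,\pi^k)\ge V(M,\pi^\star)\mid\mathcal{H}_{k-1})\ge\Phi(-1)$. Your Step~2 is exactly that lemma, and it is wrong as written.

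Conditionally on the history, $V^{\pi^\star}_{\bar M^k}(s_0)-V^{\pi^\star}_{\widehat M^k}(s_0)=\sum_{h,s,a}d_h(s,a)\,w^k_h(s,a)$, where $d_h$ is the occupancy of $\pi^\star$ under $\widehat P^k$. The $w^k_h(s,a)$ are independent across $(h,s,a)$, so the variance is $\sum_{h,s,a}d_h(s,a)^2\beta_k/N^k_h(s,a)$. This is \emph{at most}, not at least, $\sum_h\mathbb{E}_{\widehat M,\pi^\star}[\beta_k/N^k_h]$. You computed the expected variance of a noise sum along a random trajectory, not the variance of the occupancy-weighted sum. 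The bias $\sum_{h,s,a}d_h(s,a)\,b^k_h(s,a)$ must therefore be compared with $\bigl(\sum_{h,s,a}d_h(s,a)^2 b^k_h(s,a)^2\bigr)^{1/2}$, and Cauchy--Schwarz then costs the size of the support of $d$. Because $\pi^\star$ is deterministic, this support has at most $HS$ elements. Hence the squared bias is at most $HS\sum_{h,s,a}d_h^2(b^k_h)^2\lesssim SH^3\log(\cdot)\sum_{h,s,a}d_h^2/N^k_h$.

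This is exactly why the theorem takes $\beta_k=\frac12SH^3\log(2SAHk)$. With the sharp Hoeffding constant, the standard deviation then dominates the bias, which is where $\Phi(-1)$ comes from. With your $\beta_k\asymp H^3\log(\cdot)$ the argument breaks. Suppose that at every step $\pi^\star$ spreads its $\widehat P^k$-mass evenly over $S$ states with comparable counts, and all errors $(\widehat P^k_h-P_h)V^\star_{h+1}$ are negative and of the size of the confidence width, which the confidence set allows. Then the standard deviation is a factor of order $\sqrt S$ smaller than the bias, and your Gaussian exceeds it only with probability of order $\Phi(-\sqrt S)$. This $\sqrt S$ inside $\sqrt{\beta_k}$ is also one of the $\sqrt S$ factors in the final $S^{3/2}$.

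There are three further points. In Step~1, your per-term bound is $\sqrt{SH^3}\cdot H\sqrt S/\sqrt{N^k_h}=H^{5/2}S/\sqrt{N^k_h}$. Combined with $\sum_{k,h}(N^k_h)^{-1/2}\le H\sqrt{SAK}$, it gives $H^{7/2}S^{3/2}\sqrt{AK}=\sqrt{H^6S^3AT}$, not $\sqrt{H^5S^3AT}$. Your displayed product drops a factor $\sqrt H$, so the crude range bound $\|\overline V^k\|_\infty\lesssim H\max|w|$ does not by itself deliver the stated exponent.

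You also cannot fall back on clipping. The theorem concerns unclipped RLSVI, and clipping is the modification of \cite{agrawal2021improved}, which changes both the algorithm and the bound.

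In Step~3, ``Lipschitzness along the executed trajectory'' needs to be made precise. One clean way is convexity. The map $w\mapsto\max_\pi V^\pi_{\bar M(w)}(s_0)$ is convex, with subgradient equal to the $\widehat P^k$-occupancy of the maximizing policy $\pi^k$. This gives $\mathbb{E}\bigl|\overline V^k_0-\mathbb{E}[\overline V^k_0\mid\mathcal{H}_k]\bigr|\le 2\,\mathbb{E}\bigl|\sum_h\mathbb{E}_{\widehat M,\pi^k}[w^k_h]\bigr|$. The change of measure from $\widehat P^k$ to $P$ then again goes through the simulation lemma.
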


The key technical contribution in proving this result is the optimism lemma, which ensures that with constant probability, the value of the sampled policy \(\pi^k\) is optimistic relative to the optimal policy \(\pi^{\star}\), conditioned on the agent's history up to episode \(k\):
\begin{lemma}
    Let \(\pi^{\star}\) be the optimal policy in the true MDP \(M\). If \(\widehat{M}_k \in \mathcal{M}_k\), then
    \[
        \mathbb{P}\left(V(\bar{M}^k, \pi^k) \geq V(M, \pi^{\star}) \mid \mathcal{H}_{k-1}\right) \geq \Phi(-1).
    \]
\end{lemma}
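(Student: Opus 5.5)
The plan follows the standard RLSVI optimism argument of \cite{russo2019worst}: reduce optimism of the sampled value to a one-dimensional Gaussian event. Condition on $\mathcal{H}_{k-1}$, so that $\widehat{M}_k$, the counts $N_h^k$ and the variances $\sigma_h^k$ are fixed and only the noise $w^k$ is random. Since $\pi^k$ is optimal in $\bar M^k$, we have $V(\bar M^k,\pi^k)\ge V(\bar M^k,\pi^\star)$. It therefore suffices to show
\[
\mathbb{P}\bigl(V(\bar M^k,\pi^\star)\ge V(M,\pi^\star)\mid\mathcal{H}_{k-1}\bigr)\ge\Phi(-1).
\]
This reduces the statement to one fixed policy $\pi^\star$, which does not depend on $w^k$.

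Next, apply the Simulation Lemma (Lemma~\ref{lem:simulation}) to $\bar M^k$ and $M$ under $\pi^\star$. It is convenient to take the expectation along trajectories of the \emph{perturbed empirical} model: swap the roles of $M$ and $\widehat M$ in the lemma, so the weights are the empirical occupancies $\widehat d_h(s,a)$ of $\pi^\star$ under $\widehat P_k$, which are $\mathcal{H}_{k-1}$-measurable. This gives
\[
V(\bar M^k,\pi^\star)-V(M,\pi^\star)=\sum_{h,s,a}\widehat d_h(s,a)\Bigl[w^k_h(s,a)+\bigl(\widehat r_h-r_h\bigr)(s,a)+\bigl\langle \widehat P_h^k(\cdot|s,a)-P_h(\cdot|s,a),V^{\pi^\star}_{h+1}\bigr\rangle\Bigr].
\]
The key point is that the continuation value is the \emph{true} $V^{\pi^\star}_{h+1}=V^\star_{h+1}$, which is deterministic. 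The first term, $X:=\sum \widehat d_h w_h^k$, is a centered Gaussian conditionally on $\mathcal{H}_{k-1}$, with variance $\sum\widehat d_h(s,a)^2\beta_k/N_h^k(s,a)$. The remaining terms form a deterministic (given history) bias $-B$.

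On the event $\widehat M_k\in\mathcal{M}_k$, meaning the confidence sets from Lemma~\ref{error_V}/Lemma~\ref{lm:lec04:ell1_confidence_set} hold, each bracketed bias term is at most of order $H\sqrt{L/N_h^k(s,a)}$. Hence Cauchy--Schwarz yields
\[
B\le c H\sqrt{L}\sum_{h,s,a}\widehat d_h(s,a)N_h^k(s,a)^{-1/2}\le cH\sqrt{L}\sqrt{\textstyle\sum_h\sum_{s,a}\widehat d_h}\,\sqrt{\textstyle\sum\widehat d_h^2/N_h^k}.
\]
Here $\sum_{h,s,a}\widehat d_h=H$. (An $\ell_1$ version with $\sqrt S$ would also work, which is why $\beta_k$ carries a factor $S$.) With $\beta_k=\tfrac12 SH^3\log(2SAHk)$, this gives $B\le \sqrt{\mathrm{Var}(X)}$. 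Then $\mathbb{P}(X\ge B)\ge\mathbb{P}(X\ge \sqrt{\mathrm{Var}X})=\Phi(-1)$, which is the claim.

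The main obstacle is the bias-versus-variance matching step. Its difficulty lies in choosing the decomposition so that the bias is controlled by the \emph{same} quadratic form $\sum\widehat d_h^2/N_h^k$ that appears in the variance. Using true-model occupancies, or $\widehat V$ instead of $V^\star$, breaks this. Two further points need care. Unvisited pairs ($N=0$) must be handled by convention (infinite variance or value clipping). The constant in $\beta_k$ must absorb both the reward and transition errors.
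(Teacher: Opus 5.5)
Your overall architecture is the standard argument of \cite{russo2019worst}, which the paper only cites and does not reproduce. You reduce to the fixed policy $\pi^\star$ via $V(\bar M^k,\pi^k)\ge V(\bar M^k,\pi^\star)$. You run the simulation lemma along trajectories of the empirical model, so that the weights $\widehat d_h$ are $\mathcal H_{k-1}$-measurable and the continuation value is the deterministic $V^\star_{h+1}$. You then compare a centered Gaussian with a bias. However, the step you yourself flag as the crux fails as written. The inequality
\[
\sum_{h,s,a}\widehat d_h(s,a)N_h^k(s,a)^{-1/2}\le\sqrt{\textstyle\sum_{h,s,a}\widehat d_h}\,\sqrt{\textstyle\sum_{h,s,a}\widehat d_h(s,a)^2/N_h^k(s,a)}
\]
is false. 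Take, at every stage, $\widehat d_h=1/m$ on $m$ pairs with all counts equal to $1$: the left side is $H$, the right side is $H/\sqrt m$. Cauchy--Schwarz with weights $\sqrt{\widehat d_h}$ only gives $\sqrt H\,\sqrt{\sum\widehat d_h/N}$. That involves $\widehat d_h$ rather than $\widehat d_h^2$, so it does not match $\operatorname{Var}(X)$. A warning sign: if your bound held, optimism would follow with $\beta_k\asymp H^3L$, with no factor $S$ at all.

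The missing ingredient is a support count. Take $\pi^\star$ deterministic. Then for each $h$ the occupancy $\widehat d_h$ is supported on the $S$ pairs $\{(s,\pi^\star_h(s))\}$, and Cauchy--Schwarz over this support gives
\[
\sum_{h,s,a}\widehat d_h(s,a)N_h^k(s,a)^{-1/2}\le\sqrt{HS}\,\sqrt{\textstyle\sum_{h,s,a}\widehat d_h(s,a)^2/N_h^k(s,a)}.
\]
Suppose the confidence set is defined, as in \cite{russo2019worst}, by a Hoeffding radius $H\sqrt{L/(2N)}$ on the combined term $(\widehat r_h-r_h)+\langle\widehat P_h-P_h,V^\star_{h+1}\rangle$. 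Then $B^2\le\tfrac12 SH^3L\sum\widehat d_h^2/N=\operatorname{Var}(X)$ for $\beta_k=\tfrac12 SH^3L$, and $\mathbb P(X\ge B)\ge\Phi(-1)$ follows as you say. So the factor $S$ in $\beta_k$ comes from this $HS$ support size, not from an $\ell_1$ confidence set. Determinism of $\pi^\star$ is genuinely used here: a randomized $\pi^\star$ would cost $HSA$. Your parenthetical also gets this wrong: layering an $\ell_1$ transition bound on top would add a second $\sqrt S$ and force $\beta_k\propto S^2H^3$.
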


Note that the regret bound is suboptimal in its dependence on the horizon \(H\) and state space size \(S\). One challenge identified in \cite{russo2019worst} is that the range of the Q-function can become unbounded. To address this, \citet{agrawal2021improved} propose a simple clipping technique to stabilize the algorithm. After computing \(\widehat{Q}_h\), they clip its values as follows:
\[
\bar{Q}_{h}(s,a) = 
\begin{cases}
    \widehat{Q}_h(s,a) & \text{if } N_h^k(s,a) > \alpha_k \\
    H - h + 1 & \text{otherwise}
\end{cases}
\]
where \(\alpha_k = \widetilde{\mathcal{O}}(H^3 S)\). The idea is to retain optimistic estimates until a sufficient amount of data has been collected. This leads to an improved regret bound:
\[
    \mathbb{E}[\mathcal{R}(K)] = \widetilde{\mathcal{O}}(  \sqrt{H^4S^2AT}).
\]

Finally, the work \cite{xiong2022near} proposed a “single-seed” version of RLSVI, called the SSR algorithm, which in worst-case settings achieves regret \(\widetilde{\mathcal{O}}(\sqrt{H^2SAT})\), matching the theoretical lower bound. The key idea is to use a single Gaussian sample \(z_k\sim\mathcal{N}(0,1)\) per episode to generate perturbations, instead of sampling independent noises as in standard RLSVI. SSR runs a Value Iteration with the modified rewards
\[\widetilde{r}_h(s,a) = r(s,a) + z_k\sigma_k(s,a)\eqsp,\]
where \(\sigma_k(s,a)\) corresponds to the Bernstein-type magnitude of the noise. The algorithm also incorporates clipping techniques from \cite{agrawal2021improved}. Using different random seeds at each time step, as in RLSVI, can lead to optimism at some steps and pessimism at others, causing perturbation effects to cancel out. SSR avoids this problem by applying the same perturbation throughout an episode, ensuring sufficient exploration without requiring excessively large noise. Numerical simulations show that SSR significantly outperforms RLSVI, consistent with the theoretical regret analysis.

\section{Continuous setting}
\label{s7}

This chapter moves beyond the tabular setting, focusing on algorithms whose regret guarantees do not scale explicitly with the size of the state space. However, the lower bound in \eqref{eq:lower_regret_bound} shows that even in finite episodic MDPs, regret must scale at least as $\sqrt{H^2SA T}$, making such dependence unavoidable in general. This highlights the need for additional \emph{structural assumptions} to obtain sample-efficient guarantees in large or continuous domains.

Before we describe some of the most influential papers considering the case of continuous state (or state-action) space, let us introduce the following notions.
\begin{definition}\label{def:cov_num}
    Let $(\mathcal{X}, \rho)$ be a metric space. For any $x \in \mathcal{X}$, let $B(x, \sigma)=\{v \in \mathcal{X}: \rho(x, v) \leq \sigma\}$. We say that a set $\mathcal{C}_\sigma \subset \mathcal{X}$ is a $\sigma$\textit{-covering} of $(\mathcal{X}, \rho)$ if $\mathcal{X} \subset \bigcup_{x \in \mathcal{C}_\sigma} B(x, \sigma)$. In addition, we define the $\sigma$\textit{-covering number} of $(\mathcal{X}, \rho)$ as
    \[
    \mathcal{N}(\sigma, \mathcal{X}, \rho) \stackrel{\text{def}}{=} \min \left\{\left|\mathcal{C}_\sigma\right|: \mathcal{C}_\sigma \text { is a } \sigma \text {-covering of }(\mathcal{X}, \rho)\right\} .
    \]
    Moreover, the \textit{covering dimension} of a space is the smallest number $d$ such that its $\sigma$-covering number is $\mathcal{O}\left(\sigma^{-d}\right)$.
\end{definition}

\subsection{Sampling-based fitted value iteration}
Consider the case of an infinite-horizon \(\gamma\)-discounted MDP with a continuous state space represented by a compact subset of some Euclidean space, and a finite action space.
Let a generative model of the environment be available (see Section \ref{sec:gen_model}).
We will assume that rewards are bounded by some positive number \(\hat{R}_{\text {max }}\), w.p. 1 (with
probability one).

Recall that Value Iteration (Algorithm \ref{alg:vi}) is a fundamental algorithm based on applying the backup operator \eqref{eq:backup} to the current approximation of the optimal value function, $V_{k+1}:= \mathcal{B}V_k$. In the case of continuous $\cS$, the backup operator has the form
\begin{equation}\label{eq:backup_cont}
    \mathcal{B}: V \mapsto \mathcal{B}V,\quad (\mathcal{B}V)(s) = \max_{a\in\cA} \Bigl( r(s, a) + \gamma \int_{\cS} V(s') P(ds'|s,a) \Bigr),\; s\in\cS.
\end{equation}
Unfortunately, computation of the integral is often infeasible. One solution is to calculate $\mathcal{B}V(s)$ approximately using Monte-Carlo integration at a finite number of
states $s\in \cS$. After that, one can find a best fit to the computed values in a chosen family of measurable functions $\mathcal{F}$ on $\cS$. It will be assumed that functions from $\mathcal{F}$ are bounded by \(V_{\text{max}}< \infty \). This results in fitted value iteration (FVI) \citep{munos2008finite} presented in Algorithm \ref{alg:fvi}. There, $M$ represents the number of points in Monte-Carlo integration, $N$ represents the number of \emph{basepoints}, i.e., states at which the backup operator is computed approximately, $\mu \in \cP(\cS)$ is a distribution for sampling basepoints, $K$ represents the number of iterations, $p\geq 1$ is a parameter that controls the norm used to compute the best fit in $\cF$.
\begin{algorithm}[ht]
    \centering
    \caption{Fitted Value Iteration}
    \label{alg:fvi}
    \begin{algorithmic}[1]
        \STATE {\bfseries Input:} Function set $\cF$, initial value function $V_0 \in \cF$, distribution $\mu \in \cP(\cS)$, integers $M,N,K$, number $p\geq 1$;
        \FOR{$k = 0,\ldots,K-1$}
          \STATE Sample basepoints $s_1, \ldots, s_N \sim \mu$ independently of each other \\
          \STATE For each possible action $a\in \cA$, draw next states and rewards:
            $$
            \begin{aligned}
            \tilde{s}_j^{s_i, a} & \sim P\left(\cdot \mid s_i, a\right),\quad i=1, \ldots, N; \\
            r_j^{s_i, a} & \sim S\left(\cdot \mid s_i, a\right),\quad j=1, \ldots, M.
            \end{aligned}
            $$
          \STATE Compute a Monte-Carlo estimate of $\cB V_k$ at each basepoint:
            $$
            \hat{V}\left(s_i\right)=\max _{a \in \mathcal{A}} \frac{1}{M} \sum_{j=1}^M\left[r_j^{s_i, a}+\gamma V_k\left(\tilde{s}_j^{s_i, a}\right)\right], \quad i=1,2, \ldots, N
            $$
          \STATE Find the best fit in $\mathcal{F}$ to the data $\bigl(s_i, \hat{V}\left(s_i\right)\bigr)_{i=1}^N$ in $\cF$,
            $$
            V_{k+1}=\underset{f \in \mathcal{F}}{\operatorname{argmin}} \sum_{i=1}^N\bigl|f\left(s_i\right)-\hat{V}\left(s_i\right)\bigr|^p .
            $$
        \ENDFOR
        \STATE {\bfseries Output:} policy $\pi_K$ greedy w.r.t. $V_K$
    \end{algorithmic}
\end{algorithm}

Before we state the convergence result, let us introduce some notions.
\begin{itemize}
    \item To measure the approximation power of the function class \(\mathcal{F}\), define its \textit{inherent Bellman error} as
    $$
    d_{p,\mu}(\mathcal{B} \mathcal{F}, \mathcal{F})=\sup _{g \in \mathcal{F}} \inf _{f \in \mathcal{F}}\|f-\mathcal{B} g\|_{p,\mu},
    $$
    where \(p\geq 1\), \(\|h\|_{p,\mu}^p:=\int |h(s)|^p \mu(ds)\) for a real-valued measurable function \(h\) defined over \(\cS\). This measure captures how closely the function space \(\mathcal{F}\) aligns with the Bellman operator, that is, the dynamics of the MDP.
    \item Given a deterministic stationary policy $\pi$, consider a linear operator that maps a distribution \(\nu\) on \(\cS\) to the distribution
    $$
    \left(\nu P^\pi\right)(d s'):=\int P^\pi(d s' \mid s) \nu(d s),
    $$
    where \( P^\pi(d s' \mid s):=P(d s' \mid s, \pi(s) )  \).
    This is the distribution of states if the system is started from \(s_0 \sim \nu\) and policy \(\pi\) was followed for a single time-step. Similarly, \( \nu P^{\pi_1} \ldots P^{\pi_m} \) is the distribution of states if the system is started from \(s_0 \sim \nu\), policy \(\pi_1\) was followed for the first step, policy \(\pi_2\) was followed for the second step, and so on.
    \item Given distributions \(\nu\) and \(\mu\), and a sequence of stationary policies \(\{\pi_k\}_{k=1}^m\), assume that \( \nu P^{\pi_1} \ldots P^{\pi_m} \) is absolutely continuous w.r.t. \(\mu\), and define the \(m\)-step \textit{concentrability} of a future-state distribution as
    \[
        c(m) := \sup _{\pi_1, \ldots, \pi_m}\left\|\frac{d\left(\nu P^{\pi_1} P^{\pi_2} \ldots P^{\pi_m}\right)}{d \mu}\right\|_{\infty}.
    \]
    This number captures how much \(\nu\) can grow in \(m\) steps compared to the reference distribution \(\mu\).
    \item One way to impose a growth rate condition on \(c(m)\) is by introducing the \textit{discounted-average concentrability coefficient} of the future-state distributions
    \[
        C_{\nu, \mu} := (1-\gamma)^2 \sum_{m \geq 1} m \gamma^{m-1} c(m).
    \]
    and assuming that it is finite, \( C_{\nu, \mu} < \infty \). Thanks to discounting, this holds for a reasonably large class of systems, e.g., for uniformly stochastic transitions \citep{munos2003error} and in the cases when the top-Lyapunov exponent of the MDP is finite \citep{munos2008finite}.
    \item If $s^{1: N} := \left(s_1, \ldots, s_N\right)$ are i.i.d. random variables in $\cS$ with common underlying distribution $\mu$, then $\mathbb{E}\left[\mathcal{N}\left(\sigma, \mathcal{F}\left(s^{1: N}\right), \rho_1 \right)\right]$ shall be denoted by $\tilde{\mathcal{N}}(\sigma, \mathcal{F}, N, \mu)$, where \( \rho_1(x,y)=\|x-y\|_1 \), and $\mathcal{N}$ is the covering number, see Definition \ref{def:cov_num}.\
\end{itemize}

The convergence of Algorithm \ref{alg:fvi} is characterized as follows. Let \(\delta>0\) be the failure probability, \(\varepsilon>0\) be a positive number. Fix the distributions \(\nu, \mu\), and the function $V_0 \in \mathcal{F}$.
For a sufficient number of iterations \(K\) linear in $\log (1 / \varepsilon)$, $\log V_{\text {max}}$ and $\log (1 /(1-\gamma))$,
and numbers of basepoints \(N\) and integration points \(M\) polynomial in $1 / \varepsilon$, $\log (1 / \delta)$, $\log (1 /(1-\gamma))$, $V_{\text {max}}$, $\hat{R}_{\text {max }}$, $\log (|\mathcal{A}|)$, $\log \tilde{\mathcal{N}}\bigl(\frac{c \varepsilon(1-\gamma)^2}{C_{\nu, \mu}^{1 / p} \gamma}, \mathcal{F}, N, \mu\bigr)$ for some constant $c>0$, it holds w.p. at least $1-\delta$,
$$
\left\|V^*-V^{\pi_K}\right\|_{p, \nu} \leq \frac{2 \gamma}{(1-\gamma)^2} C_{\nu, \mu}^{1 / p}\, d_{p, \mu}(\mathcal{B} \mathcal{F}, \mathcal{F})+\varepsilon,
$$
where $\pi_K$ is a policy greedy w.r.t. the $K$-th iterate, i.e.,
\begin{equation*}
    \pi_K(s) = \arg\max_a \Bigl( r(s, a) + \gamma \int_{\cS} V_K(s') P(ds'|s,a) \Bigr)\quad \forall s\in\cS.
\end{equation*}
\begin{remark}
    Algorithm \ref{alg:fvi} is called a \emph{multi-sample} variant of FVI since it generates $M \cdot N$ new samples at each iteration. Its counterpart is a \emph{single-sample} variant, which uses the same samples throughout all iterations. Similar convergence result holds for it as well \citep{munos2008finite}.
\end{remark}

\subsection{Kernel UCBVI}
In \citep{ormoneit2002kernel}, the authors proposed an approach to continuous RL with a generative model based on smoothing kernels, and provided asymptotic convergence guarantees.
When a generative model of the environment is unavailable, agent interacts with the environment and thereby collects data about the system dynamics, like in Section \ref{sec:forward}. The interactions come in form of \(K\) episodes of length \(H\) (horizon).
Consider the case when $\cS$ and \(\cA\) are endowed with metrics \(\rho_{\cS} \) and \( \rho_{\cA} \), respectively, and let the metric \(\rho\) on \(\cS \times \cA \) be given by their sum.
The work \citep{domingues2021kernel} proposes Kernel-UCBVI, an algorithm that can be seen as a version of the UCBVI (Algorithm \eqref{alg:ucbvi}) for metric state-action space that uses kernel smoothing to estimate the rewards and transition. To proceed, we make the following assumption:
\begin{assumption}\label{asm:lip_mdp}
    The reward functions are $\lambda_r$-Lipschitz and the transition kernels are $\lambda_p$-Lipschitz with respect to the 1-Wasserstein distance: $\forall\left(s, a, s^{\prime}, a^{\prime}\right)$ and $\forall h \in[H]$,
    $$
    \begin{aligned}
    \left|r_h(s, a)-r_h\left(s^{\prime}, a^{\prime}\right)\right| & \leq \lambda_r \rho\left[(s, a),\left(s^{\prime}, a^{\prime}\right)\right], \quad \text { and } \\
    \quad W_1\left(P_h(\cdot \mid s, a), P_h\left(\cdot \mid s^{\prime}, a^{\prime}\right)\right) & \leq \lambda_p \rho\left[(s, a),\left(s^{\prime}, a^{\prime}\right)\right]
    \end{aligned}
    $$
    where, for two measures $\mu$ and $\nu$, we have
    $$
    W_1(\mu, \nu) := \sup _{f:\, \operatorname{Lip}(f) \leq 1} \int_{\cS} f(y)(\mathrm{d} \mu(y)-\mathrm{d} \nu(y)),
    $$
    and where, for a function $f: \cS \rightarrow \mathbb{R}$, $\operatorname{Lip}(f)$ denotes its Lipschitz constant w.r.t. $\rho_{\cS}$.
\end{assumption}

Let $u, v \in \cS \times \mathcal{A}$. For some function $g: \mathbb{R}_{+} \rightarrow[0,1]$, define the kernel function as
$$
\psi_\sigma(u, v) := g(\rho[u, v] / \sigma)
$$
where $\sigma$ is the bandwidth parameter (larger bandwidths introduce more smoothing).
Moreover, let \( (s_h^i, a_h^i, s_{h+1}^i, r_h^i) \) be the state, the action, the next state and the reward at step \(h\) of episode \( i \).
The kernel function allows us to define weights
\[
    w_h^i(s, a) \stackrel{\text { def }}{=} \psi_\sigma\left((s, a),\left(s_h^i, a_h^i\right)\right),\quad (i, h) \in[K] \times[H]
\]
that determine how much a past transition sample \( (s_h^i, a_h^i, s_{h+1}^i, r_h^i) \) should contribute to our estimate of the reward and transitions at a query point \((s, a)\). Specifically, fix an episode \(k\) and define normalized weights as
$$
    \widetilde{w}_h^i(s, a) \stackrel{\text { def }}{=} \frac{w_h^i(s, a)}{\mathbf{C}_h^k(s, a)}
$$
where
$\mathbf{C}_h^k(s, a) \stackrel{\text { def }}{=} \beta+\sum_{i=1}^{k-1} w_h^i(s, a)$ are generalized counts that act like a proxy for the number of visits to $(s, a)$, and $\beta>0$ is a regularization term.
An estimate of the rewards and transitions for each state-action pair can now be defined as
\begin{align*}
    & \widehat{r}_h^k(s, a) \stackrel{\text { def }}{=} \sum_{i=1}^{k-1} \widetilde{w}_h^i(s, a) r_h^i, \\
    & \widehat{P}_h^k(s' \mid s, a) \stackrel{\text { def }}{=} \sum_{i=1}^{k-1} \widetilde{w}_h^i(s, a) \delta_{s_{h+1}^i}(s').
\end{align*}
Here, $\delta_s$ denotes the Dirac measure with mass at $s$.
Like the usual UCBVI, Kernel-UCBVI computes an optimistic Q-function $\widetilde{Q}_h^k$ through value iteration, a.k.a. backward induction:
$$
\widetilde{Q}_h^k(s, a)=\widehat{r}_h^k(s, a)+\widehat{P}_h^k V_{h+1}^k(s, a)+\mathrm{B}_h^k(s, a),
$$
where $V_{H+1}^k(x)=0$ for all $x \in \cS$ and $\mathrm{B}_h^k(s, a)$ is an exploration bonus that equals, up to constants and logarithmic terms,
$$
\frac{H}{\sqrt{\mathbf{C}_h^k(s, a)}}+\frac{\beta H}{\mathbf{C}_h^k(s, a)}+L_1 \sigma.
$$
Here, $L_1 \stackrel{\text { def }}{=} \sum_{h^{\prime}=1}^H \lambda_r \lambda_p^{H-h^{\prime}}$, which is shown to be a Lipschitz constant of the optimal Q-function \( Q^*_1(s, a) \) \citep{domingues2021kernel}. More generally, $L_h \stackrel{\text { def }}{=} \sum_{h^{\prime}=h}^H \lambda_r \lambda_p^{H-h^{\prime}}$ is a Lipschitz constant of \( Q^*_h(s, a) \).

We denote by $\mathcal{D}_h=\left\{\left(s_h^i, a_h^i, s_{h+1}^i, r_h^i\right)\right\}_{i \in[k-1]}$ for $h \in[H]$ the samples collected at step $h$ before episode $k$, and
give the pseudocode of Kernel-UCBVI and its subroutine in Algorithms \ref{alg:k_ucbvi} and \ref{alg:optimisticQ}.
\begin{algorithm}[ht]
	\caption{Kernel-UCBVI}
	\begin{algorithmic}[1]\label{alg:k_ucbvi}
        \REQUIRE \(K, H, \delta, \lambda_r, \lambda_p, \sigma, \beta\).
        \STATE \(\mathcal{D}_h=\emptyset\, \text { for all } h \in[H]\)
		\FOR{episode \(k = 1,...K\)}
            \STATE $\text {get initial state } s_1^k$
            \STATE $Q_h^k=\operatorname{optimisticQ }\bigl(k,\left\{\mathcal{D}_h\right\}_{h \in[H]}\bigr)$
            \FOR{ step \(h = 1,...H\)}
                \STATE $\text { execute } a_h^k=\operatorname{argmax}_a Q_h^k\left(s_h^k, a\right)$
                \STATE $\text { observe reward } r_h^k \text { and next state } s_{h+1}^k$
                \STATE $\text { add sample } (s_h^k, a_h^k, s_{h+1}^k, r_h^k) \text { to } \mathcal{D}_h$
            \ENDFOR
        \ENDFOR
	\end{algorithmic}	
\end{algorithm}
\begin{algorithm}[ht]
	\caption{optimisticQ}
	\begin{algorithmic}[1]\label{alg:optimisticQ}
        \REQUIRE \(\text {episode } k \text {, data }\left\{\mathcal{D}_h\right\}_{h \in[H]}\).
        \STATE \(V_{H+1}^k(x)=0 \text { for all } x\)
		\FOR{step \(h = H,...,1\)}
            \STATE Compute optimistic targets
            \FOR{$m=1, \ldots, k-1$}
                \STATE $\widetilde{Q}_h^k\left(s_h^m, a_h^m\right)=\sum_{i=1}^{k-1} \widetilde{w}_h^i\left(s_h^m, a_h^m\right)\left(r_h^i+V_{h+1}^k\left(s_{h+1}^i\right)\right)$
                \STATE $\widetilde{Q}_h^k\left(s_h^m, a_h^m\right)=\widetilde{Q}_h^k\left(s_h^m, a_h^m\right)+\mathrm{B}_h^k\left(s_h^m, a_h^m\right)$
            \ENDFOR
            \STATE Interpolate the Q function
            \STATE \(Q_h^k(s, a)=\min _{i \in[k-1]}\left(\widetilde{Q}_h^k\left(s_h^i, a_h^i\right)+L_h \rho\left[(s, a),\left(s_h^i, a_h^i\right)\right]\right)\)
            \FOR{$m=1, \ldots, k-1$}
                \STATE $V_h^k\left(s_h^m\right)=\min \left(H-h+1, \max_{a \in \mathcal{A}} Q_h^k\left(s_h^m, a\right)\right)$
            \ENDFOR
        \ENDFOR
	\end{algorithmic}	
\end{algorithm}
\begin{theorem}
    With probability at least $1-\delta$, the regret of Kernel-UCBVI for a bandwidth $\sigma$ satisfies
    $$
    \mathcal{R}(K) \leq \widetilde{\mathcal{O}}\left(H^2 \sqrt{\left|\mathcal{C}_\sigma\right| K}+L_1 K H \sigma+H^3\left|\mathcal{C}_\sigma\right||\widetilde{\mathcal{C}}_\sigma|\right),
    $$
    where $\left|\mathcal{C}_\sigma\right|$ and $|\widetilde{\mathcal{C}}_\sigma|$ are the $\sigma$-covering numbers of $(\cS \times \cA, \rho)$ and $(\mathcal{X}, \rho_{\cS})$, respectively (see Definition \ref{def:cov_num}).
\end{theorem}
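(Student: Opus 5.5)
The plan is to follow the UCBVI-H template used in the proof of Proposition~\ref{prop:ucbvi_h}, adapted to kernel smoothing. The argument has three stages: construct a good event on which the kernel estimates concentrate, prove optimism on that event, and then bound the sum of the optimistic gaps using the covering $\mathcal{C}_\sigma$.

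\textbf{Good event.} For each $(h,k)$ and each query point $(s,a)$, consider the kernel-weighted sums $\sum_i \widetilde w_h^i(s,a)\bigl(r_h^i+V^\star_{h+1}(s_{h+1}^i)\bigr)$. The weights are predictable given the past episodes, so each sum is a self-normalized martingale. A self-normalized Hoeffding-type inequality, of the kind used for regularized least squares, then shows that its deviation from
\[
\sum_i \widetilde w_h^i\bigl(r_h(s_h^i,a_h^i)+P_hV^\star_{h+1}(s_h^i,a_h^i)\bigr)
\]
is at most of order $H/\sqrt{\mathbf C_h^k}$. The regularization $\beta$ contributes an extra bias of order $\beta H/\mathbf C_h^k$.

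\textbf{From fixed points to all points.} To make this hold uniformly over all $(s,a)$, I would take a union bound over a $\sigma$-net and use continuity of $\psi_\sigma$ in its arguments. The spatial bias, coming from replacing $(s,a)$ by the nearby points $(s_h^i,a_h^i)$ with $\rho\le$ a multiple of $\sigma$, is controlled by Assumption~\ref{asm:lip_mdp}. Lipschitzness of $Q_h^\star$ with constant $L_h$ gives a bias of order $L_1\sigma$. These three terms are exactly the pieces of the bonus $\mathrm B_h^k$.

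\textbf{Optimism.} Next I prove $V_h^k\ge V_h^\star$ by backward induction, exactly as in the Optimism-under-$\mathcal E$ lemma. At the data points, $\widetilde Q_h^k\ge Q_h^\star$ follows from the event and the induction hypothesis. The Lipschitz interpolation $\min_i(\widetilde Q_h^k+L_h\rho)$ preserves the inequality because $Q_h^\star$ is $L_h$-Lipschitz.

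\textbf{Regret decomposition.} The regret is bounded by $\sum_k\bigl(V_1^k(s_1^k)-V_1^{\pi^k}(s_1^k)\bigr)$. Unrolling as in the simulation lemma (Lemma~\ref{lem:simulation}), this sum is at most a constant times
\[
\sum_{k,h}\mathrm B_h^k(s_h^k,a_h^k)
\]
plus three extra contributions: the interpolation errors, the kernel-estimation error on $(\widehat P_h^k-P_h)(V_{h+1}^k-V_{h+1}^\star)$, and a martingale term. The martingale term is of order $H\sqrt{KH\log(1/\delta)}$ by Azuma--Hoeffding. The $L_1\sigma$ part of the bonus and the interpolation errors contribute $L_1KH\sigma$.

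\textbf{Main obstacle: the cross term.} I expect the hardest step to be the cross term $(\widehat P_h^k-P_h)(V_{h+1}^k-V_{h+1}^\star)$. There $V_{h+1}^k$ is random and depends on the data, so the fixed-function concentration does not apply. My first approach would be a Bernstein-type bound, $|(\widehat P-P)f|\le \frac1H P|f|+\frac{H^2|\widetilde{\mathcal C}_\sigma|}{\mathbf C}$, made uniform over functions $f$ that are piecewise constant on the state covering $\widetilde{\mathcal C}_\sigma$. The factor $(1+1/H)^H\le e$ then absorbs the first piece in the recursion. Summed over episodes, the second piece produces the lower-order term $H^3|\mathcal C_\sigma||\widetilde{\mathcal C}_\sigma|$.

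\textbf{Summing the bonuses.} Finally, I partition $\cS\times\cA$ into the cells of $\mathcal C_\sigma$. Within a cell, $\mathbf C_h^k(s_h^k,a_h^k)$ is at least a constant times the number of earlier visits to that cell, provided $g$ is bounded below near $0$. The pigeonhole argument from the UCRL proof then gives
\[
\sum_{k,h}\frac{H}{\sqrt{\mathbf C_h^k}}\lesssim H\sum_h\sum_{c\in\mathcal C_\sigma}\sqrt{n_h^K(c)}\le H^2\sqrt{|\mathcal C_\sigma|K},
\]
while $\sum\beta H/\mathbf C$ contributes only $\widetilde{\mathcal O}(H^2|\mathcal C_\sigma|)$. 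Combining all terms yields the stated bound.
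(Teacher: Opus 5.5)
The survey gives no proof of this theorem; it is quoted from \citet{domingues2021kernel}. Your plan reproduces the architecture of that source's argument: uniform self-normalized concentration, optimism through the Lipschitz interpolation, a $(1+1/H)$-recursion with a Bernstein bound over a discretization by $\widetilde{\mathcal{C}}_\sigma$ for the cross term, and a pigeonhole over the cells of $\mathcal{C}_\sigma$. Two steps, however, fail as written.

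\textbf{Uniformity over $(s,a)$.} A union bound over a $\sigma$-net does not give it. Moving the query point by $\sigma$ can change each weight $w_h^i$ by an amount of order one, and $\mathbf{C}_h^k$ by order $k$, so concentration at the net point says nothing about the original point. You need $g$ to have a bounded derivative. Then $\widehat r_h^k$, $\widehat P_h^k f$ and $\mathbf{C}_h^k$ are Lipschitz in $(s,a)$ with a constant polynomial in $kH/(\sigma\beta)$, and you can take a net at a scale polynomially small in $\sigma\beta/(KH)$. Its log-cardinality is still logarithmic and disappears into $\widetilde{\mathcal{O}}$. You also need $g$ non-increasing with $g(2)>0$ for your count lower bound, and a tail bound on $g$ for the kernel bias to be $\widetilde{\mathcal{O}}(L_1\sigma)$. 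The source imposes assumptions of this kind on $g$; the survey omits them.

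\textbf{Where the bonus is charged.} You charge $\mathrm{B}_h^k(s_h^k,a_h^k)$ at the current pair, but the algorithm never forms a kernel estimate there. $Q_h^k(s_h^k,a_h^k)$ is a minimum of $\widetilde Q_h^k(s_h^m,a_h^m)+L_h\rho[\cdot,\cdot]$ over earlier pairs. Counts at two points $2\sigma$ apart need not be comparable, so this minimum need not be within $O(L_h\sigma)$ of an estimate at the current pair. The recursion must instead pass through an earlier pair $(\tilde s,\tilde a)$ in the same cell of $\mathcal{C}_\sigma$:
\begin{itemize}
\item bound $V_h^k(s_h^k)\le\widetilde Q_h^k(\tilde s,\tilde a)+2L_h\sigma$;
\item transport $r_h$, $P_hV_{h+1}^k$ and $P_h(V_{h+1}^k-V_{h+1}^\star)$ back to $(s_h^k,a_h^k)$, using $\lambda_r$, $\lambda_p$ and the $L_{h+1}$-Lipschitzness of $V_{h+1}^k$ (inherited from the interpolation) and of $V_{h+1}^\star$;
\item pay $\mathrm{B}_h^k(\tilde s,\tilde a)$, whose count is at least $\beta+g(2)\,n$, with $n$ the number of earlier visits to the cell. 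Your pigeonhole then still applies.
\end{itemize}
If the cell has no earlier visit at step $h$, no such pair exists and the interpolation error is not $O(L_1\sigma)$. There you must use the trivial bound $V_h^k(s_h^k)-V_h^{\pi^k}(s_h^k)\le H$. This happens for at most $H|\mathcal{C}_\sigma|$ pairs $(k,h)$ and adds $\widetilde{\mathcal{O}}(H^2|\mathcal{C}_\sigma|)$, which the last term absorbs.

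The same ordering matters in the cross term. $W_1$-Lipschitz transitions control $P_hf(s_h^i,a_h^i)-P_hf(s,a)$ only for Lipschitz $f$. So that transfer must be done with $f=V_{h+1}^k-V_{h+1}^\star$ itself, and only the martingale part $\sum_i\widetilde w_h^i\bigl(f(s_{h+1}^i)-P_hf(s_h^i,a_h^i)\bigr)$ may be replaced by piecewise-constant functions on $\widetilde{\mathcal{C}}_\sigma$.
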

\begin{corollary}
By taking $\sigma=(1 / K)^{1 /(2 d+1)}, \mathcal{R}(K)=$ $\widetilde{\mathcal{O}}\left(H^3 K^{\max \left(\frac{1}{2}, \frac{2 d}{2 d+1}\right)}\right)$, where $d$ is the covering dimension of the state-action space (see Definition \ref{def:cov_num}). If the transitions are stationary (i.e., do not depend on $h$), the regret becomes $\widetilde{\mathcal{O}}\left(H^2 K^{\frac{2 d}{2 d+1}}\right)$.
\end{corollary}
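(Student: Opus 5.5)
The corollary is obtained by plugging $\sigma=(1/K)^{1/(2d+1)}$ into the regret bound of the preceding theorem. We then use the covering-dimension estimates from Definition~\ref{def:cov_num} to express every term as a power of $K$.

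First, since $d$ is the covering dimension of $(\cS\times\cA,\rho)$, we have $|\mathcal{C}_\sigma|=\mathcal{O}(\sigma^{-d})=\mathcal{O}(K^{d/(2d+1)})$. The state space is a projection of $\cS\times\cA$ under a 1-Lipschitz map, because $\rho$ is the sum of $\rho_\cS$ and $\rho_\cA$. Its covering dimension is therefore at most $d$, which gives $|\widetilde{\mathcal{C}}_\sigma|=\mathcal{O}(K^{d/(2d+1)})$ as well. We treat $L_1$ as a problem constant, since it depends only on $\lambda_r,\lambda_p,H$, and we absorb it into the $\widetilde{\mathcal{O}}$.

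Next we bound the three terms in turn. The first is $H^2\sqrt{|\mathcal{C}_\sigma|K}=H^2K^{(3d+1)/(2(2d+1))}$. The second is $L_1KH\sigma=L_1HK^{2d/(2d+1)}$. The third is $H^3|\mathcal{C}_\sigma||\widetilde{\mathcal{C}}_\sigma|=H^3K^{2d/(2d+1)}$.

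It remains to compare exponents. Comparing $\frac{3d+1}{2(2d+1)}$ with $\frac{2d}{2d+1}$ reduces to comparing $3d+1$ with $4d$, so the first exponent is at most the second exactly when $d\ge1$. For $d<1$, the first exponent lies between $\tfrac12$ and $\tfrac{2d}{2d+1}$ or their maximum. The first term is bounded by $H^2K^{\max(1/2,\,2d/(2d+1))}$ in any case, since $\frac{3d+1}{2(2d+1)}\le\max\bigl(\tfrac12,\tfrac{2d}{2d+1}\bigr)$ when $d\ge1$. The case $d<1$ is the one point needing care, and I would handle it with the crude bound $\frac{3d+1}{2(2d+1)}\le\tfrac12+\tfrac{d}{2(2d+1)}$. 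If a gap remains there, I would replace the choice of $\sigma$ by $\sigma\vee K^{-1/2}$. Collecting all three terms with the worst power $H^3$ yields $\widetilde{\mathcal{O}}(H^3K^{\max(1/2,2d/(2d+1))})$.

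For stationary transitions, the data from all $H$ steps can be pooled into a single kernel estimate. Re-running the theorem's analysis with pooled counts replaces the $H^3|\mathcal{C}_\sigma||\widetilde{\mathcal{C}}_\sigma|$ term by one smaller by a factor $H$. It also lets the bonus summation save a $\sqrt H$. This gives $\widetilde{\mathcal{O}}(H^2K^{2d/(2d+1)})$.

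The main obstacle is this last stationary claim. It is not a direct substitution into the stated theorem; it requires revisiting the proof in \citep{domingues2021kernel}. I would cite that refinement rather than reprove it.
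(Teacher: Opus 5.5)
Your route is the intended one: substitute $\sigma$, use $|\mathcal{C}_\sigma|=\mathcal{O}(\sigma^{-d})$, compare exponents, and import the stationary refinement from the Kernel-UCBVI analysis. Your projection argument is also correct. It gives $|\widetilde{\mathcal{C}}_\sigma|\le|\mathcal{C}_\sigma|$ directly, because $\rho\ge\rho_{\cS}$ on the state coordinate.

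The gap is the case $0<d<1$. There your claim that the first term is at most $H^2K^{\max(1/2,\,2d/(2d+1))}$ ``in any case'' is false:
\[
\frac{3d+1}{2(2d+1)}-\frac12=\frac{d}{2(2d+1)}>0,
\qquad
\frac{3d+1}{2(2d+1)}-\frac{2d}{2d+1}=\frac{1-d}{2(2d+1)}>0 .
\]
\begin{itemize}
\item Your ``crude bound'' is actually an identity, and it is exactly the first of these relations. It shows the opposite of what you need.
\item The patch $\sigma\vee K^{-1/2}$ also fails, and it abandons the prescribed $\sigma$. It changes nothing for $d\ge\frac12$. For $d<\frac12$ it gives $\sigma=K^{-1/2}$ and a first term of order $H^2K^{1/2+d/4}$, which is still above $K^{1/2}$.
\item More fundamentally, suppose $|\mathcal{C}_\sigma|$ is of order $\sigma^{-d}$. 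Balancing $\sqrt{\sigma^{-d}K}$ against $K\sigma$ shows that no choice of $\sigma$ brings the theorem's bound below $K^{(1+d)/(2+d)}$. That exponent exceeds $\max(\frac12,\frac{2d}{2d+1})$ on all of $(0,1)$.
\end{itemize}

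So this case cannot be fixed by exponent bookkeeping. As a consequence of the theorem, the corollary holds in two cases:
\begin{itemize}
\item $d=0$, where the covering numbers are $\mathcal{O}(1)$ and the $\max(\frac12,\cdot)$ accounts for the $\sqrt{K}$ term;
\item $d\ge1$, which includes every integer covering dimension and is the implicit assumption of the statement.
\end{itemize}
State that restriction explicitly instead of asserting the bound for all $d$.

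The stationary claim drops the $\max$, so it also needs $d\ge1$. With your pooled bonus term of order $H^{3/2}K^{(3d+1)/(2(2d+1))}$, it requires exactly $d\ge1$. At $d=0$ a $K$-independent regret bound is impossible, since already for bandits the regret grows like $\sqrt{K}$.

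A smaller point concerns $L_1=\sum_{h'=1}^{H}\lambda_r\lambda_p^{H-h'}$. It depends on $H$, so absorbing it into $\widetilde{\mathcal{O}}$ while tracking powers of $H$ needs justification.
\begin{itemize}
\item If $\lambda_p\le1$, then $L_1\le\lambda_rH$ and the bias term is at most $\lambda_rH^2K^{2d/(2d+1)}$. This fits under both the $H^3$ bound and the stationary $H^2$ bound.
\item If $\lambda_p>1$, then $L_1$ grows exponentially in $H$. The stated powers of $H$ then hold only under the convention that $L_1$ is a fixed constant.
\end{itemize}
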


\subsection{Q-learning with adaptive discretization}
When dealing with a continuous state-action space, a straightforward idea is to discretize it and apply known algorithms. However, complexity of naive discretization grows exponentially with the dimension, therefore, a more sophisticated approach is required. The work \citep{sinclair2019adaptive} introduces an approach based on data-driven adaptive discretization, meaning that a partition of regions of the state-action space is refined if they are frequently visited in historical trajectories, and have higher payoff estimates. The approach is based on Q-learning and is thus model-free. Similarly to the previous subsection, it is assumed that \(\cS \times \cA \) is a subset of a metric space with a metric \(\rho\), and that the optimal Q-function is Lipschitz continuous with respect to the metric. Additionally, \(\cS \times \cA \) is assumed to be bounded. Moreover, the algorithm requires a \textit{covering oracle} which takes a finite collection of balls and a set $X$ and either declares that they cover $X$ or outputs
an uncovered point. An alternative assumption is to assume the covering oracle is able to take a set $X$ and value $r$ and output an $r$\textit{-net} of $X$, i.e., a set of points $\mathcal{G}$ such that the distance between any two distinct points in $\mathcal{G}$ is at least $r$ and $\cup_{x \in \mathcal{G}} B(x, r) \supseteq X$.

The algorithms begins step \(h\) of episode \(k\) with a collection of balls \( \mathcal{G}^k_h \) covering  \(\cS \times \cA \) and an upper confidence value \( Q^k_h(B) \) for each ball \(B \in \mathcal{G}^k_h \). \textit{Domain of a ball} \(B\) is defined as a subset of $B$ which excludes all balls in \( \mathcal{G}^k_h \) having a strictly smaller radius. A ball $B$ is called \textit{relevant} for a point $x \in S$ if $(x,a) \in \operatorname{dom}(B)$ for some $a \in A$. The algorithm proceeds with the following three steps:
\begin{enumerate}
    \item select a ball \(B \in \mathcal{G}^k_h \) relevant for the current state \( s^k_h \) which has maximum \( Q^k_h(B) \);
    \item update the upper confidence value;
    \item if the number of times $B$ or its ancestors have been selected at step $h$ in previous episodes is large enough, re-partition the space by splitting $B$.
\end{enumerate}
The expression for updating \( Q^k_h(B) \) and other detail can be found in the original work \citep{sinclair2019adaptive}. Over $K$ episodes of length $H$, the algorithm achieves a regret bound
$$
\mathcal{R}(K)=\widetilde{\mathcal{O}}\left(H^{5 / 2} K^{(d+1) /(d+2)}\right)
$$
with probability at least $1-\delta$, where $d$ is the covering dimension of $\cS \times\cA$ (see Definition \ref{def:cov_num}).

\subsection{Linear Function Approximation}\label{s7.1}
An important approach to handling infinite state (or state-action) spaces is based on function approximation. Perhaps the most studied setting is the case of a linear MDP. One of the ways to define linear MDP \citep{jin2020provably} is as follows.
\begin{definition}
    $\operatorname{MDP}(\mathcal{S}, \mathcal{A}, \mathrm{H}, P, \mathrm{r})$ is a linear MDP with a feature map $\phi: \mathcal{S} \times \mathcal{A} \rightarrow \mathbb{R}^d$, if for any $h \in[H]$, there exist $d$ unknown (signed) measures $\boldsymbol{\mu}_h=\left(\mu_h^{(1)}, \ldots, \mu_h^{(d)}\right)$ over $\mathcal{S}$ and an unknown vector $\boldsymbol{\theta}_h \in \mathbb{R}^d$, such that for any $(s, a) \in \mathcal{S} \times \mathcal{A}$, we have
    $$
    P_h(\cdot \mid s, a)=\left\langle\boldsymbol{\phi}(s, a), \boldsymbol{\mu}_h(\cdot)\right\rangle, \quad r_h(s, a)=\left\langle\boldsymbol{\phi}(s, a), \boldsymbol{\theta}_h\right\rangle .
    $$
\end{definition}
Recall that (action) value iteration in tabular case (see subsection \ref{subsec:vi_pi}) performs Bellman updates of the form
\[
    Q_h(s, a) \leftarrow r_h(s,a) + \sum_{s'\in\cS}P_h(s'|s,a) \max_{a'\in\cA} Q_{h+1}(s', a')\quad \forall (s,a) \in \cS \times \cA.
\]
These updates are infeasible in the case of infinite state-action space and unknown transitions. 
A key property of the linear MDP is that, for all policies, the action-value functions are linear in the feature map $\boldsymbol{\phi}$ \citep{jin2020provably}. This allows us to parametrize $Q_h^{\star}(s, a)$ by a linear form $\mathbf{w}_h^{\top} \boldsymbol{\phi}(s, a)$ and replace the Bellman update by solving for $\mathbf{w}_h$ in the following (regularized) least-squares problem:
$$
\mathbf{w}_h \leftarrow \underset{\mathbf{w} \in \mathbb{R}^d}{\operatorname{argmin}} \sum_{i=1}^{k-1}\left[r_h\left(s_h^i, a_h^i\right)+\max _{a \in \mathcal{A}} Q_{h+1}\left(s_{h+1}^i, a\right)-\mathbf{w}^{\top} \boldsymbol{\phi}\left(s_h^i, a_h^i\right)\right]^2+\lambda\|\mathbf{w}\|^2 .
$$
This results in the algorithm known as Least-Square Value Iteration (LSVI) \citep{bradtke1996linear,osband2016generalization}.
Algorithm \ref{alg:lsvi_ucb} implements an optimistic modification of LSVI by adding an Upper Confidence Bound (UCB) bonus term of form $\beta\left(\boldsymbol{\phi}^{\top} \Lambda_h^{-1} \boldsymbol{\phi}\right)^{1 / 2}$ to encourage exploration, where $\Lambda_h$ is the Gram matrix of the regularized least-squares problem, and $\beta$ is a scalar.

\begin{algorithm}
	\caption{Least-Squares Value Iteration with UCB (LSVI-UCB)}
	\begin{algorithmic}[1]\label{alg:lsvi_ucb}
        \FOR{episode \(k = 1,...K\)}
            \STATE $\text {get initial state } s_1^k$
            \FOR{ step \(h = H, \ldots, 1\)}
                \STATE $\Lambda_h \leftarrow \sum_{i=1}^{k-1} \boldsymbol{\phi}\left(s_h^i, a_h^i\right) \boldsymbol{\phi}\left(s_h^i, a_h^i\right)^{\top}+\lambda \cdot \mathbf{I} .$
                \STATE $\mathbf{w}_h \leftarrow \Lambda_h^{-1} \sum_{i=1}^{k-1} \boldsymbol{\phi}\left(s_h^i, a_h^i\right)\left[r_h\left(s_h^i, a_h^i\right)+\max _a Q_{h+1}\left(s_{h+1}^i, a\right)\right] .$
                \STATE $Q_h(\cdot, \cdot) \leftarrow \min \left\{\mathbf{w}_h^{\top} \boldsymbol{\phi}(\cdot, \cdot)+\beta\left[\boldsymbol{\phi}(\cdot, \cdot)^{\top} \Lambda_h^{-1} \boldsymbol{\phi}(\cdot, \cdot)\right]^{1 / 2}, H\right\} .$
            \ENDFOR
            \FOR{ step \(h=1, \ldots, H\)}
                \STATE $\text { Take action } a_h^k \leftarrow \operatorname{argmax}_{a \in \mathcal{A}} Q_h\left(s_h^k, a\right) \text {, and observe } s_{h+1}^k.$
            \ENDFOR
        \ENDFOR
	\end{algorithmic}	
\end{algorithm}

A regret bound for the algorithm is given by the following theorem, where $T = KH$ is the total number of steps.
\begin{theorem}[\citep{jin2020provably}]
    Assume that MDP is linear with $\|\boldsymbol{\phi}(s, a)\| \leq 1$ for all $(s, a) \in \mathcal{S} \times \mathcal{A}$, and $\max \left\{\left\|\boldsymbol{\mu}_h(\mathcal{S})\right\|,\left\|\boldsymbol{\theta}_h\right\|\right\} \leq \sqrt{d}$ for all $h \in[H]$. There exists an absolute constant $c>0$ such that, for any fixed $p \in (0,1)$, if we set $\lambda=1$ and $\beta=c \cdot d H \sqrt{\iota}$ in Algorithm \ref{alg:lsvi_ucb} with $\iota:=\log (2 d T / p)$, then with probability $1-p$, the total regret of LSVI-UCB (Algorithm \ref{alg:lsvi_ucb}) is at most $\mathcal{O}\left(\sqrt{d^3 H^3 T \iota^2}\right)$.
\end{theorem}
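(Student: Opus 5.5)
The regret bound for LSVI-UCB will follow the same two-step template as UCBVI and UCRL: first establish optimism of the computed $Q_h$ on a high-probability event, then bound the regret by a sum of exploration bonuses and control that sum with an elliptical-potential argument. Write $Q_h^k,\mathbf w_h^k,\Lambda_h^k$ for the quantities computed in episode $k$ and $V_h^k(s)=\max_a Q_h^k(s,a)$. By linearity of the MDP, for any bounded $V$ there is $\mathbf w$ with $\langle\phi(s,a),\mathbf w\rangle = r_h(s,a)+P_hV(s,a)$ and $\|\mathbf w\|\le 2H\sqrt d$ when $\|V\|_\infty\le H$. In particular $Q_h^{\pi}(s,a)=\langle\phi(s,a),\mathbf w_h^\pi\rangle$ for every policy $\pi$.

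The first step is a self-normalized concentration bound. Decompose
\[
\mathbf w_h^k-\mathbf w_h^\pi=-\lambda(\Lambda_h^k)^{-1}\mathbf w_h^\pi+(\Lambda_h^k)^{-1}\sum_{i<k}\phi_h^i\bigl[V_{h+1}^k(s_{h+1}^i)-P_hV_{h+1}^k(s_h^i,a_h^i)\bigr]+(\Lambda_h^k)^{-1}\sum_{i<k}\phi_h^i\,P_h(V_{h+1}^k-V_{h+1}^\pi)(s_h^i,a_h^i).
\]
The last term equals $\langle\phi,\cdot\rangle$ of $P_h(V^k_{h+1}-V^\pi_{h+1})$ up to a regularization error. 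The middle term is a martingale sum, but $V_{h+1}^k$ depends on the data. To handle this, I would note that $V_{h+1}^k$ lies in the class $\{\min(\max_a \mathbf w^\top\phi+\beta\sqrt{\phi^\top\Lambda^{-1}\phi},H)\}$, whose $\varepsilon$-covering number has logarithm $O(d^2\log(\cdot))$. I would then apply the self-normalized bound for vector martingales (Abbasi-Yadkori type) to each element of the cover and take a union bound. This gives, with probability $1-p/2$,
\[
\Bigl\|\sum_{i<k}\phi_h^i[V_{h+1}^k(s^i_{h+1})-P_hV_{h+1}^k(s_h^i,a_h^i)]\Bigr\|_{(\Lambda_h^k)^{-1}}\le C\,dH\sqrt{\iota}.
\]
This uniform-convergence step is the main obstacle. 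The choice $\beta=c\,dH\sqrt\iota$ appears on both sides, because the covering number depends on $\beta$, so one must check that $\log\mathcal N=O(d^2\log(dT\beta))$ keeps the bound self-consistent for an absolute constant $c$. Combining the three terms by Cauchy–Schwarz yields, on the good event,
\[
\bigl|\langle\phi(s,a),\mathbf w_h^k\rangle-Q_h^\pi(s,a)-P_h(V^k_{h+1}-V^\pi_{h+1})(s,a)\bigr|\le\beta\|\phi(s,a)\|_{(\Lambda_h^k)^{-1}}.
\]

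Optimism $Q_h^k\ge Q_h^\star$ then follows by backward induction on $h$, exactly as in the optimism lemma for UCBVI: apply the display with $\pi=\pi^\star$ and use the induction hypothesis $V^k_{h+1}\ge V^\star_{h+1}$. The truncation at $H$ is harmless because $Q^\star_h\le H$.

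Next, set $\delta_h^k=V_h^k(s_h^k)-V_h^{\pi_k}(s_h^k)$. Applying the display with $\pi=\pi_k$ gives the recursion
\[
\delta_h^k\le\delta_{h+1}^k+\zeta_{h+1}^k+2\beta\|\phi_h^k\|_{(\Lambda_h^k)^{-1}},
\]
where $\zeta_{h+1}^k$ is a bounded martingale difference (the conditional expectation of $V^k_{h+1}-V^{\pi_k}_{h+1}$ minus its realized value). Unrolling over $h$ and using optimism,
\[
\mathcal R(K)\le\sum_k\delta_1^k\le\sum_{k,h}\zeta_{h+1}^k+2\beta\sum_{k,h}\|\phi_h^k\|_{(\Lambda_h^k)^{-1}}.
\]
Azuma–Hoeffding bounds the first sum by $O(H\sqrt{T\iota})$ with probability $1-p/2$. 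For the second sum, Cauchy–Schwarz and the elliptical potential lemma give, for each $h$,
\[
\sum_k\min\bigl(1,\|\phi_h^k\|^2_{(\Lambda_h^k)^{-1}}\bigr)\le 2d\log(1+K/d)
\]
(the bonus exceeds $H$ only when the $\min$ is active), so $\sum_{k,h}\|\phi_h^k\|_{(\Lambda_h^k)^{-1}}\le H\sqrt{2dK\iota}$. Multiplying by $\beta=c\,dH\sqrt\iota$ gives $O(dH^2\sqrt{dK}\,\iota)=O(\sqrt{d^3H^3T\iota^2})$ using $T=KH$, and a union bound over the two failure events completes the argument.
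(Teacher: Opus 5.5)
Your proposal is correct and is essentially the proof of Jin et al.\ (2020); the survey itself gives no proof and only cites that paper. You follow the same steps: the linear representation $Q_h^\pi=\langle\phi,\mathbf{w}_h^\pi\rangle$ with $\|\mathbf{w}_h^\pi\|\le 2H\sqrt d$, the three-term decomposition of $\mathbf{w}_h^k-\mathbf{w}_h^\pi$, and a self-normalized bound made uniform by covering the truncated-UCB value class (including the self-consistency check on $\beta$). The rest also matches: optimism by backward induction, the $\delta_h^k$ recursion with Azuma--Hoeffding, and the elliptical potential lemma.

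Two small fixes. First, the $\min(1,\cdot)$ in the potential lemma is never active: $\lambda=1$ and $\|\phi\|\le 1$ give $\Lambda_h^k\succeq I$, hence $\|\phi_h^k\|_{(\Lambda_h^k)^{-1}}\le 1$, so your parenthetical about the bonus exceeding $H$ is not needed. Second, you should state the bound $\|\mathbf{w}_h^k\|\le 2H\sqrt{dk/\lambda}$, which the covering-number estimate requires.
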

The theorem can also be generalized to the case when the MDP is approximately linear \citep{jin2020provably}.

\section{Policy Gradient-Based Reinforcement Learning}\label{sec:policy_gradient}

Policy gradient (PG) methods optimize a differentiable, parameterized policy $\pi_\theta(a\mid s)$ directly by ascending an estimate of the gradient of the expected return. The direct policy-search viewpoint is particularly convenient when policies must be differentiable in parameters (e.g., softmax policies) and when policies are represented by flexible function approximators. In a finite-horizon MDP $(\cS,\cA,P,r,\gamma,H)$, define a trajectory $\tau=(s_0,a_0,\dots,s_H)$ generated by $s_0\sim \rho$, $a_t\sim\pi_\theta(\cdot\mid s_t)$, and $s_{t+1}\sim P(\cdot\mid s_t,a_t)$. The canonical objective is
\begin{equation}\label{eq:pg_objective}
J(\theta):=\E_{\tau\sim\pi_\theta}\left[\sum_{t=0}^{H-1}\gamma^t r(s_t,a_t)\right].
\end{equation}
Under mild regularity conditions, the policy gradient theorem yields
\begin{equation}\label{eq:policy_gradient_theorem}
\nabla_\theta J(\theta)
=
\E_{\tau\sim\pi_\theta}\left[\sum_{t=0}^{H-1}\gamma^t\nabla_\theta\log \pi_\theta(a_t\mid s_t)\,R_t^{\pi_\theta}(s_t,a_t)\right],
\end{equation}
where $R_t^{\pi}(s,a)=\E_{\pi}\!\left[\sum_{k=t}^{H-1}\gamma^{k-t}r(s_k,a_k)\mid s_t=s,a_t=a\right]$ provides a credit assignment signal for each decision~\cite{peters2008policygrad}.

The REINFORCE estimator replaces $R_t^{\pi_\theta}(s_t,a_t)$ with a Monte Carlo return $G_t:=\sum_{k=t}^{H-1}\gamma^{k-t}r(s_k,a_k)$ and optionally subtracts a baseline $b(s_t)$ (which leaves the estimator unbiased) to reduce variance:
\begin{equation}\label{eq:reinforce_estimator}
\widehat{\nabla_\theta J(\theta)}
=
\sum_{t=0}^{H-1}\gamma^t\nabla_\theta\log \pi_\theta(a_t\mid s_t)\,\big(G_t-b(s_t)\big).
\end{equation}
Stochastic gradient ascent updates $\theta\leftarrow\theta+\alpha\,\widehat{\nabla_\theta J(\theta)}$~\cite{williams1992reinforce}.

Actor--critic methods maintain an actor (the policy $\pi_\theta$) and a critic (a value-function approximation) and use the critic to construct low-variance advantage estimates. Let $V_w(s)\approx V^{\pi_\theta}(s)$, a neural network based approximation of $V^{\pi_\theta}$, and define the temporal-difference (TD) residual
\begin{equation}\label{eq:td_error}
\delta_t:=r(s_t,a_t)+\gamma V_w(s_{t+1})-V_w(s_t).
\end{equation}
A one-step advantage estimate is $\hat A_t\approx \delta_t$, and the actor update takes the generic form
\begin{equation}\label{eq:actor_update_adv}
\widehat{\nabla_\theta J(\theta)}\approx \sum_{t=0}^{H-1}\nabla_\theta\log\pi_\theta(a_t\mid s_t)\,\hat A_t.
\end{equation}
Two-time-scale analyses formalize convergence for classes of actor--critic algorithms under linear function approximation and suitable step sizes~\cite{konda2003actorcritic}.
Natural-gradient variants (natural actor--critic) further precondition updates using the Fisher information, improving invariance properties and empirical stability~\cite{peters2008naturalac}.

Generalized Advantage Estimation (GAE) defines a family of advantage estimators that interpolates between high-variance Monte Carlo returns and low-variance but biased TD estimates. With TD residuals \eqref{eq:td_error} and parameter $\lambda\in[0,1]$, the (truncated) GAE estimator is
\begin{equation}\label{eq:gae}
\hat A_t^{\mathrm{GAE}(\gamma,\lambda)}:=\sum_{l=0}^{H-1-t}(\gamma\lambda)^l\,\delta_{t+l}.
\end{equation}
Hence $\lambda\to 0$ approaches one-step TD, while $\lambda\to 1$ approaches Monte Carlo-style returns~\cite{arulkumaran2017drlsurvey}. The whole framework allows us to consider $H\rightarrow\infty$ whenever $\gamma\in(0, 1)$, but in practice we usually work with finite $H$ in policy gradient based reinforcement learning, sometimes taking $\gamma=1$.

Advantage Actor--Critic (A2C) and Asynchronous Advantage Actor--Critic (A3C) implementations typically optimize a combined objective consisting of an advantage-weighted policy loss using $\hat A_t$ (often GAE), a squared-error value loss for the critic $V_w$, and an entropy regularizer $\cH(\pi_\theta(\cdot\mid s_t))$ to encourage exploration. Asynchronous variants (A3C) parallelize data collection and gradient updates over multiple workers, trading off bias due to stale parameters against throughput gains~\cite{arulkumaran2017drlsurvey}.

Large policy updates can be unstable because the sampling distribution induced by $\pi_\theta$ changes with $\theta$. Trust-region policy optimization stabilizes learning by solving a surrogate optimization that limits the average change between the previous policy $\pi_{\theta_{\mathrm{old}}}$ and the updated policy $\pi_\theta$, commonly measured by KL divergence. Using an advantage estimate $\hat A_t$, define the importance ratio
\begin{equation}\label{eq:is_ratio}
\kappa_t(\theta):=\frac{\pi_\theta(a_t\mid s_t)}{\pi_{\theta_{\mathrm{old}}}(a_t\mid s_t)}.
\end{equation}
A trust-region update can be written as
\begin{equation}\label{eq:trpo}
\max_{\theta}\ \E\!\left[\kappa_t(\theta)\hat A_t\right]
\quad\text{s.t.}\quad
\E\!\left[D_{\mathrm{KL}}\!\left(\pi_{\theta_{\mathrm{old}}}(\cdot\mid s_t)\,\|\,\pi_{\theta}(\cdot\mid s_t)\right)\right]\le \delta,
\end{equation}
where expectations are over samples collected under $\pi_{\theta_{\mathrm{old}}}$~\cite{xu2024trpoentropy}. The method with steps of form~\ref{eq:trpo} is also called Natural policy gradient (NPG), it defines steps in the information geometry induced by the Fisher information matrix of $\pi_\theta$, which is closely related to KL--divergence--based trust regions and mirror descent in policy space. The work~\cite{lan2023pmd} constructs a unifying mathematical viewpoint is formalized by the policy mirror descent (PMD) framework while establishing sample complexity for tabular discounted MDPs under generative model setting similar to model--free Q-learning~\eqref{eq:q_learning_sample_complexity_generative_model}.

Proximal Policy Optimization (PPO) replaces the constrained problem \eqref{eq:trpo} with an unconstrained objective that heuristically prevents excessively large policy updates. The widely used clipped surrogate objective is
\begin{equation}\label{eq:ppo_clip}
    \begin{aligned}
        &L^{\mathrm{CLIP}}(\theta)
        :=
        \E\!\left[\min\!\Big(\kappa_t(\theta)\hat A_t,\ \mathrm{clip}\big(\kappa_t(\theta),1-\epsilon,1+\epsilon\big)\hat A_t\Big)\right],\\
        &\mathrm{clip}\big(\kappa_t(\theta),1-\epsilon,1+\epsilon\big) := \min\left\{\max\left\{\kappa_t(\theta), 1 - \varepsilon\right\}, 1 + \varepsilon\right\},
    \end{aligned}
\end{equation}
often augmented with value-function and entropy terms as in actor--critic~\cite{schulman2017proximal,xu2023alphappo}. PPO is one of the most used RL algorithm in general, also it is the principal RL algorithm in some specific Natural Language Processing tasks, such as reinforcement learning from human feedback (RLHF) for model alignment due to
its simplicity and reliability~\cite{liu2026reinforcement}. PPO operates in the actor--critic framework:
\begin{itemize}
  \item The actor represents the current policy $\pi_\theta$ and produces an action $a_t$ based on the current state $s_t$ (the input).
  \item The critic (or value model) estimates the long--term expected return of the policy in the current state. This makes it possible to judge whether the actual reward for the actor's action was better or worse than expected, and to update the policy accordingly.
\end{itemize}
Without a critic we cannot know whether a reward of, say, $10$ is good or bad. Ten points may be an excellent reward if in that state we usually obtain only zero or one points, or very poor if we usually obtain $100$ or $500$ points. The critic provides a baseline by predicting the expected value of each state; we then compare the received reward with this baseline to determine the advantage and to update the actor effectively. In PPO the training proceeds schematically as follows:
\begin{enumerate}
  \item All experienced trajectories—tuples of the form $(s_t, a_t, r_t, s_{t+1})$—are stored as lists.
  \item The environment accepts actions from the actor and returns the reward {$r_{t}$} and the next state $s_{t+1}$.
  \item A loss function is defined for updating the critic's parameters.
  \item We update the parameters of both the actor and the critic using standard gradient descent.
  \item The actor's loss employs a clipping coefficient that prevents the new policy $\pi_{\theta}$ from deviating too far from the old policy $\pi_{\theta_{\text{old}}}$.  This "clip" coefficient is a distinctive feature of PPO.
  \item The advantage --- which may be positive or negative --- is computed by comparing the actual reward with the critic's predicted value for the state.
\end{enumerate}
Unlike vanilla PG, A2C and A3C, which are purely on--policy methods, TRPO and PPO at each step estimate average of {$\kappa_t(\theta)$} and $\hat A_t$ based on trajectories obtained using $\pi_{\theta_{\mathrm{old}}}$. This importance--weighting--based estimate makes these mehthods behave similar to off-policy methods such as $Q$-learning~\cite{mambelli2024off}.

Despite being simple and practically reliable, PPO method has its drawbacks: the critic model is expensive to train and resource--intensive, which slows down learning. To mitigate this, the Group Relative Policy Optimization (GRPO) algorithm was proposed, a more efficient
variant of PPO that quickly gained popularity~\cite{shao2024deepseekmath, guo2025deepseek}. Its key ideas are:
\begin{enumerate}
  \item The model generates not just a single answer but several
    answers to the same question.
  \item GRPO eliminates the value model entirely; there is no critic. To evaluate rewards correctly we use the average reward of the group of answers to the same question to determine how good each action is.
  \item Each answer in the group receives its own reward. Instead of comparing the reward of a particular action to the critic's expectation, we compare the reward to the group's average reward.
  \item The policy is updated on the basis of the relative
    advantages within the group.
\end{enumerate}
In language-model post-training and other sequence-level settings, group-relative variants of on-policy PG estimate advantages by comparing multiple rollouts generated for the same input forming a group relative policy optimization via group-relative baselines. Given an input $x$, sample $K$ outputs $\{y^{(k)}\}_{k=1}^K$ and compute rewards $R(x,y^{(k)})$. {A simple group-relative baseline is the within-group mean~\cite{liu2025understanding}:
\begin{equation}\label{eq:dr_grpo_adv}
    \begin{aligned}
        \hat A^{\mathrm{Dr.GRPO}}(x,y^{(k)}) &:=R(x,y^{(k)})-\frac{1}{K}\sum_{j=1}^K R(x,y^{(j)}),
    \end{aligned}
\end{equation}
possibly followed by normalization~\cite{shao2024deepseekmath}:
\begin{equation}\label{eq:grpo_adv}
    \begin{aligned}
        \hat A^{\mathrm{GRPO}}(x,y^{(k)}) &:=\frac{1}{\sigma(x, \{y^{(j)}\}_{j = 1}^{K}\})}\left(R(x,y^{(k)})-\frac{1}{K}\sum_{j=1}^K R(x,y^{(j)})\right),\\
        \sigma(x, \{y^{(j)}\}_{j = 1}^{K}\}) &:=\sqrt{\frac{1}{K - 1}\sum\limits_{k = 1}^{K}\left(R(x, y^{(k)}) - \frac{1}{K}\sum\limits_{j = 1}^{K}R(x, y^{(j)})\right)^{2}}.
    \end{aligned}
\end{equation}
Despite simpler form, $\hat A^{\mathrm{Dr.GRPO}}(x,y^{(k)})$ was introduced after $\hat A^{\mathrm{GRPO}}(x,y^{(k)})$, Dr.GRPO stands for GRPO done right as it was introduced to tackle an optimization bias in GRPO training procedure and to combat an artificial increase in response length during training as well. The initial advantage estimate $\hat A^{\mathrm{GRPO}}$ has a bias introduced by a standard normalization term, $\hat A^{\mathrm{Dr.GRPO}}(x,y^{(k)})$ removes this scaling to make a less biased advantage estimate, which can speed up the optimization procedure~\cite{liu2025understanding}.} Policy updates then mirror \eqref{eq:ppo_clip} with $\hat A_t$ replaced by $\hat A^{\mathrm{GRPO}}$ or $\hat A^{\mathrm{Dr.GRPO}}$, yielding a critic-free on-policy optimization rule based on relative comparisons~\cite{yin2026rewardupdatedgrpo,liu2026regrpo}.

Another approach is based on preference optimization, it starts from pairwise comparisons of outputs. Given tuples $(x,y^+,y^-)$ from preference dataset $\cP=\{(x,y^+,y^-)\}$, where $y^+$ is preferred over $y^-$ for input $x$, a common probabilistic model assumes
\begin{equation}\label{eq:bt_preference}
    \begin{aligned}
        \mathbb{P}(y^+\succ y^-\mid x)&=\sigma\!\big(r_\phi(x,y^+)-r_\phi(x,y^-)\big),\\
        \sigma(z) &:= \frac{1}{1 + \exp(-z)},
    \end{aligned}
\end{equation}
which links preference learning to reward modeling via a latent reward function $r_\phi$~\cite{bradley1952rank,wirth2017pbrl_survey}.
Direct Preference Optimization (DPO) eliminates explicit on-policy rollouts by optimizing the policy parameters directly against preference pairs relative to a fixed reference policy $\pi_{\mathrm{ref}}$~\cite{rafailov2023direct}. {DPO was proposed as a simpler and
more efficient alternative to PPO}. Its key distinction is that DPO does not require a separate reward model or an explicit reinforcement--learning phase.  Instead, it directly optimises the policy using pairs of answers ranked by human preference:
\begin{enumerate}
  \item Unlike classical RLHF approaches, DPO completely avoids training a reward model and the associated RL step. Instead it directly optimises the model (policy) on a small set of human preference data specified via pairs of answers.
  \item For each prompt in the dataset there are two candidate answers from the model, one of which humans deemed more preferable.
  \item These answer pairs are used directly to optimise the model. The objective is to maximise the probability of the preferred answers and minimise the probability of less preferred answers. In other words, the model learns directly to choose answers that best match human preferences without explicitly constructing a reward model or computing advantages through a critic.
\end{enumerate}
DPO representative objective has the form
\begin{equation}\label{eq:dpo_objective}
\max_{\theta}\ \E_{(x,y^+,y^-)\sim\cP}\left[\log \sigma\!\Big(\beta\,\Delta_{\theta,\mathrm{ref}}(x,y^+,y^-)\Big)\right],
\end{equation}
with $\beta>0$ and
\[
\Delta_{\theta,\mathrm{ref}}(x,y^+,y^-)
=
\log\frac{\pi_\theta(y^+\mid x)}{\pi_\theta(y^-\mid x)}
-
\log\frac{\pi_{\mathrm{ref}}(y^+\mid x)}{\pi_{\mathrm{ref}}(y^-\mid x)}.
\]
Thus, alignment can be cast as a stable, offline (supervised) optimization procedure that retains an implicit reward interpretation, and admits reward-guided extensions~\cite{ding2026rgdpo,wirth2017pbrl_survey}.

\section{Reinforcement Learning applications in Natural Language Processing}

Reinforcement learning (RL) contributes to natural language processing (NLP) both as a framework for sequence-level optimization of generative models and a specific fine-tuning mechanism --- post-training for aligning large language models (LLMs) with human preferences, safety constraints, and interactive task success~\cite{uc_cetina2023rl_nlp_survey}. Classical fine‑tuning does an excellent job of adapting a model to specific scenarios.  However, it has an important limitation: fine‑tuning teaches the model to reproduce correct answers from a pre‑prepared dataset, but it does not always give the model an understanding of which answers are genuinely good, useful and appropriate from a human point of view.  Therefore, in industry, after the fine‑tuning stage the next player is usually RL. Unlike simple fine‑tuning, RL does not merely show the model examples of good answers but directly provides feedback on how useful or correct its answer was.  Thanks to this feedback, the model gradually learns which generations are most appropriate and why.

In the context of generative model training (eg. LLMs) the learning from a reward signal combines two main entities -- agent and environment as follows:
\begin{itemize}
  \item agent is the large language model.  It takes actions, for example generating a response to a user question.
  \item the environment reacts to the agent's action and produces a reward. For LLMs the environment is any external evaluator; most often this is a special model that imitates human evaluations (commonly called the reward model).
\end{itemize}
After receiving feedback from the environment the agent adjusts its behaviour --- this behavioural function is called the policy --- so that it receives greater rewards in the future.

The classical learning framework utilizes sequence generation as an episodic decision process. For an input $x\sim\cD$ (e.g., a document to summarize, a dialogue history, or a user prompt), conditional generation can be modeled as an episodic MDP whose state at time $t$ is the prefix $s_t=(x,y_{<t})$, action $a_t\in\mathcal{A}$ is the next token, and the transition is deterministic $s_{t+1}=(x,y_{\le t})$. A policy $\pi_\theta(a_t\mid s_t)$ induces a distribution over complete sequences $y=y_{1:T}$ and the objective is to maximize expected sequence-level utility
\begin{equation}\label{eq:nlp_seq_rl_objective}
J_{\mathrm{NLP}}(\theta):=\E_{x\sim\cD}\ \E_{y\sim\pi_\theta(\cdot\mid x)}\!\big[R(x,y)\big],
\end{equation}
where $R(x,y)$ may encode task-specific metrics, human preference scores, constraint violations, or automatically verifiable rewards. This formulation enables optimization of non-differentiable sequence-level criteria using Monte Carlo policy gradients and actor--critic methods~\cite{williams1992reinforce,peters2008policygrad}.

In more modern LLM post-training, Reinforcement Learning from Human Feedback (RLHF) is used. This name means that in the case of LLMs the reward is mainly defined through
human evaluations. Unlike classical RL, where a model typically receives numerical rewards from a predefined reward function, in RLHF the model learns to generate answers directly from human preferences. RLHF is used to make the language model better conform to human expectations and preferences; this process is known as alignment --- that is, aligning the model's behaviour with human values and desires. In practice RLHF is implemented as a multi--stage pipeline:
\begin{enumerate}
  \item People compare several candidate answers produced by the model for the same question --- this may take the form of pairwise comparisons or ranking.  It is also possible to collect numerical or binary ratings.
  \item The human--annotated data are used to train a reward model. The reward model learns to reproduce human evaluations; in other words it automatically predicts which answers people prefer over others.
  \item The reward model then serves as the environment for further training of the primary language model. The LLM generates answers and receives numerical rewards from the reward model, based on which it gradually improves its responses. This stage is usually performed via policy gradient-based methods.
  \item The model's best answers can be sent back to humans for additional evaluation and validation. By iterating this feedback cycle one can collect more data and improve the reward model.
\end{enumerate}
RLHF combines preference learning with RL-style policy optimization. A typical pipeline starts from a preference dataset $\cP=\{(x,y^+,y^-)\}$ and fits a reward model $r_\phi(x,y)$ by maximum likelihood under a Bradley--Terry--Luce (BTL) or logistic assumption~\cite{bradley1952rank}:
\begin{equation}\label{eq:rm_loss}
\min_{\phi}\ \E_{(x,y^+,y^-)\sim\cP}
\left[
-\log \sigma\!\big(r_\phi(x,y^+)-r_\phi(x,y^-)\big)
\right].
\end{equation}
The subsequent policy optimization stage can be written as a KL-regularized RL problem
\begin{equation}\label{eq:rlhf_kl}
\max_{\theta}\ \E_{x\sim\cD}\ \E_{y\sim\pi_\theta(\cdot\mid x)}
\left[
r_\phi(x,y)-\beta\,D_{\mathrm{KL}}\!\left(\pi_\theta(\cdot\mid x)\,\|\,\pi_{\mathrm{ref}}(\cdot\mid x)\right)
\right],
\end{equation}
where $\pi_{\mathrm{ref}}$ is a reference (often supervised fine-tuned) policy and $\beta>0$ controls the strength of regularization. While \eqref{eq:rlhf_kl} is frequently introduced in a one-step abstraction by treating an entire response $y$ as a single action, it is compatible with the token-level episodic view by distributing rewards and KL penalties across timesteps.
In practice, on-policy surrogates such as PPO or group-relative variants such as GRPO are used to optimize \eqref{eq:rlhf_kl}, while direct preference methods (e.g., DPO) bypass explicit on-policy updates by solving a closely related offline objective on preference pairs~\cite{ding2026rgdpo,yin2026rewardupdatedgrpo,liu2026regrpo,zhang2025landscape}. RLHF is intended  to make language models more "human--like": models trained with human feedback can conduct polite dialogue or show empathy because they learn not merely to predict text but to account for human evaluations and preferences. RLHF is also indispensable for safety and appropriateness, helping to minimize undesirable or harmful outputs.

For more complex tasks including reasoning an Agentic RL approach entangled with tool-augmented environments is usually employed. Beyond single-turn alignment, agentic RL treats an LLM as a policy that acts over multiple steps in a (partially observable) environment. The agent receives observations $o_t$ (dialogue context, retrieved documents, tool outputs), chooses structured actions $u_t$ (token emissions, tool calls, API actions), and the environment transitions according to a kernel $P(o_{t+1}\mid o_{\le t},u_{\le t})$. With task reward $r_t$ (e.g., successful completion, verifiable correctness, cost or latency penalties), the objective becomes
\begin{equation}\label{eq:agentic_objective}
J_{\mathrm{agent}}(\theta):=\E_{\pi_\theta}\Big[\sum_{t\ge 0}\gamma^t r_t\Big],
\end{equation}
which supports long-horizon credit assignment, planning, memory, and learned tool use beyond hand-crafted heuristics~\cite{arulkumaran2017drlsurvey,zhang2025landscape}.

Many recent multi-task NLP systems instantiate multi-agent formulations in which $N$ interacting LLM agents coordinate via messages and actions. This setting can be formalized as a Markov game with state $s_t$, joint actions $a_t=(a_t^1,\dots,a_t^N)$, transition $P(s_{t+1}\mid s_t,a_t)$, and agent-specific rewards $r_t^i$. Each agent has a (possibly shared) policy $\pi_{\theta_i}(a_t^i\mid o_t^i)$ and optimizes an expected return; for cooperative tasks, a common objective is
\begin{equation}\label{eq:marl_objective}
\max_{\theta_1,\dots,\theta_N}\ \E\Big[\sum_{t\ge 0}\gamma^t\,\sum_{i=1}^N r_t^i\Big],
\end{equation}
while mixed cooperative--competitive settings may seek equilibria~\cite{gronauer2022marl_survey}.
Within LLM-based architectures, this formalism underlies RL-driven multi-agent LLM systems along two axes --- whether agent configurations adapt per task (dynamic) and whether the LLM backbones are trained---and catalogs RL-based approaches such as GPTSwarm~\cite{zhuge2024gptswarm}, MaAS~\cite{zhang2025multi}, G-Designer~\cite{zhang2024g}, MAPoRL~\cite{park2025maporl}, MLPO~\cite{estornell2025train}, ReMA~\cite{wan2025rema}, FlowReasoner~\cite{gao2025flowreasoner}, CURE~\cite{wang2025co}, MMedAgent-RL~\cite{xia2025mmedagent}, Chain-of-Agents (COA)~\cite{li2025chain}, RLCCF~\cite{yuan2025wisdom}, and MAGRPO~\cite{liu2025llm}, together with their optimization choices (e.g., PPO/GRPO or bespoke variants). Representative designs separate higher-level meta-planning from execution and share parameters across roles (ReMA), use multi-dimensional rewards tied to execution feedback for query-level meta-agents (FlowReasoner), and combine MARL with LLM-generated hybrid rewards plus evolutionary search to address credit assignment and partial observability (LERO)~\cite{wei2025lero}. For domain-specific cooperation, code-centric systems co-train a generator and tester to enrich reward signals and generalize across benchmarks (CURE), while medical Visual Question Answering frameworks coordinate generalists and specialists with a curriculum, outperforming prior Medical Large Vision--Language Models and yielding more human-like diagnostic behavior (MMedAgent-RL). At the foundation-model end, COA distills trajectories from strong multi-agent systems and then applies agentic RL with carefully designed rewards to produce Agent Foundation Models, and SPIRAL~\cite{liu2025spiral} advances fully online, multi-turn self-play in zero-sum games using a shared policy with role-conditioned advantage estimation, showing that gameplay transfers to stronger mathematical and general reasoning.

The current state of the art in reinforcement learning applications shows us RL is used not only to improve answer quality and adherence to human preferences. It is of inestimable value to the industry because of reasoning. Reasoning models differ from ordinary models in that they maintain explicit chains of thought. Such models are believed to be capable of thinking in some sense: they do not merely reproduce information but can reason step by step and justify their answers. RL plays a key role in training reasoning models: during training the model receives a reward not only for the correct answer but also for the logical and consistent chain of reasoning that leads to that answer. The most interesting aspect of reasoning is that it constitutes a new paradigm for scaling LLMs. Until recently it was believed that LLMs could only become smarter by increasing the amount of training data and computational resources. However, with the advent of the first large reasoning models it was discovered that reasoning itself can scale models. The longer the model reasons and the more tokens it devotes to its chains of thought, the higher the quality of the answers. Because this scaling occurs only at inference time, it is known as test‑time scaling.

\section{{Acknowledgements}}

{We thank Mark Obozov from Innopolis University for fruitful discussions and valuable suggestions on the text that helped lead to this work.}

\bibliographystyle{unsrtnat}
\bibliography{references}

@article{ormoneit2002kernel,
  title={Kernel-based reinforcement learning},
  author={Ormoneit, Dirk and Sen, {\'S}aunak},
  journal={Machine learning},
  volume={49},
  number={2},
  pages={161--178},
  year={2002},
  publisher={Springer}
}

@article{bradtke1996linear,
  title={Linear least-squares algorithms for temporal difference learning},
  author={Bradtke, Steven J and Barto, Andrew G},
  journal={Machine learning},
  volume={22},
  number={1},
  pages={33--57},
  year={1996},
  publisher={Springer}
}

@inproceedings{jin2020provably,
  title={Provably efficient reinforcement learning with linear function approximation},
  author={Jin, Chi and Yang, Zhuoran and Wang, Zhaoran and Jordan, Michael I},
  booktitle={Conference on learning theory},
  pages={2137--2143},
  year={2020},
  organization={PMLR}
}

@article{sinclair2019adaptive,
  title={Adaptive discretization for episodic reinforcement learning in metric spaces},
  author={Sinclair, Sean R and Banerjee, Siddhartha and Yu, Christina Lee},
  journal={Proceedings of the ACM on Measurement and Analysis of Computing Systems},
  volume={3},
  number={3},
  pages={1--44},
  year={2019},
  publisher={ACM New York, NY, USA}
}

@inproceedings{munos2003error,
  title={Error bounds for approximate policy iteration},
  author={Munos, R{\'e}mi},
  booktitle={Proceedings of the Twentieth International Conference on International Conference on Machine Learning},
  pages={560--567},
  year={2003}
}

@inproceedings{domingues2021kernel,
  title={Kernel-based reinforcement learning: A finite-time analysis},
  author={Domingues, Omar Darwiche and M{\'e}nard, Pierre and Pirotta, Matteo and Kaufmann, Emilie and Valko, Michal},
  booktitle={International Conference on Machine Learning},
  pages={2783--2792},
  year={2021},
  organization={PMLR}
}

@article{munos2008finite,
  title={Finite-Time Bounds for Fitted Value Iteration.},
  author={Munos, R{\'e}mi and Szepesv{\'a}ri, Csaba},
  journal={Journal of Machine Learning Research},
  volume={9},
  number={5},
  year={2008}
}

@article{zhang2020variational,
  title={Variational policy gradient method for reinforcement learning with general utilities},
  author={Zhang, Junyu and Koppel, Alec and Bedi, Amrit Singh and Szepesvari, Csaba and Wang, Mengdi},
  journal={Advances in Neural Information Processing Systems},
  volume={33},
  pages={4572--4583},
  year={2020}
}

@article{fatkhullin2023stochastic,
  title={Stochastic optimization under hidden convexity},
  author={Fatkhullin, Ilyas and He, Niao and Hu, Yifan},
  journal={arXiv preprint arXiv:2401.00108},
  year={2023}
}

@article{ho2016generative,
  title={Generative adversarial imitation learning},
  author={Ho, Jonathan and Ermon, Stefano},
  journal={Advances in neural information processing systems},
  volume={29},
  year={2016}
}

@book{howard:dp,
  address = {Cambridge, MA},
  author = {Howard, R. A.},
  publisher = {MIT Press},
  title = {Dynamic Programming and Markov Processes},
  year = 1960
}

@article{bellman1957markovian,
  title={A Markovian decision process},
  author={Bellman, Richard},
  journal={Journal of mathematics and mechanics},
  volume={6},
  number={5},
  pages={679--684},
  year={1957},
  publisher={JSTOR}
}

@article{agarwal2019reinforcement,
  title={Reinforcement learning: Theory and algorithms},
  author={Agarwal, Alekh and Jiang, Nan and Kakade, Sham M and Sun, Wen},
  journal={CS Dept., UW Seattle, Seattle, WA, USA, Tech. Rep},
  volume={32},
  year={2019}
}

@book{putermanMDP,
author = {Puterman, Martin L.},
title = {Markov Decision Processes: Discrete Stochastic Dynamic Programming},
year = {1994},
isbn = {0471619779},
publisher = {John Wiley \& Sons, Inc.},
address = {USA},
edition = {1st}
}

@article{wainwright2019variance,
  title={Variance-reduced $ Q $-learning is minimax optimal},
  author={Wainwright, Martin J},
  journal={arXiv preprint arXiv:1906.04697},
  year={2019}
}

@article{gheshlaghi2013minimax,
  title={Minimax PAC bounds on the sample complexity of reinforcement learning with a generative model},
  author={Gheshlaghi Azar, Mohammad and Munos, R{\'e}mi and Kappen, Hilbert J},
  journal={Machine learning},
  volume={91},
  pages={325--349},
  year={2013},
  publisher={Springer}
}

@article{neu2017unified,
  title={A unified view of entropy-regularized markov decision processes},
  author={Neu, Gergely and Jonsson, Anders and G{\'o}mez, Vicen{\c{c}}},
  journal={arXiv preprint arXiv:1705.07798},
  year={2017}
}

@inproceedings{agarwal2020model,
  title={Model-based reinforcement learning with a generative model is minimax optimal},
  author={Agarwal, Alekh and Kakade, Sham and Yang, Lin F},
  booktitle={Conference on Learning Theory},
  pages={67--83},
  year={2020},
  organization={PMLR}
}

@article{li2020breaking,
  title={Breaking the sample size barrier in model-based reinforcement learning with a generative model},
  author={Li, Gen and Wei, Yuting and Chi, Yuejie and Gu, Yuantao and Chen, Yuxin},
  journal={Advances in neural information processing systems},
  volume={33},
  pages={12861--12872},
  year={2020}
}

@inproceedings{wang2024optimal,
  title={Optimal sample complexity for average reward markov decision processes},
  author={Wang, Shengbo and Blanchet, Jose and Glynn, Peter},
  booktitle={International Conference on Learning Representations},
  volume={2024},
  pages={29843--29861},
  year={2024}
}

@inproceedings{jin2021towards,
  title={Towards tight bounds on the sample complexity of average-reward MDPs},
  author={Jin, Yujia and Sidford, Aaron},
  booktitle={International Conference on Machine Learning},
  pages={5055--5064},
  year={2021},
  organization={PMLR}
}

@article{kearns1998finite,
  title={Finite-sample convergence rates for Q-learning and indirect algorithms},
  author={Kearns, Michael and Singh, Satinder},
  journal={Advances in neural information processing systems},
  volume={11},
  year={1998}
}

@article{jin2018q,
  title={Is Q-learning provably efficient?},
  author={Jin, Chi and Allen-Zhu, Zeyuan and Bubeck, Sebastien and Jordan, Michael I},
  journal={Advances in neural information processing systems},
  volume={31},
  year={2018}
}

@article{li2024q,
  title={Is Q-learning minimax optimal? a tight sample complexity analysis},
  author={Li, Gen and Cai, Changxiao and Chen, Yuxin and Wei, Yuting and Chi, Yuejie},
  journal={Operations Research},
  volume={72},
  number={1},
  pages={222--236},
  year={2024},
  publisher={INFORMS}
}

@inproceedings{li2023statistical,
  title={A statistical analysis of polyak-ruppert averaged q-learning},
  author={Li, Xiang and Yang, Wenhao and Liang, Jiadong and Zhang, Zhihua and Jordan, Michael I},
  booktitle={International Conference on Artificial Intelligence and Statistics},
  pages={2207--2261},
  year={2023},
  organization={PMLR}
}

@article{wang2020randomized,
  title={Randomized linear programming solves the Markov decision problem in nearly linear (sometimes sublinear) time},
  author={Wang, Mengdi},
  journal={Mathematics of Operations Research},
  volume={45},
  number={2},
  pages={517--546},
  year={2020},
  publisher={INFORMS}
}

@inproceedings{jin2020efficiently,
  title={Efficiently solving MDPs with stochastic mirror descent},
  author={Jin, Yujia and Sidford, Aaron},
  booktitle={International Conference on Machine Learning},
  pages={4890--4900},
  year={2020},
  organization={PMLR}
}

@inproceedings{van2021minimum,
  title={Minimum cost flows, mdps, and $\ell_1$-regression in nearly linear time for dense instances},
  author={Van Den Brand, Jan and Lee, Yin Tat and Liu, Yang P and Saranurak, Thatchaphol and Sidford, Aaron and Song, Zhao and Wang, Di},
  booktitle={Proceedings of the 53rd Annual ACM SIGACT Symposium on Theory of Computing},
  pages={859--869},
  year={2021}
}

@inproceedings{neu2023efficient,
  title={Efficient global planning in large MDPs via stochastic primal-dual optimization},
  author={Neu, Gergely and Okolo, Nneka},
  booktitle={International Conference on Algorithmic Learning Theory},
  pages={1101--1123},
  year={2023},
  organization={PMLR}
}

@article{grand2021convex,
  title={From convex optimization to MDPs: A review of first-order, second-order and quasi-Newton methods for MDPs},
  author={Grand-Cl{\'e}ment, Julien},
  journal={arXiv preprint arXiv:2104.10677},
  year={2021}
}

@article{goyal2023first,
  title={A first-order approach to accelerated value iteration},
  author={Goyal, Vineet and Grand-Clement, Julien},
  journal={Operations Research},
  volume={71},
  number={2},
  pages={517--535},
  year={2023},
  publisher={INFORMS}
}

@article{scherrer2013improved,
  title={Improved and generalized upper bounds on the complexity of policy iteration},
  author={Scherrer, Bruno},
  journal={Advances in Neural Information Processing Systems},
  volume={26},
  year={2013}
}

@book{nesterov2018lectures,
  title={Lectures on convex optimization},
  author={Nesterov, Yurii and others},
  volume={137},
  year={2018},
  publisher={Springer}
}

@inproceedings{farahmand2021pid,
  title={PID accelerated value iteration algorithm},
  author={Farahmand, Amir-massoud and Ghavamzadeh, Mohammad},
  booktitle={International Conference on Machine Learning},
  pages={3143--3153},
  year={2021},
  organization={PMLR}
}

@inproceedings{nesterov1983method,
  title={A method for solving the convex programming problem with convergence rate O (1/k2)},
  author={Nesterov, Yurii},
  booktitle={Dokl akad nauk Sssr},
  volume={269},
  pages={543},
  year={1983}
}

@book{nesterov2013introductory,
  title={Introductory lectures on convex optimization: A basic course},
  author={Nesterov, Yurii},
  volume={87},
  year={2013},
  publisher={Springer Science \& Business Media}
}

@article{polyak1964some,
  title={Some methods of speeding up the convergence of iteration methods},
  author={Polyak, Boris T},
  journal={Ussr computational mathematics and mathematical physics},
  volume={4},
  number={5},
  pages={1--17},
  year={1964},
  publisher={Elsevier}
}

@article{robbins1952some,
  title={Some aspects of the sequential design of experiments},
  author={Robbins, Herbert},
  journal={Bulletin of the American Mathematical Society},
  volume={58},
  number={5},
  pages={527--535},
  year={1952},
  publisher={American Mathematical Society}
}

@article{slivkins2019introduction,
  title={Introduction to multi-armed bandits},
  author={Slivkins, Aleksandrs},
  journal={Foundations and Trends in Machine Learning},
  volume={12},
  number={1-2},
  pages={1--286},
  year={2019},
  publisher={Now Publishers, Inc.}
}

@article{bubeck2012regret,
  title={Regret analysis of stochastic and nonstochastic multi-armed bandit problems},
  author={Bubeck, S{\'e}bastien and Cesa-Bianchi, Nicol{\`o}},
  journal={Foundations and Trends in Machine Learning},
  volume={5},
  number={1},
  pages={1--122},
  year={2012},
  publisher={Now Publishers, Inc.}
}

@article{auer2002finite,
  title={Finite-time analysis of the multiarmed bandit problem},
  author={Auer, Peter and Cesa-Bianchi, Nicol{\`o} and Fischer, Paul},
  journal={Machine Learning},
  volume={47},
  number={2-3},
  pages={235--256},
  year={2002},
  publisher={Springer}
}

@article{lai1985asymptotically,
  title={Asymptotically efficient adaptive allocation rules},
  author={Lai, Tze Leung and Robbins, Herbert},
  journal={Advances in Applied Mathematics},
  volume={6},
  number={1},
  pages={4--22},
  year={1985},
  publisher={Elsevier}
}

@article{russo2018tutorial,
  title={A tutorial on Thompson sampling},
  author={Russo, Daniel and Roy, Benjamin and Kazerouni, Ali and Osband, Ian and Wen, Zheng},
  journal={Foundations and Trends® in Machine Learning},
  volume={11},
  number={1},
  pages={1--96},
  year={2018},
  publisher={NOW Publishers, Inc.}
}

@inproceedings{agrawal2012analysis,
  title={Analysis of Thompson Sampling for the Multi-armed Bandit Problem},
  author={Agrawal, Shipra and Goyal, Navin},
  booktitle={Conference on Learning Theory},
  pages={39--1},
  year={2012}
}

@inproceedings{agrawal2017further,
  title={Further optimal regret bounds for Thompson sampling},
  author={Agrawal, Shipra and Goyal, Navin},
  booktitle={Artificial Intelligence and Statistics},
  pages={99--107},
  year={2017},
  organization={PMLR}
}

@book{lattimore2020bandit,
  title={Bandit algorithms},
  author={Lattimore, Tor and Szepesv{\'a}ri, Csaba},
  year={2020},
  publisher={Cambridge University Press}
}

@article{chapelle2011empirical,
  title={An empirical evaluation of Thompson sampling},
  author={Chapelle, Olivier and Li, Lihong},
  journal={Advances in neural information processing systems},
  volume={24},
  year={2011}
}

@article{thompson1933likelihood,
  title={On the likelihood that one unknown probability exceeds another in view of the evidence of two samples},
  author={Thompson, William R},
  journal={Biometrika},
  volume={25},
  number={3/4},
  pages={285--294},
  year={1933},
  publisher={Oxford University Press}
}

@article{jaksch2010near,
  title={Near-optimal regret bounds for reinforcement learning},
  author={Jaksch, Thomas and Ortner, Ronald and Auer, Peter},
  journal={Journal of Machine Learning Research},
  volume={11},
  pages={1563--1600},
  year={2010}
}

@inproceedings{azar2017minimax,
  title={Minimax regret bounds for reinforcement learning},
  author={Azar, Mohammad Gheshlaghi and Osband, Ian and Munos, R{\'e}mi},
  booktitle={International conference on machine learning},
  pages={263--272},
  year={2017},
  organization={PMLR}
}

@article{mnih2015humanlevel,
  title        = {Human-level control through deep reinforcement learning},
  author       = {Mnih, Volodymyr and Kavukcuoglu, Koray and Silver, David and Rusu, Andrei A. and Veness, Joel and Bellemare, Marc G. and Graves, Alex and Riedmiller, Martin and Fidjeland, Andreas K. and Ostrovski, Georg and Petersen, Stig and Beattie, Charles and Sadik, Amir and Antonoglou, Ioannis and King, Helen and Kumaran, Dharshan and Wierstra, Daan and Legg, Shane and Hassabis, Demis},
  journal      = {Nature},
  volume       = {518},
  number       = {7540},
  pages        = {529--533},
  year         = {2015},
  publisher    = {Nature Publishing Group},
}

@inproceedings{mnih2016a3c,
  title        = {Asynchronous methods for deep reinforcement learning},
  author       = {Mnih, Volodymyr and Badia, Adri{\`a} Puigdom{\`e}nech and Mirza, Mehdi and Graves, Alex and Lillicrap, Timothy and Harley, Tim and Silver, David and Kavukcuoglu, Koray},
  booktitle    = {Proceedings of the 33rd International Conference on Machine Learning},
  year         = {2016},
  pages        = {1928--1937},
  publisher    = {PMLR},
}

@inproceedings{schulman2015trpo,
  title        = {Trust Region Policy Optimization},
  author       = {Schulman, John and Levine, Sergey and Abbeel, Pieter and Jordan, Michael and Moritz, Philipp},
  booktitle    = {Proceedings of the 32nd International Conference on Machine Learning},
  year         = {2015},
  pages        = {1889--1897},
  publisher    = {PMLR},
}

@inproceedings{osband2016generalization,
  title={Generalization and exploration via randomized value functions},
  author={Osband, Ian and Van Roy, Benjamin and Wen, Zheng},
  booktitle={International Conference on Machine Learning},
  pages={2377--2386},
  year={2016},
  organization={PMLR}
}

@inproceedings{agrawal2021improved,
  title={Improved worst-case regret bounds for randomized least-squares value iteration},
  author={Agrawal, Priyank and Chen, Jinglin and Jiang, Nan},
  booktitle={Proceedings of the AAAI Conference on Artificial Intelligence},
  volume={35},
  pages={6566--6573},
  year={2021}
}

@article{russo2019worst,
  title={Worst-case regret bounds for exploration via randomized value functions},
  author={Russo, Daniel},
  journal={Advances in neural information processing systems},
  volume={32},
  year={2019}
}

@article{singh1994upper,
  title={An upper bound on the loss from approximate optimal-value functions},
  author={Singh, Satinder P and Yee, Richard C},
  journal={Machine Learning},
  volume={16},
  number={3},
  pages={227--233},
  year={1994},
  publisher={Springer}
}

@article{osband2019deep,
  title={Deep exploration via randomized value functions},
  author={Osband, Ian and Van Roy, Benjamin and Russo, Daniel J and Wen, Zheng},
  journal={Journal of machine learning research},
  volume={20},
  number={124},
  pages={1--62},
  year={2019}
}

@article{kaufmann2012bayesian,
  title={On Bayesian Upper Confidence Bounds for Bandit Problems},
  author={Kaufmann, Emilie and Capp{\'e}, Olivier and Garivier, Aur{\'e}lien},
  journal={arXiv preprint arXiv:1204.5721},
  year={2012}
}

@article{xiong2022near,
  title={Near-optimal randomized exploration for tabular markov decision processes},
  author={Xiong, Zhihan and Shen, Ruoqi and Cui, Qiwen and Fazel, Maryam and Du, Simon S},
  journal={Advances in neural information processing systems},
  volume={35},
  pages={6358--6371},
  year={2022}
}

@inproceedings{tiapkin2022opsrl,
  title     = {Optimistic Posterior Sampling for Reinforcement Learning with Few Samples and Tight Guarantees},
  author    = {Tiapkin, Daniil and Belomestny, Denis and Calandriello, Daniele and Moulines, Eric and Munos, R\'emi and Naumov, Alexey and Rowland, Mark and Valko, Michal and Ménard, Pierre},
  booktitle = {Advances in Neural Information Processing Systems},
  year      = {2022},
  volume    = {35},
}

@inproceedings{agrawal2017opsrl_worst,
  title     = {Optimistic Posterior Sampling for Reinforcement Learning: Worst-Case Regret Bounds},
  author    = {Agrawal, Shipra and Jia, Randy},
  booktitle = {Advances in Neural Information Processing Systems},
  series    = {NeurIPS},
  volume    = {30},
  year      = {2017},
  note      = {Spotlight paper},
}

@article{zhang2020almost,
  title={Almost optimal model-free reinforcement learningvia reference-advantage decomposition},
  author={Zhang, Zihan and Zhou, Yuan and Ji, Xiangyang},
  journal={Advances in Neural Information Processing Systems},
  volume={33},
  pages={15198--15207},
  year={2020}
}

@inproceedings{neu2020unifying,
  title     = {A Unifying View of Optimism in Episodic Reinforcement Learning},
  author    = {Gergely Neu and Ciara Pike-Burke},
  booktitle = {Advances in Neural Information Processing Systems (NeurIPS)},
  volume    = {33},
  year      = {2020},
  month     = dec,
  publisher = {Curran Associates, Inc.},
  note      = {Poster and conference paper},
}

@article{wainwright2019stochastic,
  title        = {Stochastic Approximation with Cone-Contractive Operators: Sharp $\ell_\infty$-Bounds for $Q$-Learning},
  author       = {Wainwright, Martin J.},
  journal      = {arXiv preprint},
  volume       = {arXiv:1905.06265},
  year         = {2019},
  note         = {Available at arXiv},
}

@incollection{baird1995residual,
  title={Residual algorithms: Reinforcement learning with function approximation},
  author={Baird, Leemon},
  booktitle={Machine learning proceedings 1995},
  pages={30--37},
  year={1995},
  publisher={Elsevier}
}

@article{durmus2025finite,
  title={Finite-time high-probability bounds for Polyak--Ruppert averaged iterates of linear stochastic approximation},
  author={Durmus, Alain and Moulines, Eric and Naumov, Alexey and Samsonov, Sergey},
  journal={Mathematics of Operations Research},
  volume={50},
  number={2},
  pages={935--964},
  year={2025},
  publisher={INFORMS}
}

@article{durmus2021tight,
  title={Tight high probability bounds for linear stochastic approximation with fixed stepsize},
  author={Durmus, Alain and Moulines, Eric and Naumov, Alexey and Samsonov, Sergey and Scaman, Kevin and Wai, Hoi-To},
  journal={Advances in Neural Information Processing Systems},
  volume={34},
  pages={30063--30074},
  year={2021}
}

@article{samsonov2026statistical,
  title={Statistical inference for linear stochastic approximation with markovian noise},
  author={Samsonov, Sergey and Sheshukova, Marina and Moulines, Eric and Naumov, Alexey},
  journal={Advances in Neural Information Processing Systems},
  volume={38},
  pages={174565--174626},
  year={2026}
}

@inproceedings{sutton1988tdlambda,
  title={Learning to Predict by the Methods of Temporal Differences},
  author={Sutton, Richard S},
  booktitle={Machine Learning},
  year={1988},
  volume={3},
  pages={9--44},
  publisher={Springer}
}

@inproceedings{sutton:gtd2:2009,
	author = {Sutton, R. S and Maei, Hamid Reza and Precup, Doina and Bhatnagar, Shalabh and Silver, David and Szepesv{\'a}ri, Csaba and Wiewiora, E.},
	booktitle = {International Conference on Machine Learning},
	pages = {993--1000},
	title = {Fast gradient-descent methods for temporal-difference learning with linear function approximation},
	year = {2009}}

@article{sutton2008convergent,
  title={A convergent $ o (n) $ temporal-difference algorithm for off-policy learning with linear function approximation},
  author={Sutton, Richard S and Maei, Hamid and Szepesv{\'a}ri, Csaba},
  journal={Advances in neural information processing systems},
  volume={21},
  year={2008}
}

@inproceedings{butyrin2026gaussian,
  title={Gaussian approximation for two-timescale linear stochastic approximation},
  author={Butyrin, Bogdan and Rubtsov, Artemy and Naumov, Alexey and Ulyanov, Vladimir V and Samsonov, Sergey},
  booktitle={Proceedings of the AAAI Conference on Artificial Intelligence},
  volume={40},
  pages={36627--36635},
  year={2026}
}

@inproceedings{samsonov2024improved,
  title={Improved high-probability bounds for the temporal difference learning algorithm via exponential stability},
  author={Samsonov, Sergey and Tiapkin, Daniil and Naumov, Alexey and Moulines, Eric},
  booktitle={The Thirty Seventh Annual Conference on Learning Theory},
  pages={4511--4547},
  year={2024},
  organization={PMLR}
}

@inproceedings{domingues2021episodic,
  title={Episodic reinforcement learning in finite mdps: Minimax lower bounds revisited},
  author={Domingues, Omar Darwiche and M{\'e}nard, Pierre and Kaufmann, Emilie and Valko, Michal},
  booktitle={Algorithmic Learning Theory},
  pages={578--598},
  year={2021},
  organization={PMLR}
}

@article{osband2013more,
  title={(More) efficient reinforcement learning via posterior sampling},
  author={Osband, Ian and Russo, Daniel and Van Roy, Benjamin},
  journal={Advances in Neural Information Processing Systems},
  volume={26},
  year={2013}
}

@article{zhang2025landscape,
  title={The landscape of agentic reinforcement learning for llms: A survey},
  author={Zhang, Guibin and Geng, Hejia and Yu, Xiaohang and Yin, Zhenfei and Zhang, Zaibin and Tan, Zelin and Zhou, Heng and Li, Zhongzhi and Xue, Xiangyuan and Li, Yijiang and others},
  journal={arXiv preprint arXiv:2509.02547},
  year={2025}
}

@article{schulman2017proximal,
  title={Proximal policy optimization algorithms},
  author={Schulman, John and Wolski, Filip and Dhariwal, Prafulla and Radford, Alec and Klimov, Oleg},
  journal={arXiv preprint arXiv:1707.06347},
  year={2017}
}

@article{guo2025deepseek,
  title={Deepseek-r1: Incentivizing reasoning capability in llms via reinforcement learning},
  author={Guo, Daya and Yang, Dejian and Zhang, Haowei and Song, Junxiao and Zhang, Ruoyu and Xu, Runxin and Zhu, Qihao and Ma, Shirong and Wang, Peiyi and Bi, Xiao and others},
  journal={arXiv preprint arXiv:2501.12948},
  year={2025}
}

@article{rafailov2023direct,
  title={Direct preference optimization: Your language model is secretly a reward model},
  author={Rafailov, Rafael and Sharma, Archit and Mitchell, Eric and Manning, Christopher D and Ermon, Stefano and Finn, Chelsea},
  journal={Advances in neural information processing systems},
  volume={36},
  pages={53728--53741},
  year={2023}
}

@inproceedings{zhuge2024gptswarm,
  title={Gptswarm: Language agents as optimizable graphs},
  author={Zhuge, Mingchen and Wang, Wenyi and Kirsch, Louis and Faccio, Francesco and Khizbullin, Dmitrii and Schmidhuber, J{\"u}rgen},
  booktitle={Forty-first International Conference on Machine Learning},
  year={2024}
}

@article{zhang2025multi,
  title={Multi-agent architecture search via agentic supernet},
  author={Zhang, Guibin and Niu, Luyang and Fang, Junfeng and Wang, Kun and Bai, Lei and Wang, Xiang},
  journal={arXiv preprint arXiv:2502.04180},
  year={2025}
}

@article{zhang2024g,
  title={G-designer: Architecting multi-agent communication topologies via graph neural networks},
  author={Zhang, Guibin and Yue, Yanwei and Sun, Xiangguo and Wan, Guancheng and Yu, Miao and Fang, Junfeng and Wang, Kun and Chen, Tianlong and Cheng, Dawei},
  journal={arXiv preprint arXiv:2410.11782},
  year={2024}
}

@article{park2025maporl,
  title={Maporl: Multi-agent post-co-training for collaborative large language models with reinforcement learning},
  author={Park, Chanwoo and Han, Seungju and Guo, Xingzhi and Ozdaglar, Asuman and Zhang, Kaiqing and Kim, Joo-Kyung},
  journal={arXiv preprint arXiv:2502.18439},
  year={2025}
}

@article{estornell2025train,
  title={How to train a leader: Hierarchical reasoning in multi-agent llms},
  author={Estornell, Andrew and Ton, Jean-Francois and Taufiq, Muhammad Faaiz and Li, Hang},
  journal={arXiv preprint arXiv:2507.08960},
  year={2025}
}

@article{wan2025rema,
  title={Rema: Learning to meta-think for llms with multi-agent reinforcement learning},
  author={Wan, Ziyu and Li, Yunxiang and Wen, Xiaoyu and Song, Yan and Wang, Hanjing and Yang, Linyi and Schmidt, Mark and Wang, Jun and Zhang, Weinan and Hu, Shuyue and others},
  journal={arXiv preprint arXiv:2503.09501},
  year={2025}
}

@article{gao2025flowreasoner,
  title={Flowreasoner: Reinforcing query-level meta-agents},
  author={Gao, Hongcheng and Liu, Yue and He, Yufei and Dou, Longxu and Du, Chao and Deng, Zhijie and Hooi, Bryan and Lin, Min and Pang, Tianyu},
  journal={arXiv preprint arXiv:2504.15257},
  year={2025}
}

@article{wang2025co,
  title={Co-evolving llm coder and unit tester via reinforcement learning},
  author={Wang, Yinjie and Yang, Ling and Tian, Ye and Shen, Ke and Wang, Mengdi},
  journal={arXiv preprint arXiv:2506.03136},
  year={2025}
}

@article{xia2025mmedagent,
  title={MMedAgent-RL: Optimizing Multi-Agent Collaboration for Multimodal Medical Reasoning},
  author={Xia, Peng and Wang, Jinglu and Peng, Yibo and Zeng, Kaide and Wu, Xian and Tang, Xiangru and Zhu, Hongtu and Li, Yun and Liu, Shujie and Lu, Yan and others},
  journal={arXiv preprint arXiv:2506.00555},
  year={2025}
}

@article{li2025chain,
  title={Chain-of-agents: End-to-end agent foundation models via multi-agent distillation and agentic rl},
  author={Li, Weizhen and Lin, Jianbo and Jiang, Zhuosong and Cao, Jingyi and Liu, Xinpeng and Zhang, Jiayu and Huang, Zhenqiang and Chen, Qianben and Sun, Weichen and Wang, Qiexiang and others},
  journal={arXiv preprint arXiv:2508.13167},
  year={2025}
}

@article{yuan2025wisdom,
  title={Wisdom of the Crowd: Reinforcement Learning from Coevolutionary Collective Feedback},
  author={Yuan, Wenzhen and Tang, Shengji and Lin, Weihao and Ruan, Jiacheng and Cui, Ganqu and Zhang, Bo and Chen, Tao and Liu, Ting and Fu, Yuzhuo and Ye, Peng and others},
  journal={arXiv preprint arXiv:2508.12338},
  year={2025}
}

@article{liu2025llm,
  title={Llm collaboration with multi-agent reinforcement learning},
  author={Liu, Shuo and Liang, Zeyu and Lyu, Xueguang and Amato, Christopher},
  journal={arXiv preprint arXiv:2508.04652},
  year={2025}
}

@inproceedings{wei2025lero,
  title={LERO: LLM-driven Evolutionary framework with Hybrid Rewards and Enhanced Observation for Multi-Agent Reinforcement Learning},
  author={Wei, Yuan and Shan, Xiaohan and Miao, Ran and Li, Jianmin},
  booktitle={International Conference on Intelligent Computing},
  pages={15--26},
  year={2025},
  organization={Springer}
}

@article{liu2025spiral,
  title={SPIRAL: Self-Play on Zero-Sum Games Incentivizes Reasoning via Multi-Agent Multi-Turn Reinforcement Learning},
  author={Liu, Bo and Guertler, Leon and Yu, Simon and Liu, Zichen and Qi, Penghui and Balcells, Daniel and Liu, Mickel and Tan, Cheston and Shi, Weiyan and Lin, Min and others},
  journal={arXiv preprint arXiv:2506.24119},
  year={2025}
}

@book{kochenderfer2022algorithms,
  title={Algorithms for decision making},
  author={Kochenderfer, Mykel J and Wheeler, Tim A and Wray, Kyle H},
  year={2022},
  publisher={MIT press}
}

@article{arcieri2025deep,
  title={Deep belief Markov models for POMDP inference},
  author={Arcieri, Giacomo and Papakonstantinou, Konstantinos G and Straub, Daniel and Chatzi, Eleni},
  journal={Neural networks},
  pages={108386},
  year={2025},
  publisher={Elsevier}
}

@article{williams1992reinforce,
  author = {Williams, Ronald J.},
  title = {Simple statistical gradient-following algorithms for connectionist reinforcement learning},
  journal = {Machine Learning},
  volume = {8},
  number = {3-4},
  pages = {229--256},
  year = {1992},
  doi = {10.1007/BF00992696}
}

@article{konda2003actorcritic,
  author = {Konda, Vijay R. and Tsitsiklis, John N.},
  title = {On Actor-Critic Algorithms},
  journal = {SIAM Journal on Control and Optimization},
  volume = {42},
  number = {4},
  pages = {1143--1166},
  year = {2003},
  doi = {10.1137/S0363012901385691}
}

@article{peters2008naturalac,
  author = {Peters, Jan and Schaal, Stefan},
  title = {Natural Actor-Critic},
  journal = {Neurocomputing},
  volume = {71},
  number = {7-9},
  pages = {1180--1190},
  year = {2008},
  doi = {10.1016/j.neucom.2007.11.026}
}

@article{peters2008policygrad,
  author = {Peters, Jan and Schaal, Stefan},
  title = {Reinforcement learning of motor skills with policy gradients},
  journal = {Neural Networks},
  volume = {21},
  pages = {682--697},
  year = {2008},
  doi = {10.1016/j.neunet.2008.02.003}
}

@article{arulkumaran2017drlsurvey,
  author = {Arulkumaran, Kai and Deisenroth, Marc Peter and Brundage, Miles and Bharath, Anil Anthony},
  title = {Deep reinforcement learning: a brief survey},
  journal = {IEEE Signal Processing Magazine},
  volume = {34},
  number = {6},
  pages = {26--38},
  year = {2017},
  doi = {10.1109/MSP.2017.2743240}
}

@article{xu2024trpoentropy,
  author = {Xu, Haotian and Xuan, Junyu and Zhang, Guangquan and Lu, Jie},
  title = {Trust region policy optimization via entropy regularization for {K}ullback--{L}eibler divergence constraint},
  journal = {Neurocomputing},
  volume = {589},
  pages = {127716},
  year = {2024},
  doi = {10.1016/j.neucom.2024.127716}
}

@article{xu2023alphappo,
  author = {Xu, Haotian and Yan, Zheng and Xuan, Junyu and Zhang, Guangquan and Lu, Jie},
  title = {Improving proximal policy optimization with alpha divergence},
  journal = {Neurocomputing},
  volume = {534},
  pages = {94--105},
  year = {2023},
  doi = {10.1016/j.neucom.2023.02.008}
}

@article{yin2026rewardupdatedgrpo,
  author = {Yin, Yiqiao},
  title = {Use large language model to enhance reasoning of another large language model through reward updated {GRPO}},
  journal = {Scientific Reports},
  volume = {16},
  pages = {8360},
  year = {2026},
  doi = {10.1038/s41598-026-39296-8}
}

@article{liu2026regrpo,
  author = {Liu, Haoyu and Xiao, Le},
  title = {RE-GRPO: Leveraging hard negative cases through large language model guided self training},
  journal = {Neurocomputing},
  volume = {669},
  pages = {132543},
  year = {2026},
  doi = {10.1016/j.neucom.2025.132543}
}

@article{wirth2017pbrl_survey,
  author = {Wirth, Christian and Akrour, Riad and Neumann, Gerhard and F{\"u}rnkranz, Johannes},
  title = {A Survey of Preference-Based Reinforcement Learning Methods},
  journal = {Journal of Machine Learning Research},
  volume = {18},
  number = {136},
  pages = {1--46},
  year = {2017}
}

@article{ding2026rgdpo,
  author = {Ding, Zhe and Pan, Su and Zhang, Yongpan and Ji, Hui and Ding, Cheng},
  title = {Reward-guided direct preference optimization},
  journal = {Expert Systems with Applications},
  volume = {299},
  pages = {130295},
  year = {2026},
  doi = {10.1016/j.eswa.2025.130295}
}

@article{uc_cetina2023rl_nlp_survey,
  author = {Uc-Cetina, V{\'i}ctor and Navarro-Guerrero, Nic{\'o}las and Martin-Gonzalez, Anabel and Weber, Cornelius and Wermter, Stefan},
  title = {Survey on reinforcement learning for language processing},
  journal = {Artificial Intelligence Review},
  volume = {56},
  pages = {1543--1575},
  year = {2023},
  doi = {10.1007/s10462-022-10205-5}
}

@article{gronauer2022marl_survey,
  author = {Gronauer, Sven and Diepold, Klaus},
  title = {Multi-agent deep reinforcement learning: a survey},
  journal = {Artificial Intelligence Review},
  volume = {55},
  pages = {895--943},
  year = {2022},
  doi = {10.1007/s10462-021-09996-w}
}

@article{liu2026reinforcement,
  title={Reinforcement Learning from Human Feedback: A Statistical Perspective},
  author={Liu, Pangpang and Shi, Chengchun and Sun, Will Wei},
  journal={arXiv preprint arXiv:2604.02507},
  year={2026}
}

@article{shao2024deepseekmath,
  title={Deepseekmath: Pushing the limits of mathematical reasoning in open language models},
  author={Shao, Zhihong and Wang, Peiyi and Zhu, Qihao and Xu, Runxin and Song, Junxiao and Bi, Xiao and Zhang, Haowei and Zhang, Mingchuan and Li, YK and Wu, Yang and others},
  journal={arXiv preprint arXiv:2402.03300},
  year={2024}
}

@article{mambelli2024off,
  title={When Do Off-Policy and On-Policy Policy Gradient Methods Align?},
  author={Mambelli, Davide and Bongers, Stephan and Zoeter, Onno and Spaan, Matthijs TJ and Oliehoek, Frans A},
  journal={arXiv preprint arXiv:2402.12034},
  year={2024}
}

@article{bradley1952rank,
  title        = {Rank Analysis of Incomplete Block Designs: {I}. The Method of Paired Comparisons},
  author       = {Bradley, Ralph Allan and Terry, Milton E.},
  journal      = {Biometrics},
  volume       = {8},
  number       = {3},
  pages        = {324--345},
  year         = {1952}
}

@article{lan2023pmd,
  title        = {Policy Mirror Descent for Reinforcement Learning: Linear Convergence, New Sampling Complexity, and Generalized Problem Classes},
  author       = {Lan, Guanghui},
  journal      = {Mathematical Programming},
  volume       = {198},
  number       = {1},
  pages        = {1059--1106},
  year         = {2023},
  publisher    = {Springer},
  doi          = {10.1007/s10107-022-01816-5}
}

@inproceedings{tiapkin2022dirichlet,
  title={From dirichlet to rubin: Optimistic exploration in rl without bonuses},
  author={Tiapkin, Daniil and Belomestny, Denis and Moulines, Eric and Naumov, Alexey and Samsonov, Sergey and Tang, Yunhao and Valko, Michal and M{\'e}nard, Pierre},
  booktitle={International Conference on Machine Learning},
  pages={21380--21431},
  year={2022},
  organization={PMLR}
}

@article{li2020sample,
  title={Sample complexity of asynchronous Q-learning: Sharper analysis and variance reduction},
  author={Li, Gen and Wei, Yuting and Chi, Yuejie and Gu, Yuantao and Chen, Yuxin},
  journal={Advances in neural information processing systems},
  volume={33},
  pages={7031--7043},
  year={2020}
}

@inproceedings{lattimore2012pac,
  title={PAC bounds for discounted MDPs},
  author={Lattimore, Tor and Hutter, Marcus},
  booktitle={International Conference on Algorithmic Learning Theory},
  pages={320--334},
  year={2012},
  organization={Springer}
}

@article{sidford2018near,
  title={Near-optimal time and sample complexities for solving Markov decision processes with a generative model},
  author={Sidford, Aaron and Wang, Mengdi and Wu, Xian and Yang, Lin and Ye, Yinyu},
  journal={Advances in Neural Information Processing Systems},
  volume={31},
  year={2018}
}

@incollection{yu1997assouad,
  title={Assouad, fano, and le cam},
  author={Yu, Bin},
  booktitle={Festschrift for Lucien Le Cam: research papers in probability and statistics},
  pages={423--435},
  year={1997},
  publisher={Springer}
}

@article{karmarkar2026solving,
  title={Solving Matrix Games with Near-Optimal Matvec Complexity},
  author={Karmarkar, Ishani and O'Carroll, Liam and Sidford, Aaron},
  journal={arXiv e-prints},
  pages={arXiv--2601},
  year={2026}
}

@article{tropp2011freedman,
  title={Freedman's inequality for matrix martingales},
  author={Tropp, Joel},
  year={2011}
}

@article{liu2025understanding,
  title={Understanding r1-zero-like training: A critical perspective},
  author={Liu, Zichen and Chen, Changyu and Li, Wenjun and Qi, Penghui and Pang, Tianyu and Du, Chao and Lee, Wee Sun and Lin, Min},
  journal={arXiv preprint arXiv:2503.20783},
  year={2025}
}

\end{document}